\newcommand{\+}{\protect\nobreakdash-}
\renewcommand{\:}{\colon}
\newcommand{\rarrow}{\longrightarrow}
\newcommand{\ot}{\otimes}
\newcommand{\bu}{{\text{\smaller\smaller$\scriptstyle\bullet$}}}
\DeclareMathOperator{\Hom}{Hom}
\DeclareMathOperator{\Ext}{Ext}
\DeclareMathOperator{\Spec}{Spec}
\DeclareMathOperator{\Fil}{\mathsf{Fil}}
\DeclareMathOperator{\Com}{\mathsf{Com}}
\newcommand{\Modr}{{\operatorname{\mathsf{Mod--}}}}
\newcommand{\Modl}{{\operatorname{\mathsf{--Mod}}}}
\newcommand{\bb}{\mathsf b}
\newcommand{\sA}{\mathsf A}
\newcommand{\sC}{\mathsf C}
\newcommand{\sD}{\mathsf D}
\newcommand{\sF}{\mathsf F}
\newcommand{\sI}{\mathsf I}
\newcommand{\sK}{\mathsf K}
\newcommand{\sL}{\mathsf L}
\newcommand{\sM}{\mathsf M}
\newcommand{\sN}{\mathsf N}
\newcommand{\sS}{\mathsf S}
\newcommand{\sT}{\mathsf T}
\newcommand{\Sets}{\mathsf{Sets}}
\newcommand{\GF}{\mathsf{GF}}
\newcommand{\GI}{\mathsf{GI}}
\newcommand{\boZ}{\mathbb Z}
\newcommand{\boQ}{\mathbb Q}
\newcommand{\p}{\mathfrak p}
\newcommand{\Section}[1]{\bigskip\section{#1}\medskip}
\theoremstyle{plain}
\newtheorem{thm}{Theorem}[section]
\newtheorem{lem}[thm]{Lemma}
\newtheorem{prop}[thm]{Proposition}
\newtheorem{cor}[thm]{Corollary}
\theoremstyle{definition}
\newtheorem{rem}[thm]{Remark}
\begin{document}

\title{Resolutions as directed colimits}

\author{Leonid Positselski}

\address{Institute of Mathematics, Czech Academy of Sciences \\
\v Zitn\'a~25, 115~67 Praha~1, Czech Republic}

\email{positselski@math.cas.cz}

\begin{abstract}
 A general principle suggests that ``anything flat is a directed colimit
of countably presentable flats''.
 In this paper, we consider resolutions and coresolutions of modules
over a countably coherent ring $R$ (e.~g., any coherent ring or any
countably Noetherian ring).
 We show that any $R$\+module of flat dimension~$n$ is a directed
colimit of countably presentable $R$\+modules of flat dimension
at most~$n$, and any flatly coresolved $R$\+module is a directed colimit
of countably presentable flatly coresolved $R$\+modules.
 If $R$ is a countably coherent ring with a dualizing complex, then any
F\+totally acyclic complex of flat $R$\+modules is a directed colimit
of F\+totally acyclic complexes of countably presentable flat
$R$\+modules.
 The proofs are applications of an even more general category-theoretic
principle going back to an unpublished 1977 preprint of Ulmer.
 Our proof of the assertion that every Gorenstein-flat module over
a countably coherent ring is a directed colimit of countably presentable
Gorenstein-flat modules uses a different technique, based on results
of \v Saroch and \v St\!'ov\'\i\v cek.
 We also discuss totally acyclic complexes of injectives and
Gorenstein-injective modules, obtaining various cardinality estimates
for the accessibility rank under various assumptions.
\end{abstract}

\maketitle

\tableofcontents

\section*{Introduction}
\medskip

 The classical Govorov--Lazard theorem~\cite{Gov,Laz} tells that all
flat modules are directed colimits of finitely generated projective
modules.
 What about modules of flat dimension~$n$, for a fixed $n\ge1$? 

 Any module of projective dimension~$1$, over an arbitrary ring,
is a directed colimit of finitely presentable modules of projective
dimension at most~$1$.
 Indeed, such a module is a cokernel of an injective morphism of
free modules, and such a morphism is a directed union of injective
morphisms of finitely generated free modules.
 This fact is mentioned, and a generalization to projective
dimension~$n$ is discussed, in the paper~\cite[Section~4]{BH};
see specifically~\cite[Theorem~4.6]{BH}.

 According to~\cite[Theorem~3.5]{AT}, any module of flat dimension~$1$
over a commutative integral domain is a directed colimit of finitely
presentable modules of projective dimension at most~$1$.
 Nevertheless, over commutative Noetherian local rings already,
a module of flat dimension~$1$ need not be a directed colimit of
(finitely generated) modules of projective dimension at most~$1$
\,\cite[Example~8.5]{BH}, \cite[Theorem~B]{HG}.
 A further discussion can be found in the book~\cite[Section~9.2]{GT}.

 In this paper we show that, over any right countably coherent ring,
any right module of flat dimension~$n$ is a directed colimit of
countably presentable modules of flat dimension at most~$n$
(see Corollary~\ref{finite-flat-dimension-cor}).
 Here a ring $R$ is said to be \emph{right countably coherent} if every
finitely generated right ideal in $R$ is countably presentable as
a right $R$\+module.
 In other words, in the language of~\cite[Chapter~2]{AR}, the category
of right $R$\+modules of flat dimension~$\le n$ is
\emph{$\aleph_1$\+accessible}, and its \emph{$\aleph_1$\+presentable
objects} are the countably presentable modules of flat
dimension~$\le n$ (i.~e., countably presentable as objects of
the category of all modules).
 Why is this interesting?

 From our perspective, the significance of flat modules with small
cardinalities of generators and relations lies in the fact that such
modules have finite projective dimensions.
 In fact, any countably presentable flat module has projective
dimension at most~$1$ \,\cite[Corollary~2.23]{GT}.
 More generally, any flat module with less than~$\aleph_m$
generators and relations has projective dimension at most~$m$
\,\cite[Proposition~5.3]{Jen} (see also our
Corollary~\ref{aleph-m-presentable-flat-module-cor}).

 There has been a stream of research about \emph{deconstructibility}
properties of various classes of modules and abelian/exact category
objects, using the \emph{Hill lemma}~\cite[Theorem~1]{Hil},
\cite[Theorem~2.1]{FL}, \cite[Theorem~6]{StT}, \cite[Theorem~7.10]{GT},
\cite[Theorem~2.1]{Sto} as one of the main technical tools.
 The importance of deconstructibility is explained by
the Eklof lemma~\cite[Lemma~1]{ET}, \cite[Lemma~6.2]{GT} and
the Eklof--Trlifaj theorem~\cite[Theorems~2 and~10]{ET},
\cite[Theorem~6.11 and Corollary~6.14]{GT}.

 In this context, a natural question is whether every module of
projective dimension~$n$ is a direct summand of a module filtered by
modules of projective dimension at most~$n$ admitting a resolution
by finitely generated projective modules.
 The answer is negative for $n=1$ and commutative Noetherian local
rings already~\cite[Theorem~8.6 or Lemma~9.1]{BH},
\cite[Theorem~3.14]{HG0}, \cite[Theorem~A]{HG}.
 So a module of projective dimension~$1$ need not be a direct summand
of a module filtered by finitely generated modules of projective
dimension~$1$.

 On the other hand, over a right countably Noetherian ring, any right
module of projective dimension~$n$ is filtered by countably generated
modules of projective dimension at most~$n$
\,\cite[Corollaire~II.3.2.5]{RG}, \cite[Proposition~4.1]{AEJO}.
 Here a ring $R$ is called \emph{right countably Noetherian} if every
right ideal in $R$ is countably generated.
 A far-reaching generalization of this result can be found
in~\cite[Theorem~3.4]{SlT}.

 An approach to accessibility based on deconstructibility is possible,
but its results may be suboptimal.
 Let $\kappa$~be a regular cardinal and $\sS$ be a set of
$\kappa$\+presentable modules over a ring~$R$.
 Then it follows from the Hill lemma that every module filtered by $\sS$
is a $\kappa$\+directed union of $\kappa$\+presentable modules
filtered by~$\sS$.
 Arguing in this way, and using the mentioned result from~\cite{SlT}
together with purification considerations~\cite[Lemma~1
and Proposition~2]{BBE}, one can show that any $R$\+module of flat
dimension~$n$ is a directed colimit of $\kappa$\+presentable
$R$\+modules of flat dimension at most~$n$, where $\kappa$~is
an uncountable regular cardinal greater than the cardinality of~$R$.
 This is not as good as the result of our
Corollary~\ref{finite-flat-dimension-cor}, which gives
$\kappa=\aleph_1$ for modules over countably coherent rings.

 The main techniques presented in this paper are category-theoretic in
nature, based on a general principle going back to an unpublished
1977~preprint of Ulmer~\cite{Ulm} and exemplified by the Pseudopullback
Theorem of Raptis and Rosick\'y~\cite[Proposition~3.1]{CR},
\cite[Theorem~2.2]{RR}.
 Another exposition can be found in the recent preprint~\cite{Pacc}
by the present author. {\uchyph=0\par}

 With these methods, we can and do treat coresolutions on par with
resolutions.
 In particular, one says that an $R$\+module $M$ is \emph{flatly
coresolved} if there exists an exact sequence of $R$\+modules
$0\rarrow M\rarrow F^0\rarrow F^1\rarrow F^2\rarrow\dotsb$ with
flat $R$\+modules~$F^i$.
 We show that, over a countably right coherent ring $R$, any flatly
coresolved right $R$\+module is a directed colimit of countably
presentable flatly coresolved right $R$\+modules
(see Corollary~\ref{flatly-coresolved-modules-cor}).

 The latter result brings us closer to the presently popular subject
of Gorenstein homological algebra.
 Approaching it with our category-theoretic methods, we show that,
over a right countably coherent ring $R$ with a dualizing complex, any
F\+totally acyclic complex of flat right $R$\+modules (in the sense
of~\cite[Section~2]{EFI}) is an $\aleph_1$\+directed colimit of
F\+totally acyclic complexes of countably presentable flat modules
(see Theorem~\ref{dualizing-complex-F-totally-acyclics-theorem}).
 Consequently, any Gorenstein-flat right $R$\+module is a directed
colimit of countably presentable Gorenstein-flat modules.
 Furthermore, any countably presentable Gorenstein-flat right
$R$\+module is a direct summand of a (Gorenstein-flat) module
admitting an F\+totally acyclic two-sided resolution by countably
presentable flat $R$\+modules
(Corollary~\ref{dualizing-complex-F-totally-acyclics-direct-summands}).
 The same results apply to commutative Noetherian rings with at most
countable spectrum (instead of a dualizing complex).

 A more powerful approach is based on difficult results of
the paper~\cite{SarSt}, or specifically~\cite[Theorems~4.9
and~4.11(4)]{SarSt}.
 It allows to prove that, over any right countably coherent ring,
any Gorenstein-flat right module is an $\aleph_1$\+directed colimit
of countably presentable Gorenstein-flat modules
(Theorem~\ref{countably-coherent-Gorenstein-flats-theorem}).

 We also consider Gorenstein-injective modules over a left Noetherian
ring $R$ with a dualizing complex.
 Here we prove that, if the cardinality of $R$ is smaller than~$\kappa$,
then any totally acyclic complex of injective left $R$\+modules is
a $\kappa$\+directed colimit of totally acyclic complexes of injective
modules of cardinality less than~$\kappa$
(see Theorem~\ref{dualizing-complex-tot-acycl-of-inj-theorem}).
 Consequently, every Gorenstein-injective left $R$\+module is
a directed colimit of Gorenstein-injective modules of cardinality
less than~$\kappa$.
 Furthermore, any Gorenstein-injective left $R$\+module of
cardinality less than~$\kappa$ is a direct summand of
a (Gorenstein-injective) module admitting a totally acyclic two-sided
resolution by injective $R$\+modules of cardinality less
than~$\kappa$\,
(Corollary~\ref{dualizing-complex-tot-acycl-of-inj-direct-summand}).

 More generally, let us say that an $R$\+module $M$ is
\emph{injectively resolved} if there exists an exact sequence of
$R$\+modules $\dotsb\rarrow J_2\rarrow J_1\rarrow J_0\rarrow M\rarrow0$
with injective $R$\+modules~$J_i$.
 We show that, over a left Noetherian ring $R$, any injectively
resolved left $R$\+module is a directed colimit of injectively
resolved $R$\+modules of cardinality not exceeding $\aleph_0$ plus
the cardinality of~$R$.

 Let us say a few more words about the proofs.
 The main arguments in this paper are based on a very general
category-theoretic principle going back to~\cite{Ulm} and rediscovered
in~\cite{Pacc}.
 The Pseudopullback Theorem~\cite[Proposition~3.1]{CR},
\cite[Theorem~2.2]{RR} is one of the instances of this principle, and
it is essentially sufficient for our purposes.
 In full generality, the principle involves a regular cardinal~$\kappa$
and a smaller infinite cardinal $\lambda<\kappa$, and the claim is that
the class of all $\kappa$\+accessible categories with directed colimits
of $\lambda$\+indexed chains is stable under many category-theoretic
constructions, including the pseudopullbacks, the inserters, and
the equifiers.
 Moreover, there is a good control over the full subcategories of
$\kappa$\+presentable objects in the categories arising under such
constructions; this is crucial for applications.

 A general principle that \emph{anything flat is a directed colimit of
countably presentable flats} is mentioned in the abstract.
 This is illustrated by the results of
the preprints~\cite[Sections~10.2, 10.4, and~10.5]{Pacc},
\cite[Sections~2\+-4]{PS6}, \cite[Sections~3, 4, 10, and~11]{Pflcc},
and~\cite[Sections~4, 6, and~7]{Pcor}.
 This is also confirmed by the results of the present paper, such as
Corollaries~\ref{finite-flat-dimension-cor}
and~\ref{flatly-coresolved-modules-cor}, and
Theorem~\ref{countably-coherent-Gorenstein-flats-theorem}.

 Sections~\ref{accessible-and-acyclic-secn}\+-%
\ref{deconstructibility-secn} contain various kinds of preliminary
material and preliminary discussions.
 In Sections~\ref{two-sided-resolutions-secn}\+-%
\ref{flatly-coresolved-secn} we discuss modules with two-sided
resolutions by modules from accessible classes and two particular cases,
modules of bounded flat dimension and flatly coresolved modules.
 In the subsequent
Sections~\ref{dualizing-and-F-totally-acyclic-secn}\+-%
\ref{gorenstein-flats-secn} we study F\+totally acyclic complexes
and Gorenstein-flat modules.
 In the next Sections~\ref{finite-injective-dimension-secn}\+-%
\ref{injectively-resolved-secn} we construct modules of bounded
injective dimension and injectively resolved modules as directed
colimits.
 The final Sections~\ref{left-noetherian-secn}\+-%
\ref{full-generality-secn} are dedicated to accessibility properties
of totally acyclic complexes of injectives and
Gorenstein-injective modules.

\subsection*{Acknowledgement}
 This work was inspired by a conversation with Michal Hrbek, and took
its present form as a result of extensive consultations and
communications with Michal Hrbek, Jan Trlifaj, Giovanna Le~Gros,
Dolors Herbera, Silvana Bazzoni, and Jan \v St\!'ov\'\i\v cek.
 Liran Shaul suggested the idea of using dualizing complexes for
characterizing total resolutions.
 I~wish to thank all the mentioned people for their help.
 Quite separately, I~want to express my gratitude to
Jan \v St\!'ov\'\i\v cek, who brought~\cite[Theorems~4.9
and~4.11(4)]{SarSt} to my attention, and suggested to include
the topic of Gorenstein-injective modules.
 Lemma~\ref{functorial-totally-injective-lemma} and
Corollary~\ref{Noeth-Gorenst-injectives-rho-dir-colim-closed}
are also due to Jan.
 Thanks are also due to an anonymous referee for careful reading
of the manuscript and suggesting a number of small corrections,
as well as the reference~\cite{Jen} in connection with
Corollary~\ref{aleph-m-presentable-flat-module-cor}.
 The author is supported by the GA\v CR project 23-05148S and
the Czech Academy of Sciences (RVO~67985840).
{\uchyph=0\par}

\Section{Accessible Categories and Acyclic Complexes}
\label{accessible-and-acyclic-secn}

 We use the book~\cite{AR} as the main reference source on accessible
categories.
 In particular, we refer to~\cite[Definition~1.4, Theorem and
Corollary~1.5, Definition~1.13(1), and Remark~1.21]{AR} for
the discussion of \emph{$\kappa$\+directed posets} vs.\
\emph{$\kappa$\+filtered small categories} and, accordingly,
\emph{$\kappa$\+directed} vs.\ \emph{$\kappa$\+filtered colimits}.
 Here $\kappa$~denotes a regular cardinal.
 Let us just mention that a poset $\Xi$ is said to be
\emph{$\kappa$\+directed} if every subset of cardinality less
than~$\kappa$ has an upper bound in~$\Xi$.

 Let $\sK$ be a category with $\kappa$\+directed (equivalently,
$\kappa$\+filtered) colimits.
 An object $S\in\sK$ is said to be
\emph{$\kappa$\+presentable}~\cite[Definition~1.13(2)]{AR}
(or ``$<\kappa$\+presented'' in the traditional module-theoretic
terminology) if the functor $\Hom_\sK(S,{-})\:\sK\rarrow\Sets$
preserves $\kappa$\+directed colimits.
 We will denote the full subcategory of $\kappa$\+presentable objects
of $\sK$ by $\sK_{<\kappa}\subset\sK$.

 The category $\sK$ is called
\emph{$\kappa$\+accessible}~\cite[Definition~2.1]{AR} if there
is a \emph{set} of $\kappa$\+presentable objects $\sS\subset\sK$
such that every object of $\sK$ is a $\kappa$\+directed colimit of
objects from~$\sS$.
 If this is the case, then the $\kappa$\+presentable objects of $\sK$
are precisely all the retracts of the objects from~$\sS$.
 A $\kappa$\+accessible category $\sK$ is called
\emph{locally $\kappa$\+presentable} if all colimits exist in~$\sK$
\,\cite[Definition~1.17 and Theorem~1.20]{AR}.

 In the standard terminology, $\aleph_0$\+presentable objects are called
\emph{finitely presentable}~\cite[Definition~1.1]{AR}; and we will
call $\aleph_1$\+presentable objects \emph{countably presentable}.
 $\aleph_0$\+accessible categories are called \emph{finitely
accessible}~\cite[Remark~2.2(1)]{AR}, and locally $\aleph_0$\+presentable
categories are called \emph{locally finitely
presentable}~\cite[Definition~1.9 and Theorem~1.11]{AR}.
{\hbadness=1050\par}

 Given a category\/ $\sK$ with $\kappa$\+directed colimits and a class
of objects $\sT\subset\sK$, we denote by $\varinjlim_{(\kappa)}\sT
\subset\sK$ the class of all $\kappa$\+directed colimits of objects
from~$\sT$.
 In the case of $\kappa=\aleph_0$, we will simply write $\varinjlim\sT$
instead of $\varinjlim_{(\aleph_0)}\sT$.

 The following proposition is essentially well-known.

\begin{prop} \label{accessible-subcategory}
 Let $\kappa$~be a regular cardinal and\/ $\sK$ be
a\/ $\kappa$\+accessible category.
 Let\/ $\sT\subset\sK$ be a set of $\kappa$\+presentable objects.
 Then the full subcategory\/ $\varinjlim_{(\kappa)}\sT\subset\sK$ is
closed under $\kappa$\+directed colimits in\/~$\sK$.
 The category\/ $\varinjlim_{(\kappa)}\sT$ is $\kappa$\+accessible,
and the $\kappa$\+presentable objects of\/ $\varinjlim_{(\kappa)}\sT$
are precisely all the retracts of objects from\/~$\sT$.
 An object $K\in\sK$ belongs to\/ $\varinjlim_{(\kappa)}\sT$ if and
only if, for every object $S\in\sK_{<\kappa}$, any morphism $S\rarrow K$
in\/ $\sK$ factorizes through an object from\/~$\sT$.
\end{prop}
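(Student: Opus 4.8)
The plan is to establish the final characterization first, since the other three assertions follow from it together with standard facts about accessible categories. The ``only if'' direction is immediate: if $K=\varinjlim_i T_i$ is a $\kappa$\+directed colimit with all $T_i\in\sT$ and $S\in\sK_{<\kappa}$, then $\kappa$\+presentability of $S$ makes $\varinjlim_i\Hom_\sK(S,T_i)\rarrow\Hom_\sK(S,K)$ bijective, so every morphism $S\rarrow K$ factors through some $T_i\in\sT$.

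For the ``if'' direction, which is the technical core, I would introduce the comma category $\sT/K$ whose objects are the morphisms $t\:T\rarrow K$ with $T\in\sT$ and whose morphisms are the commutative triangles over $K$, together with the forgetful functor $U\:\sT/K\rarrow\sK$, $(T\rarrow K)\mapsto T$. The first step is to show that $\sT/K$ is $\kappa$\+filtered. Given a subdiagram indexed by fewer than $\kappa$ objects and morphisms, I would view it inside the comma category $\sK_{<\kappa}/K$, which is $\kappa$\+filtered because $\sK$ is $\kappa$\+accessible; a cocone there has apex some $S\rarrow K$ with $S\in\sK_{<\kappa}$, and the hypothesis lets me factor $S\rarrow K$ through an object of $\sT$, converting this into a cocone in $\sT/K$. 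The second step is to show that the tautological cocone under $U$ with vertex $K$ is a colimit cocone. Writing $L=\varinjlim_{\sT/K}U$ for the (existing, since $\sK$ has $\kappa$\+filtered colimits) colimit and $\phi\:L\rarrow K$ for the comparison map, I would verify that $\Hom_\sK(S,\phi)$ is bijective for every $S\in\sK_{<\kappa}$: surjectivity is precisely the factorization hypothesis, and injectivity follows by coequalizing two competing maps first in $\sK_{<\kappa}/K$, which is $\kappa$\+filtered, and then transporting the coequalizing morphism into $\sT/K$ by one more application of the hypothesis. Since in a $\kappa$\+accessible category the $\kappa$\+presentable objects form a dense generator \cite{AR}, a morphism inducing bijections on all $\Hom_\sK(S,{-})$ with $S\in\sK_{<\kappa}$ is an isomorphism; hence $\phi$ is invertible and $K$ is a $\kappa$\+filtered --- equivalently, by \cite[Remark~1.21]{AR}, a $\kappa$\+directed --- colimit of objects of $\sT$, i.e.\ $K\in\varinjlim_{(\kappa)}\sT$. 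I expect this injectivity step to be the main obstacle, as it is the only point where one must manufacture an object of $\sT/K$ coequalizing two prescribed maps, and it genuinely combines $\kappa$\+filteredness with the factorization hypothesis rather than relying on either alone.

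The three remaining assertions then follow. Closure of $\varinjlim_{(\kappa)}\sT$ under $\kappa$\+directed colimits is read off the characterization: if $K=\varinjlim_i K_i$ is $\kappa$\+directed with each $K_i\in\varinjlim_{(\kappa)}\sT$, then any $S\rarrow K$ with $S\in\sK_{<\kappa}$ factors through some $K_i$ by $\kappa$\+presentability and then through some $T\in\sT$ by the characterization applied to $K_i$, so $K$ meets the factorization criterion and lies in $\varinjlim_{(\kappa)}\sT$. Hence these colimits exist in $\varinjlim_{(\kappa)}\sT$ and are computed as in $\sK$; since each $T\in\sT$ is $\kappa$\+presentable in $\sK$ it stays $\kappa$\+presentable in the full subcategory, and as every object of $\varinjlim_{(\kappa)}\sT$ is by definition a $\kappa$\+directed colimit of the set $\sT$, the category $\varinjlim_{(\kappa)}\sT$ is $\kappa$\+accessible. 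Finally, the general fact recalled before the proposition --- that in a $\kappa$\+accessible category the $\kappa$\+presentable objects are exactly the retracts of any generating set of $\kappa$\+presentables --- identifies the $\kappa$\+presentable objects of $\varinjlim_{(\kappa)}\sT$ with the retracts of objects of $\sT$, these retracts lying in $\varinjlim_{(\kappa)}\sT$ because an accessible category is idempotent\+complete.
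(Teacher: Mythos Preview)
Your proposal is correct and follows precisely the strategy the paper indicates: the paper's proof is only a sketch, citing \cite{Len,CB,Kra} for the finitely accessible additive case and stating that ``the key step is to prove the `if' part of the last assertion; then the remaining arguments are easy.'' You identify exactly this key step and supply the standard argument via the comma category $\sT/K$, so your write-up is essentially a fleshed-out version of what the paper leaves implicit.
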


\begin{proof}
 In the particular case of finitely accessible additive categories,
versions of this result were discussed in~\cite[Proposition~2.1]{Len},
\cite[Section~4.1]{CB}, and~\cite[Proposition~5.11]{Kra}.
 (The terminology ``finitely presented categories'' was used
in~\cite{CB,Kra} for what we call finitely accessible categories.)
 The key step is to prove the ``if'' part of the last assertion; then
the remaining arguments are easy.
\end{proof}

\begin{prop} \label{product-proposition}
 Let $\kappa$~be a regular cardinal, and let $(\sK_\xi)_{\xi\in\Xi}$ be
a family of $\kappa$\+accessible categories, indexed by a set\/ $\Xi$
of cardinality less than~$\kappa$.
 Then the Cartesian product category\/ $\sK=\prod_{\xi\in\Xi}\sK_\xi$ is
also $\kappa$\+accessible.
 The $\kappa$\+presentable objects of\/ $\sK$ are precisely all
the collections of objects $(S_\xi\in\sK_\xi)_{\xi\in\Xi}$ such that
the object $S_\xi$ is $\kappa$\+presentable in\/ $\sK_\xi$ for every
$\xi\in\Xi$.
\end{prop}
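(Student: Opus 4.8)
The plan is to exploit the fact that all limits and colimits in the Cartesian product category $\sK=\prod_{\xi\in\Xi}\sK_\xi$ are computed componentwise. In particular $\kappa$\+directed colimits exist in $\sK$ (since they exist in each $\sK_\xi$) and are formed coordinatewise, and for any object $S=(S_\xi)_{\xi\in\Xi}$ there is a natural identification $\Hom_\sK(S,{-})\cong\prod_{\xi\in\Xi}\Hom_{\sK_\xi}(S_\xi,\pi_\xi({-}))$, where $\pi_\xi\colon\sK\rarrow\sK_\xi$ is the projection. There are two assertions to prove: that $\sK$ is $\kappa$\+accessible, and that its $\kappa$\+presentable objects are exactly the collections that are componentwise $\kappa$\+presentable.

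The main obstacle, and the only place where the hypothesis $|\Xi|<\kappa$ enters, is the set-theoretic lemma that in $\Sets$ a $\kappa$\+directed colimit of a product of fewer than $\kappa$ functors agrees with the product of the colimits: for a $\kappa$\+directed poset $J$ and functors $X_\xi\colon J\rarrow\Sets$, the canonical map $\varinjlim_{j\in J}\prod_{\xi\in\Xi}X_\xi(j)\rarrow\prod_{\xi\in\Xi}\varinjlim_{j\in J}X_\xi(j)$ is a bijection. I would prove surjectivity by choosing, for a given tuple in the target, a stage $j_\xi\in J$ representing its $\xi$\+coordinate; since $|\Xi|<\kappa$ and $J$ is $\kappa$\+directed, the family $(j_\xi)_{\xi\in\Xi}$ admits an upper bound $j^\ast\in J$, at which all coordinates are simultaneously represented. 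Injectivity is entirely analogous: two elements of the source that agree in each coordinate-colimit already become equal at a common upper bound of the fewer than $\kappa$ many stages witnessing the coordinatewise equalities. Combined with the componentwise description of $\Hom$, this shows at once that if each $S_\xi$ is $\kappa$\+presentable in $\sK_\xi$, then $(S_\xi)_{\xi\in\Xi}$ is $\kappa$\+presentable in $\sK$, the functor $\Hom_\sK(S,{-})$ being a product of fewer than $\kappa$ many $\kappa$\+directed-colimit-preserving functors.

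Next I would produce enough such objects. Fix for each $\xi$ a set $\sS_\xi$ of $\kappa$\+presentable objects generating $\sK_\xi$ under $\kappa$\+directed colimits, and let $\sS\subset\sK$ be the set of all collections $(S_\xi)_{\xi\in\Xi}$ with $S_\xi\in\sS_\xi$ (this is a set, as $\Xi$ and each $\sS_\xi$ are sets); by the previous step every object of $\sS$ is $\kappa$\+presentable in $\sK$. Given an arbitrary $K=(K_\xi)_{\xi\in\Xi}$, write each $K_\xi=\varinjlim_{a\in I_\xi}S_{\xi,a}$ as a $\kappa$\+directed colimit of objects of $\sS_\xi$, and form the product poset $I=\prod_{\xi\in\Xi}I_\xi$. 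This $I$ is again $\kappa$\+directed, since an upper bound of a subset of cardinality less than $\kappa$ is obtained by bounding each of its coordinate-projections separately. Consider the diagram $I\rarrow\sK$ sending $i=(i_\xi)_{\xi\in\Xi}$ to $(S_{\xi,i_\xi})_{\xi\in\Xi}\in\sS$; its colimit is computed componentwise, and because each projection $\pi_\xi\colon I\rarrow I_\xi$ is a final functor between $\kappa$\+directed posets (its comma categories are nonempty and, by directedness of $I$, connected), the $\xi$\+component of the colimit is $\varinjlim_{a\in I_\xi}S_{\xi,a}=K_\xi$. Hence the colimit equals $K$, so every object of $\sK$ is a $\kappa$\+directed colimit of objects of $\sS$, and $\sK$ is $\kappa$\+accessible.

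Finally I would identify the $\kappa$\+presentable objects. By the general fact recalled just before Proposition~\ref{accessible-subcategory}, in the $\kappa$\+accessible category $\sK$ they are precisely the retracts of objects of $\sS$, and likewise in each $\sK_\xi$ the $\kappa$\+presentable objects are precisely the retracts of objects of $\sS_\xi$. Since idempotents and their splittings in a product category are taken coordinatewise, a retract of $(S_\xi)_{\xi\in\Xi}\in\sS$ is a collection $(T_\xi)_{\xi\in\Xi}$ in which each $T_\xi$ is a retract of $S_\xi\in\sS_\xi$, hence $\kappa$\+presentable in $\sK_\xi$; conversely, any collection of componentwise $\kappa$\+presentable objects arises in this way. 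This gives the stated description, the one nontrivial input throughout being the commutation of $\kappa$\+directed colimits with products of fewer than $\kappa$ factors in $\Sets$.
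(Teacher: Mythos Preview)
Your argument is correct and complete. The paper itself gives no argument at all, merely citing~\cite[proof of Proposition~2.67]{AR} (with a warning that a correction is needed) and~\cite[Proposition~2.1]{Pacc} for the details; your write-up is essentially what one finds when unwinding those references, with the commutation of $\kappa$\+directed colimits in $\Sets$ with products of fewer than~$\kappa$ factors as the key input and the product of the indexing posets as the device for exhibiting every object as a $\kappa$\+directed colimit of generators. One minor remark: in your last paragraph the converse inclusion (componentwise $\kappa$\+presentable implies $\kappa$\+presentable in~$\sK$) was already established directly in your second paragraph via the colimit--product commutation, so you do not actually need to pass through retracts for that direction; the retract description is only needed to show that every $\kappa$\+presentable object of~$\sK$ is componentwise $\kappa$\+presentable.
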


\begin{proof}
 This is a corrected version of~\cite[proof of Proposition~2.67]{AR}.
 We refer to~\cite[Proposition~2.1]{Pacc} for the details.
\end{proof}

 In the next two theorems, we consider a regular cardinal~$\kappa$
and a smaller infinite cardinal $\lambda<\kappa$ (so $\kappa$~is
necessarily uncountable).
 The cardinal~$\lambda$ is viewed as an ordinal, and directed colimits
of $\lambda$\+indexed chains are considered.
 Here a \emph{$\lambda$\+indexed chain} in $\sK$ is a directed diagram
of the form $(K_i\to K_j)_{0\le i<j<\lambda}$.

 Let $\sK_1$, $\sK_2$, and $\sL$ be three categories, and let
$\Phi_1\:\sK_1\rarrow\sL$ and $\Phi_2\:\sK_2\rarrow\sL$ be two functors.
 The \emph{pseudopullback} $\sC$ of the pair of functors $\Phi_1$
and $\Phi_2$ is defined as the category of triples $(K_1,K_2,\theta)$,
where $K_1\in\sK_1$ and $K_2\in\sK_2$ are two objects, and
$\theta\:\Phi_1(K_1)\simeq\Phi_2(K_2)$ is an isomorphism in~$\sL$.

\begin{thm} \label{pseudopullback-theorem}
 Let $\kappa$~be a regular cardinal and $\lambda<\kappa$ be a smaller
infinite cardinal.
 Let\/ $\sK_1$, $\sK_2$, and\/ $\sL$ be three $\kappa$\+accessible
categories where colimits of $\lambda$\+indexed chains exist.
 Assume that two functors\/ $\Phi_1\:\sK_1\rarrow\sL$ and\/
$\Phi_2\:\sK_2\rarrow\sL$ preserve $\kappa$\+directed colimits
and colimits of $\lambda$\+indexed chains, and take
$\kappa$\+presentable objects to $\kappa$\+presentable objects.
 Then the pseudopullback\/ $\sC$ is a $\kappa$\+accessible category.
 The $\kappa$\+presentable objects of\/ $\sC$ are precisely all
the triples $(S_1,S_2,\theta)\in\sC$ such that the object $S_1$ is
$\kappa$\+presentable in\/ $\sK_1$ and the object $S_2$ is
$\kappa$\+presentable in\/~$\sK_2$.
\end{thm}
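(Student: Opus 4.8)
The plan is to verify the standard criterion for $\kappa$\+accessibility: exhibit a set of $\kappa$\+presentable objects such that $\sC$ has $\kappa$\+directed colimits and every object is a $\kappa$\+directed colimit of objects from that set. First I would observe that $\kappa$\+directed colimits in $\sC$ are computed componentwise. Given a $\kappa$\+directed diagram $\big((K_1^{(d)},K_2^{(d)},\theta^{(d)})\big)_{d\in D}$, one forms $K_1=\varinjlim_d K_1^{(d)}$ and $K_2=\varinjlim_d K_2^{(d)}$ in $\sK_1$ and $\sK_2$; since $\Phi_1$ and $\Phi_2$ preserve $\kappa$\+directed colimits, the isomorphisms $\theta^{(d)}$ amalgamate into an isomorphism $\theta\:\Phi_1(K_1)\simeq\Phi_2(K_2)$, and $(K_1,K_2,\theta)$ is the colimit in $\sC$. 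The candidate set $\sS$ consists of all triples $(S_1,S_2,\theta)$ with $S_1\in(\sK_1)_{<\kappa}$ and $S_2\in(\sK_2)_{<\kappa}$; this is essentially small because each $(\sK_i)_{<\kappa}$ is essentially small and the isomorphisms $\theta$ form a set of morphisms in $\sL$.

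Next I would check that every such triple is $\kappa$\+presentable in $\sC$, which also establishes one inclusion of the last assertion. A morphism $(S_1,S_2,\theta)\to(K_1,K_2,\theta_\infty)$ into a $\kappa$\+directed colimit is a compatible pair $(f_1,f_2)$; using that $S_1,S_2$ are $\kappa$\+presentable I factor $f_1$ and $f_2$ through a common stage~$d$, and then use that $\Phi_1(S_1)$ is $\kappa$\+presentable in $\sL$ (because $\Phi_1$ takes $\kappa$\+presentables to $\kappa$\+presentables) to equalize the two resulting composites $\Phi_1(S_1)\to\Phi_2(K_2^{(d)})$ after enlarging~$d$, making $(f_1,f_2)$ factor through stage~$d$ in $\sC$. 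Uniqueness of such factorizations up to refinement follows likewise from $\kappa$\+presentability of $S_1$ and $S_2$ separately. Since $\kappa$\+presentable objects of $\sK_1$ and $\sK_2$ are closed under retracts and $\sC$\+morphisms are componentwise, the class of triples with $\kappa$\+presentable components is closed under retracts, so once accessibility is known this class is exactly $\sC_{<\kappa}$.

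The main obstacle is to prove that an arbitrary object $(K_1,K_2,\theta)$ is a $\kappa$\+directed colimit of objects from $\sS$; this is where the colimits of $\lambda$\+indexed chains enter. Writing $K_1=\varinjlim_{a}U_a$ and $K_2=\varinjlim_b V_b$ as $\kappa$\+directed colimits of $\kappa$\+presentables and applying $\Phi_1$, $\Phi_2$, I would run a back-and-forth construction of length~$\lambda$. Starting from a seed $U_{a_0}\to K_1$, the composite $\Phi_1(U_{a_0})\to\Phi_1(K_1)\xrightarrow{\theta}\Phi_2(K_2)$ factors through some $\Phi_2(V_{b_0})$ by $\kappa$\+presentability of $\Phi_1(U_{a_0})$; applying $\theta^{-1}$ and factoring back produces $\Phi_2(V_{b_0})\to\Phi_1(U_{a_1})$, and, after enlarging indices so that the two composites agree with the structural transition maps (again using $\kappa$\+presentability in $\sL$), one obtains a zig-zag of maps $\alpha_n\:\Phi_1(U_{a_n})\to\Phi_2(V_{b_n})$ and $\beta_n\:\Phi_2(V_{b_n})\to\Phi_1(U_{a_{n+1}})$ compatible with the chains. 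At limit stages below~$\lambda$ the $\kappa$\+directedness of the index sets (together with $\lambda<\kappa$) lets me choose upper bounds. Setting $T_1=\varinjlim_{n<\lambda}U_{a_n}$ and $T_2=\varinjlim_{n<\lambda}V_{b_n}$, these are $\kappa$\+presentable (being $\lambda$\+indexed colimits of $\kappa$\+presentables with $\lambda<\kappa$), and, because $\Phi_1$ and $\Phi_2$ preserve colimits of $\lambda$\+indexed chains, the families $(\alpha_n)$ and $(\beta_n)$ induce mutually inverse isomorphisms between $\Phi_1(T_1)$ and $\Phi_2(T_2)$, yielding a triple $(T_1,T_2,\tau)\in\sS$ with a morphism to $(K_1,K_2,\theta)$ that intertwines $\tau$ and~$\theta$.

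Finally I would assemble these approximating triples into a $\kappa$\+directed system with colimit $(K_1,K_2,\theta)$. Seeding the construction with arbitrary $\kappa$\+presentable subobjects shows that every piece of $K_1$ and $K_2$ is covered, so the induced maps to $K_1$ and $K_2$ are jointly colimiting; and given fewer than~$\kappa$ such triples over $(K_1,K_2,\theta)$, I would rerun the back-and-forth with their (still $\kappa$\+presentable) union as a seed and with the given partial isomorphism data built in, producing a single triple dominating them all. The delicate point throughout, and the step I expect to require the most care, is making this back-and-forth \emph{relative}---extending prescribed sub-triples while keeping everything compatible with the maps into $(K_1,K_2,\theta)$---so that the resulting diagram is genuinely $\kappa$\+filtered and its colimit recovers the original object. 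Granting this, $\sC$ is $\kappa$\+accessible, and combined with the second paragraph the $\kappa$\+presentable objects are exactly the triples with $\kappa$\+presentable components.
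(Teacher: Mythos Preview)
The paper does not prove this theorem in the body of the text; it simply records the references where the argument can be found (Ulmer's preprint, Chorny--Rosick\'y~\cite[Proposition~3.1]{CR}, Raptis--Rosick\'y~\cite[Theorem~2.2]{RR}, and~\cite[Corollary~5.1]{Pacc}). Your proposal, by contrast, sketches a direct proof, and the outline you give is essentially the proof that appears in those references: componentwise $\kappa$\+directed colimits, $\kappa$\+presentability of triples with $\kappa$\+presentable components, and the $\lambda$\+length zig-zag interleaving to produce approximating triples in~$\sS$. The key mechanism you identify---using preservation of $\lambda$\+indexed chain colimits so that the interleaved $(\alpha_n)$ and $(\beta_n)$ assemble into mutually inverse isomorphisms between $\Phi_1(T_1)$ and $\Phi_2(T_2)$---is exactly the point of the hypothesis on~$\lambda$, and is what the cited proofs do.

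Your sketch is sound as an outline. The one place where you gloss slightly is the coherence bookkeeping in the zig-zag: after enlarging indices so that $\beta_n\alpha_n$ agrees with $\Phi_1$ of the transition map and $\alpha_{n+1}\beta_n$ agrees with $\Phi_2$ of the transition map, you must also ensure that at each enlargement the \emph{new} $\alpha$ and $\beta$ remain compatible with the structure maps into $\Phi_1(K_1)\simeq\Phi_2(K_2)$, so that the induced $\tau$ really intertwines with~$\theta$. You do say this (``compatible with the chains'', ``intertwines $\tau$ and~$\theta$''), but in a full write-up this is where most of the verification lives. Likewise, the ``relative'' extension step you flag at the end---dominating a given $<\kappa$\+sized family of approximants by a single one that refines all of them in~$\sC$, not just componentwise---is indeed the technical crux, and the references carry this out by seeding the zig-zag with a common upper bound for the components together with the partial isomorphism data. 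Granting those details, your argument is correct and coincides with the published one.
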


\begin{proof}
 This assertion, going back to~\cite[Remark~3.2(I), Theorem~3.8,
Corollary~3.9, and Remark~3.11(II)]{Ulm}, can be found
in~\cite[Pseudopullback Theorem~2.2]{RR} with the proof
in~\cite[Proposition~3.1]{CR}.
 See also~\cite[Corollary~5.1]{Pacc}.
\end{proof}

 Let $\sK$ and $\sL$ be two categories, and let $\Phi_1$, $\Phi_2\:\sK
\rightrightarrows\sL$ be two parallel functors.
 The \emph{isomorpher} $\sC$ of the pair of functors $\Phi_1$ and
$\Phi_2$ is defined as the category of pairs $(K,\theta)$, where
$K\in\sK$ is an object and $\theta\:\Phi_1(K)\simeq\Phi_2(K)$ is
an isomorphism in~$\sL$.

\begin{thm} \label{isomorpher-theorem}
 Let $\kappa$~be a regular cardinal and $\lambda<\kappa$ be a smaller
infinite cardinal.
 Let\/ $\sK$ and\/ $\sL$ be two $\kappa$\+accessible categories where
colimits of $\lambda$\+indexed chains exist.
 Assume that two functors\/ $\Phi_1$ and\/ $\Phi_2\:
\sK\rightrightarrows\sL$ preserve $\kappa$\+directed colimits and
colimits of $\lambda$\+indexed chains, and take $\kappa$\+presentable
objects to $\kappa$\+presentable objects.
 Then the isomorpher\/ $\sC$ is a $\kappa$\+accessible category.
 The $\kappa$\+presentable objects of\/ $\sC$ are precisely all
the pairs $(S,\theta)\in\sC$ such that the object $S$ is
$\kappa$\+presentable in\/~$\sK$.
\end{thm}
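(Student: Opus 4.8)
The plan is to realize the isomorpher $\sC$ as a pseudopullback and then simply invoke Theorem~\ref{pseudopullback-theorem}. The key observation is that testing the commutation of two functors out of the \emph{same} source $\sK$ can be rephrased, via the diagonal, as a pseudopullback over the product category $\sL\times\sL$. So first I would form the combined functor $\Phi=(\Phi_1,\Phi_2)\colon\sK\rarrow\sL\times\sL$, \ $K\mapsto(\Phi_1(K),\Phi_2(K))$, together with the diagonal functor $\Delta\colon\sL\rarrow\sL\times\sL$, \ $L\mapsto(L,L)$, and consider the pseudopullback $\sC'$ of the pair $\Phi$ and $\Delta$. Its objects are triples $(K,L,\vartheta)$ with $\vartheta\colon(\Phi_1(K),\Phi_2(K))\simeq(L,L)$ an isomorphism in $\sL\times\sL$; writing $\vartheta=(\vartheta_1,\vartheta_2)$ with $\vartheta_i\colon\Phi_i(K)\simeq L$, the composite $\vartheta_2^{-1}\vartheta_1\colon\Phi_1(K)\simeq\Phi_2(K)$ produces an object of the isomorpher.

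Next I would check that the three categories $\sK$, $\sL$, $\sL\times\sL$ and the two functors $\Phi$, $\Delta$ satisfy the hypotheses of Theorem~\ref{pseudopullback-theorem}. Since $\kappa$ is uncountable we have $|\{1,2\}|=2<\kappa$, so by Proposition~\ref{product-proposition} the product $\sL\times\sL$ is $\kappa$\+accessible, and its colimits of $\lambda$\+indexed chains (and of $\kappa$\+directed diagrams) exist and are computed componentwise. Because colimits in $\sL\times\sL$ are componentwise and each $\Phi_i$ preserves $\kappa$\+directed colimits and $\lambda$\+indexed chains, so does $\Phi$; the diagonal $\Delta$ preserves every colimit for the same componentwise reason. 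For $\kappa$\+presentability, if $S\in\sK_{<\kappa}$ then each $\Phi_i(S)$ lies in $\sL_{<\kappa}$ by assumption, whence $(\Phi_1(S),\Phi_2(S))$ is $\kappa$\+presentable in $\sL\times\sL$ by Proposition~\ref{product-proposition}; likewise $\Delta$ sends $\sL_{<\kappa}$ into $(\sL\times\sL)_{<\kappa}$. Theorem~\ref{pseudopullback-theorem} then yields that $\sC'$ is $\kappa$\+accessible with $\kappa$\+presentable objects exactly the triples $(S,T,\vartheta)$ with $S\in\sK_{<\kappa}$ and $T\in\sL_{<\kappa}$.

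It then remains to transport this along an equivalence $\sC\simeq\sC'$. I would define $P\colon\sC'\rarrow\sC$ by $(K,L,(\vartheta_1,\vartheta_2))\mapsto(K,\vartheta_2^{-1}\vartheta_1)$ and $Q\colon\sC\rarrow\sC'$ by $(K,\theta)\mapsto(K,\Phi_2(K),(\theta,\id_{\Phi_2(K)}))$, acting as $\id$ on the underlying $\sK$\+morphisms. One checks $PQ=\id_\sC$ directly, while $QP$ is naturally isomorphic to $\id_{\sC'}$ via the component $(\id_K,\vartheta_2)\colon(K,\Phi_2(K),(\vartheta_2^{-1}\vartheta_1,\id))\to(K,L,(\vartheta_1,\vartheta_2))$; naturality is a short diagram chase. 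Since $\kappa$\+accessibility and the class of $\kappa$\+presentable objects are invariant under equivalence, $\sC$ is $\kappa$\+accessible and its $\kappa$\+presentable objects correspond to the $\kappa$\+presentable objects of $\sC'$.

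Finally I would read off the presentability criterion. Under $Q$, a pair $(S,\theta)\in\sC$ maps to $(S,\Phi_2(S),(\theta,\id))$, which is $\kappa$\+presentable in $\sC'$ if and only if $S\in\sK_{<\kappa}$ \emph{and} $\Phi_2(S)\in\sL_{<\kappa}$. The second condition is automatic once $S\in\sK_{<\kappa}$, because $\Phi_2$ takes $\kappa$\+presentable objects to $\kappa$\+presentable objects; hence $(S,\theta)$ is $\kappa$\+presentable in $\sC$ exactly when $S$ is $\kappa$\+presentable in $\sK$, as claimed. The argument is essentially a bookkeeping reduction, and I expect the only genuinely delicate point to be verifying that the diagonal construction faithfully reproduces the isomorpher, i.e.\ that $P$ and $Q$ really constitute an equivalence and that the isomorphism data $\vartheta=(\vartheta_1,\vartheta_2)$ is correctly matched with a single isomorphism $\theta$; all the accessibility input is supplied by Proposition~\ref{product-proposition} and Theorem~\ref{pseudopullback-theorem}.
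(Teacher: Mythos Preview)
Your proposal is correct and takes essentially the same approach as the paper: the paper's proof consists entirely of the remark that the isomorpher theorem is ``an equivalent version'' of Theorem~\ref{pseudopullback-theorem}, referring to~\cite[Remark~5.2]{Pacc} for the explanation, and your diagonal/product construction is precisely the standard way to make that reduction explicit. You have supplied the details the paper omits; the verification that $P$ and $Q$ form an equivalence and that $\Delta$ preserves the relevant colimits (since colimits in $\sL\times\sL$ are componentwise) is all correct.
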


\begin{proof}
 This is essentially an equivalent version of
Theorem~\ref{pseudopullback-theorem}, as explained
in~\cite[Remark~5.2]{Pacc}.
 The reference to~\cite[Remark~3.2(I), Theorem~3.8, Corollary~3.9,
and Remark~3.11(II)]{Ulm} is also applicable.
\end{proof}

 Let $R$ be an associative ring.
 We will denote by $\Modr R$ the category of right $R$\+modules and
by $R\Modl$ the category of left $R$\+modules.
 The abelian category $\Modr R$ is locally finitely presentable, hence
locally $\kappa$\+presentable for every regular cardinal~$\kappa$
\,\cite[Remark~1.20]{AR}.
 The $\kappa$\+presentable objects of $\Modr R$ are precisely all
the $R$\+modules with less than~$\kappa$ generators and less
than~$\kappa$ relations, i.~e., in other words, the cokernels of
morphisms of free $R$\+modules with less than~$\kappa$ generators.

 For any additive category $\sA$, we denote by $\Com(\sA)$ the category
of (unbounded) complexes in~$\sA$.

\begin{lem} \label{abelian-complexes-lemma}
 For any associative ring $R$, the following assertions hold: \par
\textup{(a)} The abelian category\/ $\Com(\Modr R)$ of complexes of
right $R$\+modules is locally finitely presentable.
 Consequently, this category is locally $\kappa$\+presentable for any
regular cardinal~$\kappa$. \par
\textup{(b)} The finitely presentable objects of\/ $\Com(\Modr R)$ are
precisely all the \emph{bounded} complexes of finitely presentable
$R$\+modules. \par
\textup{(c)} For every uncountable regular cardinal~$\kappa$,
the $\kappa$\+presentable objects of\/ $\Com(\Modr R)$ are precisely
all the (unbounded) complexes of $\kappa$\+presentable $R$\+modules.
\hbadness=1725
\end{lem}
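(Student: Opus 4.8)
The plan is to exploit that all limits and colimits in $\Com(\Modr R)$ are computed termwise, together with the two adjoints of each evaluation functor. For $n\in\boZ$ let $\mathrm{ev}^n\:\Com(\Modr R)\rarrow\Modr R$ take a complex to its term $M^n$; this functor preserves all limits and colimits, its left adjoint sends an $R$\+module $X$ to the \emph{disk} $D^n(X)$ (namely $X\xrightarrow{\id}X$ in degrees $n,n+1$ and $0$ elsewhere), and its right adjoint sends $X$ to the \emph{codisk} $C^n(X)$ (namely $X\xrightarrow{\id}X$ in degrees $n-1,n$). Writing $D^n=D^n(R)$, one has $\Hom_\Com(D^n,M^\bu)\cong M^n$ naturally; thus each $D^n$ is projective (as $\mathrm{ev}^n$ is exact) and finitely presentable (a special case of the computation in the next paragraph), and the set $\{D^n\}_{n\in\boZ}$ generates, since every complex is a quotient of a coproduct of disks. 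For part~(a) I would then note that $\Com(\Modr R)$ is a Grothendieck, hence cocomplete, abelian category carrying a set of finitely presentable projective generators; the standard criterion (equivalently, the identification of $\Com(\Modr R)$ with the category of modules over the path ringoid of the quiver $\dotsb\to n\to n+1\to\dotsb$ modulo $d^2=0$) shows it is locally finitely presentable, and \cite[Remark~1.20]{AR} upgrades this to local $\kappa$\+presentability for every regular~$\kappa$.

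The core is one ``if'' computation that handles the substantive inclusions of both~(b) and~(c) at once. Fix a regular cardinal~$\kappa$ and a complex $M^\bu$ whose every term $M^m$ is $\kappa$\+presentable and which has $M^m\ne0$ for fewer than~$\kappa$ indices~$m$ --- that is, $M^\bu$ bounded when $\kappa=\aleph_0$, a vacuous restriction when $\kappa$ is uncountable, as $|\boZ|=\aleph_0<\kappa$. Let $N^\bu=\varinjlim_\xi N^\bu_\xi$ be a $\kappa$\+directed colimit, formed termwise. Given a chain map $f\:M^\bu\rarrow N^\bu$, each component $f^m$ factors through some $N^m_{\xi_m}$ because $M^m$ is $\kappa$\+presentable; as fewer than~$\kappa$ indices occur and the diagram is $\kappa$\+directed, a single index~$\xi$ dominates all the~$\xi_m$. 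The resulting termwise maps $g^m\:M^m\rarrow N^m_\xi$ need not yet commute with the differentials, but $d g^m$ and $g^{m+1}d$ become equal in $N^\bu$, hence at some later stage --- again by $\kappa$\+presentability of~$M^m$ and a common upper bound over fewer than~$\kappa$ indices --- producing an honest chain map lifting~$f$. The same ``equal in the colimit, therefore equal at a stage'' principle, applied termwise, gives injectivity of the comparison map. Hence $\Hom_\Com(M^\bu,{-})$ preserves $\kappa$\+directed colimits, so $M^\bu$ is $\kappa$\+presentable.

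For the reverse inclusions, let $M^\bu$ be $\kappa$\+presentable. As $\mathrm{ev}^n$ is left adjoint to the codisk functor $C^n$, and $C^n$ preserves $\kappa$\+directed colimits (being termwise), $\mathrm{ev}^n$ preserves $\kappa$\+presentable objects; thus every term $M^n=\mathrm{ev}^n(M^\bu)$ is $\kappa$\+presentable in $\Modr R$, which is the module-theoretic half of both~(b) and~(c). For~(c) this already suffices. For~(b) it remains to force boundedness when $\kappa=\aleph_0$: here I would present $M^\bu$ as the directed union of its \emph{bounded} subcomplexes (each element $x\in M^m$ lies in the bounded subcomplex $xR+d(x)R$ it generates, and such subcomplexes are closed under finite sums). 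Finite presentability makes $\id_{M^\bu}$ factor through one bounded subcomplex $L\hookrightarrow M^\bu$, exhibiting the inclusion as a split epimorphism; being also monic, it is an isomorphism, so $M^\bu=L$ is bounded. This completes~(b), and~(c) follows by combining the ``if'' computation with the $\mathrm{ev}^n$ argument.

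I expect the forward direction of~(b) --- deducing boundedness from finite presentability --- to be the main obstacle, since the termwise adjoint argument controls only the individual terms; the remedy is to choose the directed system of bounded subcomplexes correctly and to upgrade a factorization of the identity to an isomorphism by the retract trick. A secondary point is to keep the two ``if'' arguments uniform: the sole difference between~(b) and~(c) is whether the support has finite or merely ${<}\kappa$ cardinality, which is precisely why unbounded complexes of $\kappa$\+presentable modules become $\kappa$\+presentable once~$\kappa$ is uncountable.
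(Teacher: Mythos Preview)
Your proof is correct. The paper's own proof is a two-line appeal to a general principle: it observes that complexes of $R$\+modules are precisely modules over the path ringoid of the $\boZ$\+indexed quiver modulo $d^2=0$, and then invokes the fact that the standard presentability results for modules over rings carry over verbatim to modules over small preadditive categories (with~\cite{Hen} or~\cite{Pacc} cited as an alternative for part~(c)). You mention this ringoid identification in passing for part~(a), but your main argument is a direct, hands-on one built around the disk/codisk adjunctions of the evaluation functors $\mathrm{ev}^n$. What this buys you is self-containment: a reader who does not already know that ``modules over ringoids behave like modules over rings'' can follow every step, and in particular your unified ``fewer than~$\kappa$ nonzero terms'' computation makes transparent exactly why boundedness appears for $\kappa=\aleph_0$ and disappears for uncountable~$\kappa$. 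What the paper's approach buys is brevity and a pointer to the conceptual reason behind the result. Your retract trick for extracting boundedness from finite presentability, and your use of the right adjoint $C^n$ to transport $\kappa$\+presentability along $\mathrm{ev}^n$, are both standard and correctly executed.
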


\begin{proof}
 Essentially, complexes are modules over a suitable ``ring with many
objects'' (viz., the objects are indexed by the integers $n\in\boZ$).
 As usual, the results about modules over rings are applicable to
modules over rings with many objects, and provide the assertions of
the lemma.
 Alternatively, part~(c) can be deduced by applying an additive
version of~\cite[Theorem~1.2]{Hen} or~\cite[Theorem~6.2]{Pacc}.
\end{proof}

 The \emph{category of epimorphisms of right $R$\+modules} has
epimorphisms of right $R$\+modules $L\rarrow M$ as objects and
commutative squares $L'\rarrow M'\rarrow M''$, \ $L'\rarrow L''
\rarrow M''$ (with epimorphisms $L'\rarrow M'$ and $L''\rarrow M''$)
as morphisms.

\begin{lem} \label{epimorphism-lemma}
 For any ring $R$ and every regular cardinal~$\kappa$, the category of
epimorphisms of right $R$\+modules is $\kappa$\+accessible.
 The $\kappa$\+presentable objects of this category are
the epimorphisms of $\kappa$\+presentable right $R$\+modules.
\end{lem}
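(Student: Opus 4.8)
The plan is to realize the category of epimorphisms as a full subcategory of the larger \emph{morphism category} $\sM$, whose objects are arbitrary morphisms $L\rarrow M$ of right $R$\+modules and whose morphisms are commutative squares, and then to apply Proposition~\ref{accessible-subcategory}. The category $\sM$ is the category of modules over a ring with two objects, so, as for $\Com(\Modr R)$ in Lemma~\ref{abelian-complexes-lemma} but more simply (the indexing shape being finite), it is locally $\kappa$\+presentable for every regular cardinal~$\kappa$, and its $\kappa$\+presentable objects are exactly the morphisms $L\rarrow M$ with both $L$ and $M$ $\kappa$\+presentable (a finite limit of $\Hom$'s commutes with $\kappa$\+directed colimits). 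Let $\sT\subset\sM$ be a representative set of the epimorphisms between $\kappa$\+presentable $R$\+modules; these are $\kappa$\+presentable objects of~$\sM$. It then suffices to prove that the full subcategory of epimorphisms coincides with $\varinjlim_{(\kappa)}\sT\subset\sM$, since Proposition~\ref{accessible-subcategory} will immediately yield $\kappa$\+accessibility together with the description of the $\kappa$\+presentable objects.

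The inclusion $\varinjlim_{(\kappa)}\sT\subseteq\{\text{epimorphisms}\}$ is the easy direction: directed colimits are exact in module categories, so the cokernel functor $\sM\rarrow\Modr R$ preserves $\kappa$\+directed colimits, and a $\kappa$\+directed colimit of morphisms with vanishing cokernel again has vanishing cokernel. For the reverse inclusion I would invoke the last assertion of Proposition~\ref{accessible-subcategory}: given an epimorphism $f\:L\rarrow M$, I must show that every morphism in $\sM$ from a $\kappa$\+presentable object $(u\:S_1\rarrow S_2)$ to $f$ factors through an object of~$\sT$. Such a morphism is a pair $g\:S_1\rarrow L$, $h\:S_2\rarrow M$ with $fg=hu$.

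To build the factorization, first use the $\kappa$\+accessibility of $\Modr R$: writing $L$ as a $\kappa$\+directed colimit of $\kappa$\+presentable modules, the map $g$ factors through some $\kappa$\+presentable $T_1\xrightarrow{q_1}L$, and, enlarging $T_1$ along the $\kappa$\+directed system (there being fewer than~$\kappa$ constraints), I may also assume that $T_1$ contains elements $\hat\ell_\beta$ lifting chosen preimages $\ell_\beta\in L$ of the images $h(s_\beta)\in M$ of a generating set $\{s_\beta\}$ of $S_2$ of cardinality $<\kappa$, where $f(\ell_\beta)=h(s_\beta)$. I would then take $T_2$ to be a quotient $T_1/N$ by a submodule $N$ generated by fewer than~$\kappa$ elements, so that $T_2$ is again $\kappa$\+presentable and $v\:T_1\twoheadrightarrow T_2$ is the desired epimorphism. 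The generators of $N$ are dictated by what the two squares demand: to make $h$ factor as $S_2\xrightarrow{p_2}T_2\xrightarrow{q_2}M$ with $p_2(s_\beta)=v(\hat\ell_\beta)$ well defined, one throws into $N$ the images $\sum_\beta\hat\ell_\beta r_{j\beta}$ of the (fewer than~$\kappa$) relations $\sum_\beta s_\beta r_{j\beta}=0$ of $S_2$; and to make the left-hand square $vp_1=p_2u$ commute, one also throws in the elements $p_1(\sigma_\delta)-\sum_\beta\hat\ell_\beta a_{\delta\beta}$, where the $\sigma_\delta$ generate $S_1$ and $u(\sigma_\delta)=\sum_\beta s_\beta a_{\delta\beta}$. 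The identity $fg=hu$ guarantees that all of these elements lie in $\ker(fq_1)$, so that $q_2\:T_2\rarrow M$ with $q_2v=fq_1$ is well defined; this simultaneously makes the right-hand square commute.

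The main obstacle — and the reason the naive choice of $\kappa$\+generated submodules of $L$ and $M$ fails — is that a $\kappa$\+generated submodule of an arbitrary module need not be $\kappa$\+presentable. This is precisely what forces the two-step construction: $T_1$ is produced as a genuinely $\kappa$\+presentable approximation to $L$ via the accessibility of $\Modr R$, rather than as a submodule, and $T_2$ is obtained from $T_1$ by adjoining only fewer than~$\kappa$ new relations, so that $\kappa$\+presentability is preserved (using the description of $\kappa$\+presentable modules recalled before Lemma~\ref{abelian-complexes-lemma}). Once the factorization is in hand, Proposition~\ref{accessible-subcategory} shows that the category of epimorphisms is $\kappa$\+accessible and that its $\kappa$\+presentable objects are the retracts of objects of~$\sT$; since the cokernel functor preserves retracts and retracts of $\kappa$\+presentable modules are $\kappa$\+presentable, these retracts are again epimorphisms between $\kappa$\+presentable modules, completing the proof.
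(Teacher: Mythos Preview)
Your argument is correct. The paper itself does not give a proof but simply refers to~\cite[Lemma~7.7]{Pacc}, so your self-contained argument via Proposition~\ref{accessible-subcategory} is a genuine addition rather than a duplication. The strategy---embedding the epimorphism category as $\varinjlim_{(\kappa)}\sT$ inside the arrow category and verifying the factorization criterion by hand---is natural and is essentially the kind of argument the cited lemma encapsulates.

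A couple of cosmetic points. When you write that $T_1$ ``contains elements $\hat\ell_\beta$'', bear in mind that $q_1\:T_1\rarrow L$ need not be injective; what you mean (and what your construction actually uses) is that there exist $\hat\ell_\beta\in T_1$ with $q_1(\hat\ell_\beta)=\ell_\beta$, obtained by passing to an upper bound in the $\kappa$-directed system presenting~$L$. Also, in the last sentence, the phrase ``the cokernel functor preserves retracts'' is doing the work of showing that a retract in $\sM$ of an epimorphism is again an epimorphism: a retract of $0$ is~$0$. This is fine, but you might state it that way for clarity. Neither point affects the validity of the proof.
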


\begin{proof}
 This is a particular case of~\cite[Lemma~10.7]{Pacc}.
\end{proof}

 We will say that an associative ring $R$ is \emph{right\/
$<\kappa$\+coherent} if, for every $\kappa$\+pre\-sentable right
$R$\+module $S$, any submodule in $S$ having less than~$\kappa$
generators is $\kappa$\+presentable.
 Equivalently, $R$ is right $<\kappa$\+coherent if and only if every
right ideal in $R$ with less than~$\kappa$ generators is
$\kappa$\+presentable as a right $R$\+module, and if and only if
every finitely generated right ideal in $R$ is $\kappa$\+presentable.
 We will call right $<\aleph_1$\+coherent rings \emph{right countably
coherent}.

\begin{cor} \label{short-exact-sequences-cor}
 For any regular cardinal~$\kappa$ and any right\/ $<\kappa$\+coherent
ring $R$, the category of short exact sequences of right $R$\+modules
is $\kappa$\+accessible.
 The $\kappa$\+presentable objects of this category are
the short exact sequences of $\kappa$\+presentable right $R$\+modules.
\end{cor}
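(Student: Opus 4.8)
The plan is to realize the category of short exact sequences as an equivalent copy of the category of epimorphisms from Lemma~\ref{epimorphism-lemma}, and then to invoke the $<\kappa$\+coherence hypothesis to control the third term. Write \sS for the category of short exact sequences $0\rarrow L\rarrow M\rarrow N\rarrow0$ of right $R$\+modules and $\mathsf E$ for the category of epimorphisms. First I would consider the forgetful functor $\Phi\:\sS\rarrow\mathsf E$ taking a short exact sequence to its epimorphism $M\rarrow N$, and show it is an equivalence of categories. It is essentially surjective, since every epimorphism $p\:M\rarrow N$ yields the short exact sequence $0\rarrow\ker p\rarrow M\rarrow N\rarrow0$. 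It is fully faithful because, given a commutative square from $M\rarrow N$ to $M'\rarrow N'$, the composite $L\rarrow M\rarrow M'$ lands in $\ker(M'\rarrow N')$ (its image in $N'$ vanishes by commutativity) and hence factors through the monomorphism $L'\hookrightarrow M'$ uniquely; this produces the unique morphism of short exact sequences lifting the given square. Transporting the conclusion of Lemma~\ref{epimorphism-lemma} across $\Phi$ shows at once that \sS is $\kappa$\+accessible and that its $\kappa$\+presentable objects are exactly the short exact sequences $0\rarrow L\rarrow M\rarrow N\rarrow0$ in which $M$ and $N$ are $\kappa$\+presentable.

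It remains to identify these with the short exact sequences of $\kappa$\+presentable modules, which is where the coherence hypothesis enters and which I expect to be the crux. Given a short exact sequence with $M$ and $N$ both $\kappa$\+presentable, I would prove that the third term $L$ is automatically $\kappa$\+presentable. Since $M$ is $<\kappa$\+generated and $N$ is $\kappa$\+presentable, the standard estimate that in a short exact sequence with $<\kappa$\+generated middle term and $\kappa$\+presentable right term the left term is $<\kappa$\+generated shows that $L=\ker(M\rarrow N)$ is $<\kappa$\+generated. Now $L$ is a $<\kappa$\+generated submodule of the $\kappa$\+presentable module $M$, so the definition of right $<\kappa$\+coherence forces $L$ to be $\kappa$\+presentable. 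Thus a short exact sequence with $M,N$ both $\kappa$\+presentable has all three terms $\kappa$\+presentable; the reverse inclusion is trivial. Combining this with the previous paragraph gives that the $\kappa$\+presentable objects of \sS are precisely the short exact sequences of $\kappa$\+presentable modules.

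I would emphasize that the generator estimate together with its interaction with coherence is the only non-formal input: the equivalence $\sS\simeq\mathsf E$ and the transport of accessibility need no hypotheses on $R$, whereas the passage from ``$M$ and $N$ are $\kappa$\+presentable'' to ``$L$ is $\kappa$\+presentable'' is exactly what coherence supplies and what distinguishes this corollary from the unconditional Lemma~\ref{epimorphism-lemma}. An equivalent packaging realizes \sS as the pseudopullback of the kernel functor $\mathsf E\rarrow\Modr R$ along the identity functor of $\Modr R$ and appeals to Theorem~\ref{pseudopullback-theorem}; here the kernel functor preserves $\kappa$\+directed colimits (filtered colimits in $\Modr R$ being exact) and, by the same coherence argument, sends $\kappa$\+presentable objects to $\kappa$\+presentable objects. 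Since that route requires an auxiliary infinite $\lambda<\kappa$ and hence $\kappa$ uncountable, I would prefer the direct equivalence above, which treats every regular cardinal~$\kappa$ uniformly.
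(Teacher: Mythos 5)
Your proposal is correct and takes essentially the same route as the paper's own (much terser) proof: the paper likewise treats the first assertion as a restatement of Lemma~\ref{epimorphism-lemma} via the identification of short exact sequences with epimorphisms, and invokes right $<\kappa$\+coherence precisely to pass from the lemma's description of $\kappa$\+presentable epimorphisms to the corollary's description, i.~e., to guarantee that the kernel term is also $\kappa$\+presentable. Your write-up merely fills in the details the paper leaves implicit (the equivalence of categories, and the generator estimate showing the kernel is $<\kappa$\+generated before coherence is applied), all of which are correct.
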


\begin{proof}
 The first assertion is a restatement of the first assertion of
Lemma~\ref{epimorphism-lemma} and holds for any ring~$R$; but one
needs the $<\kappa$\+coherence assumption in order to obtain
the second assertion of the corollary from the second assertion of
the lemma (cf.~\cite[Corollary~10.13]{Pacc}).
\end{proof}

\begin{prop} \label{acyclic-complexes-prop}
 For any uncountable regular cardinal~$\kappa$ and any right\/
$<\kappa$\+coherent ring $R$, the category of (unbounded) acyclic
complexes of right $R$\+modules is $\kappa$\+accessible.
 The $\kappa$\+presentable objects of this category are
the acyclic complexes of $\kappa$\+presentable right $R$\+modules.
\end{prop}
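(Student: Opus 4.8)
The plan is to realize the category of acyclic complexes as an \emph{isomorpher} of two functors between product categories, and then to apply Theorem~\ref{isomorpher-theorem}. The key observation is that an acyclic complex $(\dotsb\rarrow C^{n-1}\rarrow C^n\xrightarrow{d^n}C^{n+1}\rarrow\dotsb)$ is the same datum as a $\boZ$\+indexed family of short exact sequences glued along their end terms. Writing $Z^n=\ker(d^n)=\operatorname{im}(d^{n-1})$ for the module of cocycles, the complex decomposes into short exact sequences $\sigma_n\: 0\rarrow Z^n\rarrow C^n\rarrow Z^{n+1}\rarrow0$, one for each $n\in\boZ$; conversely, such a family recovers the complex once one identifies the quotient term $Z^{n+1}$ of $\sigma_n$ with the subobject term of $\sigma_{n+1}$. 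Thus an acyclic complex is exactly a family of short exact sequences together with these gluing isomorphisms.

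First I would assemble the ambient $\kappa$\+accessible categories. By Corollary~\ref{short-exact-sequences-cor} (which is where the $<\kappa$\+coherence hypothesis is first used), the category $\sS$ of short exact sequences of right $R$\+modules is $\kappa$\+accessible, and its $\kappa$\+presentable objects are the short exact sequences of $\kappa$\+presentable modules. Since the index set $\boZ$ has cardinality $\aleph_0<\kappa$, Proposition~\ref{product-proposition} shows that the product $\sK=\prod_{n\in\boZ}\sS$ is $\kappa$\+accessible, with $\kappa$\+presentable objects the families of short exact sequences of $\kappa$\+presentable modules; likewise $\sL=\prod_{n\in\boZ}(\Modr R)$ is $\kappa$\+accessible. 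Both $\sK$ and $\sL$ admit colimits of $\lambda$\+indexed chains for $\lambda=\aleph_0$, computed termwise, since directed colimits of short exact sequences of modules exist and are again short exact.

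Next I would define two functors $\Phi_1,\Phi_2\:\sK\rightrightarrows\sL$. The functor $\Phi_1$ sends a family $(\sigma_n)_{n\in\boZ}$ to the family of quotient terms of the $\sigma_n$, while $\Phi_2$ sends it to the family of subobject terms, reindexed so that the $n$\+th component of $\Phi_2((\sigma_n)_n)$ is the subobject term of $\sigma_{n+1}$. Because directed colimits of short exact sequences of modules are exact, the subobject and quotient functors $\sS\rarrow\Modr R$ preserve directed colimits; hence $\Phi_1$ and $\Phi_2$ preserve $\kappa$\+directed colimits and colimits of $\aleph_0$\+indexed chains. Subobjects and quotients of short exact sequences of $\kappa$\+presentable modules are $\kappa$\+presentable, so $\Phi_1$ and $\Phi_2$ take $\kappa$\+presentable objects to $\kappa$\+presentable objects. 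With $\lambda=\aleph_0<\kappa$, the hypotheses of Theorem~\ref{isomorpher-theorem} hold, so the isomorpher $\sC$ of $\Phi_1$ and $\Phi_2$ is $\kappa$\+accessible, and its $\kappa$\+presentable objects are the pairs $((\sigma_n)_n,\theta)$ for which $(\sigma_n)_n$ is $\kappa$\+presentable in~$\sK$.

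Finally I would identify $\sC$ with the category of acyclic complexes through the gluing description: an object of $\sC$ is a family of short exact sequences equipped with isomorphisms $\theta_n$ between the quotient term of $\sigma_n$ and the subobject term of $\sigma_{n+1}$, which is precisely the data of an acyclic complex, and one checks that this defines an equivalence of categories. This gives the first assertion. For the second, transporting the description above, a $\kappa$\+presentable object of $\sC$ is an acyclic complex whose terms $C^n$ \emph{and} whose cocycle modules $Z^n$ are all $\kappa$\+presentable. The main point to verify --- and where the $<\kappa$\+coherence hypothesis re-enters --- is that this agrees with the class of acyclic complexes of $\kappa$\+presentable modules: if each term $C^n$ is $\kappa$\+presentable, then each cocycle $Z^{n+1}$ is a quotient of $C^n$, hence has fewer than $\kappa$ generators, while also being a submodule of the $\kappa$\+presentable module $C^{n+1}$; by $<\kappa$\+coherence, $Z^{n+1}$ is then $\kappa$\+presentable, so the two descriptions coincide. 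This yields both assertions of the proposition.
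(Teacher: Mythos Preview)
Your proof is correct and follows essentially the same approach as the paper's: both realize the category of acyclic complexes as the isomorpher of the quotient and (shifted) subobject functors from $\prod_{n\in\boZ}\sS$ to $\prod_{n\in\boZ}\Modr R$, invoking Corollary~\ref{short-exact-sequences-cor}, Proposition~\ref{product-proposition}, and Theorem~\ref{isomorpher-theorem} with $\lambda=\aleph_0$. Your final paragraph, verifying via $<\kappa$\+coherence that the cocycle modules of an acyclic complex of $\kappa$\+presentable modules are themselves $\kappa$\+presentable, makes explicit a step the paper leaves implicit in the description of $\kappa$\+presentable objects from Corollary~\ref{short-exact-sequences-cor}.
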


\begin{proof}
 The argument is similar to~\cite[proof of Corollary~10.14]{Pacc}.
 Notice that an acyclic complex of modules $C^\bu$ is the same thing
as a collection of short exact sequences of modules $0\rarrow K^n
\rarrow C^n\rarrow M^n\rarrow0$, \,$n\in\boZ$, together with
an isomorphism of modules $M^n\simeq K^{n+1}$ for every $n\in\boZ$.
 This observation allows to construct the category of acyclic complexes
of $R$\+modules from the category of short exact sequences of
$R$\+modules using Cartesian products and the isomorpher construction.

 Specifically, put $\sL=\prod_{n\in\boZ}\Modr R$, and let $\sK$ be
the Cartesian product of the categories of short exact sequences of
right $R$\+modules taken over all $n\in\boZ$.
 Let $\Phi_1\:\sK\rarrow\sL$ be the functor assigning to a family of
short exact sequences $(0\to K^n\to L^n\to M^n\to0)_{n\in\boZ}$
the family of modules $(M^n)_{n\in\boZ}$, and let $\Phi_2$ be
the functor assigning to the same family of short exact sequences
the family of modules $(K^{n+1})_{n\in\boZ}$.
 Then the isomorpher category $\sC$ is equivalent to the desired
category of acyclic complexes of right $R$\+modules.

 The categories $\sK$ and $\sL$ are $\kappa$\+accessible by
Corollary~\ref{short-exact-sequences-cor}
and Proposition~\ref{product-proposition}, and
Theorem~\ref{isomorpher-theorem} is applicable (for $\lambda=\aleph_0$).
 The theorem tells that the category $\sC$ is $\kappa$\+accessible,
and provides the desired description of its full subcategory of
$\kappa$\+presentable objects.
\end{proof}

\Section{Modules of Small Presentability Rank as Small Directed
Colimits}

 The aim of this section is to prove the following proposition,
which is purported to complement the main results of this paper.
 Recall that an $R$\+module is said to be \emph{$\kappa$\+presentable}
in our (category-theoretic) terminology if it is the cokernel of
a morphism of free $R$\+modules with \emph{less than~$\kappa$}
generators.

\begin{prop} \label{aleph-m-presentable-directed-colimit-prop}
 Let $R$ be an associative ring and\/ $\sS$ be a set of finitely
presentable right $R$\+modules.
 Let $C\in\Modr R$ be an\/ $\aleph_m$\+presentable $R$\+module
belonging to\/ $\varinjlim\sS\subset\Modr R$ (where $m\ge0$ is
an integer).
 Let $D\in\sS^{\perp_{\ge1}}$ be a right $R$\+module such that\/
$\Ext^i_R(S,D)=0$ for all $S\in\sS$ and $i>0$.
 Then\/ $\Ext^i_R(C,D)=0$ for all $i>m$.
\end{prop}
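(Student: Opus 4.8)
The plan is to prove the statement by induction on $m$, realizing $C$ as a directed colimit over a \emph{small} index poset and then computing $\Ext^*_R(C,D)$ as a derived inverse limit over that poset. I would first dispose of the base case $m=0$ separately: here $C$ is finitely presentable and lies in $\varinjlim\sS$, so by the last assertion of Proposition~\ref{accessible-subcategory} (applied with $\kappa=\aleph_0$, the finitely presentable objects of $\varinjlim\sS$ being the retracts of $\sS$) the module $C$ is a retract of some $S\in\sS$. Since $\Ext^i_R(-,D)$ carries retracts to retracts, $\Ext^i_R(C,D)$ is a direct summand of $\Ext^i_R(S,D)=0$ for every $i>0$, which is exactly what is claimed.

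For $m\ge1$ the key structural step is to write $C=\varinjlim_{i\in I}S_i$ over a directed poset $I$ with $|I|<\aleph_m$ (equivalently $|I|\le\aleph_{m-1}$), where each $S_i$ is finitely presentable in $\varinjlim\sS$. This combines two inputs: that $C\in\varinjlim\sS$, and that $C$ is $\aleph_m$\+presentable. Because $\varinjlim\sS\subset\Modr R$ is closed under directed colimits and the inclusion preserves them (Proposition~\ref{accessible-subcategory}), $C$ is $\aleph_m$\+presentable as an object of the finitely accessible category $\varinjlim\sS$; and an $\aleph_m$\+presentable object of a finitely accessible category is a retract of a directed colimit of finitely presentable objects indexed by a poset of cardinality $<\aleph_m$. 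Absorbing the retract (or simply noting that retracts preserve the Ext vanishing I am after), I may assume $C=\varinjlim_{i\in I}S_i$ with $I$ as above. Finally, each finitely presentable object of $\varinjlim\sS$ is a retract of an object of $\sS$, so $D\in\sS^{\perp_{\ge1}}$ gives $\Ext^q_R(S_i,D)=0$ for all $q>0$ and all $i\in I$.

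The homological core is then the identity, in the derived category of abelian groups,
\[
R\Hom_R(C,D)\;\cong\;R\varprojlim_{i\in I}R\Hom_R(S_i,D),
\]
which holds because the filtered colimit $C=\varinjlim_i S_i$ is also a homotopy colimit and $R\Hom_R(-,D)$ sends homotopy colimits to homotopy limits. Since $\Ext^q_R(S_i,D)=0$ for $q>0$, each $R\Hom_R(S_i,D)$ is concentrated in degree~$0$ with $H^0=\Hom_R(S_i,D)$, so passing to cohomology yields
\[
\Ext^p_R(C,D)\;\cong\;\varprojlim^p_{i\in I}\Hom_R(S_i,D)
\]
for every~$p$, the right-hand side being the $p$\+th derived inverse limit of the inverse system $\bigl(\Hom_R(S_i,D)\bigr)_{i\in I}$ over the directed poset~$I$.

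It remains to bound these derived limits, and this is where I expect the main obstacle to lie: the bound rests on Mitchell's theorem that the cohomological dimension of a directed poset of cardinality $\le\aleph_n$ is at most $n+1$, so that $\varprojlim^p_I=0$ for all $p>n+1$. Applying this with $n=m-1$ (as $|I|\le\aleph_{m-1}$) gives $\varprojlim^p_{i\in I}\Hom_R(S_i,D)=0$ for $p>m$, whence $\Ext^p_R(C,D)=0$ for all $p>m$, as required. The two nonformal ingredients are thus the accessibility fact that an $\aleph_m$\+presentable object is a directed colimit over a poset of size $<\aleph_m$, and Mitchell's vanishing bound for derived limits over such a poset; the intervening manipulations with $R\Hom_R(-,D)$ and with retracts are routine.
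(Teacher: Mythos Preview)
Your proposal is correct and follows essentially the same approach as the paper: reduce to $C$ being (a retract of) a directed colimit of objects of $\sS$ over a poset of cardinality $<\aleph_m$ (the paper's Lemma~\ref{kappa-presentable-directed-colimit-lemma}), identify $\Ext^*_R(C,D)$ with the derived inverse limits $\varprojlim^*_I\Hom_R(S_i,D)$ (the paper does this via an explicit bicomplex spectral sequence rather than the $R\Hom$/homotopy-limit language, but the content is the same), and conclude by Mitchell's bound. Note that your ``induction on $m$'' is only nominal---the inductive hypothesis is never invoked, and the separate treatment of $m=0$ is unnecessary since the general argument already covers it.
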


\begin{lem} \label{kappa-presentable-directed-colimit-lemma}
 Let $R$ be an associative ring and\/ $\sS$ be a set of finitely
presentable $R$\+modules.
 Let $\kappa$~be a regular cardinal and $C$ be a $\kappa$\+presentable
$R$\+module belonging to $\varinjlim\sS$.
 Then $C$ is a direct summand of a directed colimit of modules from\/
$\sS$ indexed by a directed poset of cardinality less than\/~$\kappa$.
\end{lem}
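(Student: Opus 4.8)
The plan is to recognize $C$ as a $\kappa$\+presentable object of the finitely accessible subcategory $\sA := \varinjlim\sS \subseteq \Modr R$, and then to re-express a chosen presentation $C = \varinjlim_{i\in I}S_i$ (with $S_i \in \sS$) as a $\kappa$\+directed colimit indexed by the \emph{small} directed subsets of $I$. The desired retraction will then fall out of the universal property of this colimit applied to $\id_C$.

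First I would record the structural facts coming from Proposition~\ref{accessible-subcategory} (taken with $\kappa = \aleph_0$): the full subcategory $\sA = \varinjlim\sS$ is closed under directed colimits in $\Modr R$. Since a $\kappa$\+directed colimit is in particular a directed colimit, the inclusion $\sA \hookrightarrow \Modr R$ preserves $\kappa$\+directed colimits, and these are computed in the same way in both categories. Because $C$ lies in $\sA$ and is $\kappa$\+presentable in $\Modr R$, it follows that $\Hom_\sA(C,{-}) = \Hom_{\Modr R}(C,{-})|_\sA$ preserves $\kappa$\+directed colimits; that is, $C$ is $\kappa$\+presentable as an object of\/ $\sA$.

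Next I would fix a directed poset $I$ and a presentation $C = \varinjlim_{i\in I}S_i$ with all $S_i \in \sS$, with coprojections $f_i : S_i \rarrow C$. Let $\mathcal D$ be the poset, ordered by inclusion, of those directed subsets $J \subseteq I$ with $|J| < \kappa$, and for $J \in \mathcal D$ set $C_J := \varinjlim_{j\in J}S_j \in \sA$, with induced map $\phi_J : C_J \rarrow C$. The key point is that $\mathcal D$ is $\kappa$\+directed: every $X \subseteq I$ with $|X| < \kappa$ is contained in some $J \in \mathcal D$ (for $\kappa = \aleph_0$ one adjoins a single upper bound of the finite set $X$; for uncountable $\kappa$ one closes $X$ under upper bounds of its finite subsets and iterates $\omega$ times, the regularity and uncountability of $\kappa$ keeping the cardinality below~$\kappa$), and a union of fewer than $\kappa$ members of $\mathcal D$ again has cardinality $<\kappa$ by regularity, hence admits such a directed hull. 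Since every singleton lies in $\mathcal D$, we have $\bigcup_{J\in\mathcal D}J = I$, and a standard interchange-of-colimits (cofinality) argument yields a canonical isomorphism $\varinjlim_{J\in\mathcal D}C_J \cong C$ compatible with the~$\phi_J$.

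Finally, I would play $\kappa$\+presentability of $C$ in $\sA$ against this $\kappa$\+directed colimit: the identity determines an element of $\Hom_\sA(C,C) = \Hom_\sA\bigl(C,\varinjlim_{J}C_J\bigr) = \varinjlim_{J}\Hom_\sA(C,C_J)$, so there exist $J_0 \in \mathcal D$ and a morphism $\sigma : C \rarrow C_{J_0}$ with $\phi_{J_0}\circ\sigma = \id_C$. This exhibits $C$ as a retract of $C_{J_0} = \varinjlim_{j\in J_0}S_j$, a directed colimit of objects of $\sS$ indexed by the directed poset $J_0$ of cardinality $<\kappa$, as required. I expect the main obstacle to be the verification that $\mathcal D$ is genuinely $\kappa$\+directed --- that directed hulls of $<\kappa$\+sized subsets of $I$ can be kept of cardinality $<\kappa$ --- where the regular-cardinal bookkeeping, and the separate treatment of the finite case $\kappa=\aleph_0$, must be handled with care.
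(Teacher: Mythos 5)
Your proof is correct and takes essentially the same route as the paper's: the paper defines $\sT$ to be the class of directed colimits of modules from $\sS$ indexed by directed posets of cardinality less than~$\kappa$, cites \cite[proof of Theorem~2.11(iv)\,$\Rightarrow$\,(i) and Example~2.13(1)]{AR} for precisely your two middle steps (the construction of $<\kappa$\+sized directed hulls inside the index poset, and the resulting rearrangement of $C$ as a $\kappa$\+directed colimit of the subcolimits $C_J$), and then finishes with the same factorization of $\id_C$ through some $C_{J_0}$ via $\kappa$\+presentability. Your only deviation --- passing to $\kappa$\+presentability inside the subcategory $\varinjlim\sS$ --- is harmless but unnecessary, since the colimit $\varinjlim_{J\in\mathcal D}C_J$ is computed in $\Modr R$ and $\kappa$\+presentability there already gives the splitting.
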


\begin{proof}
 Let $\sT$ denote the class of all directed colimits of modules from
$\sS$ indexed by directed posets of cardinality less than~$\kappa$.
 Then, following~\cite[proof of Theorem~2.11(iv)\,$\Rightarrow$\,(i)
and Example~2.13(1)]{AR} (with $\lambda=\aleph_0$ and $\mu=\kappa$),
every module from $\varinjlim\sS$ is a $\kappa$\+directed colimit
of modules from~$\sT$.
 For a $\kappa$\+presentable module $C\in\varinjlim\sS$, it follows
that $C$ is a direct summand of a module from~$\sT$.
\end{proof}

 The following lemma can be found in~\cite[Th\'eor\`eme~4.2]{Jen}.

\begin{lem} \label{varprojlim-Ext-spectral-sequence-lemma}
 Let\/ $\Xi$ be a directed poset, $(S_\xi)_{\xi\in\Xi}$ be
a\/ $\Xi$\+indexed diagram of right $R$\+modules, and $D$ be a right
$R$\+module.
 Let\/ $\varprojlim_{\xi\in\Xi}^n$ denote the derived functors of\/
$\Xi$\+indexed limit of abelian groups.
 Then there is a spectral sequence
$$
 E_2^{pq}=\varprojlim\nolimits_{\xi\in\Xi}^p\Ext_R^q(S_\xi,D)
 \Longrightarrow E_\infty^{pq}=\mathrm{gr}^p\Ext_R^{p+q}
 (\varinjlim\nolimits_{\xi\in\Xi}S_\xi,\>D).
$$
\end{lem}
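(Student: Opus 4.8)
The plan is to recognize the stated spectral sequence as the Grothendieck spectral sequence of a composition of two left exact functors, resting on the identity $\Hom_R(\varinjlim_\xi S_\xi,\>D)=\varprojlim_\xi\Hom_R(S_\xi,D)$. I would work in the abelian category $\sA=\mathsf{Fun}(\Xi,\Modr R)$ of $\Xi$\+indexed diagrams of right $R$\+modules, which has enough projectives, and in the category $\mathsf{Fun}(\Xi^{\mathrm{op}},\mathsf{Ab})$ of $\Xi$\+indexed inverse systems of abelian groups. Consider the pointwise Hom functor $H\:\sA^{\mathrm{op}}\rarrow\mathsf{Fun}(\Xi^{\mathrm{op}},\mathsf{Ab})$, $H((S_\xi)_\xi)=(\Hom_R(S_\xi,D))_\xi$, together with the inverse limit functor $G=\varprojlim_\Xi\:\mathsf{Fun}(\Xi^{\mathrm{op}},\mathsf{Ab})\rarrow\mathsf{Ab}$; both are left exact, and $G\circ H$ sends $(S_\xi)_\xi$ to $\varprojlim_\xi\Hom_R(S_\xi,D)=\Hom_R(\varinjlim_\xi S_\xi,\>D)$.

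Next, I would identify the three families of derived functors. The projective objects of $\sA$ are \emph{pointwise} projective: the generating projectives are the left Kan extensions $\mathrm{Lan}_\zeta P$ ($\zeta\in\Xi$, \ $P$ a projective $R$\+module), whose value at $\xi$ is $P$ for $\xi\ge\zeta$ and $0$ otherwise. Since exactness in $\sA$ is pointwise, a projective resolution in $\sA$ restricts at each $\xi$ to a projective resolution of $S_\xi$, giving $R^qH((S_\xi)_\xi)=(\Ext^q_R(S_\xi,D))_\xi$; and by definition $R^pG=\varprojlim^p_\Xi$. For the abutment, the key observation is that a directed colimit over $\Xi$ is exact and carries $\mathrm{Lan}_\zeta P$ to $P$, hence preserves projectives; therefore applying $\varinjlim_\Xi$ to a projective resolution of $(S_\xi)_\xi$ yields a projective resolution of the $R$\+module $\varinjlim_\xi S_\xi$, so $R^n(G\circ H)((S_\xi)_\xi)=\Ext^n_R(\varinjlim_\xi S_\xi,\>D)$. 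The Grothendieck spectral sequence for $G\circ H$ then reads exactly $E_2^{pq}=\varprojlim^p_\xi\Ext^q_R(S_\xi,D)\Rightarrow\Ext^{p+q}_R(\varinjlim_\xi S_\xi,\>D)$, as claimed.

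The step I expect to be the main obstacle is the one genuinely non-formal hypothesis of the Grothendieck spectral sequence: that $H$ carries projective objects of $\sA$ (the injectives of $\sA^{\mathrm{op}}$) to $G$\+acyclic inverse systems. As $H$ turns coproducts into products, and $\varprojlim^p_\Xi$ commutes with products (products being exact in $\mathsf{Ab}$) while acyclicity passes to retracts, it suffices to treat a single $\mathrm{Lan}_\zeta P$. Its image under $H$ is the inverse system $F$ equal to $B=\Hom_R(P,D)$ on the up-set $\Xi_{\ge\zeta}$ with identity transition maps and $0$ elsewhere. To compute $\varprojlim^\bullet_\Xi F$ I would use the standard cochain complex computing the derived limit of a diagram over a poset, whose degree\+$n$ term is the product of the values $F(\xi_0)$ over chains $\xi_0\le\dotsb\le\xi_n$ in $\Xi$; since $F(\xi_0)=0$ unless $\xi_0\ge\zeta$, only chains lying entirely in $\Xi_{\ge\zeta}$ contribute, so this complex coincides with the one computing $\varprojlim^\bullet$ of the constant system $B$ over $\Xi_{\ge\zeta}$. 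Because $\Xi_{\ge\zeta}$ has a least element $\zeta$, its inverse limit is exact, whence $\varprojlim^p_\Xi F=0$ for $p>0$, establishing the acyclicity. Alternatively, the whole computation can be run through the double complex obtained from an injective resolution $D\rarrow J^\bullet$ and this same poset cochain complex applied to $(\Hom_R(S_\xi,J^\bullet))_\xi$: the two filtrations produce the two spectral sequences, and the same up-set computation shows that the second one degenerates, identifying the abutment with $\Ext^\bullet_R(\varinjlim_\xi S_\xi,\>D)$.
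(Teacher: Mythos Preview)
Your argument via the Grothendieck spectral sequence for the composite
$\sA^{\mathrm{op}}\xrightarrow{H}\mathsf{Fun}(\Xi^{\mathrm{op}},\mathsf{Ab})
\xrightarrow{\varprojlim}\mathsf{Ab}$ is correct and complete in all
essentials, and it is a genuinely different packaging from the paper's
proof.  The paper does not invoke the Grothendieck machinery or
the functor category $\sA$ at all: it writes down the explicit double
complex $A^{pq}=\Hom_R(B_p,J^q)$, where $J^\bu$ is an injective
coresolution of~$D$ and $B_\bu$ is the bar complex of the diagram
$(S_\xi)$, and reads off the two spectral sequences directly.  One
filtration collapses (because $B_\bu\to\varinjlim_\xi S_\xi$ is
a resolution and each $J^q$ is injective), identifying the abutment;
the other gives the $E_2$\+page as
$\varprojlim^p_\xi\Ext_R^q(S_\xi,D)$.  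Your ``alternative'' in the last
sentence is exactly this argument.  What your main approach buys is a
conceptual explanation (the spectral sequence is a composite-functor
one), at the cost of having to verify the $G$\+acyclicity hypothesis;
the paper's approach is shorter and more self-contained, since the
collapse of the first spectral sequence is immediate from exactness of
directed colimits.

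One small inaccuracy to fix: in the acyclicity step you write
``because $\Xi_{\ge\zeta}$ has a least element~$\zeta$, its inverse
limit is exact.''  That is not quite right---a least element does not
make the inverse-limit \emph{functor} over $\Xi_{\ge\zeta}$ exact
(a greatest element would).  What is true, and what your cochain
computation actually shows, is that the \emph{constant} inverse system
$\underline B$ over $\Xi_{\ge\zeta}$ is $\varprojlim$\+acyclic, because
$\Xi_{\ge\zeta}$ has an initial object~$\zeta$ and hence contractible
nerve, so $\varprojlim^p_{\Xi_{\ge\zeta}}\underline B=
H^p(N(\Xi_{\ge\zeta});B)=0$ for $p>0$.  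With this correction the
argument stands.
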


\begin{proof}
 Let $J^\bu$ be an injective coresolution of the $R$\+module $D$,
and let $B_\bu$ be the bar-complex of the diagram $(S_\xi)_{\xi\in\Xi}$.
 Consider the bicomplex of abelian groups $A^{pq}=\Hom_R(B_p,J^q)$.
 Let $T^\bu$ denote the total complex of the bicomplex $A^{\bu,\bu}$.

 Then one has $H_0(B_\bu)\simeq\varinjlim_{\xi\in\Xi}S_\xi$ and
$H_p(B_\bu)=0$ for all $p>0$ (because the directed colimit functors
are exact).
 Consequently, $H^0(A^{\bu,q})=H^0(\Hom_R(B_\bu,J^q))\simeq
\Hom_R(\varinjlim_{\xi\in\Xi}S_\xi,\>J^q)$ and
$H^p(A^{\bu,q})=H^p(\Hom_R(B_\bu,J^q))=0$ for all $q\ge0$ and $p>0$
(since $J^q$ is an injective $R$\+module).
 Hence a natural isomorphism $H^n(T^\bu)\simeq
H^n(\Hom_R(\varinjlim_{\xi\in\Xi}S_\xi,\>J^\bu))=
\Ext_R^n(\varinjlim_{\xi\in\Xi}S_\xi,\>D)$ for all $n\ge0$.

 On the other hand, for every $q\ge0$, one has
$H^q(A^{p,\bu})=\Ext_R^q(B_p,D)$.
 The complex $H^q(A^{\bu,\bu})$ is the cobar-complex computing
the derived functor of $\Xi$\+indexed limit
$\varprojlim_{\xi\in\Xi}^*\Ext_R^q(S_\xi,D)$, so one has
$H^pH^q(A^{\bu,\bu})=\varprojlim_{\xi\in\Xi}^p\Ext_R^q(S_\xi,D)$.
 Thus the spectral sequence $E_2^{pq}=H^pH^q(A^{\bu,\bu})
\Longrightarrow E_\infty^{pq}=\mathrm{gr}^pH^{p+q}(T^\bu)$ is
the desired one.
\end{proof}

\begin{proof}[Proof of
Proposition~\ref{aleph-m-presentable-directed-colimit-prop}]
 In view of Lemma~\ref{kappa-presentable-directed-colimit-lemma},
one can assume without loss of generality that
$C=\varinjlim_{\xi\in\Xi}S_\xi$, where $\Xi$ is a directed poset
of cardinality smaller than~$\aleph_m$ and $S_\xi\in\sS$ for
all $\xi\in\Xi$.
 Then the spectral sequence from
Lemma~\ref{varprojlim-Ext-spectral-sequence-lemma} degenerates to
a natural isomorphism $\Ext^n_R(C,D)\simeq\varprojlim^n_{\xi\in\Xi}
\Hom_R(S_\xi,D)$ for all $n\ge0$.
 It remains to recall that the derived functor of $\Xi$\+indexed
limit in the category of abelian groups has cohomological dimension
at most~$m$ \,\cite{Mit}.
\end{proof}

 The following corollary can be found in~\cite[Proposition~5.3]{Jen}.
 It is a partial generalization of~\cite[Corollary~2.23]{GT}.

\begin{cor} \label{aleph-m-presentable-flat-module-cor}
 Let $R$ be an associative ring and $F$ be an\/ $\aleph_m$\+presentable
flat $R$\+module.
 Then the projective dimension of $F$ does not exceed~$m$.
\end{cor}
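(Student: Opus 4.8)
The plan is to invoke Proposition~\ref{aleph-m-presentable-directed-colimit-prop} for a well-chosen set~$\sS$. Let $\sS$ be a representative set of the finitely generated projective right $R$\+modules, one object from each isomorphism class. By the Govorov--Lazard theorem, a right $R$\+module is flat if and only if it is a directed colimit of finitely generated projectives; thus $\varinjlim\sS$ is exactly the class of flat modules, and in particular the flat module $F$ belongs to $\varinjlim\sS$.

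The crucial point is that, for this choice of $\sS$, the condition imposed on the auxiliary module $D$ in the proposition holds automatically for \emph{every}~$D$. Indeed, each $S\in\sS$ is projective, so $\Ext^i_R(S,D)=0$ for all $i>0$ and all right $R$\+modules~$D$; hence every $D$ lies in $\sS^{\perp_{\ge1}}$ and trivially satisfies the required vanishing of higher Ext. This is what makes the chosen $\sS$ work, and it is the only idea needed: the set of finitely presentable modules in the proposition is specialized to the \emph{projective} ones precisely so that the hypothesis on $D$ becomes vacuous.

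Now $F$ is $\aleph_m$\+presentable and lies in $\varinjlim\sS$, so Proposition~\ref{aleph-m-presentable-directed-colimit-prop} yields $\Ext^i_R(F,D)=0$ for all $i>m$ and all right $R$\+modules~$D$. Taking $i=m+1$, we see that $\Ext^{m+1}_R(F,{-})$ vanishes identically, which is precisely the statement that the projective dimension of $F$ does not exceed~$m$. I do not expect any genuine obstacle here beyond selecting $\sS$ correctly: all the real work has already been carried out in the proof of the proposition, the only substantive ingredient being the bound by~$m$ on the cohomological dimension of the derived functor $\varprojlim^n$ of an inverse limit over a directed poset of cardinality less than~$\aleph_m$ \,\cite{Mit}. (The corollary is stated for one-sided modules; the argument is written for right modules, and the left-handed case is symmetric.)
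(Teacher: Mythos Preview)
Your proof is correct and follows exactly the same approach as the paper: take $\sS$ to be the finitely generated projective $R$\+modules, observe that every $D$ lies in $\sS^{\perp_{\ge1}}$, and apply Proposition~\ref{aleph-m-presentable-directed-colimit-prop}. The paper's proof is a one-line version of what you wrote.
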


\begin{proof}
 Let $\sS$ be the set of all finitely generated projective (or free)
$R$\+modules, $D$ be an arbitrary $R$\+module, and apply
Proposition~\ref{aleph-m-presentable-directed-colimit-prop}.
\end{proof}

\Section{Deconstructibility and Directed Colimits}
\label{deconstructibility-secn}

 In this section we discuss the deconstructibility-based approach to
accessibility.
 Both the deconstructible classes and the right $\Ext^1$\+orthogonal
classes to deconstructible classes are considered.

 Let $R$ be an associative ring, $F$ be an $R$\+module, and
$\alpha$~be an ordinal.
 An \emph{$\alpha$\+indexed filtration} of $F$ is a family of
submodules $F_\beta\subset F$, indexed by the ordinals
$0\le\beta\le\alpha$, satisfying the following conditions:
\begin{itemize}
\item $F_0=0$ and $F_\alpha=F$;
\item one has $F_\gamma\subset F_\beta$ for all $0\le\gamma\le\beta
\le\alpha$;
\item one has $F_\beta=\bigcup_{\gamma<\beta}F_\gamma$ for all limit
ordinals $\beta\le\alpha$.
\end{itemize}

 An $R$\+module $F$ endowed with an $\alpha$\+indexed filtration
$(F_\beta)_{0\le\beta\le\alpha}$ is said to be \emph{filtered by}
the quotient modules $F_{\beta+1}/F_\beta$, \,$0\le\beta<\alpha$.
 Given a class of $R$\+modules $\sS\subset\Modr R$, one says that
an $R$\+module $F$ is \emph{filtered by} $\sS$ if there exists
an ordinal~$\alpha$ and an $\alpha$\+indexed filtration on $F$
such that the quotient module $F_{\beta+1}/F_\beta$ is isomorphic
to a module from $\sS$ for every $0\le\beta<\alpha$.

 The class of all $R$\+modules filtered by $\sS$ is denoted by
$\Fil(\sS)\subset\Modr R$.
 A class of $R$\+modules $\sF$ is said to be
\emph{$\kappa$\+deconstructible} (for a regular cardinal~$\kappa$) if
$\sF=\Fil(\sS)$ for a set of $\kappa$\+presentable $R$\+modules~$\sS$.

\begin{prop} \label{deconstructible-as-directed-colimit}
 Let $R$ be an associative ring, $\kappa$~be a regular cardinal,
and\/ $\sS$ be a set of $\kappa$\+presentable $R$\+modules.
 Then any $R$\+module filtered by\/ $\sS$ is a $\kappa$\+directed
colimit of $\kappa$\+presentable $R$\+modules filtered by\/~$\sS$.
 In other words, for any $\kappa$\+deconstructible class of modules\/
$\sF$, all modules from\/ $\sF$ are\/ $\kappa$\+directed colimits
(in fact, $\kappa$\+directed unions) of $\kappa$\+presentable modules
from\/~$\sF$.
\end{prop}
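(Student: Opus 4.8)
The plan is to deduce the whole statement from the Hill lemma (\cite[Theorem~7.10]{GT}, \cite[Theorem~2.1]{Sto}), applied to a fixed $\sS$\+filtration of an arbitrary module $F\in\Fil(\sS)$. The second, ``in other words'', formulation is a verbatim restatement of the first with $\sF=\Fil(\sS)$, so it suffices to treat the first. Accordingly, I would fix an ordinal~$\alpha$ and an $\alpha$\+indexed filtration $(F_\beta)_{0\le\beta\le\alpha}$ on~$F$ with all factors $F_{\beta+1}/F_\beta$ isomorphic to members of~$\sS$, and invoke the Hill lemma to obtain the associated \emph{Hill family}~$\mathcal H$ of submodules of~$F$. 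The three properties of~$\mathcal H$ I would rely on are: (i)~$\mathcal H$ is closed under arbitrary sums; (ii)~for any $N\subseteq P$ in~$\mathcal H$, the quotient $P/N$ is filtered by~$\sS$; and (iii)~the smallness property that for every $N\in\mathcal H$ and every subset $X\subseteq F$ of cardinality $<\kappa$ there is a member $P\in\mathcal H$ with $N\cup X\subseteq P$ such that $P/N$ is filtered by \emph{fewer than~$\kappa$} modules from~$\sS$.

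Next I would single out the subfamily $\mathcal H_{<\kappa}\subseteq\mathcal H$ of those members which are themselves filtered by fewer than~$\kappa$ modules from~$\sS$. This is the right class for two reasons. First, any module admitting a filtration of length $<\kappa$ with factors from~$\sS$ is $\kappa$\+presentable: each factor has fewer than~$\kappa$ generators and fewer than~$\kappa$ relations, and since $\kappa$~is regular, a union of fewer than~$\kappa$ sets each of cardinality $<\kappa$ again has cardinality $<\kappa$, so the total numbers of generators and of relations stay below~$\kappa$ (here I also use that $\kappa$\+presentable modules are closed under extensions). Second, by property~(ii) applied with $N=0$, every member of $\mathcal H_{<\kappa}$ is filtered by~$\sS$, hence lies in $\Fil(\sS)=\sF$. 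Thus $\mathcal H_{<\kappa}$ consists precisely of $\kappa$\+presentable modules belonging to~$\sF$.

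It then remains to verify that $\mathcal H_{<\kappa}$, ordered by inclusion, is a $\kappa$\+directed poset whose union is~$F$, which exhibits $F=\varinjlim_{(\kappa)}\mathcal H_{<\kappa}$ as the sought $\kappa$\+directed union. For directedness, given fewer than~$\kappa$ members $P_i\in\mathcal H_{<\kappa}$, I would choose fewer than~$\kappa$ generators for each; their union is a subset $X\subseteq F$ of cardinality $<\kappa$ by regularity of~$\kappa$, and property~(iii) with $N=0$ produces a single $P\in\mathcal H$ containing~$X$ (hence containing every~$P_i$) and filtered by fewer than~$\kappa$ modules from~$\sS$, i.e. $P\in\mathcal H_{<\kappa}$. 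For the covering, property~(iii) with $N=0$ applied to singletons shows that every element of~$F$ lies in some member of $\mathcal H_{<\kappa}$, so $\bigcup\mathcal H_{<\kappa}=F$.

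The genuine mathematical content is entirely absorbed into the Hill lemma, so there is no real ``hard step'' left once it is in hand; the only points demanding care are the cardinal bookkeeping that keeps sums of fewer than~$\kappa$ small members small (precisely where regularity of~$\kappa$ is used) and making sure the invoked form of the Hill lemma is available for the cardinal at hand—for $\kappa=\aleph_0$ one appeals to the version valid for every infinite regular cardinal.
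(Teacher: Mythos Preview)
Your proof is correct and follows essentially the same approach as the paper: both invoke the Hill lemma to obtain the family of submodules, then take the subfamily of $\kappa$\+presentable members and observe it is $\kappa$\+directed with union~$F$. You supply more detail than the paper (which is quite terse), in particular spelling out why a filtration of length~$<\kappa$ by $\kappa$\+presentable modules yields a $\kappa$\+presentable module and how regularity of~$\kappa$ enters the directedness argument, but the underlying strategy is identical.
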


\begin{proof}
 This is a direct corollary of the Hill lemma~\cite[Theorem~6]{StT},
\cite[Theorem~7.10]{GT}, \cite[Theorem~2.1]{Sto}.
 Let $F$ be a module filtered by modules from~$\sS$.
 Then the Hill lemma provides a complete lattice of submodules in $F$
such that every subset of cardinality less than~$\kappa$ in $F$
is contained in a $\kappa$\+presentable submodule of $F$ belonging to
this complete lattice, and every module belonging to the lattice is
filtered by~$\sS$.
 The family of all $\kappa$\+presentable submodules of $F$ belonging
to the lattice is $\kappa$\+directed by inclusion, and $F$ is
the directed union of these submodules.
\end{proof}

 An $R$\+module is said to be \emph{$<\kappa$\+generated} if it has
a set of generators of cardinality less than~$\kappa$.
 An associative ring $R$ is said to be \emph{right\/
$<\kappa$\+Noetherian} if every submodule of a $<\kappa$\+generated
right $R$\+module is $<\kappa$\+generated, or equivalently, every
right ideal in $R$ is $<\kappa$\+generated.
 \emph{Left\/ $<\kappa$\+Noetherian rings} are defined similarly.
 The $<\aleph_1$\+generated modules are called \emph{countably
generated}, and the right $<\aleph_1$\+Noetherian rings are called
\emph{right countably Noetherian}.

\begin{prop} \label{projective-dimension-m}
 Let $\kappa$~be an uncountable regular cardinal and $R$ be a right\/
$<\nobreak\kappa$\+Noetherian associative ring.
 Then, for every integer $m\ge0$, the class of all right $R$\+modules
of projective dimension at most~$m$ is $\kappa$\+deconstructible.
\end{prop}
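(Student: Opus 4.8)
The plan is to show that the class $\mathcal P_m$ of all right $R$-modules of projective dimension at most~$m$ coincides with $\Fil(\sS)$, where $\sS$ is a representative set of all $<\kappa$-generated right $R$-modules of projective dimension at most~$m$. First I would check that $\sS$ is genuinely a set of $\kappa$-presentable modules: over a right $<\kappa$-Noetherian ring any $<\kappa$-generated module $S$ is already $\kappa$-presentable, since writing $S$ as a quotient of a free module $F$ with less than~$\kappa$ generators, the kernel is a submodule of $F$ and hence again $<\kappa$-generated, so $S$ is the cokernel of a morphism of free modules with less than~$\kappa$ generators. It then remains to establish the two inclusions $\Fil(\sS)\subseteq\mathcal P_m$ and $\mathcal P_m\subseteq\Fil(\sS)$.

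For the first inclusion the key point is that $\mathcal P_m$ is closed under transfinite extensions. Here I would use the dimension-shifting isomorphism $\Ext^{m+1}_R(M,N)\simeq\Ext^1_R(M,W)$, where $W$ is the $m$-th cosyzygy of $N$ with respect to any fixed injective coresolution. Consequently $M$ has projective dimension at most~$m$ if and only if $\Ext^{m+1}_R(M,N)=0$ for all $N$, that is, if and only if $M\in{}^{\perp_1}\mathcal W$, where $\mathcal W$ denotes the class of $m$-th cosyzygies of all right $R$-modules and ${}^{\perp_1}\mathcal W=\{M:\Ext^1_R(M,W)=0\ \text{for all}\ W\in\mathcal W\}$. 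By the Eklof lemma \cite[Lemma~1]{ET}, \cite[Lemma~6.2]{GT}, the class ${}^{\perp_1}\mathcal W=\mathcal P_m$ is closed under transfinite extensions, and in particular under filtrations; since every module in $\sS$ lies in $\mathcal P_m$, this yields $\Fil(\sS)\subseteq\mathcal P_m$.

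The substantive inclusion is $\mathcal P_m\subseteq\Fil(\sS)$, and this is where the $<\kappa$-Noetherian hypothesis (together with the uncountability and regularity of~$\kappa$) is essential. Given a module $M$ of projective dimension at most~$m$, I would fix a projective resolution $0\to P_m\to\dotsb\to P_0\to M\to0$ of length~$m$ with $P_0,\dotsc,P_{m-1}$ free, and produce the filtration of $M$ by filtering the whole resolution. Concretely, the aim is to build a continuous increasing chain of subcomplexes $P^{(\beta)}_\bu\subseteq P_\bu$ such that each $P^{(\beta)}_\bu$ is degreewise a free direct summand of $P_\bu$, both $P^{(\beta)}_\bu$ and the quotient complex $P_\bu/P^{(\beta)}_\bu$ are resolutions, and each successive quotient $P^{(\beta+1)}_\bu/P^{(\beta)}_\bu$ is a free resolution of length at most~$m$ of a $<\kappa$-generated module. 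Writing $M^{(\beta)}=H_0(P^{(\beta)}_\bu)$, the exactness of the quotient complexes ensures that the $M^{(\beta)}$ form a filtration of $M$, and the last condition guarantees that each successive quotient $M^{(\beta+1)}/M^{(\beta)}$ lies in~$\sS$.

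The main obstacle lies in carrying out this transfinite construction. At each successor stage one starts from a $<\kappa$-generated subcomplex and must \emph{close it up}, so that the cycles of the enlarged subcomplex become boundaries inside it while the degreewise inclusion into $P_\bu$ retains an exact quotient. Each closing-up step enlarges the generating set by fewer than~$\kappa$ elements, using that over a right $<\kappa$-Noetherian ring every submodule of a $<\kappa$-generated module is again $<\kappa$-generated; and since $\kappa$ is uncountable and regular, iterating this closing-up countably many times still produces a $<\kappa$-generated subcomplex. This is precisely the generalization to an arbitrary uncountable regular cardinal~$\kappa$ of the Raynaud--Gruson construction \cite[Corollaire~II.3.2.5]{RG}, \cite[Proposition~4.1]{AEJO}; for the general statement one may instead invoke \cite[Theorem~3.4]{SlT}.
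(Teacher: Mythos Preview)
Your proposal is correct and follows essentially the same approach as the paper: the paper's proof consists entirely of references to \cite[Corollaire~II.3.2.5]{RG}, \cite[Proposition~4.1]{AEJO}, and \cite[Theorem~3.4]{SlT}, and your outline is an expanded account of precisely the Raynaud--Gruson/Sl\'avik--Trlifaj filtration-of-the-resolution argument contained in those references. One small imprecision: you ask that each $P^{(\beta)}_\bu$ be ``degreewise a free direct summand'', but $P_m$ is only projective, so in degree~$m$ you should rather take direct summands coming from a Kaplansky decomposition of $P_m$ into countably generated projectives (this is implicit in the cited sources and does not affect the argument).
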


\begin{proof}
 This result goes back to~\cite[Corollaire~II.3.2.5]{RG}
(for $\kappa=\aleph_1$) and~\cite[Proposition~4.1]{AEJO}.
 For all successor cardinals~$\kappa$, it is a particular case
of~\cite[Theorem~3.4]{SlT}, and the general case is similar.
\end{proof}

 For any set $X$, we denote by $|X|$ the cardinality of~$X$.
 The successor cardinal of a cardinal~$\nu$ is denoted by~$\nu^+$.
 We will use the notation $\rho=|R|+\aleph_0$ for the minimal infinite
cardinal greater than or equal to the cardinality of the ring~$R$.
 Notice that, for any given cardinal $\nu\ge\rho$, an $R$\+module is
$\nu^+$\+presentable if and only if it has cardinality at most~$\nu$.

\begin{prop} \label{enochs-slavik-trlifaj}
 Let $R$ be an associative ring; put $\rho=|R|+\aleph_0$.
 Then \par
\textup{(a)} the class of all flat $R$\+modules
is $\rho^+$\+deconstructible; \par
\textup{(b)} for every integer $m\ge0$, the class of all $R$\+modules
of flat dimension at most~$m$ is $\rho^+$\+deconstructible.
\end{prop}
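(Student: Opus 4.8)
The plan is to prove both parts by building explicit transfinite filtrations via purification, arranging that the successive quotients are flat (resp.\ of flat dimension $\le m$) modules of cardinality at most~$\rho$; since $\rho\ge|R|$, such modules are exactly the $\rho^+$\+presentable ones (as noted just before the proposition), which is precisely what $\rho^+$\+deconstructibility demands. The indispensable input is the purification lemma of~\cite[Lemma~1 and Proposition~2]{BBE}: in any right $R$\+module, every subset $X$ is contained in a pure submodule $P$ with $|P|\le|X|+\rho$. I will use repeatedly two standard facts about purity: a pure submodule of a flat module is flat with flat quotient, and purity is transitive in the sense that if $P\subseteq Q\subseteq F$ with $P$ pure in $F$ then $P$ is pure in $Q$, while if moreover $Q/P$ is pure in $F/P$ then $Q$ is pure in~$F$.

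For part~(a), let $F$ be a flat module and fix an enumeration $F=\{x_\beta\}_{\beta<\lambda}$, where $\lambda=|F|$. I would construct a continuous increasing chain of pure submodules $0=P_0\subseteq P_1\subseteq\dotsb\subseteq P_\lambda=F$ by transfinite recursion: at a successor step, apply purification inside the flat module $F/P_\beta$ to the image of $x_\beta$ to obtain a pure submodule $P_{\beta+1}/P_\beta\subseteq F/P_\beta$ of cardinality $\le\rho$ containing $x_\beta$, and take unions at limit ordinals. Transitivity of purity guarantees that each $P_\beta$ is pure in $F$ and that $P_\beta$ is pure in $P_{\beta+1}$, so every quotient $P_{\beta+1}/P_\beta$ is flat of cardinality $\le\rho$. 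Letting $\sS$ be a set of representatives of the flat modules of cardinality $\le\rho$, this exhibits $F$ as an object of $\Fil(\sS)$. The reverse inclusion holds because the class of flat modules is closed under transfinite extensions (the Eklof lemma direction~\cite[Lemma~6.2]{GT}), so the class of flat modules equals $\Fil(\sS)$ and part~(a) follows.

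For part~(b) I would argue by induction on~$m$, the base case $m=0$ being part~(a). Given a module $M$ of flat dimension $\le m$, I fix a flat resolution $0\rarrow F_m\rarrow\dotsb\rarrow F_0\rarrow M\rarrow0$ and build a continuous filtration of this entire exact sequence by subsequences $0\rarrow F_{m,\beta}\rarrow\dotsb\rarrow F_{0,\beta}\rarrow M_\beta\rarrow0$ whose terms have cardinality $\le\rho$, in which each $F_{i,\beta}$ is pure in $F_i$ and each row is exact. Such a filtration is produced by a closing-off argument: starting from a bounded subset, one alternately purifies each $F_{i,\beta}$ and enlarges the $F_{i,\beta}$ so as to capture the kernels and preimages forced by exactness of the row, iterating countably many times and adding only $\le\rho$ elements at each finite stage. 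Passing to successive quotients then yields exact rows $0\rarrow F_{m,\beta+1}/F_{m,\beta}\rarrow\dotsb\rarrow M_{\beta+1}/M_\beta\rarrow0$ with flat terms, so each $M_{\beta+1}/M_\beta$ has flat dimension $\le m$ and cardinality $\le\rho$, giving the required deconstructibility. Alternatively, part~(b) can be deduced formally from part~(a) via the general principle that the class of modules admitting a resolution of length~$m$ by a fixed extension-closed deconstructible class is again deconstructible, in the spirit of~\cite[Theorem~3.4]{SlT}.

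The hard part is the simultaneous filtration in~(b): throughout the closing-off recursion I must balance three requirements at once, namely purity of each $F_{i,\beta}$ in $F_i$, exactness of the whole row $0\rarrow F_{m,\beta}\rarrow\dotsb\rarrow M_\beta\rarrow0$, and the cardinality bound $\le\rho$ on the increments. Exactness of the \emph{quotient} rows is the delicate point, since it forces the chosen submodules to contain enough syzygies and preimages, and reconciling this with purity (which itself pulls in further elements) without exceeding~$\rho$ is exactly what the countable iteration is engineered to achieve; verifying that the iteration closes off is where the regularity of the cardinal and the finite character of purity enter.
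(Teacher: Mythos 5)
Your proposal is correct, but it takes a more self-contained route than the paper, whose entire proof consists of two citations: part~(a) \emph{is} the result of~\cite[Lemma~1 and Proposition~2]{BBE}, and part~(b) is deduced from part~(a) by invoking~\cite[Theorem~3.4]{SlT}. For part~(a) you essentially unwind the proof of the cited result: purification plus a transfinite chain of pure submodules with flat quotients is exactly how~\cite{BBE} establishes $\rho^+$\+deconstructibility of flat modules, so there the content coincides with what the paper cites. The genuine divergence is in part~(b): instead of appealing to the general theorem of Sl\'avik and Trlifaj (deconstructibility of the class of modules admitting bounded-length resolutions by a deconstructible class), you reprove the special case at hand by filtering a fixed flat resolution $0\rarrow F_m\rarrow\dotsb\rarrow F_0\rarrow M\rarrow0$ simultaneously, via a closing-off recursion alternating purification with the capture of syzygies and preimages. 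This does work, and the cleanest way to organize the successor step is the one implicit in your part~(a): perform the closing-off inside the quotient row $0\rarrow F_m/F_{m,\beta}\rarrow\dotsb\rarrow F_0/F_{0,\beta}\rarrow M/M_\beta\rarrow0$, whose terms $F_i/F_{i,\beta}$ are flat by purity, and pull back; then purity of the new terms in $F_i$ follows from transitivity of purity, exactness of the enlarged row follows from the long exact homology sequence applied to the short exact sequence of complexes (both the old row and the quotient row being acyclic), and the increments stay of cardinality at most~$\rho$. The paper's route buys brevity and much greater generality; yours buys a self-contained argument whose only external input is the purification lemma, and it is in substance the same technique that underlies the proof of~\cite[Theorem~3.4]{SlT}, so the two routes are close relatives rather than strangers.

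Two small repairs. First, the terms $F_{i,\beta}$, $M_\beta$ of your filtration in~(b) cannot all have cardinality at most~$\rho$ (they exhaust $F_i$ and~$M$); what your construction actually delivers, and what is needed, is that the successive quotients $F_{i,\beta+1}/F_{i,\beta}$ and $M_{\beta+1}/M_\beta$ have cardinality at most~$\rho$ --- your later sentence about ``increments'' says this, but the earlier phrasing should be corrected. Second, since the paper's definition of $\kappa$\+deconstructibility demands the equality $\sF=\Fil(\sS)$, in part~(b) you must also record the reverse inclusion, as you did in part~(a): the class of modules of flat dimension at most~$m$ is closed under transfinite extensions, because it is closed under extensions and under directed colimits (Tor commutes with directed colimits), so $\Fil(\sS)$ does not leave the class.
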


\begin{proof}
 Part~(a) is~\cite[Lemma~1 and Proposition~2]{BBE}.
 Part~(b) follows from part~(a) by virtue of~\cite[Theorem~3.4]{SlT}.
\end{proof}

 Given a class of $R$\+modules $\sS\subset\Modr R$, one denotes by
$\sS^{\perp_1}\subset\Modr R$ the class of all modules $M\in\Modr R$
such that $\Ext^1_R(S,M)=0$ for all $S\in\sS$.
 The following result is known as the \emph{Eklof
lemma}~\cite[Lemma~1]{ET}, \cite[Lemma~6.2]{GT}.

\begin{lem}
 For any class of modules\/ $\sS\subset\Modr R$ one has\/
$\sS^{\perp_1}=\Fil(\sS)^{\perp_1}$.  \qed
\end{lem}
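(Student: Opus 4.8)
The plan is to prove the two inclusions separately. The inclusion $\Fil(\sS)^{\perp_1}\subseteq\sS^{\perp_1}$ is immediate: every $S\in\sS$ is filtered by $\sS$ via the trivial two-step filtration $0\subset S$, so $\sS\subseteq\Fil(\sS)$, and orthogonality against the larger class forces orthogonality against the smaller one. All the content lies in the reverse inclusion $\sS^{\perp_1}\subseteq\Fil(\sS)^{\perp_1}$, which I would establish by the classical transfinite induction underlying the Eklof lemma.

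So fix a module $M\in\sS^{\perp_1}$ and a module $F$ filtered by $\sS$, with filtration $(F_\beta)_{0\le\beta\le\alpha}$ and subquotients $S_\beta=F_{\beta+1}/F_\beta\in\sS$. To prove $\Ext^1_R(F,M)=0$, I would show that every short exact sequence $0\to M\to X\to F\to0$ splits. Writing $\iota\:M\to X$ for the inclusion and $\pi\:X\to F$ for the surjection, set $X_\beta=\pi^{-1}(F_\beta)$; this yields an induced filtration of $X$ with $0\to M\to X_\beta\to F_\beta\to0$ exact for each~$\beta$. I would then construct, by transfinite recursion, a coherent family of sections $s_\beta\:F_\beta\to X_\beta$ (meaning $\pi s_\beta=\id_{F_\beta}$ and $s_{\beta'}$ restricts to $s_\beta$ on $F_\beta$ whenever $\beta'>\beta$); the final section $s_\alpha$ then splits the original sequence.

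The successor step is where the hypothesis enters. Given $s_\beta$, pass to the quotient $Y=X_{\beta+1}/s_\beta(F_\beta)$. Since $\pi$ is injective on $s_\beta(F_\beta)$ while $\iota(M)=\ker\pi$, the submodules $s_\beta(F_\beta)$ and $\iota(M)$ intersect trivially, so there is a short exact sequence $0\to M\to Y\to S_\beta\to0$. Because $\Ext^1_R(S_\beta,M)=0$, this sequence splits; equivalently it admits a retraction $Y\to M$, and composing with the quotient map $X_{\beta+1}\to Y$ produces a retraction of $0\to M\to X_{\beta+1}\to F_{\beta+1}\to0$, hence a section $s_{\beta+1}$. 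Since the quotient map kills $s_\beta(F_\beta)$, this retraction vanishes on $s_\beta(F_\beta)$, so the resulting section automatically restricts to $s_\beta$ on $F_\beta$. At a limit ordinal $\beta$ one sets $s_\beta=\bigcup_{\gamma<\beta}s_\gamma$, which is a well-defined homomorphism exactly because the $s_\gamma$ form a coherent family and $F_\beta=\bigcup_{\gamma<\beta}F_\gamma$.

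The main obstacle is precisely this coherence requirement: the transfinite recursion only closes up if the sections chosen at successor stages remain compatible with all earlier ones, so that amalgamation at limit stages yields an honest module homomorphism rather than a mere collection of unrelated splittings. The construction above sidesteps any separate coherence bookkeeping by building $s_{\beta+1}$ from a retraction that is forced to vanish on $s_\beta(F_\beta)$, whence compatibility holds automatically and the recursion goes through to $\beta=\alpha$.
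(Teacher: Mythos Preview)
Your proof is correct and is precisely the standard transfinite-induction argument for the Eklof lemma. The paper does not give its own proof at all: the lemma is stated with an immediate \qed and a citation to~\cite[Lemma~1]{ET} and~\cite[Lemma~6.2]{GT}, so you have simply supplied the details that the paper delegates to the literature.
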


 Given two cardinals $\nu$ and~$\lambda$, one denotes by
$\nu^{<\lambda}$ the supremum of the cardinals $\nu^\mu$
taken over all the cardinals $\mu<\lambda$.
 The following lemma is a generalization of~\cite[Lemma~10.5]{GT};
it can be found in~\cite[Lemma~4.1]{CS}.

\begin{lem} \label{systems-of-equations}
 Let $R$ be a ring; put $\rho=|R|+\aleph_0$.
 Let $M$ be an $R$\+module, $\lambda$~be a regular cardinal, and
$\nu$~be a cardinal such that $\rho\le\nu$ and $\nu^{<\lambda}=\nu$.
 Then for every subset $X\subset M$ of cardinality at most~$\nu$
there exists a submodule $N\subset M$ of cardinality at most~$\nu$
such that $X\subset N$ and the following property holds:
 Every system of less than~$\lambda$ nonhomogeneous $R$\+linear
equations in less than~$\lambda$ variables, with parameters from $N$,
has a solution in $N$ provided that it has a solution in~$M$.
\end{lem}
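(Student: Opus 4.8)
The plan is to construct $N$ by a transfinite L\"owenheim--Skolem argument, realizing it as the union of an increasing continuous chain of submodules of $M$, each of cardinality at most~$\nu$, which at every step is closed up under the solutions of the relevant systems of equations. First I would take $N_0$ to be the submodule of $M$ generated by $X$; since $|X|\le\nu$ and $|R|\le\rho\le\nu$, a submodule generated by $\nu$ elements has cardinality at most $\nu\cdot\rho=\nu$, so $|N_0|\le\nu$. I then define submodules $N_\beta\subset M$ for $0\le\beta\le\lambda$ by recursion, taking unions at limit steps (so the chain is continuous) and adjoining solutions at successor steps as described below. The desired module will be $N=N_\lambda=\bigcup_{\beta<\lambda}N_\beta$.

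At a successor step, given $N_\beta$ with $|N_\beta|\le\nu$, I would run through every system of fewer than~$\lambda$ nonhomogeneous $R$\+linear equations in fewer than~$\lambda$ variables whose constant terms lie in~$N_\beta$; for each such system that admits a solution in~$M$, I choose one solution in~$M$ and let $N_{\beta+1}$ be the submodule generated by $N_\beta$ together with the fewer-than-$\lambda$ values of the variables in all the chosen solutions. The crucial cardinality bookkeeping is to bound the number of such systems by~$\nu$. A single equation is determined by its finitely supported coefficient family (a function to~$R$) and its constant term in~$N_\beta$, so there are at most $\nu\cdot\nu=\nu$ of them; a system consists of fewer than~$\lambda$ equations, whence there are at most $\nu^{<\lambda}=\nu$ systems. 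Therefore $N_{\beta+1}$ is generated over $N_\beta$ by at most $\nu\cdot\lambda$ new elements, and since $\lambda\le\nu$ (which itself follows from $\nu^{<\lambda}=\nu$, because $\mu<2^\mu\le\nu^\mu\le\nu$ for every $\mu<\lambda$), we obtain $|N_{\beta+1}|\le\nu$.

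Finally, $|N|\le\lambda\cdot\nu=\nu$ and $X\subseteq N_0\subseteq N$ by construction, so it remains only to verify the closure property. Given any admissible system with constant terms in~$N$, there are fewer than~$\lambda$ such constant terms; each lies in some $N_\beta$, and as $\lambda$ is regular the supremum of these fewer-than-$\lambda$ indices is again below~$\lambda$, so by continuity of the chain all the constant terms lie in a single $N_\gamma$ with $\gamma<\lambda$. If the system has a solution in~$M$, then such a solution was adjoined at stage $\gamma+1$, hence lies in $N_{\gamma+1}\subseteq N$, as required. The main obstacle I expect is exactly this twofold cardinal arithmetic: counting the systems at each successor stage so as to keep $|N_\beta|\le\nu$ (which is precisely where the hypothesis $\nu^{<\lambda}=\nu$ is used in an essential way), together with the appeal to the regularity of~$\lambda$ at the end, which is what guarantees that the closure attained stage by stage is inherited by the union~$N$.
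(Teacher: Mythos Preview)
Your proposal is correct and follows essentially the same transfinite L\"owenheim--Skolem construction as the paper: build an increasing continuous chain $(N_\beta)_{\beta<\lambda}$ starting from the submodule generated by~$X$, at each successor step adjoin one solution (from~$M$) for every solvable system with parameters in~$N_\beta$, and use regularity of~$\lambda$ to conclude that the union~$N$ is closed under solutions. Your cardinality bookkeeping is somewhat more explicit than the paper's (you spell out the finite-support interpretation of coefficients and the derivation of $\lambda\le\nu$ from $\nu^{<\lambda}=\nu$), but the argument is the same.
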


\begin{proof}
 The argument is similar to the one in~\cite[Lemma~10.5]{GT}.
 Notice that one has $\lambda\le\nu$.
 The submodule $N\subset M$ is constructed as the union of an increasing
chain of submodules $(N_i\subset M)_{0\le i<\lambda}$, with
the cardinality of $N_i$ not exceeding~$\nu$ for every~$i$.
 Let $N_0$ be the submodule spanned by $X$ in~$M$.

 For a successor ordinal $j=i+1<\lambda$, we define $N_{i+1}$ by
adjoining to $N$ one solution of every system of less than~$\lambda$
nonhomogeneous $R$\+linear equations in less than~$\lambda$ variables
with parameters from $N_i$ that has a solution in~$M$.
 As such a system of equations has less than~$\lambda$ coefficients
from~$R$ and less than~$\lambda$ parameters from~$N_i$, the cardinality
of the set of all such systems of equations is not greater than~$\nu$.
 For a limit ordinal $j<\lambda$, we put $N_j=\bigcup_{i<j}N_i$.

 Now if a system of less than~$\lambda$ nonhomogeneous $R$\+linear
equations in less than~$\lambda$ variables has parameters in
$N=\bigcup_{i<\lambda}N_i$, then all these parameters belong to
$N_i$ for some $i<\lambda$.
 Hence such system of equations has a solution in~$N_{i+1}$.
\end{proof}

 An $R$\+module $S$ is said to be \emph{FP$_2$} \,\cite[Section~5.2]{GT}
if there exists an exact sequence $P_2\rarrow P_1\rarrow P_0\rarrow S
\rarrow0$ with finitely generated projective $R$\+modules $P_0$, $P_1$,
and~$P_2$.
 So, over a right coherent ring, all finitely presentable right modules
are FP$_2$.
 Similarly, let us say that an $R$\+module $S$ is
\emph{$\lambda$\+P$_2$} if there exists an exact sequence
$P_2\rarrow P_1\rarrow P_0\rarrow S\rarrow0$, where $P_0$, $P_1$, and
$P_2$ are projective $R$\+modules with less than~$\lambda$ generators.

\begin{prop} \label{right-Ext-1-orthogonal-accessible}
 Let $R$ be a ring; put $\rho=|R|+\aleph_0$.
 Let $\lambda$~be a regular cardinal, $\nu$~be a cardinal such that
$\rho\le\nu$ and $\nu^{<\lambda}=\nu$, and\/ $\sS$ be a set of
$\lambda$\+P$_2$ right $R$\+modules.
 Then the full subcategory\/ $\sS^{\perp_1}\subset\Modr R$ is closed
under $\lambda$\+directed colimits, and every module $M\in\sS^{\perp_1}$
is a $\nu^+$\+directed union of the $\nu^+$\+directed poset of all
the $\nu^+$\+presentable submodules of $M$ belonging
to\/~$\sS^{\perp_1}$.
 Consequently, the category\/ $\sS^{\perp_1}$ is $\nu^+$\+accessible
with directed colimits of $\lambda$\+indexed chains, and
the $\nu^+$\+presentable objects of\/ $\sS^{\perp_1}$ are precisely
all the $\nu^+$\+presentable right $R$\+modules belonging
to\/~$\sS^{\perp_1}$.
\end{prop}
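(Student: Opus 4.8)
The plan is to prove the three assertions in turn, the construction in the middle being the crux. For closure under $\lambda$\nobreakdash-directed colimits, fix $S\in\sS$ and choose, using that $S$ is $\lambda$-P$_2$, an exact sequence $F_2\to F_1\to F_0\to S\to0$ by free modules on fewer than $\lambda$ generators. For any module $M$ the group $\Ext^1_R(S,M)$ is the first cohomology of $\Hom_R(F_0,M)\to\Hom_R(F_1,M)\to\Hom_R(F_2,M)$. Each $\Hom_R(F_i,-)$ is a product of fewer than $\lambda$ copies of the identity functor, hence preserves $\lambda$-directed colimits; as $\lambda$-directed colimits are exact on abelian groups they commute with passage to cohomology, so $\Ext^1_R(S,-)$ preserves $\lambda$-directed colimits. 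Thus $\varinjlim_i M_i\in\sS^{\perp_1}$ whenever all $M_i\in\sS^{\perp_1}$.

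The main step is to show that every $M\in\sS^{\perp_1}$ is a $\nu^+$-directed union of $\nu^+$-presentable submodules from $\sS^{\perp_1}$. Given a subset $X\subset M$ with $|X|\le\nu$, I would apply Lemma~\ref{systems-of-equations} to obtain a submodule $N\subset M$ with $X\subset N$, $|N|\le\nu$, and the stated solvability property, and then check that $N\in\sS^{\perp_1}$. Fix $S\in\sS$, a free presentation $F_1\to F_0\to S\to0$ on fewer than $\lambda$ generators, and set $K=\operatorname{im}(F_1\to F_0)$, generated by fewer than $\lambda$ elements $(k_s)$. The vanishing $\Ext^1_R(S,N)=0$ is equivalent to surjectivity of the restriction map $\Hom_R(F_0,N)\to\Hom_R(K,N)$, that is, to the extendability to $F_0$ of every homomorphism $f\colon K\to N$. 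Writing $k_s=\sum_t a_{st}e_t$ in terms of a basis $(e_t)$ of $F_0$ (fewer than $\lambda$ elements), such an extension $g$ amounts to a solution $(x_t)=(g(e_t))$ in $N$ of the system $\sum_t a_{st}x_t=f(k_s)$ of fewer than $\lambda$ nonhomogeneous $R$-linear equations in fewer than $\lambda$ variables with parameters $f(k_s)\in N$. Since $M\in\sS^{\perp_1}$, the map $\Hom_R(F_0,M)\to\Hom_R(K,M)$ is surjective, so this system has a solution in $M$, and hence, by the defining property of $N$, a solution in $N$. Therefore $\Ext^1_R(S,N)=0$ for all $S\in\sS$, so $N\in\sS^{\perp_1}$.

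Consequently every subset of $M$ of cardinality $\le\nu$ lies in a submodule $N\in\sS^{\perp_1}$ with $|N|\le\nu$, hence $\nu^+$-presentable, and these submodules form a $\nu^+$-directed poset with union $M$ (a family of at most $\nu$ of them has a union of cardinality $\le\nu$, which enlarges to another such $N$). As the $\nu^+$-presentable $R$-modules in $\sS^{\perp_1}$ form a set up to isomorphism, $\sS^{\perp_1}$ is $\nu^+$-accessible. Since $\lambda\le\nu<\nu^+$, every $\nu^+$-directed poset is $\lambda$-directed, so by the first assertion $\nu^+$-directed colimits in $\sS^{\perp_1}$ coincide with those in $\Modr R$; hence a $\nu^+$-presentable $R$-module in $\sS^{\perp_1}$ is $\nu^+$-presentable in $\sS^{\perp_1}$, and the $\nu^+$-presentable objects of $\sS^{\perp_1}$ are exactly their retracts, which are again $\nu^+$-presentable modules in $\sS^{\perp_1}$ (the class being closed under direct summands). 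Finally, the regularity of $\lambda$ makes the ordinal $\lambda$ a $\lambda$-directed poset, so colimits of $\lambda$-indexed chains are particular $\lambda$-directed colimits and exist in $\sS^{\perp_1}$ by the first assertion.

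The hard part is the translation carried out in the second paragraph: recasting the extendability of homomorphisms $K\to N$ as the solvability of a system obeying the quantitative bounds of Lemma~\ref{systems-of-equations}---fewer than $\lambda$ equations in fewer than $\lambda$ variables---which is exactly the point where the $\lambda$-P$_2$ hypothesis on the modules of $\sS$ is needed (to make both the generators of $K$ and the basis of $F_0$ number fewer than $\lambda$).
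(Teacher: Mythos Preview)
Your proof is correct and follows essentially the same approach as the paper's: compute $\Ext^1_R(S,-)$ via a free presentation $F_2\to F_1\to F_0\to S\to0$ with fewer than~$\lambda$ generators to get closure under $\lambda$-directed colimits, then translate membership in $\{S\}^{\perp_1}$ into solvability of systems of fewer than~$\lambda$ equations in fewer than~$\lambda$ variables and invoke Lemma~\ref{systems-of-equations}. You have simply spelled out in detail what the paper summarizes in a sentence, including the deduction of $\nu^+$-accessibility and the identification of the $\nu^+$-presentable objects.
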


\begin{proof}
 One can easily see that for any $\lambda$\+P$_2$ module $S$
there exists an exact sequence $P_2\rarrow P_1\rarrow P_0\rarrow S
\rarrow0$ such that $P_0$, $P_1$, and $P_2$ are free modules with
less than $\lambda$~generators.
 Then, for any $R$\+module $M$, the group $\Ext^1_R(S,M)$ can be
computed as the middle cohomology group of the complex
$\Hom_R(P_0,M)\rarrow\Hom_R(P_1,M)\rarrow\Hom_R(P_2,M)$.
 This proves the first assertion of the proposition.
 Furthermore, the property of an $R$\+module $M$ to belong to the class
$\{S\}^{\perp_1}\subset\Modr R$ is expressed by solvability of certain
systems of less than~$\lambda$ nonhomogeneous $R$\+linear equations
in less than~$\lambda$ variables with parameters in~$M$.
 Hence the second assertion of the proposition follows from
Lemma~\ref{systems-of-equations}.
 Notice that an $R$\+module is $\nu^+$\+presentable if and only if
its cardinality does not exceed~$\nu$.
\end{proof}

\begin{cor} \label{injectives-accessible}
 Let $\lambda$ be a regular cardinal and $R$ be a right\/
$<\lambda$\+Noetherian ring.
 Let $\nu$~be a cardinal such that $\rho=|R|+\aleph_0\le\nu$
and $\nu^{<\lambda}=\nu$.
 Then the full subcategory of injective $R$\+modules is closed
under $\lambda$\+directed colimits in $\Modr R$.
 The category of injective right $R$\+modules is $\nu^+$\+accessible
with directed colimits of $\lambda$\+indexed chains, and
the $\nu^+$\+presentable objects of this category are precisely
all the injective $R$\+modules that are $\nu^+$\+presentable
in $\Modr R$\, (i.~e., injective $R$\+modules of cardinality
at most~$\nu$).
 Moreover, every injective right $R$\+module is a $\nu^+$\+directed
union of $\nu^+$\+presentable injective $R$\+modules. 
\end{cor}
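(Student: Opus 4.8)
The plan is to realize the class of injective right $R$\+modules as a right $\Ext^1$\+orthogonal class $\sS^{\perp_1}$ of exactly the type handled by Proposition~\ref{right-Ext-1-orthogonal-accessible}, and then to read off all the assertions of the corollary from that proposition. Accordingly, I would take $\sS$ to be the set of all cyclic modules $R/I$ indexed by the right ideals $I\subset R$; these form a genuine set, being subsets of~$R$. By the Baer criterion in its homological form, the short exact sequence $0\rarrow I\rarrow R\rarrow R/I\rarrow0$ yields $\Ext^1_R(R/I,M)\cong\coker\bigl(\Hom_R(R,M)\rarrow\Hom_R(I,M)\bigr)$, so a right $R$\+module $M$ is injective precisely when $\Ext^1_R(R/I,M)=0$ for every right ideal~$I$. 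Hence the class of injective right $R$\+modules coincides with $\sS^{\perp_1}$.

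The key step is to verify that every module $R/I\in\sS$ is $\lambda$\+P$_2$, and this is where the right $<\lambda$\+Noetherian hypothesis enters: it guarantees that every submodule of a $<\lambda$\+generated right $R$\+module is again $<\lambda$\+generated. Starting from the free presentation $R\rarrow R/I$ with kernel~$I$, the ideal $I$ is $<\lambda$\+generated, so there is a surjection $P_1\rarrow I$ with $P_1$ free on less than~$\lambda$ generators; the kernel of the composite $P_1\rarrow R$ equals the kernel of $P_1\rarrow I$, which is a submodule of $P_1$ and therefore also $<\lambda$\+generated, so it receives a surjection from a free module $P_2$ on less than~$\lambda$ generators. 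This produces an exact sequence $P_2\rarrow P_1\rarrow R\rarrow R/I\rarrow0$ with all three free modules $<\lambda$\+generated, exhibiting $R/I$ as $\lambda$\+P$_2$. (Iterating would give a full free resolution of $R/I$ by $<\lambda$\+generated modules, but only these three terms are needed.)

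With $\sS$ thus exhibited as a set of $\lambda$\+P$_2$ modules whose right $\Ext^1$\+orthogonal class $\sS^{\perp_1}$ is the class of injective right $R$\+modules, and with the hypotheses on $\lambda$ and $\nu$ matching verbatim, Proposition~\ref{right-Ext-1-orthogonal-accessible} applies and delivers every claim at once: the subcategory of injectives is closed under $\lambda$\+directed colimits in $\Modr R$; it is $\nu^+$\+accessible with directed colimits of $\lambda$\+indexed chains; its $\nu^+$\+presentable objects are exactly the $\nu^+$\+presentable injective modules; and every injective module is the $\nu^+$\+directed union of its $\nu^+$\+presentable injective submodules. To obtain the parenthetical reformulation, I would cite the observation made before Proposition~\ref{enochs-slavik-trlifaj} that, for $\nu\ge\rho$, a module is $\nu^+$\+presentable if and only if its cardinality is at most~$\nu$. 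The only point demanding any care — and hence the main, if modest, obstacle — is the $\lambda$\+P$_2$ verification above, since everything else is a direct specialization of the already-proved proposition.
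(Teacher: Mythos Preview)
Your proof is correct and follows exactly the same approach as the paper's own proof, which also takes $\sS$ to be the set of cyclic modules $R/I$ and invokes Proposition~\ref{right-Ext-1-orthogonal-accessible}. You have merely filled in the details (Baer's criterion, the $\lambda$\+P$_2$ verification via the $<\lambda$\+Noetherian hypothesis) that the paper leaves implicit.
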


\begin{proof}
 Take $\sS$ to be the set of all cyclic right $R$\+modules $R/I$
(where $I$ ranges over all the right ideals in~$R$), and
apply Proposition~\ref{right-Ext-1-orthogonal-accessible}.
\end{proof}

\Section{Two-Sided Resolutions by Accessible Classes}
\label{two-sided-resolutions-secn}

 In this section we begin our discussion of accessibility of
categories of (co)resolutions based on the techniques described
in Section~\ref{accessible-and-acyclic-secn}.
 We start with a general abstract formulation before passing to
a finite/countable special case.

\begin{prop} \label{two-sided-by-accessible-classes-prop}
 Let $\kappa$~be a regular cardinal and\/ $\lambda<\kappa$ be a smaller
infinite cardinal.
 Let $R$ be a right\/ $<\kappa$\+coherent ring, and let
$(\sT_n)_{n\in\boZ}$ be a sequence of classes of $\kappa$\+presentable
right $R$\+modules, $\sT_n\subset(\Modr R)_{<\kappa}$.
 Assume that, for every $n\in\boZ$, the class of $R$\+modules\/ $\sT_n$
is closed under direct summands and the class of $R$\+modules
$\varinjlim_{(\kappa)}\sT_n$ is closed under colimits of
$\lambda$\+indexed chains in\/ $\Modr R$.
 Then the category\/ $\sC$ of all acyclic complexes of right $R$\+modules
$C^\bu$ with $C^n\in\varinjlim_{(\kappa)}\sT_n$ for every $n\in\boZ$
is $\kappa$\+accessible.
 The $\kappa$\+presentable objects of the category\/ $\sC$ are all
the acyclic complexes of $R$\+modules $T^\bu$ with $T^n\in\sT_n$ for
every $n\in\boZ$.
 Consequenly, every acyclic complex of $R$\+modules $C^\bu$ with
the terms $C^n\in\varinjlim_{(\kappa)}\sT_n$ is a $\kappa$\+directed
colimit of acyclic complexes $T^\bu$ with the terms $T^n\in\sT_n$.
\end{prop}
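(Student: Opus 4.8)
The plan is to follow the pattern of the proof of Proposition~\ref{acyclic-complexes-prop}, in which an acyclic complex is encoded as a $\boZ$\+indexed family of short exact sequences $0\rarrow K^n\rarrow C^n\rarrow M^n\rarrow0$ glued together by isomorphisms $M^n\simeq K^{n+1}$, but now carrying the additional constraint that the middle term $C^n$ lie in $\varinjlim_{(\kappa)}\sT_n$. Accordingly, I would proceed in two steps. First, I build for each $n$ the $\kappa$\+accessible category of short exact sequences whose middle term belongs to $\varinjlim_{(\kappa)}\sT_n$; this is the step that consumes the hypotheses on~$\sT_n$. Then I assemble $\sC$ from these pieces by a Cartesian product over $n\in\boZ$ followed by the isomorpher construction, exactly as in Proposition~\ref{acyclic-complexes-prop}.

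For the first step, fix $n$ and form the pseudopullback $\sS_n$ of the middle-term functor from the category of short exact sequences of right $R$\+modules to $\Modr R$ against the inclusion functor $\varinjlim_{(\kappa)}\sT_n\rarrow\Modr R$. By Corollary~\ref{short-exact-sequences-cor} the category of short exact sequences is $\kappa$\+accessible, with $\lambda$\+indexed chain colimits computed termwise (they are exact, being directed), and with $\kappa$\+presentable objects the short exact sequences of $\kappa$\+presentable modules; by Proposition~\ref{accessible-subcategory} the category $\varinjlim_{(\kappa)}\sT_n$ is $\kappa$\+accessible with $\kappa$\+presentable objects the retracts of objects of~$\sT_n$, which here are precisely the objects of $\sT_n$ since $\sT_n$ is closed under direct summands. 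The middle-term functor preserves $\kappa$\+directed colimits, colimits of $\lambda$\+indexed chains, and $\kappa$\+presentability; the inclusion preserves $\kappa$\+directed colimits by Proposition~\ref{accessible-subcategory} and $\kappa$\+presentability by the previous sentence. The one point that genuinely uses a hypothesis is that the inclusion preserves colimits of $\lambda$\+indexed chains, which is exactly the assumption that $\varinjlim_{(\kappa)}\sT_n$ is closed under $\lambda$\+indexed chains in $\Modr R$. Thus Theorem~\ref{pseudopullback-theorem} applies and shows that $\sS_n$ is $\kappa$\+accessible (retaining $\lambda$\+indexed chain colimits, computed componentwise), with $\kappa$\+presentable objects the short exact sequences of $\kappa$\+presentable modules whose middle term lies in~$\sT_n$.

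For the second step, put $\sK=\prod_{n\in\boZ}\sS_n$ and $\sL=\prod_{n\in\boZ}\Modr R$; both are $\kappa$\+accessible by Proposition~\ref{product-proposition}, since $|\boZ|=\aleph_0<\kappa$ (as $\kappa>\lambda\ge\aleph_0$), with $\kappa$\+presentable objects described componentwise and with $\lambda$\+indexed chain colimits again componentwise. Let $\Phi_1\:\sK\rarrow\sL$ send a family $(0\to K^n\to C^n\to M^n\to0)_n$ to $(M^n)_n$, and let $\Phi_2$ send the same family to $(K^{n+1})_n$. These cokernel and shifted-kernel functors preserve $\kappa$\+directed colimits and colimits of $\lambda$\+indexed chains, because such colimits are computed termwise and are exact, and they take $\kappa$\+presentable objects to $\kappa$\+presentable objects, since the kernel and cokernel of a short exact sequence of $\kappa$\+presentable modules are $\kappa$\+presentable. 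Theorem~\ref{isomorpher-theorem} then shows that the isomorpher of $\Phi_1$ and $\Phi_2$ is $\kappa$\+accessible; this isomorpher is equivalent to~$\sC$, the differential $C^n\rarrow C^{n+1}$ being the composite of the surjection $C^n\rarrow M^n$, the gluing isomorphism $M^n\simeq K^{n+1}$, and the injection $K^{n+1}\rarrow C^{n+1}$. Its $\kappa$\+presentable objects are therefore the acyclic complexes $T^\bu$ with $T^n\in\sT_n$ for every $n$ and with all the associated short exact sequences $0\to K^n\to T^n\to M^n\to0$ consisting of $\kappa$\+presentable modules.

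It remains to match this with the description in the statement, i.~e., to see that requiring only $T^n\in\sT_n$ for all $n$ already forces the whole complex to be $\kappa$\+presentable. Here the $<\kappa$\+coherence of $R$ enters: if $T^\bu$ is acyclic with each $T^n\in\sT_n\subset(\Modr R)_{<\kappa}$, then $K^n$ is the image of $T^{n-1}\rarrow T^n$, hence a $<\kappa$\+generated submodule of the $\kappa$\+presentable module $T^n$, and therefore $\kappa$\+presentable by $<\kappa$\+coherence; consequently $M^n\simeq K^{n+1}$ is $\kappa$\+presentable as well, so every short exact sequence in the encoding has $\kappa$\+presentable terms. This yields the stated description of the $\kappa$\+presentable objects of~$\sC$, and the final ``consequently'' clause is then immediate from $\kappa$\+accessibility. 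I expect the main obstacle to be the careful verification of the functor hypotheses in Theorems~\ref{pseudopullback-theorem} and~\ref{isomorpher-theorem}---above all, that the inclusion $\varinjlim_{(\kappa)}\sT_n\rarrow\Modr R$ preserves $\lambda$\+indexed chain colimits, which is the sole place the closure assumption is used---together with the bookkeeping that transports the $\kappa$\+presentable objects correctly through both the pseudopullback and the isomorpher.
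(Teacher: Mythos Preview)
Your proof is correct, but it takes a different route from the paper's. The paper treats Proposition~\ref{acyclic-complexes-prop} as a black box: it lets $\sK_1$ be the category of all acyclic complexes of right $R$\+modules (already known to be $\kappa$\+accessible), lets $\sK_2=\prod_{n\in\boZ}\varinjlim_{(\kappa)}\sT_n$ and $\sL=\prod_{n\in\boZ}\Modr R$, and applies a single pseudopullback (Theorem~\ref{pseudopullback-theorem}) to the underlying-terms functor $\sK_1\rarrow\sL$ against the product of inclusions $\sK_2\rarrow\sL$. You instead unpack the construction of Proposition~\ref{acyclic-complexes-prop} and insert the term constraints at the level of the short exact sequences: first a pseudopullback at each degree~$n$ to build~$\sS_n$, then the product over~$n$, then the isomorpher. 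Both decompositions rely on the same ingredients (Propositions~\ref{accessible-subcategory}, \ref{product-proposition}, Corollary~\ref{short-exact-sequences-cor}, and Theorems~\ref{pseudopullback-theorem}/\ref{isomorpher-theorem}), and both use the closure of $\varinjlim_{(\kappa)}\sT_n$ under $\lambda$\+indexed chains at the same essential point. The paper's route is shorter because it reuses Proposition~\ref{acyclic-complexes-prop} wholesale, so the $<\kappa$\+coherence argument for the cocycles is already absorbed there and need not be repeated; your route makes that step explicit at the end, which is pedagogically clearer but slightly longer.
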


\begin{proof}
 This is an application of Theorem~\ref{pseudopullback-theorem}
together with Propositions~\ref{accessible-subcategory},
\ref{product-proposition}, and~\ref{acyclic-complexes-prop}.
 Denote by $\sK_1$ the category of acyclic complexes of right
$R$\+modules from Proposition~\ref{acyclic-complexes-prop}, and
let $\sK_2=\prod_{n\in\boZ}(\varinjlim_{(\kappa)}\sT_n)$ be
the Cartesian product of the full subcategories
$\varinjlim_{(\kappa)}\sT_n\subset\Modr R$, taken over all
the integers $n\in\boZ$.

 Let $\sL=\prod_{n\in\boZ}\Modr R$ be the Cartesian product of
$\boZ$~copies of the abelian category of right $R$\+modules.
 Consider the following functors $\Phi_1\:\sK_1\rarrow\sL$ and
$\Phi_2\:\sK_2\rarrow\sL$.
 The functor $\Phi_1$ takes an acyclic complex of right $R$\+modules
$A^\bu$ to the collection of modules $(A^n)_{n\in\boZ}$.
 The functor $\Phi_2$ is the Cartesian product of the identity
inclusion functors $\varinjlim_{(\kappa)}\sT_n\rarrow\Modr R$,
taken over all $n\in\boZ$.
 Then the pseudopullback of the pair of functors $\Phi_1$ and $\Phi_2$
is equivalent to the desired category~$\sC$.

 The category $\sK_1$ is $\kappa$\+accessible by
Proposition~\ref{acyclic-complexes-prop}.
 The categories $\sK_2$ and $\sL$ are $\kappa$\+accessible by
Propositions~\ref{accessible-subcategory} and~\ref{product-proposition}.
 Theorem~\ref{pseudopullback-theorem} is applicable; it tells
that the category $\sC$ is $\kappa$\+accessible and provides
the desired description of the full subcategory of
$\kappa$\+presentable objects in~$\sC$.
\end{proof}

\begin{thm} \label{two-sided-by-lim-classes-theorem}
 Let $R$ be a right countably coherent ring, and let
$(\sS_n)_{n\in\boZ}$ be a sequence of sets of finitely presentable
right $R$\+modules, $\sS_n\subset(\Modr R)_{<\aleph_0}$.
 Then the category\/ $\sC$ of all acyclic complexes of right
$R$\+modules $C^\bu$ with $C^n\in\varinjlim\sS_n$ for every
$n\in\boZ$ is\/ $\aleph_1$\+accessible.
 The\/ $\aleph_1$\+presentable objects of the category\/ $\sC$ are
all the acyclic complexes $T^\bu$ such that $T^n\in\varinjlim\sS_n$
and $T^n$ is a countably presentable $R$\+module for every $n\in\boZ$.
 Consequently, every acyclic complex of $R$\+modules $C^\bu$ with
the terms $C^n\in\varinjlim\sS_n$ is an $\aleph_1$\+directed colimit
of acyclic complexes of countably presentable $R$\+modules $T^\bu$
with the terms $T^n\in\varinjlim\sS_n$.
\end{thm}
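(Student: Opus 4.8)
The plan is to derive Theorem~\ref{two-sided-by-lim-classes-theorem} as a special case of Proposition~\ref{two-sided-by-accessible-classes-prop}, taking $\kappa=\aleph_1$ and $\lambda=\aleph_0$. For each $n\in\boZ$, I would set $\sT_n=\varinjlim\sS_n\cap(\Modr R)_{<\aleph_1}$, the class of countably presentable right $R$-modules belonging to $\varinjlim\sS_n$. By Proposition~\ref{accessible-subcategory} applied to the set $\sS_n$ of finitely presentable modules (which are in particular $\aleph_1$-presentable), the $\aleph_1$-presentable objects of $\varinjlim_{(\aleph_1)}\sT_n$ are exactly the retracts of objects from $\sT_n$. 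So the real content is to verify the hypotheses of the proposition for this choice of $\sT_n$ and then to identify the resulting category and its presentable objects with the ones in the statement.

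First I would check the two standing hypotheses on each $\sT_n$. Closure under direct summands: if $T$ is countably presentable and a retract of $T\oplus T'$ lying in $\varinjlim\sS_n$, then each summand is again countably presentable, and $\varinjlim\sS_n$ is closed under direct summands (being a $\varinjlim$-closure of finitely presentables; cf.\ Proposition~\ref{accessible-subcategory}), so the summands lie in $\sT_n$. The colimit-of-chains hypothesis requires that $\varinjlim_{(\aleph_1)}\sT_n$ be closed under colimits of $\aleph_0$-indexed chains in $\Modr R$. Here I would invoke the key identity $\varinjlim_{(\aleph_1)}\sT_n=\varinjlim\sS_n$: since $\sS_n$ consists of finitely presentable modules, every object of $\varinjlim\sS_n$ is itself an $\aleph_1$-directed colimit of its countably presentable subobjects lying in $\varinjlim\sS_n$ (this is the module-theoretic avatar of Proposition~\ref{accessible-subcategory}, using that $R$ is countably coherent so that countably presentable objects of $\varinjlim\sS_n$ form a set up to isomorphism). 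Granting this, $\varinjlim_{(\aleph_1)}\sT_n=\varinjlim\sS_n$, and $\varinjlim\sS_n$ is closed under \emph{all} directed colimits in $\Modr R$, in particular under colimits of $\aleph_0$-indexed chains; so the hypothesis holds automatically.

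Once the hypotheses are verified, Proposition~\ref{two-sided-by-accessible-classes-prop} immediately yields that the category $\sC$ of acyclic complexes $C^\bu$ with $C^n\in\varinjlim_{(\aleph_1)}\sT_n=\varinjlim\sS_n$ is $\aleph_1$-accessible, and that its $\aleph_1$-presentable objects are the acyclic complexes $T^\bu$ with $T^n\in\sT_n$ for all $n$. Unwinding the definition of $\sT_n$, these are precisely the acyclic complexes $T^\bu$ such that each $T^n$ is countably presentable and lies in $\varinjlim\sS_n$, which is the description in the statement. The final consequence on $\aleph_1$-directed colimits is then just the defining property of an $\aleph_1$-accessible category applied to $\sC$.

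The main obstacle I expect is the identity $\varinjlim_{(\aleph_1)}\sT_n=\varinjlim\sS_n$, i.e.\ showing that every module in $\varinjlim\sS_n$ is an $\aleph_1$-directed colimit of its \emph{countably presentable} submodules that still lie in $\varinjlim\sS_n$. This is where the countable coherence of $R$ is essential: it guarantees that a countably generated submodule of an object of $\varinjlim\sS_n$ is genuinely countably \emph{presentable} (finitely many relations become countably many under the coherence hypothesis), so that the approximating objects land in $\sT_n$ rather than merely in the larger class of countably generated modules. The rest of the argument is a routine transcription of Proposition~\ref{two-sided-by-accessible-classes-prop} into the countable setting.
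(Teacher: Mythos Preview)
Your approach is essentially the same as the paper's: both derive the theorem from Proposition~\ref{two-sided-by-accessible-classes-prop} with $\kappa=\aleph_1$ and $\lambda=\aleph_0$, taking $\sT_n$ to be the countably presentable objects of $\varinjlim\sS_n$. (The paper phrases $\sT_n$ as ``direct summands of countable directed colimits of modules from $\sS_n$'', which by Lemma~\ref{kappa-presentable-directed-colimit-lemma} is the same class.)

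However, you have misidentified where countable coherence enters. The identity $\varinjlim_{(\aleph_1)}\sT_n=\varinjlim\sS_n$ holds over \emph{any} ring: it is the standard fact that a finitely accessible category is $\aleph_1$\+accessible, with $\aleph_1$\+presentable objects given precisely by retracts of countable directed colimits of the finitely presentable ones (this is~\cite[Theorem~2.11 and Example~2.13(1)]{AR}, invoked in the paper via the proof of Lemma~\ref{kappa-presentable-directed-colimit-lemma}). No coherence is needed here, and there is no need to argue via submodules; in particular, your claim that coherence ensures ``countably presentable objects of $\varinjlim\sS_n$ form a set'' is misplaced (that is always a set). The countable coherence hypothesis is consumed entirely inside Proposition~\ref{two-sided-by-accessible-classes-prop}, specifically through Proposition~\ref{acyclic-complexes-prop}, to guarantee that the category of acyclic complexes is $\aleph_1$\+accessible with the expected $\aleph_1$\+presentable objects. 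Your intuition about ``countably generated submodules being countably presentable'' is relevant to \emph{that} step (the cocycle modules in an acyclic complex of countably presentable modules), not to the identity $\varinjlim_{(\aleph_1)}\sT_n=\varinjlim\sS_n$.
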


\begin{proof}
 This is a particular case of
Proposition~\ref{two-sided-by-accessible-classes-prop}.
 Put $\lambda=\aleph_0$, \,$\kappa=\aleph_1$, and denote by
$\sT_n\subset\Modr R$ the class of all direct summands of countable
directed colimits of modules from $\sS_n$, for every $n\in\boZ$.
 Then $\varinjlim\sS_n=\varinjlim_{(\aleph_1)}\sT_n$, as per the proof
of Lemma~\ref{kappa-presentable-directed-colimit-lemma}.
 The class $\varinjlim\sS_n$ is closed under directed colimits in
$\Modr R$ by Proposition~\ref{accessible-subcategory}.
\end{proof}

 Given an acyclic complex of $R$\+modules $C^\bu$, we denote by
$Z^0(C^\bu)$ the module of degree~$0$ cocycles in~$C^\bu$.
 So $Z^0(C^\bu)$ is the kernel of the differential
$d^0\:C^0\rarrow C^1$, or equivalently, the cokernel of
the differential $d^{-2}\:C^{-2}\rarrow C^{-1}$.

\begin{cor} \label{two-sided-by-lim-classes-cor}
 Let $R$ be a right countably coherent ring, and let
$(\sS_n)_{n\in\boZ}$ be a sequence of sets of finitely presentable
right $R$\+modules, $\sS_n\subset(\Modr R)_{<\aleph_0}$.
 Denote by\/ $\sM$ the class of all right $R$\+modules $M$ of
the form $M=Z^0(C^\bu)$, where $C^\bu$ is an acyclic complex
of $R$\+modules with the terms $C^n\in\varinjlim\sS_n$.
 Then every module from\/ $\sM$ is an\/ $\aleph_1$\+directed colimit
of countably presentable modules from\/~$\sM$.
\end{cor}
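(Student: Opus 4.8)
The plan is to transport the $\aleph_1$-accessibility statement already established in Theorem~\ref{two-sided-by-lim-classes-theorem} for the category $\sC$ of acyclic complexes along the cocycle functor $Z^0$. I would first observe that $Z^0 \colon \sC \rarrow \Modr R$ is an additive functor whose essential image is exactly the class $\sM$. The key structural point is that $Z^0$ preserves $\aleph_1$-directed colimits: taking degree-$0$ cocycles amounts to computing a kernel (of $d^0\colon C^0 \rarrow C^1$), and both the formation of kernels and the passage to $\aleph_1$-directed colimits in $\Modr R$ are exact, hence commute. Thus $Z^0$ carries an $\aleph_1$-directed colimit of complexes in $\sC$ to the corresponding $\aleph_1$-directed colimit of their degree-$0$ cocycle modules.

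Granting this, the argument is short. Let $M \in \sM$, so $M = Z^0(C^\bu)$ for some acyclic complex $C^\bu$ with $C^n \in \varinjlim\sS_n$ for all $n$. By Theorem~\ref{two-sided-by-lim-classes-theorem}, the complex $C^\bu$ is an $\aleph_1$-directed colimit $C^\bu = \varinjlim_{\xi\in\Xi} T_\xi^\bu$ of $\aleph_1$-presentable objects $T_\xi^\bu \in \sC$, i.e.\ acyclic complexes of \emph{countably presentable} $R$-modules with terms in $\varinjlim\sS_n$. Applying $Z^0$ and using that it preserves $\aleph_1$-directed colimits, I obtain
$$
 M = Z^0(C^\bu) = Z^0\bigl(\varinjlim\nolimits_{\xi\in\Xi} T_\xi^\bu\bigr)
 = \varinjlim\nolimits_{\xi\in\Xi} Z^0(T_\xi^\bu).
$$
It remains to check that each $Z^0(T_\xi^\bu)$ is a countably presentable module lying in $\sM$. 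Membership in $\sM$ is immediate, since $T_\xi^\bu$ is itself an acyclic complex with terms in $\varinjlim\sS_n$. For countable presentability, I would use that $Z^0(T_\xi^\bu)$ is the kernel of the map $d^0\colon T_\xi^0 \rarrow T_\xi^1$ between countably presentable modules; equivalently, from acyclicity, it is the image (cokernel of $d^{-2}$) in a complex of countably presentable terms. Over a right countably coherent ring, the relevant kernels/images of morphisms of countably presentable modules are again countably presentable — this is precisely what the $<\aleph_1$-coherence hypothesis delivers, and it is what underlies the description of the $\aleph_1$-presentable objects of $\sC$ in Theorem~\ref{two-sided-by-lim-classes-theorem}.

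The main obstacle I anticipate is the last verification, that $Z^0(T_\xi^\bu)$ is genuinely countably presentable rather than merely countably generated. The cleanest route is to avoid recomputing this by hand and instead invoke the coherence machinery already in place: an acyclic complex of countably presentable modules over a right countably coherent ring has countably presentable cocycle modules, because each short exact sequence $0 \rarrow Z^n \rarrow C^n \rarrow Z^{n+1} \rarrow 0$ extracted from acyclicity exhibits $Z^{n+1}$ as a countably presentable quotient whose kernel $Z^n$ is a submodule of a countably presentable module generated by countably many elements, so $<\aleph_1$-coherence forces $Z^n$ countably presentable as well. This is exactly the content encoded in Corollary~\ref{short-exact-sequences-cor} and Proposition~\ref{acyclic-complexes-prop}, so I would cite those to close the gap cleanly.
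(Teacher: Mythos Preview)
Your proposal is correct and follows essentially the same approach as the paper: apply Theorem~\ref{two-sided-by-lim-classes-theorem} to write $C^\bu$ as an $\aleph_1$\+directed colimit of acyclic complexes of countably presentable modules, then pass to $Z^0$ and note that the cocycles remain countably presentable. Your only unnecessary detour is in the final paragraph: since $Z^0(T_\xi^\bu)=\coker(d^{-2}\colon T_\xi^{-2}\to T_\xi^{-1})$ by acyclicity, countable presentability is immediate without invoking the coherence hypothesis at that step (coherence is needed earlier, for Theorem~\ref{two-sided-by-lim-classes-theorem} itself).
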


\begin{proof}
 Let us emphasize that there is \emph{no} claim about closedness of
the class $\sM$ under $\aleph_1$\+directed colimits in this corollary.
 The assertion of the corollary follows directly from the last
assertion of Theorem~\ref{two-sided-by-lim-classes-theorem}.
 One needs to observe that, in any acyclic complex of countably
presentable modules, the modules of cocycles are also countably
presentable.
\end{proof}

\begin{cor} \label{two-sided-by-lim-classes-direct-summand}
 In the context of Corollary~\ref{two-sided-by-lim-classes-cor},
let $M$ be a countably presentable $R$\+module belonging to
the class\/~$\sM$.
 Then $M$ is a direct summand of an $R$\+module $N=Z^0(T^\bu)$,
where $T^\bu$ is an acyclic complex of countably presentable
$R$\+modules with the terms $T^n\in\varinjlim\sS_n$.
\end{cor}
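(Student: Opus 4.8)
The plan is to realize $M$ as a direct summand of $Z^0(T^\bu)$ for a suitable\/ $\aleph_1$\+presentable object $T^\bu$ of the category\/ $\sC$ from Theorem~\ref{two-sided-by-lim-classes-theorem}, using that the degree~$0$ cocycle functor $Z^0$ commutes with directed colimits of acyclic complexes of modules.

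First I would use the assumption $M\in\sM$ to fix an acyclic complex $C^\bu\in\sC$ with terms $C^n\in\varinjlim\sS_n$ and $M=Z^0(C^\bu)$. Since\/ $\sC$ is\/ $\aleph_1$\+accessible by Theorem~\ref{two-sided-by-lim-classes-theorem}, the complex $C^\bu$ is an\/ $\aleph_1$\+directed colimit $C^\bu=\varinjlim_{\xi\in\Xi}T^\bu_\xi$ of\/ $\aleph_1$\+presentable objects $T^\bu_\xi\in\sC$; and by the description of the\/ $\aleph_1$\+presentable objects in that theorem, every $T^\bu_\xi$ is an acyclic complex of countably presentable $R$\+modules with the terms $T^n_\xi\in\varinjlim\sS_n$.

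Next I would observe that $Z^0$ preserves\/ $\aleph_1$\+directed colimits. Directed colimits in\/ $\sC$ are computed termwise, since\/ $\sC$ is closed under\/ $\aleph_1$\+directed colimits in $\Com(\Modr R)$; and the functor $Z^0=\ker(d^0)$ commutes with directed colimits because directed colimits of $R$\+modules are exact. Therefore $M=Z^0(C^\bu)=\varinjlim_{\xi\in\Xi}Z^0(T^\bu_\xi)$ is an\/ $\aleph_1$\+directed colimit in $\Modr R$, indexed by the same\/ $\aleph_1$\+directed poset\/~$\Xi$. Each module $Z^0(T^\bu_\xi)=\coker(d^{-2}\:T^{-2}_\xi\rarrow T^{-1}_\xi)$ is countably presentable (being a cokernel of a morphism of countably presentable modules) and lies in\/~$\sM$.

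Finally, since $M$ is countably presentable, i.~e.,\/ $\aleph_1$\+presentable in $\Modr R$, the functor $\Hom_R(M,{-})$ preserves the above colimit, so the identity morphism $\id_M$ factorizes as $M\rarrow Z^0(T^\bu_\xi)\rarrow M$ for some index $\xi\in\Xi$. This exhibits $M$ as a direct summand of $N=Z^0(T^\bu_\xi)$, and taking $T^\bu=T^\bu_\xi$ completes the argument. The one substantive step is the interchange of $Z^0$ with the directed colimit, which is exactly the exactness of filtered colimits of modules; everything else is the standard retract-splitting property of\/ $\aleph_1$\+presentable objects, already used implicitly in the last assertion of Theorem~\ref{two-sided-by-lim-classes-theorem}.
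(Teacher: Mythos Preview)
Your argument is correct and follows essentially the same route as the paper's proof: write $M$ as an $\aleph_1$-directed colimit of the modules $Z^0(T^\bu_\xi)$ via Theorem~\ref{two-sided-by-lim-classes-theorem}, then use countable presentability of $M$ to split off $M$ as a retract of some $Z^0(T^\bu_\xi)$. You have simply spelled out the details (the commutation of $Z^0$ with directed colimits and the retract argument) that the paper leaves implicit.
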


\begin{proof}
 It is clear from Theorem~\ref{two-sided-by-lim-classes-theorem} that
every $R$\+module $M\in\sM$ is the colimit of an $\aleph_1$\+directed
diagram of $R$\+modules $(N_\xi)_{\xi\in\Xi}$ such that $N_\xi=
Z^0(T^\bu_\xi)$ for some acyclic complexes of countably presentable
modules $T^\bu_\xi$ with the terms $T^n_\xi\in\varinjlim\sS_n$.
 Now if $M$ is countably presentable, then it follows that there exists
$\xi\in\Xi$ such that $M$ is a direct summand of $N_\xi$.
\end{proof}

\Section{Modules of Finite Flat Dimension}

 Recall from the introduction that, already over commutative Noetherian
local rings $R$ of Krull dimension~$2$, a module of flat dimension~$1$
need not be a directed colimit of modules of projective dimension
at most~$1$ \,\cite[Example~8.5]{BH}, \cite[Theorem~B]{HG}.
 Any finitely generated $R$\+module of flat dimension~$\le1$ would, of
course, have projective dimension~$\le1$ (since finitely presentable
flat modules are always projective).
 So a module of flat dimension~$1$ \emph{need not} be a directed colimit
of finitely generated flat modules of flat dimension at most~$1$.

 By contrast, in this section we show that any module of flat
dimension~$m$ is a directed colimit of countably presentable modules
of flat dimension~$\le m$.

\begin{thm} \label{finite-flat-resolutions-theorem}
 Let $R$ be a right countably coherent ring and $m\ge0$ be an integer.
 Then the category of all exact sequences of flat right $R$\+modules\/
$0\rarrow F_m\rarrow F_{m-1}\rarrow\dotsb\rarrow F_0$ is\/
$\aleph_1$\+accessible.
 The\/ $\aleph_1$\+presentable objects of this category are precisely
all the exact sequences\/ $0\rarrow T_m\rarrow T_{m-1}\rarrow\dotsb
\rarrow T_0$ with countably presentable flat right $R$\+modules $T_n$,
\,$0\le n\le m$.
 Consequently, every finite flat resolution\/ $0\rarrow F_m\rarrow
F_{m-1}\rarrow\dotsb\rarrow F_0$ in\/ $\Modr R$ is
an\/ $\aleph_1$\+directed colimit of finite flat resolutions\/
$0\rarrow T_m\rarrow T_{m-1}\rarrow\dotsb\rarrow T_0$ of the same
length~$m$ with countably presentable flat modules~$T_n$,
\,$0\le n\le m$.
\end{thm}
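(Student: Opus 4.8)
The plan is to realize the category in question as a pseudopullback and apply Theorem~\ref{pseudopullback-theorem}, in close analogy with Proposition~\ref{two-sided-by-accessible-classes-prop}; throughout I take $\kappa=\aleph_1$ and $\lambda=\aleph_0$. First I would construct the auxiliary category $\sK_1$ of \emph{all} exact sequences of arbitrary right $R$\+modules $0\rarrow A_m\rarrow\dotsb\rarrow A_0$, and show it is $\aleph_1$\+accessible, with $\aleph_1$\+presentable objects the exact sequences of countably presentable modules, and admitting colimits of countable chains. Such an exact sequence decomposes into the short exact sequences $0\rarrow B_{i+1}\rarrow A_i\rarrow B_i\rarrow0$ for $1\le i\le m$, where $B_i\subseteq A_{i-1}$ is the image of $A_i\rarrow A_{i-1}$, with $B_{m+1}=0$ and the bottom cokernel $B_0=A_0/B_1$ left unconstrained; the gluing identifies the subobject $B_{i+1}$ of the $i$\+th sequence with the quotient $B_{i+1}$ of the $(i+1)$\+st. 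Mirroring the proof of Proposition~\ref{acyclic-complexes-prop}, I would assemble $\sK_1$ from $m$ copies of the category of short exact sequences (Corollary~\ref{short-exact-sequences-cor}) by means of the Cartesian product (Proposition~\ref{product-proposition}) and the isomorpher construction (Theorem~\ref{isomorpher-theorem} with $\lambda=\aleph_0$); the only differences from the acyclic case are that the chain of short exact sequences is finite and that its two outer ends are truncated (the top kernel is $0$, the bottom cokernel free). This is precisely the step where right countable coherence is used: it enters through Corollary~\ref{short-exact-sequences-cor} to guarantee that the $\aleph_1$\+presentable short exact sequences are exactly the short exact sequences of countably presentable modules, so that the images $B_i$ remain countably presentable and the description of the $\aleph_1$\+presentable objects of $\sK_1$ comes out correctly.

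Next I would treat the flat terms. By the Govorov--Lazard theorem the class $\sF$ of flat right $R$\+modules equals $\varinjlim\sS$ for $\sS$ the set of finitely generated free modules; taking $\sT$ to be the class of direct summands of countable directed colimits of modules from $\sS$ (as in the proof of Lemma~\ref{kappa-presentable-directed-colimit-lemma} and Theorem~\ref{two-sided-by-lim-classes-theorem}), one has $\sF=\varinjlim_{(\aleph_1)}\sT$. Hence, by Proposition~\ref{accessible-subcategory}, the category $\sF$ is $\aleph_1$\+accessible and its $\aleph_1$\+presentable objects are exactly the countably presentable flat modules; moreover $\sF$ is closed under directed colimits in $\Modr R$, in particular under colimits of countable chains. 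By Proposition~\ref{product-proposition} the finite product $\sK_2=\prod_{n=0}^m\sF$ is then $\aleph_1$\+accessible with the expected presentable objects and inherits colimits of countable chains.

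Finally I would form the pseudopullback of the forgetful functor $\Phi_1\:\sK_1\rarrow\sL$, \ $(A_\bu)\mapsto(A_n)_{0\le n\le m}$, and the inclusion $\Phi_2\:\sK_2\rarrow\sL$, where $\sL=\prod_{n=0}^m\Modr R$. A triple $(K_1,K_2,\theta)$ amounts to an exact sequence $0\rarrow A_m\rarrow\dotsb\rarrow A_0$ together with an isomorphism of each term $A_n$ onto a flat module, i.e.\ to an exact sequence of flat modules; so this pseudopullback is equivalent to the category $\sC$ in the statement. The three categories $\sK_1$, $\sK_2$, $\sL$ are $\aleph_1$\+accessible and admit colimits of countable chains, while both functors preserve $\aleph_1$\+directed colimits and colimits of countable chains (these are computed termwise in module categories, where directed colimits are exact and flats are closed under them) and send $\aleph_1$\+presentable objects to $\aleph_1$\+presentable objects. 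Theorem~\ref{pseudopullback-theorem} then yields that $\sC$ is $\aleph_1$\+accessible and that its $\aleph_1$\+presentable objects are the triples with both $K_1$ and $K_2$ presentable, that is, the exact sequences $0\rarrow T_m\rarrow\dotsb\rarrow T_0$ whose terms are simultaneously countably presentable and flat. The concluding assertion is immediate: in any $\aleph_1$\+accessible category every object is an $\aleph_1$\+directed colimit of $\aleph_1$\+presentable objects, and the colimit is formed inside the fixed length\+$m$ category, so the length $m$ is preserved. The main obstacle, as indicated, is the bookkeeping for $\sK_1$ --- checking that the isomorpher/product assembly of finitely many short exact sequence categories, with truncated ends, really does reproduce the exact\+sequence category together with the correct class of $\aleph_1$\+presentable objects; everything else is a direct application of the cited results.
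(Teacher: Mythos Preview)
Your proposal is correct and follows the same underlying strategy as the paper --- a pseudopullback of the category of exact sequences against a product of copies of the category of flat modules, via Theorem~\ref{pseudopullback-theorem} --- but the paper packages it more economically. Rather than rebuilding the finite-length exact-sequence category $\sK_1$ by hand from short exact sequences, the paper simply invokes Theorem~\ref{two-sided-by-lim-classes-theorem} with the choice $\sS_n=\{\text{fin.\ gen.\ projectives}\}$ for $-m-1\le n\le-1$, $\sS_0=\{\text{all fin.\ presentable modules}\}$, and $\sS_n=\{0\}$ otherwise. The point is that an exact sequence $0\to F_m\to\dotsb\to F_0$ is the same thing as an unbounded acyclic complex that is zero in degrees $\le -m-2$ and $\ge 1$, has flat terms in degrees $-m-1,\dotsc,-1$, and has an arbitrary module (the cokernel) in degree~$0$; this encodes your ``truncated ends'' automatically. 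So the paper's proof is a one-line specialization, whereas yours unpacks that specialization and redoes the bookkeeping (your acknowledged ``main obstacle'') that Theorem~\ref{two-sided-by-lim-classes-theorem} and Proposition~\ref{acyclic-complexes-prop} already handle. Both routes use countable coherence at exactly the same place, namely Corollary~\ref{short-exact-sequences-cor}.
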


\begin{proof}
 This is a particular case of
Theorem~\ref{two-sided-by-lim-classes-theorem}.
 Take $\sS_n$ to be the set of all finitely generated projective
(or free) right $R$\+modules for all $-m-1\le n\le-1$, \ $\sS_0$ to be
the set of all finitely presentable right $R$\+modules, and
$\sS_n=\{0\}$ for all $n\le-m-2$ and all $n\ge1$.
 Then $\varinjlim\sS_n$ is the class of all flat right $R$\+modules
for all $-m-1\le n\le-1$, \ $\varinjlim\sS_0=\Modr R$, and
$\varinjlim\sS_n=\{0\}$ for all $n\le-m-2$ and all $n\ge1$.
 So the category $\sC$ from
Theorem~\ref{two-sided-by-lim-classes-theorem} is equivalent to
the category of finite flat resolutions we are interested in.
\end{proof}

\begin{cor} \label{finite-flat-dimension-cor}
 Let $R$ be a right countably coherent ring and $m\ge0$ be an integer.
 Denote by\/ $\sF_m\subset\Modr R$ the full subcategory of all right
$R$\+modules of flat dimension at most~$m$.
 Then the category\/ $\sF_m$ is\/ $\aleph_1$\+accessible.
 The\/ $\aleph_1$\+presentable objects of\/ $\sF_m$ are precisely all
the countably presentable right $R$\+modules of flat dimension\/~$\le m$
(i.~e., those modules from\/ $\sF_m$ that are countably presentable in\/
$\Modr R$).
 So every right $R$\+module of flat dimension~$m$ is
an\/ $\aleph_1$\+directed colimit of countably presentable $R$\+modules
of flat dimension at most~$m$.
\end{cor}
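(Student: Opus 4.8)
The plan is to realize $\sF_m$ as a directed-colimit closure $\varinjlim_{(\aleph_1)}\sT$ for a suitable set $\sT$ of countably presentable modules and then read off everything from Proposition~\ref{accessible-subcategory}. Concretely, I would let $\sT$ be the (essentially small) set of all countably presentable right $R$\+modules of flat dimension $\le m$; these are $\aleph_1$\+presentable objects of the locally $\aleph_1$\+presentable category $\Modr R$. The whole corollary then reduces to the single claim that $\sF_m=\varinjlim_{(\aleph_1)}\sT$ as full subcategories of $\Modr R$.

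First I would establish the inclusion $\varinjlim_{(\aleph_1)}\sT\subset\sF_m$ by recording that $\sF_m$ is closed under arbitrary directed colimits in $\Modr R$. Indeed, a module $N$ lies in $\sF_m$ precisely when $\Tor^R_{m+1}(N,{-})=0$, and since $\Tor$ commutes with directed colimits in its first argument, any directed colimit of modules from $\sF_m$ again has vanishing $\Tor_{m+1}$. In particular $\sF_m$ is closed under $\aleph_1$\+directed colimits and hence contains $\varinjlim_{(\aleph_1)}\sT$.

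For the reverse inclusion $\sF_m\subset\varinjlim_{(\aleph_1)}\sT$ I would invoke Theorem~\ref{finite-flat-resolutions-theorem}. Given $M\in\sF_m$, truncating a flat resolution produces an exact sequence of flat modules $0\rarrow F_m\rarrow\dotsb\rarrow F_0$ with $M\simeq\coker(F_1\to F_0)$. By the concluding assertion of that theorem, this sequence is an $\aleph_1$\+directed colimit of exact sequences $0\rarrow T^\xi_m\rarrow\dotsb\rarrow T^\xi_0$ of countably presentable flat modules. Applying the degree-$0$ cokernel functor, which commutes with directed colimits, I obtain $M\simeq\varinjlim_\xi M_\xi$ with $M_\xi=\coker(T^\xi_1\to T^\xi_0)$; each $M_\xi$ is countably presentable (a cokernel of a morphism of countably presentable modules) and has flat dimension $\le m$ (the truncated sequence is a length-$m$ flat resolution of it), so $M_\xi\in\sT$ and $M\in\varinjlim_{(\aleph_1)}\sT$.

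With the identity $\sF_m=\varinjlim_{(\aleph_1)}\sT$ secured, Proposition~\ref{accessible-subcategory} with $\kappa=\aleph_1$ and $\sK=\Modr R$ immediately yields that $\sF_m$ is $\aleph_1$\+accessible and that its $\aleph_1$\+presentable objects are exactly the retracts of objects of $\sT$; since a direct summand of a countably presentable module of flat dimension $\le m$ is again countably presentable of flat dimension $\le m$, these retracts are precisely the countably presentable modules of flat dimension $\le m$, and the final colimit statement is then just the definition of $\aleph_1$\+accessibility. I do not expect a serious obstacle here, since the substantial work is already done in Theorem~\ref{finite-flat-resolutions-theorem}; the only points demanding care are the $\Tor$\+based directed-colimit closure of $\sF_m$ and the bookkeeping that passing to cokernels preserves countable presentability.
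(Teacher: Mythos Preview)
Your proposal is correct and follows essentially the same approach as the paper: both arguments note that $\sF_m$ is closed under directed colimits via $\Tor$, invoke Theorem~\ref{finite-flat-resolutions-theorem} to exhibit each $M\in\sF_m$ as an $\aleph_1$\+directed colimit of cokernels of morphisms of countably presentable flat modules (which are countably presentable of flat dimension~$\le m$), and then apply Proposition~\ref{accessible-subcategory}. Your write-up is more explicit about the set $\sT$ and the two inclusions, but the substance is identical.
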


\begin{proof}
 It is easy to see that the full subcategory $\sF_m$ is closed under
directed colimits in $\Modr R$ (since the functor Tor preserves
directed colimits).
 In view of Proposition~\ref{accessible-subcategory}, it suffices to
check the last assertion of the corollary, which follows immediately
from Theorem~\ref{finite-flat-resolutions-theorem}.
 Notice that the cokernel of any morphism of countably presentable
modules is countably presentable.
\end{proof}

\begin{cor}
 Let $R$ be a right countably coherent ring and $m\ge0$ be an integer.
 Then every right $R$\+module of flat dimension~$m$ is
an\/ $\aleph_1$\+directed colimit of countably presentable right
$R$\+modules of projective dimension at most~$m+1$.
\end{cor}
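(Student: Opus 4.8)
The plan is to combine the previous corollary with a cardinality bound on the projective dimension of countably presentable flat modules. By Corollary~\ref{finite-flat-dimension-cor}, any right $R$\+module $M$ of flat dimension~$m$ is an $\aleph_1$\+directed colimit $M=\varinjlim_{\xi\in\Xi}M_\xi$, where each $M_\xi$ is a countably presentable $R$\+module of flat dimension at most~$m$. It therefore suffices to show that each such $M_\xi$ has projective dimension at most~$m+1$; then the same directed colimit presentation witnesses the claim.

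First I would reduce the projective dimension estimate to a statement about flat modules. Fix a countably presentable $M_\xi$ of flat dimension $\le m$, and choose a (possibly finite) flat resolution $0\rarrow F_m\rarrow F_{m-1}\rarrow\dotsb\rarrow F_0\rarrow M_\xi\rarrow0$. Since $M_\xi$ is countably presentable, I can arrange that all the syzygies and hence all the terms $F_n$ are \emph{countably presentable} flat modules: the kernel of a map between countably presentable modules is again countably presentable over a countably coherent ring (this is exactly the $\aleph_1$\+coherence hypothesis, as used in the proof of Corollary~\ref{finite-flat-dimension-cor} via Corollary~\ref{short-exact-sequences-cor}). Concretely, resolving $M_\xi$ step by step by finitely/countably generated free modules, the $<\!\aleph_1$\+coherence of~$R$ keeps every syzygy countably presentable, and flatness of $M_\xi$ through dimension~$m$ makes the $m$\+th syzygy $F_m$ flat.

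Next I would invoke Corollary~\ref{aleph-m-presentable-flat-module-cor}: every countably presentable (that is, $\aleph_1$\+presentable) flat $R$\+module has projective dimension at most~$1$. Applying this to $F_m$, it has projective dimension $\le1$. Splicing a projective resolution of $F_m$ of length~$1$ onto the flat resolution above yields a projective resolution of $M_\xi$ of length at most $m+1$, so the projective dimension of $M_\xi$ is at most~$m+1$. Substituting these projective-dimension bounds back into the $\aleph_1$\+directed colimit presentation of $M$ completes the argument.

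The main obstacle is the bookkeeping that guarantees the flat resolution of $M_\xi$ can be taken with \emph{countably presentable} terms, since Corollary~\ref{aleph-m-presentable-flat-module-cor} applies only to $\aleph_1$\+presentable flat modules. This is where the countable coherence of~$R$ is essential: it ensures that syzygies of countably presentable modules remain countably presentable, so that the terminal flat syzygy $F_m$ is itself countably presentable and the projective-dimension-one bound applies. Everything else is a routine splice of resolutions and an appeal to the already-established accessibility statement.
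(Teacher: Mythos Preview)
Your argument is correct and follows exactly the paper's approach: invoke Corollary~\ref{finite-flat-dimension-cor}, then show that a countably presentable module of flat dimension $\le m$ has projective dimension $\le m+1$ by building a resolution with countably presentable terms (using countable coherence) and applying Corollary~\ref{aleph-m-presentable-flat-module-cor} to the flat $m$\+th syzygy. The paper's proof is simply a terse two-sentence version of what you wrote out in detail.
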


\begin{proof}
 This is a corollary of Corollary~\ref{finite-flat-dimension-cor}.
 The point is that any countably presentable right $R$\+module of
flat dimension~$\le m$ has projective dimension~$\le m+1$, since any
countably presentable flat module has projective dimension at most~$1$
(by~\cite[Corollary~2.23]{GT}
or Corollary~\ref{aleph-m-presentable-flat-module-cor} above).
\end{proof}

 Notice that Corollary~\ref{finite-flat-dimension-cor} (or more
generally, Proposition~\ref{two-sided-by-accessible-classes-prop})
gives a better cardinality estimate for the accessibility rank of
the category of $R$\+modules of flat dimension~$\le m$ than
Proposition~\ref{deconstructible-as-directed-colimit} combined with
Proposition~\ref{enochs-slavik-trlifaj}(b).

\Section{Flatly Coresolved Modules}
\label{flatly-coresolved-secn}

 By a \emph{flat coresolution} we mean an exact sequence $F^0\rarrow
F^1\rarrow F^2\rarrow\dotsb$, where $F^n$ are flat modules for all
$n\ge0$.
 An $R$\+module $M$ is said to be \emph{flatly coresolved} if
there exists an exact sequence of $R$\+modules $0\rarrow M\rarrow
F^0\rarrow F^1\rarrow F^2\rarrow\dotsb$ with flat $R$\+modules~$F^n$.

\begin{thm} \label{flat-coresolutions-theorem}
 Let $R$ be a right countably coherent ring.
 Then the category of flat coresolutions $F^0\rarrow F^1\rarrow F^2
\rarrow\dotsb$ in $\Modr R$ is\/ $\aleph_1$\+accessible.
 The\/ $\aleph_1$\+presentable objects of this category are precisely
all the flat coresolutions $T^0\rarrow T^1\rarrow T^2\rarrow\dotsb$
with countably presentable flat right $R$\+modules $T^n$, \,$n\ge0$.
 Consequently, every flat coresolution $F^0\rarrow F^1\rarrow F^2
\rarrow\dotsb$ in $\Modr R$ is an\/ $\aleph_1$\+directed colimit of
flat coresolutions $T^0\rarrow T^1\rarrow T^2\rarrow\dotsb$
with countably presentable flat $R$\+modules $T^n$, \,$n\ge0$.
\end{thm}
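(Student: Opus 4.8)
The plan is to derive this theorem from Theorem~\ref{two-sided-by-lim-classes-theorem}, in exactly the way that Theorem~\ref{finite-flat-resolutions-theorem} was derived from it, but with the sequence $(\sS_n)_{n\in\boZ}$ chosen so that the resulting acyclic complexes encode flat coresolutions. First I would take $\sS_{-1}$ to be the set of all finitely presentable right $R$\+modules, take $\sS_n$ to be the set of all finitely generated free (equivalently, projective) right $R$\+modules for every $n\ge0$, and set $\sS_n=\{0\}$ for all $n\le-2$. Then $\varinjlim\sS_{-1}=\Modr R$ (every module is a directed colimit of finitely presentable ones), while $\varinjlim\sS_n$ is the class of all flat modules for $n\ge0$ and is $\{0\}$ for $n\le-2$. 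With these choices, the category $\sC$ produced by Theorem~\ref{two-sided-by-lim-classes-theorem} is the category of acyclic complexes $\dotsb\rarrow0\rarrow C^{-1}\rarrow C^0\rarrow C^1\rarrow\dotsb$ with an arbitrary module $C^{-1}$ in degree~$-1$ and flat modules $C^n$ in all degrees $n\ge0$.

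The next step is to identify $\sC$ with the category of flat coresolutions. Acyclicity of such a complex says precisely that $C^{-1}\rarrow C^0$ is a monomorphism with image $\ker(C^0\rarrow C^1)$, while $C^0\rarrow C^1\rarrow C^2\rarrow\dotsb$ is exact at every term of positive degree; conversely, a flat coresolution $C^0\rarrow C^1\rarrow\dotsb$ determines the kernel term $C^{-1}=\ker(C^0\rarrow C^1)$ uniquely. I would therefore verify that the functor forgetting the degree~$-1$ term is an equivalence between $\sC$ and the category of flat coresolutions; full faithfulness is immediate, since the degree~$-1$ component of any morphism is just the restriction of its degree~$0$ component to the kernels, and essential surjectivity amounts to adjoining the kernel. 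Transporting the $\aleph_1$\+accessibility statement from Theorem~\ref{two-sided-by-lim-classes-theorem} across this equivalence then gives the first assertion, and the $\aleph_1$\+directed colimit statement follows once the $\aleph_1$\+presentable objects have been identified.

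The point that I expect to require genuine care is precisely this identification of the $\aleph_1$\+presentable objects. Theorem~\ref{two-sided-by-lim-classes-theorem} describes the $\aleph_1$\+presentable objects of $\sC$ as the acyclic complexes all of whose terms are countably presentable, so under the equivalence these correspond to flat coresolutions with countably presentable flat terms $T^n$ ($n\ge0$) \emph{together with} a countably presentable kernel $T^{-1}=\ker(T^0\rarrow T^1)$. To reach the statement as formulated, I must check that the condition on the kernel is automatic. This is exactly where the countable coherence of $R$ enters (over and above its use inside Theorem~\ref{two-sided-by-lim-classes-theorem}): the image of $T^0\rarrow T^1$ is a countably generated submodule of the countably presentable module $T^1$, hence countably presentable by $<\aleph_1$\+coherence; and then $T^{-1}$, being the kernel of a surjection from the countably generated module $T^0$ onto a countably presentable module, is itself countably generated, whence a countably generated submodule of the countably presentable module $T^0$, and so countably presentable again. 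I would highlight the contrast with Theorem~\ref{finite-flat-resolutions-theorem}, where the analogous extra term was a \emph{cokernel} and was automatically countably presentable over an arbitrary ring; here the extra term is a \emph{kernel}, which is the one place where countable coherence cannot be omitted.
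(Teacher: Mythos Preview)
Your proposal is correct and takes essentially the same approach as the paper, with the identical choice of $(\sS_n)_{n\in\boZ}$. You are actually more thorough than the paper here: the paper's proof simply asserts the equivalence of $\sC$ with the category of flat coresolutions and does not explicitly verify that the kernel term $T^{-1}$ is automatically countably presentable, deferring that observation about kernels over a countably coherent ring to the proof of Corollary~\ref{flatly-coresolved-modules-cor}.
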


\begin{proof}
 This is another particular case of
Theorem~\ref{two-sided-by-lim-classes-theorem}.
 Take $\sS_n$ to be the set of all finitely generated projective
(or free) right $R$\+modules for all $n\ge0$, \ $\sS_{-1}$ to be
the set of all finitely presentable right $R$\+modules, and
$\sS_n=\{0\}$ for all $n\le-2$.
 Then $\varinjlim\sS_n$ is the class of all flat right $R$\+modules for
all $n\ge0$, \ $\varinjlim\sS_{-1}=\Modr R$, and $\varinjlim\sS_n=0$
for all $n\le-2$.
 So the category $\sC$ from
Theorem~\ref{two-sided-by-lim-classes-theorem} is equivalent to
the category of flat coresolutions we are interested in.
\end{proof}

\begin{cor} \label{flatly-coresolved-modules-cor}
 Let $R$ be a right countably coherent ring.
 Then any flatly coresolved right $R$\+module is
an\/ $\aleph_1$\+directed colimit of countably presentable
flatly coresolved right $R$\+modules.
\end{cor}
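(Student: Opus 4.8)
The plan is to recognize this corollary as a direct instance of Corollary~\ref{two-sided-by-lim-classes-cor}, applied to the very same sequence of sets $(\sS_n)_{n\in\boZ}$ that was used in the proof of Theorem~\ref{flat-coresolutions-theorem}. Thus I would take $\sS_n$ to be the set of all finitely generated free right $R$-modules for $n\ge0$, take $\sS_{-1}$ to be the set of all finitely presentable right $R$-modules, and put $\sS_n=\{0\}$ for $n\le-2$, so that $\varinjlim\sS_n$ is the class of all flat modules for $n\ge0$, equals $\Modr R$ for $n=-1$, and is zero otherwise. Let $\sM$ denote the associated class from Corollary~\ref{two-sided-by-lim-classes-cor}, namely the modules of the form $Z^0(C^\bu)$ for an acyclic complex $C^\bu$ with $C^n\in\varinjlim\sS_n$ for all $n$.

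The key step is to identify $\sM$ with the class of flatly coresolved right $R$-modules. An acyclic complex $C^\bu$ of the above shape is concentrated in degrees $\ge-1$, has flat terms in nonnegative degrees, and an arbitrary term $C^{-1}$; acyclicity forces the map $C^{-1}\to C^0$ to be injective, so that $0\to C^{-1}\to C^0\to C^1\to\dotsb$ is exact with flat terms in nonnegative degrees. Since $Z^0(C^\bu)=\coker(d^{-2})=C^{-1}$, because $C^{-2}=0$, the module $Z^0(C^\bu)$ is precisely $C^{-1}$, and it is visibly flatly coresolved. Conversely, any flatly coresolved module $M$, equipped with a chosen exact sequence $0\to M\to F^0\to F^1\to\dotsb$, arises in this way by setting $C^{-1}=M$ and $C^n=F^n$ for $n\ge0$. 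Hence $\sM$ is exactly the class of flatly coresolved right $R$-modules, and Corollary~\ref{two-sided-by-lim-classes-cor} then yields at once that every such module is an $\aleph_1$-directed colimit of countably presentable modules of the same kind, which is the assertion to be proved.

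The conceptual subtlety worth flagging — and the reason this argument cannot simply mimic the proof of Corollary~\ref{finite-flat-dimension-cor} — is that the class of flatly coresolved modules is \emph{not} known to be closed under $\aleph_1$-directed colimits in $\Modr R$, so Proposition~\ref{accessible-subcategory} is unavailable and one should make no claim that the flatly coresolved modules form an accessible subcategory. What makes the plan succeed is precisely that Corollary~\ref{two-sided-by-lim-classes-cor} furnishes the colimit representation \emph{without} any closedness hypothesis: it rests only on the last assertion of Theorem~\ref{flat-coresolutions-theorem}, together with the observation (over a countably coherent ring) that the cocycle modules of an acyclic complex of countably presentable modules are again countably presentable. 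I therefore expect the whole difficulty to have been absorbed into the earlier results, leaving the present corollary as a clean specialization.
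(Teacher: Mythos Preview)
Your proposal is correct and follows essentially the same approach as the paper: the paper likewise identifies this corollary as a particular case of Corollary~\ref{two-sided-by-lim-classes-cor} (equivalently, as a consequence of Theorem~\ref{flat-coresolutions-theorem}), and it makes exactly the same point you do about there being \emph{no} closedness claim under $\aleph_1$\+directed colimits. The one observation the paper singles out explicitly---that over a right countably coherent ring the kernel of a morphism of countably presentable modules is countably presentable---is precisely the fact underlying your remark that cocycles in an acyclic complex of countably presentable modules are again countably presentable.
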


\begin{proof}
 This is a corollary of Theorem~\ref{flat-coresolutions-theorem} and
a particular case of Corollary~\ref{two-sided-by-lim-classes-cor}.
 Let us emphasize once again that there is \emph{no} claim about
the class of flatly coresolved modules being closed under
$\aleph_1$\+directed colimits in this corollary.
 To deduce the assertion of the corollary from
Theorem~\ref{flat-coresolutions-theorem}, one needs to observe
that the kernel of any morphism of countably presentable right modules
over a right countably coherent ring $R$ is countably presentable.
\end{proof}

\begin{cor} \label{flat-coresolutions-direct-summands}
 Let $R$ be a right countably coherent ring and $M$ be a countably
presentable flatly coresolved right $R$\+module.
 Then $M$ is a direct summand of an $R$\+module $N$ admitting a flat
coresolution\/ $0\rarrow N\rarrow T^0\rarrow T^1\rarrow T^2\rarrow
\dotsb$ with countably presentable flat right $R$\+modules~$T^n$,
\,$n\ge0$.
\end{cor}

\begin{proof}
 This is a corollary of Theorem~\ref{flat-coresolutions-theorem}
and a particular case of
Corollary~\ref{two-sided-by-lim-classes-direct-summand}.
\end{proof}

\Section{Dualizing Complexes and F-Totally Acyclic Complexes}
\label{dualizing-and-F-totally-acyclic-secn}

 An acyclic complex of flat right $R$\+modules $F^\bu$ is said to be
\emph{F\+totally acyclic}~\cite[Section~2]{EFI} if the complex of
abelian groups $F^\bu\ot_RJ$ is acyclic for every injective left
$R$\+module~$J$.
 A right $R$\+module $M$ is said to be \emph{Gorenstein-flat} if
there exists an F\+totally acyclic complex of flat right $R$\+modules
$F^\bu$ such that $M\simeq Z^0(F^\bu)$ is its module of cocycles.

 The aim of the following four sections,
Sections~\ref{dualizing-and-F-totally-acyclic-secn}\+-%
\ref{gorenstein-flats-secn}, is to establish some accessibility
properties of the categories of F\+totally acyclic complexes and
Gorenstein-flat modules, under suitable assumptions.
 In particular, the present section is purported to prepare
ground for the next one.

 We recall that a right $R$\+module $K$ is said to be
\emph{fp\+injective} (or ``absolutely pure'') if $\Ext^1_R(T,K)=0$
for all finitely presentable right $R$\+modules~$T$.
 The definition of an fp\+injective left module is similar.

 Given an abelian category $\sA$, we denote by $\sD^\bb(\sA)$
the bounded derived category of~$\sA$.
 A bounded complex of left $R$\+modules $K^\bu$ is said to have
\emph{finite fp\+injective dimension} if it is isomorphic, as an object
of the derived category of left $R$\+modules $\sD^\bb(R\Modl)$,
to a bounded complex of fp\+injective left $R$\+modules.

 Let $R$ and $S$ be two associative rings.
 Assume that the ring $S$ is right coherent.
 In the following definition we list some properties of a complex
of $R$\+$S$\+bimodules $D^\bu$ that are relevant for the purposes
of this section.
 For comparison, see the definition of a \emph{dualizing complex}
in~\cite[Section~4]{Pfp}.

 We will say that a bounded complex of $R$\+$S$\+bimodules $D^\bu$ is
a \emph{right dualizing complex} for the rings $R$ and $S$ if it
satisfies the following conditions:
\begin{enumerate}
\renewcommand{\theenumi}{\roman{enumi}}
\item the terms of the complex $D^\bu$ are fp\+injective as right
$S$\+modules, and the whole complex $D^\bu$ has finite fp\+injective
dimension as a complex of left $R$\+modules;
\item the $R$\+$S$\+bimodules of cohomology of the complex $D^\bu$
are finitely presentable as right $S$\+modules;
\item the homothety map $R\rarrow
\Hom_{\sD^\bb(\Modr S)}(D^\bu,D^\bu[*])$ is an isomorphism of
graded rings.
\end{enumerate}

 The following proposition is the main result of this section.
 It is our version of~\cite[Lemma~1.7]{Jor}.

\begin{prop} \label{F-totally-acyclics-described-by-dualizing}
 Let $R$ be a ring, $S$ be a right coherent ring, and $D^\bu$ be
a right dualizing complex of $R$\+$S$\+bimodules.
 Then an acyclic complex of flat right $R$\+modules $F^\bu$ is
F\+totally acyclic if and only if the complex of right $S$\+modules
$F^\bu\ot_RD^\bu$ is acyclic.
\end{prop}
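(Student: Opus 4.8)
The plan is to interpret the statement through the Foxby-style adjunction attached to the bimodule complex $D^\bu$, namely the adjoint pair $D^\bu\ot_S^{\mathbf L}(-)\colon\sD(S\Modl)\rightleftarrows\sD(R\Modl)\colon\mathbf R\Hom_R(D^\bu,-)$, and to transport the acyclicity of the functor $F^\bu\ot_R(-)$ between the complex $D^\bu$ and the injective left $R$\+modules. Two preliminary reductions are common to both directions. First, since each $F^n$ is flat, the functor $N\mapsto F^\bu\ot_RN$ is exact, and since $D^\bu$ is bounded the two hypercohomology spectral sequences of the bicomplex $F^p\ot_RD^q$ converge (in each total degree only finitely many $(p,q)$ contribute); in particular $F^\bu\ot_RD^\bu$ computes $F^\bu\ot_R^{\mathbf L}D^\bu$ and may be manipulated in the derived category. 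Second, F\+total acyclicity says precisely that $F^\bu\ot_R(-)$ kills injective left $R$\+modules; because a module of finite injective dimension has a bounded injective coresolution and $F^\bu$ is flat, an easy d\'evissage by finitely many short exact sequences of complexes shows that $F^\bu\ot_R(-)$ then also kills every left $R$\+module of finite injective dimension.

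For the ``if'' direction I would argue as follows. The hypotheses (i)--(iii) on a right dualizing complex are exactly what is needed to place the injective left $R$\+modules in the Bass class of the adjunction: for injective $J$ the counit $D^\bu\ot_S^{\mathbf L}\mathbf R\Hom_R(D^\bu,J)\rarrow J$ is an isomorphism in $\sD(R\Modl)$, where the homothety condition (iii) supplies the nondegeneracy and conditions (i)--(ii) guarantee the requisite boundedness. Granting this, associativity of the derived tensor product yields
\[
 F^\bu\ot_R^{\mathbf L}J\;\simeq\;F^\bu\ot_R^{\mathbf L}\bigl(D^\bu\ot_S^{\mathbf L}\mathbf R\Hom_R(D^\bu,J)\bigr)\;\simeq\;(F^\bu\ot_R^{\mathbf L}D^\bu)\ot_S^{\mathbf L}\mathbf R\Hom_R(D^\bu,J).
\]
Since $F^\bu\ot_RD^\bu$ is acyclic by assumption, the right-hand side vanishes in the derived category, so $F^\bu\ot_RJ$ is acyclic for every injective $J$; that is, $F^\bu$ is F\+totally acyclic.

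For the ``only if'' direction I would use the second spectral sequence of the bicomplex, the one that takes cohomology in the $D$\+direction first. As $F^p$ is flat it has $E_1^{p,q}=F^p\ot_RH^q(D^\bu)$ and hence $E_2^{p,q}=H^p\bigl(F^\bu\ot_RH^q(D^\bu)\bigr)$; by boundedness it converges to the cohomology of $F^\bu\ot_RD^\bu$. Thus it suffices to prove that $F^\bu\ot_RH^q(D^\bu)$ is acyclic for each of the finitely many cohomology bimodules $H^q(D^\bu)$. Here is where the finiteness hypotheses must be combined: each $H^q(D^\bu)$ is finitely presented as a right $S$\+module by (ii), and the whole complex has finite fp\+injective dimension over $R$ by (i). The aim is to upgrade this to the statement that $H^q(D^\bu)$ has \emph{finite injective dimension} as a left $R$\+module, at which point the second reduction above finishes the proof.

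The main obstacle is exactly this last upgrade, and it is the heart of the matter. Over a left Noetherian ring fp\+injective and injective coincide, so finite fp\+injective dimension is finite injective dimension and there is nothing to do; but $R$ is arbitrary here, and in general F\+total acyclicity does \emph{not} force $F^\bu\ot_RK$ to be acyclic for an individual fp\+injective module $K$ (a pure-exact d\'evissage $0\rarrow K\rarrow E\rarrow E/K\rarrow0$ with $E$ injective only shifts the cohomology and never terminates). The finiteness of $H^q(D^\bu)$ over the \emph{coherent} ring $S$, together with the homothety isomorphism (iii) linking $R$ to the derived endomorphisms of $D^\bu$ over $S$, is precisely the extra input that bridges the two rings and rescues the argument; equivalently, after applying $\Hom_\boZ(-,\boQ/\boZ)$ one reduces ``only if'' to showing that a totally acyclic complex of injective left $R$\+modules is $\Hom_R$\+acyclic against the cohomology of $D^\bu$, and the finiteness conditions are what make this true. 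I expect establishing this finite-injective-dimension (or $\Hom$\+acyclicity) statement to be the technical core, with everything else reducing to the formal adjunction and spectral-sequence bookkeeping above.
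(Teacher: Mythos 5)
Your proposal has genuine gaps in both directions, and one of its foundational claims is false.

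The false claim is in your first ``preliminary reduction'': that $F^\bu\ot_RD^\bu$ computes $F^\bu\ot_R^{\mathbf{L}}D^\bu$. Since $F^\bu$ is an unbounded \emph{acyclic} complex, it is the zero object of the derived category, so every derived tensor product with it vanishes identically; a termwise flat unbounded complex need not be K\+flat, and the discrepancy between the (identically zero) functor $F^\bu\ot_R^{\mathbf{L}}({-})$ and the underived functor $F^\bu\ot_R({-})$ is exactly what the proposition is about. If your claim were true, every acyclic complex of flats would be F\+totally acyclic, which is false. Consequently your ``if'' argument is vacuous: the chain $F^\bu\ot_R^{\mathbf{L}}J\simeq(F^\bu\ot_R^{\mathbf{L}}D^\bu)\ot_S^{\mathbf{L}}\mathbf{R}\Hom_R(D^\bu,J)$ reads $0\simeq0$ and yields no information about the underived complex $F^\bu\ot_RJ$. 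The underlying idea (evaluation quasi-isomorphism plus associativity) is indeed the paper's, but it must be run on underived total complexes, and then the decisive step --- tensoring the acyclic complex $F^\bu\ot_RD^\bu$ over $S$ with $\Hom_R(D^\bu,J)$ preserves acyclicity --- requires knowing that $\Hom_R(D^\bu,J)$ is a bounded complex of \emph{flat} left $S$\+modules. That is Lemma~\ref{Hom-module-flat}, and it is precisely where the right coherence of $S$ and the fp\+injectivity of the terms of $D^\bu$ over $S$ enter; your proposal never invokes these hypotheses, which signals that the argument as written cannot be complete.

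In the ``only if'' direction you have no proof: your spectral-sequence reduction to the acyclicity of $F^\bu\ot_RH^q(D^\bu)$ is legitimate but heads into a dead end, since condition~(i) concerns the complex $D^\bu$ as a whole and does not give the individual cohomology bimodules $H^q(D^\bu)$ finite (fp\+)injective dimension over~$R$; you then admit you cannot close the gap. Worse, your assertion that F\+total acyclicity does \emph{not} force $F^\bu\ot_RK$ to be acyclic for an fp\+injective module $K$ is wrong, and the statement you dismiss is exactly the paper's key lemma (Lemma~\ref{tensor-with-fp-injective-module}). The argument is not the long-exact-sequence d\'evissage you tried, but a purity/splitting trick: $K$ is a \emph{pure} submodule of an injective module $J$, purity makes $\Hom_\boZ(J,\boQ/\boZ)\rarrow\Hom_\boZ(K,\boQ/\boZ)$ a split epimorphism of right $R$\+modules, hence $\Hom_\boZ(F^\bu\ot_RK,\>\boQ/\boZ)$ is a direct summand of the acyclic complex $\Hom_\boZ(F^\bu\ot_RJ,\>\boQ/\boZ)$, and acyclicity is detected by character duals. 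With this lemma, condition~(i) finishes the proof: replace $D^\bu$, up to quasi-isomorphism in $\sD^\bb(R\Modl)$, by a bounded complex of fp\+injective left $R$\+modules; tensoring $F^\bu$ with the bounded acyclic cone is harmless because $F^\bu$ is termwise flat (Lemma~\ref{termwise-flat-tensored-with-bounded-acyclic}), and the total complex of a bounded complex of acyclic complexes is acyclic. No finite-injective-dimension statement and no spectral sequence over the cohomology of $D^\bu$ is needed.
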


 The proof of
Proposition~\ref{F-totally-acyclics-described-by-dualizing}
is based on a sequence of lemmas.

\begin{lem} \label{termwise-flat-tensored-with-bounded-acyclic}
 Let $F^\bu$ be a complex of flat right $R$\+modules and $L^\bu$
be a bounded acyclic complex of left $R$\+modules.
 Then the complex of abelian groups $F^\bu\ot_R L^\bu$ is acyclic.
\end{lem}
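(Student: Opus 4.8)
The plan is to realize $F^\bu\ot_R L^\bu$ as the total complex of the bicomplex $B^{p,q}=F^p\ot_R L^q$ and to exploit the flatness of the $F^p$ together with the boundedness of $L^\bu$. Since $L^\bu$ is concentrated in finitely many degrees, for each total degree $n$ only finitely many pairs $(p,q)$ with $p+q=n$ contribute, so the totalization involves finite direct sums and there are no convergence subtleties even though $F^\bu$ may be unbounded.

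First I would run the spectral sequence of this bicomplex obtained by taking cohomology in the $q$\+direction first. For fixed~$p$, the $p$\+th column is $F^p\ot_R L^\bu$; since $F^p$ is a flat right $R$\+module, the functor $F^p\ot_R{-}$ is exact, so $H^q(F^p\ot_R L^\bu)\simeq F^p\ot_R H^q(L^\bu)$, which vanishes for all~$q$ because $L^\bu$ is acyclic. Hence the $E_1$\+page is identically zero, and because $L^\bu$ is bounded the column filtration on each term of the total complex is finite, so the spectral sequence converges to $H^\bu(F^\bu\ot_R L^\bu)$. This forces $F^\bu\ot_R L^\bu$ to be acyclic.

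Alternatively, and avoiding any discussion of convergence, I would argue by induction on the number of nonzero terms of~$L^\bu$. If $L^\bu$ is concentrated in a single degree, acyclicity forces $L^\bu=0$ and the claim is trivial. Otherwise, letting $a$ be the lowest nonzero degree and $B^{a+1}=\operatorname{im}(d^a)\subset L^{a+1}$, acyclicity at degree~$a$ shows that $d^a\colon L^a\to B^{a+1}$ is an isomorphism, so the two-term subcomplex $C^\bu=(L^a\xrightarrow{\sim}B^{a+1})$ is contractible (via the $R$\+linear homotopy $(d^a)^{-1}$) and the quotient $Q^\bu=L^\bu/C^\bu$ is a bounded acyclic complex with fewer nonzero terms. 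The sequence $0\rarrow C^\bu\rarrow L^\bu\rarrow Q^\bu\rarrow0$ is exact in every degree, so tensoring with each flat module $F^p$ keeps it exact; totalizing then yields a short exact sequence of complexes of abelian groups $0\rarrow F^\bu\ot_R C^\bu\rarrow F^\bu\ot_R L^\bu\rarrow F^\bu\ot_R Q^\bu\rarrow0$. Here $F^\bu\ot_R C^\bu$ is contractible, hence acyclic, and $F^\bu\ot_R Q^\bu$ is acyclic by the inductive hypothesis, so the associated long exact cohomology sequence gives acyclicity of $F^\bu\ot_R L^\bu$.

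The only point requiring genuine care\,---\,and the step I would flag as the main obstacle\,---\,is the bookkeeping of the totalization when $F^\bu$ is unbounded: one must confirm that the degreewise short exact sequences of bicomplexes totalize to a short exact sequence of total complexes (respectively, that the spectral sequence converges). Both are guaranteed by the boundedness hypothesis on~$L^\bu$, which makes each total degree a finite direct sum and each column filtration finite. Beyond this, the argument uses nothing more than the exactness of $F^p\ot_R{-}$ and the standard long exact sequence in cohomology.
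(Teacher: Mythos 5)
Your proposal is correct and follows essentially the same route as the paper: the paper's own proof observes that $F^n\ot_R L^\bu$ is acyclic for every $n\in\boZ$ (by flatness of~$F^n$) and then invokes the standard fact that the total complex of a bounded acyclic complex of complexes of abelian groups is acyclic, which is exactly the content of your reduction. Your spectral-sequence argument (whose convergence is indeed secured because boundedness of $L^\bu$ makes the filtration finite in each total degree) and your induction splitting off contractible two-term subcomplexes are simply two standard ways of verifying that cited totalization fact, so both versions of your argument are sound.
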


\begin{proof}
 The point is that the complex of abelian groups $F^n\ot_R L^\bu$
is acyclic for every $n\in\boZ$.
 The total complex of any bounded acyclic complex of complexes of
abelian groups is acyclic.
\end{proof}

\begin{lem} \label{tensor-with-fp-injective-module}
 Let $F^\bu$ be a complex of right $R$\+modules such that the complex
of abelian groups $F^\bu\ot_RJ$ is acyclic for every injective left
$R$\+module~$J$.
 Then the complex of abelian groups $F^\bu\ot_RK$ is acyclic for
every fp\+injective left $R$\+module~$K$.
\end{lem}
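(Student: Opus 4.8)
The plan is to derive the conclusion from the hypothesis by resolving $K$ and comparing two total complexes of a single bicomplex. First I would fix an injective coresolution $0\rarrow K\rarrow J^0\rarrow J^1\rarrow J^2\rarrow\dotsb$ of the left $R$\+module $K$, leaving the cosyzygies arbitrary, and form the bicomplex $D^{p,q}=F^p\ot_R J^q$ with $p\in\boZ$ and $q\ge0$. Two features of $D$ are at hand. The rows $F^\bu\ot_R J^q$ are acyclic for every $q\ge0$, which is exactly the hypothesis applied to the injective module $J^q$. And, provided each functor $F^p\ot_R{-}$ is exact, the columns $0\rarrow F^p\ot_R K\rarrow F^p\ot_R J^0\rarrow F^p\ot_R J^1\rarrow\dotsb$ are acyclic, since $J^\bu$ is a coresolution of $K$. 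So the whole matter reduces to comparing $F^\bu\ot_R K$ with the totalization of $D$.

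I would carry this out with the product totalization $\mathrm{Tot}^\Pi(D)$. On one side, adjoin the column $F^\bu\ot_R K$ in degree $q=-1$; the resulting bicomplex is bounded below in $q$ and has exact columns, so by the acyclic assembly lemma for upper half\+plane bicomplexes with exact columns its product total complex is acyclic. This says precisely that the augmentation $F^\bu\ot_R K\rarrow\mathrm{Tot}^\Pi(D)$ is a quasi\+isomorphism. On the other side, the brutal truncations $\sigma_{<j}D$ in the $q$\+direction are bounded bicomplexes with acyclic rows, hence have acyclic total complexes; and $\mathrm{Tot}^\Pi(D)=\varprojlim_j\mathrm{Tot}(\sigma_{<j}D)$ is the inverse limit of this tower along the surjective truncation maps. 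The tower is Mittag\+Leffler, so $\varprojlim^1$ vanishes, and the Milnor sequence forces $\mathrm{Tot}^\Pi(D)$ to be acyclic. Combining the two computations, $F^\bu\ot_R K$ is acyclic, as desired.

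The step I expect to require the most care is the exactness of the columns, i.e.\ the clause ``provided each $F^p\ot_R{-}$ is exact''. When the terms $F^n$ are flat\,---\,the situation in which the lemma is applied, namely to the complexes of flat modules of Proposition~\ref{F-totally-acyclics-described-by-dualizing}\,---\,this is automatic, and fp\+injectivity of $K$ enters only through the (universally available) embedding of $K$ into an injective module. For a general complex $F^\bu$ of right $R$\+modules one cannot tensor an arbitrary coresolution exactly, and the argument then genuinely needs a \emph{pure} injective coresolution of $K$, one preserved by $F^p\ot_R{-}$ for every $p$; this is exactly where fp\+injectivity must be used, since $K$ is fp\+injective if and only if it is a pure submodule of an injective module. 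Securing the successive cosyzygies as pure submodules of injectives is the delicate point, and is what distinguishes the fp\+injective hypothesis from plain injectivity of the $J^q$. A secondary but routine issue\,---\,the convergence of the totalization, which is unbounded in the $p$\+direction\,---\,is precisely what the product totalization together with the Mittag\+Leffler/$\varprojlim^1$ argument above is designed to handle.
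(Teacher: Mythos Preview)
Your bicomplex argument is fine when the terms $F^p$ are flat, but the lemma is stated for an arbitrary complex of right $R$\+modules, and there the ``delicate point'' you flag is an actual obstruction, not just a delicate point. You need each cosyzygy $Z^q$ of the injective coresolution to sit as a pure submodule of $J^q$, equivalently each $Z^q$ to be fp\+injective. But from a pure short exact sequence $0\to K\to J^0\to Z^1\to0$ with $K$ fp\+injective and $J^0$ injective, the long exact sequence only gives $\Ext^1_R(S,Z^1)\hookrightarrow\Ext^2_R(S,K)$ for finitely presentable $S$, and there is no reason for $\Ext^2_R(S,K)$ to vanish. So $Z^1$ need not be fp\+injective, the embedding $Z^1\hookrightarrow J^1$ need not be pure, and the column $F^p\ot_R J^\bu$ need not compute $F^p\ot_R K$. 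Replacing the $J^q$ by pure\+injectives would repair the columns but destroy the rows, since pure\+injectives are not injective and the hypothesis no longer applies to them. A single pure step also fails on its own: from exactness of $0\to F^\bu\ot_RK\to F^\bu\ot_RJ^0\to F^\bu\ot_RZ^1\to0$ and acyclicity of the middle term you get only $H^n(F^\bu\ot_RK)\cong H^{n-1}(F^\bu\ot_RZ^1)$, which is a shift, not vanishing.

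The paper's argument stays at that single pure embedding $K\hookrightarrow J$ but extracts more from it via Pontryagin duality. Purity of $K\hookrightarrow J$ means exactly that $J^*\to K^*$ is a split epimorphism of right $R$\+modules, where $(-)^*=\Hom_\boZ(-,\boQ/\boZ)$. Applying $\Hom_R(F^\bu,-)$ and using the tensor--Hom adjunction $(F^\bu\ot_RM)^*\cong\Hom_R(F^\bu,M^*)$, one finds that $(F^\bu\ot_RK)^*$ is a direct summand of $(F^\bu\ot_RJ)^*$ \emph{as complexes}. The latter is acyclic because $F^\bu\ot_RJ$ is and $(-)^*$ is exact; hence so is the summand, and since $(-)^*$ reflects acyclicity, $F^\bu\ot_RK$ is acyclic. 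No iteration, no bicomplex, and the fp\+injectivity of $K$ is used exactly once, through the characterization you already quoted.
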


\begin{proof}
 This is~\cite[proof of Lemma~2.8\,(1)\,$\Rightarrow$\,(2)]{MD}.
 Let an fp\+injective $R$\+module $K$ be a submodule in an injective
$R$\+module~$J$; then $K$ is a pure submodule in~$J$ (see, e.~g.,
\cite[Definition~2.6 and Lemma~2.19]{GT}).
 The point is that, whenever some left $R$\+module $M$ is a pure
submodule in a left $R$\+module $L$, and $F^\bu$ is a complex of
right $R$\+modules such that the complex of abelian groups
$F^\bu\ot_RL$ is acyclic, it follows that the complex of abelian
groups $F^\bu\ot_RM$ is acyclic as well.
 One can prove this by observing that $\Hom_\boZ(L,\boQ/\boZ)
\rarrow\Hom_\boZ(M,\boQ/\boZ)$ is a split epimorphism of right
$R$\+modules; hence the complex $\Hom_\boZ(F^\bu\ot_RM,\>\boQ/\boZ)$ is
a direct summand of the complex $\Hom_\boZ(F^\bu\ot_RL,\>\boQ/\boZ)$.
\end{proof}

\begin{cor} \label{tensor-with-bounded-fp-injective-dimension}
 Let $F^\bu$ be an F\+totally acyclic complex of flat right
$R$\+modules and $K^\bu$ be a bounded complex of left $R$\+modules.
 Assume that the complex $K^\bu$ has finite fp\+injective dimension.
 Then the complex of abelian groups $F^\bu\ot_RK^\bu$ is acyclic.
\end{cor}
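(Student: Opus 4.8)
The plan is to first settle the case in which $K^\bu$ is itself a bounded complex of fp\+injective left $R$\+modules, and then to transfer the general statement along the isomorphism in the bounded derived category. For the base case, note that the assumption that $F^\bu$ is F\+totally acyclic means precisely that $F^\bu\ot_RJ$ is acyclic for every injective left $R$\+module $J$; Lemma~\ref{tensor-with-fp-injective-module} then upgrades this to acyclicity of $F^\bu\ot_RK$ for every fp\+injective left $R$\+module $K$. Thus, if $I^\bu$ is a bounded complex of fp\+injectives, each of the complexes $F^\bu\ot_RI^q$ is acyclic. Regarding $F^\bu\ot_RI^\bu$ as the total complex of the bicomplex $F^p\ot_RI^q$, which is concentrated in finitely many $q$\+degrees and has acyclic rows $(F^\bu\ot_RI^q)$, I conclude that $F^\bu\ot_RI^\bu$ is acyclic by the same totalization principle used in the proof of Lemma~\ref{termwise-flat-tensored-with-bounded-acyclic}.

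To pass to an arbitrary $K^\bu$ of finite fp\+injective dimension, recall that by definition $K^\bu$ is isomorphic in $\sD^\bb(R\Modl)$ to a bounded complex $I^\bu$ of fp\+injectives as above. The key point is that the functor $F^\bu\ot_R(-)$ carries quasi\+isomorphisms of \emph{bounded} complexes to quasi\+isomorphisms: for a quasi\+isomorphism $g\:L^\bu\rarrow M^\bu$ of bounded complexes, its cone is a bounded acyclic complex, and since tensoring commutes with cones and shifts one has $\mathrm{Cone}(F^\bu\ot_Rg)\simeq F^\bu\ot_R\mathrm{Cone}(g)$, which is acyclic by Lemma~\ref{termwise-flat-tensored-with-bounded-acyclic} because $F^\bu$ is termwise flat. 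Representing the derived isomorphism by a roof $K^\bu\llarrow Z^\bu\lrarrow I^\bu$ through a bounded complex $Z^\bu$ with both legs quasi\+isomorphisms, and applying $F^\bu\ot_R(-)$, I obtain quasi\+isomorphisms $F^\bu\ot_RK^\bu\llarrow F^\bu\ot_RZ^\bu\lrarrow F^\bu\ot_RI^\bu$. As the right\+hand term is acyclic by the base case, so are the other two, and in particular $F^\bu\ot_RK^\bu$ is acyclic.

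The step I expect to require the most care is this transfer, where the fixed complex $F^\bu$ is \emph{unbounded}: one must make sure that tensoring it against a derived\+category isomorphism is well behaved. This is exactly where the boundedness of $K^\bu$—and hence of the representing complex $Z^\bu$, of $I^\bu$, and of all the relevant cones—is indispensable, since it guarantees that each total degree of $F^\bu\ot_R(-)$ is a finite direct sum, so that no convergence issues arise and Lemma~\ref{termwise-flat-tensored-with-bounded-acyclic} is genuinely applicable. Once this functoriality is in place, the remainder is routine homological bookkeeping.
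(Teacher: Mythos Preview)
Your proof is correct and follows essentially the same approach as the paper, which simply says ``Combine Lemmas~\ref{termwise-flat-tensored-with-bounded-acyclic} and~\ref{tensor-with-fp-injective-module}''; you have spelled out in detail how these two lemmas are to be combined, including the cone argument that transfers acyclicity along the derived isomorphism.  One small remark: the totalization principle you invoke for the base case (a bounded complex of acyclic complexes has acyclic totalization) is literally the one stated in the proof of Lemma~\ref{acyclic-tensored-with-bounded-flat} rather than Lemma~\ref{termwise-flat-tensored-with-bounded-acyclic}, though both are equally elementary.
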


\begin{proof}
 Combine Lemmas~\ref{termwise-flat-tensored-with-bounded-acyclic}
and~\ref{tensor-with-fp-injective-module}.
\end{proof}

\begin{lem} \label{acyclic-tensored-with-bounded-flat}
 Let $S$ be a ring, $H^\bu$ be an acyclic complex of right $S$\+modules,
and $G^\bu$ be a bounded complex of flat left $S$\+modules.
 Then the complex of abelian groups $H^\bu\ot_SG^\bu$ is acyclic.
\end{lem}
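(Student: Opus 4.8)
The plan is to mirror the argument of Lemma~\ref{termwise-flat-tensored-with-bounded-acyclic}, interchanging the roles of the acyclic complex and the flat complex. I would form the bicomplex of abelian groups with terms $H^p\ot_S G^q$ and observe that its total complex is precisely $H^\bu\ot_S G^\bu$. The proof then reduces to checking acyclicity in one direction followed by passage to the total complex.

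First I would fix a single term $G^q$ of the complex $G^\bu$. Since $G^q$ is a flat left $S$\+module, the functor ${-}\ot_S G^q$ from right $S$\+modules to abelian groups is exact, and an exact functor carries an acyclic complex to an acyclic complex. Hence the complex of abelian groups $H^\bu\ot_S G^q$ is acyclic for every $q\in\boZ$. In other words, the bicomplex $H^\bu\ot_S G^\bu$ is acyclic along each of its columns in the $H^\bu$\+direction.

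The conclusion then follows from the standard fact, already invoked in the proof of Lemma~\ref{termwise-flat-tensored-with-bounded-acyclic}, that the total complex of a bounded acyclic complex of complexes of abelian groups is acyclic. There is no genuine obstacle here; this lemma is the exact flat/acyclic counterpart of Lemma~\ref{termwise-flat-tensored-with-bounded-acyclic}. The only point requiring attention is the boundedness hypothesis on $G^\bu$: it guarantees that in each total degree only finitely many bidegree components are nonzero, so that forming the total complex introduces neither infinite direct sums in a single degree nor any spectral-sequence convergence difficulties, and the acyclicity of the columns passes directly to the total complex.
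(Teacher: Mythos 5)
Your proof is correct and follows exactly the argument the paper intends: each column $H^\bu\ot_S G^q$ is acyclic because $G^q$ is flat, and then the total complex of a bounded complex of acyclic complexes of abelian groups is acyclic (the paper's one-line proof states only this second point, leaving the flatness step implicit). Your additional remark on why boundedness of $G^\bu$ matters (finitely many bidegree components per total degree, so no convergence issues) is a correct and welcome elaboration, and you rightly read the statement's $F^\bu$ as a typo for $G^\bu$.
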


\begin{proof}
 The point is that the total complex of any bounded complex of
acyclic complexes of abelian groups is acyclic.
\end{proof}

\begin{lem} \label{Hom-module-flat}
 Let $R$ be a ring, $S$ be a right coherent ring, $E$ be
an $R$\+$S$\+bimodule that is fp\+injective as a right $S$\+module,
and $J$ be an injective left $R$\+module.
 Then the left $S$\+module\/ $\Hom_R(E,J)$ is flat.
\end{lem}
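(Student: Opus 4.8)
The plan is to reduce to a computation with the injective cogenerator of $R\Modl$ and then invoke the character\+module description of fp\+injective modules over a right coherent ring. Throughout, write $N^{+}=\Hom_\boZ(N,\boQ/\boZ)$ for the character module of an abelian group (or module)~$N$.

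First I would recall that $W=\Hom_\boZ(R,\boQ/\boZ)$, where $R$ is regarded as a right module over itself, is an injective cogenerator of $R\Modl$: by the Hom\+tensor adjunction $\Hom_R(M,W)\cong M^{+}$ for every left $R$\+module $M$, and $(-)^{+}$ is faithful and exact. Hence any injective left $R$\+module $J$ embeds into, and---being injective---is a direct summand of, a product $W^X$ of copies of~$W$ for a suitable set~$X$. Since $\Hom_R(E,-)$ preserves products and direct summands, it suffices to show that $\Hom_R(E,W)$ is a flat left $S$\+module and that its products stay flat. The key identification is $\Hom_R(E,W)\cong E^{+}$ as left $S$\+modules, once more by Hom\+tensor adjunction, $\Hom_R(E,\Hom_\boZ(R,\boQ/\boZ))\cong\Hom_\boZ(R\ot_R E,\>\boQ/\boZ)$ together with $R\ot_R E\cong E$; the left $S$\+module structures on both sides are those induced by the right $S$\+action on~$E$, and the adjunction isomorphism respects them. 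So everything comes down to flatness of the character module~$E^{+}$.

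The main point, and the place where both hypotheses on~$S$ enter, is to prove that $E^{+}$ is a flat left $S$\+module. I would test flatness on finitely presentable right $S$\+modules~$T$, which is legitimate because $\Tor$ commutes with directed colimits and every module is a directed colimit of finitely presentable ones. Over the right coherent ring~$S$, each such $T$ admits a resolution by finitely generated projective modules, so for finitely generated projective~$P$ the natural map $P\ot_S E^{+}\rarrow\Hom_S(P,E)^{+}$ is an isomorphism; passing to homology and using exactness of $(-)^{+}$ gives the duality isomorphism $\Tor_1^S(T,E^{+})\cong\Ext^1_S(T,E)^{+}$. As $E$ is fp\+injective, $\Ext^1_S(T,E)=0$ for all finitely presentable~$T$, whence $\Tor_1^S(T,E^{+})=0$ and $E^{+}$ is flat. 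Finally, since $S$ is right coherent, Chase's theorem guarantees that arbitrary products of flat left $S$\+modules are flat, so $(E^{+})^X$ is flat and hence so is its direct summand $\Hom_R(E,J)$. The one genuinely delicate step is the character\+module duality for~$E^{+}$, where right coherence is needed to secure finitely generated projective resolutions of finitely presentable modules; everything else is formal manipulation of adjunctions together with the reduction of flatness to finitely presentable test modules.
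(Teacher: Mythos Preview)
Your proof is correct, but it takes a more circuitous route than the paper's.  The paper argues directly with the given injective $J$: for any finitely presentable right $S$\+module $N$ there is a natural isomorphism
\[
 N\ot_S\Hom_R(E,J)\;\cong\;\Hom_R(\Hom_S(N,E),J),
\]
and over the right coherent ring $S$ the functor $\Hom_S({-},E)$ is exact on the abelian category of finitely presentable right $S$\+modules (by fp\+injectivity of~$E$), while $\Hom_R({-},J)$ is exact because $J$ is injective.  Hence $N\longmapsto N\ot_S\Hom_R(E,J)$ is exact on finitely presentable $N$, and $\Hom_R(E,J)$ is flat.

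Your argument instead reduces to the special injective $W=\Hom_\boZ(R,\boQ/\boZ)$, identifies $\Hom_R(E,W)\cong E^{+}$, establishes the Tor--Ext duality $\Tor_1^S(T,E^{+})\cong\Ext_S^1(T,E)^{+}$ for finitely presentable $T$, and then invokes Chase's theorem to pass from $E^{+}$ to arbitrary products and summands.  This works, but the detour through character modules and Chase's theorem is unnecessary: the very Tor--Ext duality you prove for $\boQ/\boZ$ is the specialization of the paper's isomorphism above, which holds for \emph{any} injective~$J$.  Running your computation with $J$ in place of $\boQ/\boZ$ gives the result in one step and avoids the extra appeal to Chase's theorem.
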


\begin{proof}
 This is~\cite[Lemma~4.1(b)]{Pfp}.
 The point is that the functor $N\longmapsto N\ot_S\Hom_R(E,J)
\simeq\Hom_R(\Hom_S(N,E),J)$ is exact on the abelian category of
finitely presentable right $S$\+modules~$N$.
\end{proof}

\begin{lem} \label{evaluation-quasi-isomorphism}
 Let $R$ be a ring, $S$ be a right coherent ring, and $D^\bu$ be
a right dualizing complex of $R$\+$S$\+bimodules.
 Then the natural (evaluation) morphism of complexes of left
$R$\+modules $D^\bu\ot_S\Hom_R(D^\bu,J)\rarrow J$ is a quasi-isomorphism
for every injective left $R$\+module~$J$.
\end{lem}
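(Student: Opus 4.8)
The plan is to realize the evaluation morphism, after precomposition with a quasi-isomorphism, as the composite of a tensor-evaluation isomorphism followed by the $J$\+dual of the homothety map, and then to read off the assertion from condition~(iii) of the definition of a right dualizing complex. Two structural observations set this up. First, by Lemma~\ref{Hom-module-flat} every term $\Hom_R(D^n,J)$ is a flat left $S$\+module, so $\Hom_R(D^\bu,J)$ is a \emph{bounded} complex of flat left $S$\+modules, hence homotopy flat; consequently tensoring any quasi-isomorphism of right $S$\+module complexes with $\Hom_R(D^\bu,J)$ over $S$ preserves quasi-isomorphisms, and the ordinary tensor product already computes the derived one. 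Second, since $S$ is right coherent and the cohomology of $D^\bu$ is finitely presentable as right $S$\+modules (condition~(ii)), the complex $D^\bu$ admits a resolution $f\:Q^\bu\rarrow D^\bu$ by a bounded-above complex of finitely generated free right $S$\+modules; being a bounded-above complex of projectives, $Q^\bu$ is homotopy projective, so $\Hom_S(Q^\bu,D^\bu)$ computes the derived Hom-complex $\Hom_{\sD^\bb(\Modr S)}(D^\bu,D^\bu[*])$, which by condition~(iii) is quasi-isomorphic to $R$ concentrated in degree~$0$ via the homothety.

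Next I would introduce the natural tensor-evaluation morphism of complexes of left $R$\+modules
$$
\beta\:Q^\bu\ot_S\Hom_R(D^\bu,J)\rarrow\Hom_R(\Hom_S(Q^\bu,D^\bu),\>J),
\qquad x\ot\phi\longmapsto\bigl(\psi\mapsto\phi(\psi(x))\bigr),
$$
and check that it is an \emph{isomorphism} of complexes. This is the routine point: for a single finitely generated free term $Q^p=S$ one computes directly that $\beta$ is the identity of $\Hom_R(D^\bu,J)$, both sides commute with finite direct sums, and $\beta$ is the totalization of these termwise isomorphisms (the relevant sums in each total degree are finite because $\Hom_R(D^\bu,J)$ is bounded and $Q^\bu$ is bounded above).

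I would then assemble the zigzag
$$
D^\bu\ot_S\Hom_R(D^\bu,J)\ \xleftarrow{\ f\ot\id\ }\ Q^\bu\ot_S\Hom_R(D^\bu,J)\ \xrightarrow{\ \beta\ }\ \Hom_R(\Hom_S(Q^\bu,D^\bu),\>J)\ \xrightarrow{\ \Hom_R(h,J)\ }\ J,
$$
where $h\:R\rarrow\Hom_S(Q^\bu,D^\bu)$ is the homothety. Here $f\ot\id$ is a quasi-isomorphism by the homotopy flatness of $\Hom_R(D^\bu,J)$ noted above, $\beta$ is an isomorphism, and $\Hom_R(h,J)$ is a quasi-isomorphism because $J$ is injective: the functor $\Hom_R(-,J)$ is exact, so it carries the quasi-isomorphism $h$ to a quasi-isomorphism onto $\Hom_R(R,J)=J$.

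Finally I would verify that this zigzag represents the evaluation morphism, i.e.\ that $\mathrm{ev}\circ(f\ot\id)=\Hom_R(h,J)\circ\beta$, which I expect to hold on the nose at the chain level. Indeed, on $x\ot\phi$ the left-hand side gives $\phi(f(x))$, while the right-hand side evaluates the functional $\psi\mapsto\phi(\psi(x))$ at $h(1)$; since $h(1)=f$ as an element of $\Hom_S(Q^\bu,D^\bu)$ (the resolution map $f$ is exactly the cycle representing $\id_{D^\bu}$ under the homothety), the right-hand side is again $\phi(f(x))$. As $f\ot\id$, $\beta$, and $\Hom_R(h,J)$ are all quasi-isomorphisms, it follows that $\mathrm{ev}$ is a quasi-isomorphism. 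I expect the genuine work to lie in the setup rather than in any single step: securing the bounded-above finitely generated free resolution $Q^\bu$ from right coherence of $S$ together with condition~(ii), and matching up the degree and sign bookkeeping so that the chain-level identity $\mathrm{ev}\circ(f\ot\id)=\Hom_R(h,J)\circ\beta$ holds exactly; the tensor-evaluation isomorphism $\beta$ and the exactness of $\Hom_R(-,J)$ are straightforward.
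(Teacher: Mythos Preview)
Your argument is correct and is essentially the standard proof that the reference \cite[Lemma~4.2(b)]{Pfp} would supply; the paper itself gives no details beyond that citation. Two small remarks are in order.

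First, a labeling slip: the intermediate complexes $Q^\bu\ot_S\Hom_R(D^\bu,J)$ and $\Hom_R(\Hom_S(Q^\bu,D^\bu),J)$ in your zigzag carry no left $R$\+module structure, since $Q^\bu$ is only a complex of right $S$\+modules and the functor $\Hom_R({-},J)$ uses up the left $R$\+action on $\Hom_S(Q^\bu,D^\bu)$. So $\beta$ and the whole zigzag should be read as morphisms of complexes of abelian groups. This is harmless for the conclusion, because a morphism of complexes of left $R$\+modules is a quasi-isomorphism if and only if the underlying morphism of abelian group complexes is one.

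Second, the paper's proof records that ``this lemma only uses conditions~(ii) and~(iii)'', whereas your argument invokes condition~(i) through Lemma~\ref{Hom-module-flat} to obtain flatness of the terms of $\Hom_R(D^\bu,J)$, which is what makes $f\ot\id$ a quasi-isomorphism. Since the hypothesis of the lemma is that $D^\bu$ is a right dualizing complex (so~(i) holds), your use of it is legitimate; you simply do not recover the sharper observation the paper makes. Note that without the flatness step your zigzag establishes only that the \emph{derived} evaluation $D^\bu\ot_S^{\mathbb L}\Hom_R(D^\bu,J)\rarrow J$ is an isomorphism in $\sD(R\Modl)$; passing to the underived tensor product is exactly where the termwise fp\+injectivity from~(i) enters your argument.
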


\begin{proof}
 This lemma only uses conditions~(ii\+-iii) and the first part of
condition~(i) from the definition of a right dualizing complex above.
 See~\cite[Lemma~4.2(b)]{Pfp}.
\end{proof}

\begin{proof}[Proof of
Proposition~\ref{F-totally-acyclics-described-by-dualizing}]
 The ``only if'' implication is provided by
Corollary~\ref{tensor-with-bounded-fp-injective-dimension}.
 To prove the ``if'', let $J$ be an injective left $R$\+module.
 By Lemma~\ref{evaluation-quasi-isomorphism}, we have
a quasi-isomorphism of bounded complexes of left $R$\+modules
$D^\bu\ot_S\Hom_R(D^\bu,J)\rarrow J$.
 In view of Lemma~\ref{termwise-flat-tensored-with-bounded-acyclic},
in order to show that the complex $F^\bu\ot_RJ$ is acyclic, it
suffices to check that so is the complex
$F^\bu\ot_RD^\bu\ot_S\Hom_R(D^\bu,J)$.
 Now, by assumption, the complex of right $S$\+modules $F^\bu\ot_RD^\bu$
is acyclic.
 By Lemma~\ref{Hom-module-flat}, \,$\Hom_R(D^\bu,J)$ is a bounded
complex of flat left $S$\+modules. 
 It remains to refer to Lemma~\ref{acyclic-tensored-with-bounded-flat}.
\end{proof}

\begin{rem} \label{gorenstein-flats-in-gorenstein-case}
 The concepts of Gorenstein homological algebra become less complicated
over a ring which is itself Gorenstein in a suitable sense.
 Notice that, for any acyclic complex of flat right $R$\+modules $F^\bu$
and any left $R$\+module $G$ of finite flat dimension, the complex of
abelian groups $F^\bu\ot_RG$ is acyclic.
 This follows from
Lemmas~\ref{termwise-flat-tensored-with-bounded-acyclic}
and~\ref{acyclic-tensored-with-bounded-flat}.
 Now if every injective left $R$\+module has finite flat dimension,
then every acyclic complex of flat right $R$\+modules is F\+totally
acyclic, and every flatly coresolved right $R$\+module is
Gorenstein-flat.
 Assuming additionally that the ring $R$ is right countably coherent,
the results of Theorem~\ref{two-sided-by-lim-classes-theorem},
Theorem~\ref{flat-coresolutions-theorem},
Corollary~\ref{flatly-coresolved-modules-cor},
and Corollary~\ref{flat-coresolutions-direct-summands} become directly
applicable (essentially) as properties of F\+totally acyclic complexes
and Gorenstein-flat modules.
\end{rem}

\Section{F-Totally Acyclic Complexes as Directed Colimits}
\label{F-totally-acyclics-as-dir-colims-secn}

 We start with a very general result of \v Saroch and
\v St\!'ov\'\i\v cek.

\begin{prop} \label{saroch-stovicek-gorenstein-flat-basics}
 Let $R$ be an associative ring.  Then \par
\textup{(a)} the class of Gorenstein-flat modules is closed under
directed colimits in\/ $\Modr R$; \par
\textup{(b)} the class of Gorenstein-flat $R$\+modules is
$\rho^+$\+deconstructible, where $\rho=|R|+\aleph_0$.
\end{prop}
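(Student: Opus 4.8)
The plan is to derive both parts from the structural theory of Gorenstein-flat modules of \v Saroch and \v St\!'ov\'\i\v cek, whose engine is a homological description of the class $\GF$ of Gorenstein-flat right $R$\+modules that is stable enough to be manipulated by the cotorsion-pair formalism. The first thing I would establish is the skeleton of a cotorsion pair: $\GF$ contains the projectives, is closed under extensions and under kernels of epimorphisms between its members (so the pair will be hereditary), and is closed under direct summands. The substantial input, which I would treat as the central black box, is that the cotorsion pair $(\GF,\GF^{\perp_1})$ is complete and that membership in $\GF$ admits a \emph{Tor}\+orthogonality description: there is a class $\sC$ of cotorsion left $R$\+modules with
\[
 \GF=\{\,M\in\Modr R : \Tor^R_1(M,C)=0 \text{ for all } C\in\sC\,\}.
\]
This is the hard part; showing that the vanishing of these Tor groups actually produces a two-sided F\+totally acyclic complex of flats with $M$ as a cocycle is exactly where the singular compactness method of \cite{SarSt} is needed.

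Granting the displayed characterization, part~(a) is then immediate. Indeed, the functors $\Tor^R_i({-},C)$ commute with directed colimits, and a directed colimit of zero groups is zero; hence for any directed system $(M_\xi)$ in $\GF$ and any $C\in\sC$ one has $\Tor^R_1(\varinjlim_\xi M_\xi,\>C)\simeq\varinjlim_\xi\Tor^R_1(M_\xi,C)=0$, so $\varinjlim_\xi M_\xi\in\GF$. Thus $\GF$ is closed under directed colimits in\/ $\Modr R$.

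For part~(b) I would exhibit $\GF$ as the left-hand class of a cotorsion pair \emph{generated by a set}. Using the $\rho^+$\+deconstructibility of the class of flat modules (Proposition~\ref{enochs-slavik-trlifaj}(a)) together with the characterization above, one can choose a \emph{set} $\sS$ of right $R$\+modules of cardinality at most~$\rho$ (equivalently, $\rho^+$\+presentable modules) such that $\sS^{\perp_1}=\GF^{\perp_1}$, i.e.\ the cotorsion pair is generated by~$\sS$. By the Eklof--Trlifaj machinery --- the Hill-lemma framework already invoked in Proposition~\ref{deconstructible-as-directed-colimit} --- the left-hand class of a cotorsion pair generated by a set $\sS$ consists, up to direct summands, of the modules in $\Fil(\sS\cup\{R\})$; combined with the closure of $\GF$ under direct summands noted above, this yields $\GF=\Fil(\sS')$ for a set $\sS'$ of $\rho^+$\+presentable modules, which is precisely $\rho^+$\+deconstructibility. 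The only delicate point here is the cardinality bookkeeping that keeps the generators of size~$\le\rho$.

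The main obstacle throughout is the single deep ingredient of the first paragraph: the completeness of the Gorenstein-flat cotorsion pair and the Tor\+orthogonality characterization over an \emph{arbitrary} ring, historically an open problem resolved by the singular compactness argument of \cite[Theorems~4.9 and~4.11(4)]{SarSt}. Once that is in hand, both (a) and (b) reduce to the exactness of directed colimits of Tor and to standard cotorsion-pair and Hill-lemma bookkeeping.
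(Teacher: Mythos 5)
Your overall architecture is the right one---both assertions really do rest entirely on the machinery of~\cite{SarSt}---but note that the paper's own proof is simply a citation of \cite[Corollary~4.12]{SarSt}, which contains (a) and (b) verbatim; what you propose is to re-derive that corollary from the deeper Theorems~4.9 and~4.11 of the same paper. For part~(a) your derivation is correct: granted a Tor-orthogonality description of $\GF$ (which is indeed among the characterizations established in~\cite{SarSt}), closure under directed colimits is immediate, since $\Tor_1^R({-},C)$ preserves directed colimits. One ordering quibble: closure of $\GF$ under extensions over an arbitrary ring is not a ``skeleton'' fact one can establish up front---it is itself one of the main theorems of~\cite{SarSt}---though it does follow formally from the Tor-orthogonality description once that is granted.

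The genuine gap is in part~(b), at the sentence ``one can choose a set $\sS$ \dots\ such that $\sS^{\perp_1}=\GF^{\perp_1}$.'' Neither of your stated inputs yields such a set. The Tor-orthogonality description $\GF=\{M:\Tor_1^R(M,C)=0\text{ for all }C\in\sC\}$ rewrites, via the duality $\Ext^1_R(M,\Hom_\boZ(C,\boQ/\boZ))\simeq\Hom_\boZ(\Tor_1^R(M,C),\boQ/\boZ)$, as $\GF={}^{\perp_1}\sD$ for $\sD=\{\Hom_\boZ(C,\boQ/\boZ):C\in\sC\}$; this says the cotorsion pair is \emph{cogenerated} by $\sD$, which is the wrong side: the Eklof--Trlifaj theorem requires \emph{generation}, i.e.\ a set $\sS$ with $\sS^{\perp_1}=\GF^{\perp_1}$, and cogeneration (even by a set) does not provide this. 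Taking $\sS$ to be the $\le\rho$-sized modules of $\GF$ is circular, since proving $\sS^{\perp_1}\subseteq\GF^{\perp_1}$ would already require the deconstructibility being established. The standard repair uses your other black box: by \cite[Theorem~4.11(4)]{SarSt} and hereditariness, $\GF^{\perp_1}$ is the intersection of $\mathsf{PGF}^{\perp_1}$ with the class of cotorsion modules, so a generating set is assembled from the set of countably presentable modules generating the $\mathsf{PGF}$ cotorsion pair (\cite[Theorem~4.9]{SarSt}) together with a set of $\le\rho$-sized flat modules generating the flat cotorsion pair (Proposition~\ref{enochs-slavik-trlifaj}(a) plus the Eklof lemma). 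Even then, Eklof--Trlifaj only exhibits $\GF$ as the class of direct summands of $\Fil(\sS\cup\{R\})$-modules; the passage to honest deconstructibility $\GF=\Fil(\sS')$ is a theorem of \v St\!'ov\'\i\v cek~\cite{Sto}, not bookkeeping. Alternatively---staying entirely inside your own black box---drop cotorsion pairs: any Tor-orthogonal class is closed under pure submodules and pure quotients (a pure-exact sequence is a directed colimit of split ones, so the $\Tor$ long exact sequence degenerates), and the Enochs-style filtration argument of~\cite{BBE}, the same one behind Proposition~\ref{enochs-slavik-trlifaj}(a), then filters every module of $\GF$ by pure submodules of cardinality at most~$\rho$ remaining in $\GF$, which gives $\rho^+$-deconstructibility directly.
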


\begin{proof}
 These assertions are parts of~\cite[Corollary~4.12]{SarSt}.
\end{proof}

 We will need to assume another condition in addition to
the conditions~(i\+-iii) in the definition of a right dualizing complex
in Section~\ref{dualizing-and-F-totally-acyclic-secn}.
 We will say that a right dualizing complex of $R$\+$S$\+bimodules
$D^\bu$ has \emph{right countable type} if
\begin{enumerate}
\renewcommand{\theenumi}{\roman{enumi}}
\setcounter{enumi}{3}
\item there exist a right countably coherent ring $S'$ and a bounded
complex of $R$\+$S'$\+bimodules $\widetilde D^\bu$ whose terms are
countably presentable as right $S'$\+modules such that the complexes
$D^\bu$ and $\widetilde D^\bu$ are isomorphic as objects of the bounded 
derived category of left $R$\+modules $\sD^\bb(R\Modl)$.
\end{enumerate}
 
\begin{lem} \label{tensor-product-of-presentable}
 Let $R$ and $S$ be two rings, $T$ be a right $R$\+module, and
$E$ be an $R$\+$S$\+bimodule.
 Assume that the right $R$\+module $T$ is $\kappa$\+presentable and
the right $S$\+module $E$ is $\kappa$\+presentable, for a given
regular cardinal~$\kappa$.
 Then the right $S$\+module $T\ot_RE$ is also $\kappa$\+presentable.
\end{lem}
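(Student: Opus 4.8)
The plan is to exhibit $T\ot_RE$ as a $\kappa$-small colimit of copies of $E$ and then invoke closure of $\kappa$-presentable objects under such colimits. Since $T$ is $\kappa$-presentable as a right $R$-module, the concrete description of $\kappa$-presentable modules recalled before Lemma~\ref{epimorphism-lemma} provides a presentation $R^{(\beta)}\rarrow R^{(\alpha)}\rarrow T\rarrow0$ with index sets $\alpha$ and $\beta$ of cardinality less than~$\kappa$. The functor ${-}\ot_RE\:\Modr R\rarrow\Modr S$ is a left adjoint (its right adjoint being $\Hom_S(E,{-})$), hence right exact and commuting with coproducts; applying it and using the natural isomorphism $R^{(\alpha)}\ot_RE\simeq E^{(\alpha)}$ of right $S$-modules (and likewise for~$\beta$), I obtain a right exact sequence $E^{(\beta)}\rarrow E^{(\alpha)}\rarrow T\ot_RE\rarrow0$. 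Thus $T\ot_RE$ is realized as the cokernel of a morphism between coproducts of fewer than $\kappa$ copies of~$E$.

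First I would record the two closure properties of the full subcategory $(\Modr S)_{<\kappa}$ that the argument needs. The category $\Modr S$ is locally finitely presentable, hence locally $\kappa$-presentable, and in any locally $\kappa$-presentable category the $\kappa$-presentable objects are closed under colimits of diagrams with fewer than $\kappa$ morphisms (see \cite[Proposition~1.16]{AR}). In particular, a coproduct of strictly fewer than $\kappa$ copies of the $\kappa$-presentable module $E$ is again $\kappa$-presentable, so $E^{(\alpha)}$ and $E^{(\beta)}$ lie in $(\Modr S)_{<\kappa}$; and the cokernel of a single morphism between two $\kappa$-presentable modules is $\kappa$-presentable, being a finite (hence $\kappa$-small) colimit. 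Combining these two facts with the right exact sequence above gives that $T\ot_RE$ is $\kappa$-presentable, as claimed.

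Alternatively, one can argue entirely on the level of generators and relations, feeding a free presentation of $E$ as a right $S$-module into the sequence $E^{(\beta)}\rarrow E^{(\alpha)}\rarrow T\ot_RE\rarrow0$ and counting, or dually via the adjunction isomorphism $\Hom_S(T\ot_RE,N)\simeq\Hom_R(T,\Hom_S(E,N))$ together with the fact that $\Hom_S(E,{-})$ and $\Hom_R(T,{-})$ each preserve $\kappa$-directed colimits. I expect the only point genuinely requiring care to be the cardinal arithmetic underlying the two closure properties: that combining fewer than $\kappa$ generators, each relation set itself of size below~$\kappa$, still yields fewer than $\kappa$ generators and relations in total. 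This is exactly where \emph{regularity} of $\kappa$ enters, since a sum of fewer than $\kappa$ cardinals each below $\kappa$ remains below~$\kappa$ (the product bound $\mu\cdot\nu<\kappa$ for $\mu,\nu<\kappa$ needing only that $\kappa$ is infinite).
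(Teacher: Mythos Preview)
Your proof is correct and follows essentially the same approach as the paper: present $T$ as the cokernel of a morphism of free $R$-modules with fewer than~$\kappa$ generators, tensor with $E$, and invoke \cite[Proposition~1.16]{AR} to conclude that $T\ot_RE$, being a $\kappa$-small colimit of copies of the $\kappa$-presentable module~$E$, is itself $\kappa$-presentable. You simply spell out in more detail what the paper's two-sentence proof leaves implicit.
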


\begin{proof}
 Represent $T$ as the cokernel of a morphism of free $R$\+modules
with less than~$\kappa$ generators, and use the fact that the class
of all $\kappa$\+presentable $S$\+modules is closed under colimits
indexed by categories with less than~$\kappa$
morphisms~\cite[Proposition~1.16]{AR}.
\end{proof}

\begin{thm} \label{dualizing-complex-F-totally-acyclics-theorem}
 Let $R$ be a right countably coherent ring, $S$ be a right coherent
ring, and $D^\bu$ be a right dualizing complex of $R$\+$S$\+bimodules.
 Assume that $D^\bu$ is a right dualizing complex of right
countable type.
 Then the category of F\+totally acyclic complexes of flat right
$R$\+modules is\/ $\aleph_1$\+accessible.
 The\/ $\aleph_1$\+presentable objects of this category are precisely
all the F\+totally acyclic complexes of countably presentable flat
right $R$\+modules.
 Consequently, every F\+totally acyclic complex of flat right
$R$\+modules is an\/ $\aleph_1$\+directed colimit of F\+totally
acyclic complexes of countably presentable flat right $R$\+modules.
\end{thm}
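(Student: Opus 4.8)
The plan is to realize the category of F\+totally acyclic complexes of flat right $R$\+modules as a pseudopullback and apply Theorem~\ref{pseudopullback-theorem}. The bridge between ``acyclic complex of flats'' and ``F\+totally acyclic'' is Proposition~\ref{F-totally-acyclics-described-by-dualizing}: an acyclic complex of flat right $R$\+modules $F^\bu$ is F\+totally acyclic if and only if $F^\bu\ot_RD^\bu$ is acyclic. Thus F\+total acyclicity is exactly an acyclicity condition on a tensor product, which is the kind of condition a pseudopullback can cut out.

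First I would fix the three input categories. Let $\sK_1$ be the category of acyclic complexes of flat right $R$\+modules; by Theorem~\ref{two-sided-by-lim-classes-theorem} (taking every $\sS_n$ to be the set of finitely generated projective right $R$\+modules, so that $\varinjlim\sS_n$ is the class of flat modules) it is $\aleph_1$\+accessible, with $\aleph_1$\+presentable objects the acyclic complexes of countably presentable flat modules. Using condition~(iv), fix a right countably coherent ring $S'$ and a bounded complex of $R$\+$S'$\+bimodules $\widetilde D^\bu$ with countably presentable right $S'$\+module terms, isomorphic to $D^\bu$ in $\sD^\bb(R\Modl)$. Let $\sK_2$ be the category of acyclic complexes of right $S'$\+modules, which is $\aleph_1$\+accessible by Proposition~\ref{acyclic-complexes-prop} (here the countable coherence of $S'$ is used), and let $\sL=\Com(\Modr S')$, which is locally $\aleph_1$\+presentable by Lemma~\ref{abelian-complexes-lemma}. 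Define $\Phi_1\:\sK_1\rarrow\sL$ by $F^\bu\longmapsto F^\bu\ot_R\widetilde D^\bu$, and let $\Phi_2\:\sK_2\rarrow\sL$ be the inclusion.

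Second I would identify the pseudopullback $\sC$ with the category of F\+totally acyclic complexes. For a given $F^\bu\in\sK_1$, a triple $(F^\bu,H^\bu,\theta)\in\sC$ exists if and only if $F^\bu\ot_R\widetilde D^\bu$ is acyclic (take $H^\bu=F^\bu\ot_R\widetilde D^\bu$ and $\theta=\id$), in which case $H^\bu$ and $\theta$ are determined up to unique isomorphism; so the projection $\sC\rarrow\sK_1$ is fully faithful with essential image those $F^\bu$ for which $F^\bu\ot_R\widetilde D^\bu$ is acyclic. Here I would use that, for a complex of flat modules $F^\bu$, acyclicity of $F^\bu\ot_R\widetilde D^\bu$ is equivalent to acyclicity of $F^\bu\ot_RD^\bu$: a quasi-isomorphism of bounded complexes of left $R$\+modules, tensored with $F^\bu$, remains a quasi-isomorphism because its cone is bounded acyclic and Lemma~\ref{termwise-flat-tensored-with-bounded-acyclic} applies, while $D^\bu\simeq\widetilde D^\bu$ in $\sD^\bb(R\Modl)$ amounts to a finite zig-zag of such quasi-isomorphisms (acyclicity concerns only the underlying complexes of abelian groups, so the differing right module structures over $S$ and $S'$ are irrelevant). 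Combined with Proposition~\ref{F-totally-acyclics-described-by-dualizing}, this identifies $\sC$ with the category of F\+totally acyclic complexes of flat right $R$\+modules.

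Finally I would verify the hypotheses of Theorem~\ref{pseudopullback-theorem} with $\kappa=\aleph_1$ and $\lambda=\aleph_0$. All three categories admit colimits of $\aleph_0$\+indexed chains, indeed all directed colimits: flatness and acyclicity are preserved by directed colimits in the ambient complex categories, so $\sK_1$ and $\sK_2$ are closed under them, and $\sL$ is cocomplete. The functor $\Phi_1$ preserves all colimits (tensor product), and $\Phi_2$ preserves directed colimits since these are computed in $\sL$. The crucial point is that both functors send $\aleph_1$\+presentable objects to $\aleph_1$\+presentable objects: for $\Phi_1$, an acyclic complex $F^\bu$ of countably presentable flat modules has $F^\bu\ot_R\widetilde D^\bu$ whose terms are finite direct sums of modules $F^p\ot_R\widetilde D^q$, countably presentable over $S'$ by Lemma~\ref{tensor-product-of-presentable} and the boundedness of $\widetilde D^\bu$, hence $\aleph_1$\+presentable in $\sL$ by Lemma~\ref{abelian-complexes-lemma}(c); for $\Phi_2$ this is immediate from Lemma~\ref{abelian-complexes-lemma}(c) and Proposition~\ref{acyclic-complexes-prop}. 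Theorem~\ref{pseudopullback-theorem} then gives that $\sC$ is $\aleph_1$\+accessible, its $\aleph_1$\+presentable objects being the triples with $F^\bu$ $\aleph_1$\+presentable in $\sK_1$ and $H^\bu$ $\aleph_1$\+presentable in $\sK_2$; translated through the equivalence, these are precisely the F\+totally acyclic complexes of countably presentable flat modules (the condition on $H^\bu$ being automatic from the preservation just checked). The concluding ``consequently'' is then the general fact that every object of an $\aleph_1$\+accessible category is an $\aleph_1$\+directed colimit of its $\aleph_1$\+presentable objects. I expect the main obstacle to be precisely the replacement of $D^\bu$ by $\widetilde D^\bu$: only after passing to $\widetilde D^\bu$, with countably presentable terms over a countably coherent ring, does the tensor functor land in an $\aleph_1$\+accessible target and preserve $\aleph_1$\+presentables, and one must check that this replacement leaves the acyclicity characterization of Proposition~\ref{F-totally-acyclics-described-by-dualizing} intact.
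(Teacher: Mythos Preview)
Your proposal is correct and follows essentially the same approach as the paper's proof: the same pseudopullback of $\sK_1$ (acyclic complexes of flats, via Theorem~\ref{two-sided-by-lim-classes-theorem}), $\sK_2$ (acyclic complexes over $S'$, via Proposition~\ref{acyclic-complexes-prop}), and $\sL=\Com(\Modr S')$ (via Lemma~\ref{abelian-complexes-lemma}), with $\Phi_1=(-)\ot_R\widetilde D^\bu$ and $\Phi_2$ the inclusion, together with the replacement $D^\bu\rightsquigarrow\widetilde D^\bu$ justified through Lemma~\ref{termwise-flat-tensored-with-bounded-acyclic} and Proposition~\ref{F-totally-acyclics-described-by-dualizing}, and the preservation of $\aleph_1$\+presentables via Lemma~\ref{tensor-product-of-presentable}. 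Your treatment of the zig-zag of quasi-isomorphisms and of why the condition on $H^\bu$ is automatic is slightly more explicit than the paper's, but the argument is the same.
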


\begin{proof}
 This is an application of Theorem~\ref{pseudopullback-theorem} together
with Theorem~\ref{two-sided-by-lim-classes-theorem} and
Proposition~\ref{F-totally-acyclics-described-by-dualizing}.
 Notice first of all that the full subcategory of F\+totally acyclic
complexes of flat modules is obviously closed under directed colimits
in the ambient abelian category of all complexes of modules.

 Now let $\widetilde D^\bu$ be a bounded complex of $S'$\+countably
presentable $R$\+$S'$\+bimodules provided by the definition of
a right dualizing complex of right countable type (item~(iv)) above.
 It is clear from
Lemma~\ref{termwise-flat-tensored-with-bounded-acyclic} that, for
any complex of flat right $R$\+modules $F^\bu$, the complex of right
$S$\+modules $F^\bu\ot_RD^\bu$ is acyclic if and only if the complex
of right $S'$\+modules $F^\bu\ot_R\widetilde D^\bu$ is acyclic.

 Denote by $\sK_1$ the category of acyclic complexes of flat right
$R$\+modules.
 Applying Theorem~\ref{two-sided-by-lim-classes-theorem} for
$\sS_n$ being the set of all finitely generated projective (or free)
$R$\+modules, for every $n\in\boZ$, we see that $\sK_1$ is
an $\aleph_1$\+accessible category, and obtain a description of its
$\aleph_1$\+presentable objects (cf.\
Theorem~\ref{flat-coresolutions-theorem}).

 Furthermore, denote by $\sK_2$ the category of acyclic complexes of
right $S'$\+modules.
 Proposition~\ref{acyclic-complexes-prop} tells that $\sK_2$ is
an $\aleph_1$\+accessible category and provides a description of its
full subcategory of $\aleph_1$\+presentable objects.
 Finally, let $\sL$ be the abelian category of arbitrary complexes
of right $S'$\+modules.
 Lemma~\ref{abelian-complexes-lemma}(a,c) tells that $\sL$ is
a locally $\aleph_1$\+presentable category and describes its full
subcategory of $\aleph_1$\+presentable objects.

 Let the functor $\Phi_1\:\sK_1\rarrow\sL$ take any acyclic complex of
flat right $R$\+modules $F^\bu$ to the complex of right $S'$\+modules
$F^\bu\ot_R\widetilde D^\bu$.
 Let $\Phi_2\:\sK_2\rarrow\sL$ be the identity inclusion of
the category of acyclic complexes of right $S'$\+modules into
the category of all complexes of right $S'$\+modules.
 Then the pseudopullback $\sC$ of the two functors $\Phi_1$ and
$\Phi_2$ is the category of all acyclic complexes of flat right
$R$\+modules $F^\bu$ for which the complex of right $S'$\+modules
$F^\bu\ot_R\widetilde D^\bu$ is acyclic.
 By Proposition~\ref{F-totally-acyclics-described-by-dualizing}, and
in view of the argument with
Lemma~\ref{termwise-flat-tensored-with-bounded-acyclic} above,
the category $\sC$ is the desired category of F\+totally acyclic
complexes of flat right $R$\+modules.

 Finally, by Lemma~\ref{tensor-product-of-presentable}, the functor
$\Phi_1$ takes $\aleph_1$\+presentable objects to
$\aleph_1$\+pre\-sentable objects.
 All the other assumptions of Theorem~\ref{pseudopullback-theorem}
(for $\kappa=\aleph_1$ and $\lambda=\aleph_0$) are clearly satisfied.
 Theorem~\ref{pseudopullback-theorem} tells that $\sC$ is
an $\aleph_1$\+accessible category, and provides the desired description
of its full subcategory of $\aleph_1$\+presentable objects.
\end{proof}

\begin{cor} \label{dualizing-complex-Gorenstein-flats-cor}
 Let $R$ be a right countably coherent ring, $S$ be a right coherent
ring, and $D^\bu$ be a right dualizing complex of $R$\+$S$\+bimodules.
 Assume that $D^\bu$ is a right dualizing complex of right
countable type.
 Then the category of Gorenstein-flat right $R$\+modules\/ $\GF$ is\/
$\aleph_1$\+accessible.
 The\/ $\aleph_1$\+presentable objects of\/ $\GF$ are precisely
all the Gorenstein-flat right $R$\+modules that are countably
presentable in $\Modr R$.
 So every Gorenstein-flat right $R$\+module is an\/ $\aleph_1$\+directed
colimit of countably presentable Gorenstein-flat right $R$\+modules.
\end{cor}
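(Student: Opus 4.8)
The plan is to realize the class of Gorenstein-flat modules as a class of the form $\varinjlim_{(\aleph_1)}\sT$ inside $\Modr R$, and then to quote Proposition~\ref{accessible-subcategory}. First I would take $\sT$ to be a set of representatives of the isomorphism classes of countably presentable Gorenstein-flat right $R$\+modules; this is a set rather than a proper class because countably presentable modules have bounded cardinality. Clearly $\sT$ consists of $\aleph_1$\+presentable objects of $\Modr R$.

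The crux is to show that every Gorenstein-flat module belongs to $\varinjlim_{(\aleph_1)}\sT$. Given $M\in\GF$, I would write $M\simeq Z^0(F^\bu)$ for some F\+totally acyclic complex of flat right $R$\+modules $F^\bu$. By Theorem~\ref{dualizing-complex-F-totally-acyclics-theorem}, the complex $F^\bu$ is an $\aleph_1$\+directed colimit $\varinjlim_\xi T_\xi^\bu$ of F\+totally acyclic complexes of countably presentable flat modules. Since directed colimits of complexes are computed termwise and directed colimits are exact in module categories, the functor $Z^0=\ker(d^0)$ commutes with this colimit, so $M\simeq\varinjlim_\xi Z^0(T_\xi^\bu)$. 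Each $Z^0(T_\xi^\bu)$ is Gorenstein-flat, being the cocycle module of an F\+totally acyclic complex of flats, and it is countably presentable because the kernel of a morphism of countably presentable modules over a right countably coherent ring is countably presentable (the same observation used in the proof of Corollary~\ref{flatly-coresolved-modules-cor}). Hence each $Z^0(T_\xi^\bu)$ is isomorphic to an object of $\sT$, and $M\in\varinjlim_{(\aleph_1)}\sT$.

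For the reverse inclusion, $\varinjlim_{(\aleph_1)}\sT\subseteq\GF$ holds because $\sT\subseteq\GF$ and the class of Gorenstein-flat modules is closed under directed colimits in $\Modr R$ by Proposition~\ref{saroch-stovicek-gorenstein-flat-basics}(a). Thus $\GF=\varinjlim_{(\aleph_1)}\sT$, and Proposition~\ref{accessible-subcategory}, applied with $\sK=\Modr R$ and $\kappa=\aleph_1$, yields that $\GF$ is $\aleph_1$\+accessible and that its $\aleph_1$\+presentable objects are exactly the retracts of objects from $\sT$.

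It remains to identify these retracts with the countably presentable Gorenstein-flat modules. A retract of an object of $\sT$ is countably presentable in $\Modr R$ (retracts of countably presentable modules are countably presentable) and lies in $\GF$ by the definition of the $\aleph_1$\+presentable objects of $\GF$. Conversely, any countably presentable Gorenstein-flat module is isomorphic to an object of $\sT$, and it is $\aleph_1$\+presentable in $\GF$ because $\GF$ is closed under $\aleph_1$\+directed colimits in $\Modr R$, so that $\Hom$ out of it is computed in the ambient module category, where the module is $\aleph_1$\+presentable. The final assertion of the corollary is precisely the identity $\GF=\varinjlim_{(\aleph_1)}\sT$ established above. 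I expect the only genuinely delicate point to be the commutation of $Z^0$ with the $\aleph_1$\+directed colimit together with the preservation of countable presentability under $Z^0$, both of which rest on the countable coherence of $R$; everything else is a formal application of Proposition~\ref{accessible-subcategory}.
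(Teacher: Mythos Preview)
Your proposal is correct and follows essentially the same approach as the paper: both use Theorem~\ref{dualizing-complex-F-totally-acyclics-theorem} to exhibit every Gorenstein-flat module as an $\aleph_1$\+directed colimit of countably presentable Gorenstein-flat modules, invoke Proposition~\ref{saroch-stovicek-gorenstein-flat-basics}(a) for closure under directed colimits, and conclude via Proposition~\ref{accessible-subcategory}. The paper's proof is terser, leaving implicit the points you spell out (commutation of $Z^0$ with directed colimits, countable presentability of the cocycle modules); one small imprecision in your final remark is that the commutation of $Z^0$ with directed colimits follows from exactness of directed colimits alone and does not require countable coherence---only the countable presentability of $Z^0(T_\xi^\bu)$ uses that hypothesis.
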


\begin{proof}
 The full subcategory $\GF\subset\Modr R$ is closed under directed
colimits in $\Modr R$ by
Proposition~\ref{saroch-stovicek-gorenstein-flat-basics}(a).
 On the other hand, it is obvious from
Theorem~\ref{dualizing-complex-F-totally-acyclics-theorem} that
any Gorenstein-flat right $R$\+module is an $\aleph_1$\+directed colimit
of Gorenstein-flat right $R$\+modules that are countably presentable
in $\Modr R$.
 In view of Proposition~\ref{accessible-subcategory}, all the assertions
of the corollary follow.
\end{proof}

 For a far-reaching generalization of
Corollary~\ref{dualizing-complex-Gorenstein-flats-cor}, see
Theorem~\ref{countably-coherent-Gorenstein-flats-theorem} below.

\begin{cor} \label{dualizing-complex-F-totally-acyclics-direct-summands}
 Let $R$ be a right countably coherent ring, $S$ be a right coherent
ring, and $D^\bu$ be a right dualizing complex of $R$\+$S$\+bimodules.
 Assume that $D^\bu$ is a right dualizing complex of right
countable type.
 Let $M$ be a countably presentable Gorenstein-flat right $R$\+module.
 Then $M$ is a direct summand of an $R$\+module $N$ admitting
an F\+totally acyclic two-sided resolution $T^\bu$ with countably
presentable flat right $R$\+modules $T^n$, \,$n\in\boZ$.
\end{cor}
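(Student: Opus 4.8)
The plan is to imitate the proof of Corollary~\ref{two-sided-by-lim-classes-direct-summand}, with Theorem~\ref{dualizing-complex-F-totally-acyclics-theorem} playing the role of the input accessibility result. Since $M$ is Gorenstein-flat, by definition there is an F\+totally acyclic complex of flat right $R$\+modules $F^\bu$ with $M\simeq Z^0(F^\bu)$. Theorem~\ref{dualizing-complex-F-totally-acyclics-theorem} then exhibits $F^\bu$ as an $\aleph_1$\+directed colimit $F^\bu\simeq\varinjlim_{\xi\in\Xi}T^\bu_\xi$ of F\+totally acyclic complexes $T^\bu_\xi$ of countably presentable flat right $R$\+modules.

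Next I would pass to degree\+$0$ cocycles. Directed colimits are exact in $\Modr R$, so the functor $Z^0$ (a kernel) commutes with $\aleph_1$\+directed colimits; hence $M\simeq Z^0(F^\bu)\simeq\varinjlim_{\xi\in\Xi}Z^0(T^\bu_\xi)$. Writing $N_\xi=Z^0(T^\bu_\xi)$, this presents $M$ as an $\aleph_1$\+directed colimit of the modules $N_\xi$. Each $N_\xi$ is moreover countably presentable, being the kernel of a morphism of countably presentable modules over the right countably coherent ring $R$ (as already observed in the proof of Corollary~\ref{flatly-coresolved-modules-cor}), though this fact is not strictly needed for the conclusion.

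Finally I would invoke the countable presentability of $M$. Since $M$ is $\aleph_1$\+presentable in $\Modr R$ and $M\simeq\varinjlim_{\xi}N_\xi$ is an $\aleph_1$\+directed colimit, the functor $\Hom_R(M,{-})$ preserves this colimit, so the identity morphism $\id_M$ factors through one of the canonical maps $N_\xi\rarrow M$. Concretely, there exist an index $\xi\in\Xi$ and a morphism $M\rarrow N_\xi$ whose composite with $N_\xi\rarrow M$ equals $\id_M$, which makes $M$ a retract, i.e.\ a direct summand, of $N_\xi$. Setting $N=N_\xi$ and $T^\bu=T^\bu_\xi$ yields the desired F\+totally acyclic complex of countably presentable flat right $R$\+modules with $N=Z^0(T^\bu)$ and $M$ a direct summand of~$N$.

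There is no genuine obstacle once Theorem~\ref{dualizing-complex-F-totally-acyclics-theorem} is in hand; the only points requiring care are the exactness of directed colimits, used to commute $Z^0$ past $\varinjlim$, and the right countable coherence of $R$, used to keep the cocycle modules $N_\xi$ countably presentable. Both have already been exploited in the preceding corollaries, so the argument is essentially formal.
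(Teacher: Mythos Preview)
Your proposal is correct and follows essentially the same approach as the paper: use Theorem~\ref{dualizing-complex-F-totally-acyclics-theorem} to write an F\+totally acyclic resolution of $M$ as an $\aleph_1$\+directed colimit of F\+totally acyclic complexes of countably presentable flat modules, pass to $Z^0$, and then use $\aleph_1$\+presentability of $M$ to split it off one of the~$N_\xi$. The paper's proof is terser but identical in substance; your added remarks on exactness of directed colimits and countable presentability of $N_\xi$ are correct elaborations rather than genuine departures.
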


\begin{proof}
 The argument is similar to the proof of
Corollary~\ref{two-sided-by-lim-classes-direct-summand}.
 It is clear from
Theorem~\ref{dualizing-complex-F-totally-acyclics-theorem} that every
Gorenstein-flat right $R$\+module $M$ is the colimit of
an $\aleph_1$\+directed diagram of $R$\+modules $(N_\xi)_{\xi\in\Xi}$
such that $N_\xi=Z^0(T_\xi^\bu)$ for some F\+totally acyclic complexes
of countably presentable flat right $R$\+modules~$T_\xi^\bu$.
 Now if $M$ is countably presentable, then it follows that $M$ is
a direct summand of $N_\xi$ for some $\xi\in\Xi$.
\end{proof}

\Section{Commutative Noetherian Rings with Countable Spectrum}
\label{countable-spectrum-secn}

 In this section, we prove results similar to those of
Section~\ref{F-totally-acyclics-as-dir-colims-secn}, but under
a different set of assumptions.
 Instead of existence of a dualizing complex, we assume the ring $R$
to be commutative Noetherian with small cardinality of the spectrum.

 For any $R$\+module $M$, we denote by $E_R(M)$ an injective envelope
of~$M$.
 For a prime ideal~$\p$ in a commutative ring $R$, we denote by $R_\p$
the local ring $(R\setminus\p)^{-1}R$.
 The following proposition is standard commutative algebra material.

\begin{prop} \label{matlis}
 Let $R$ be a commutative Noetherian ring.  In this setting: \par
\textup{(a)} All injective $R$\+modules are direct sums of
indecomposable injective $R$\+modules. \par
\textup{(b)} The indecomposable injective $R$\+modules, viewed up to
isomorphism, correspond bijectively to prime ideals of~$R$.
 For every prime ideal\/ $\p\in\Spec R$, the corresponding
indecomposable injective $R$\+module is the injective envelope
$E_R(R/\p)$ of the $R$\+module~$R/\p$. \par
\textup{(c)} For every prime ideal\/ $\p\in\Spec R$, the $R$\+module
$E_R(R/\p)$ is a module over the local ring $R_\p$.
 The $R_\p$\+module $E_R(R/\p)$ is (at most) countably generated.
\end{prop}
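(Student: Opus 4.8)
The plan is to recover the classical Matlis--Papp structure theory of injective modules over a Noetherian ring, keeping track of cardinalities only where part~(c) requires it. The engine behind part~(a) is the Bass--Papp theorem: over a Noetherian ring, an arbitrary direct sum of injective modules is again injective. I would also use the two standard facts that every nonzero module over a Noetherian ring has an associated prime, so it contains a copy of $R/\p$ for some $\p\in\Spec R$, and that $R/\p\subset E_R(R/\p)$ is an essential extension.

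For part~(a), I would first show that every nonzero injective $R$\+module $E$ contains an indecomposable injective direct summand. Choosing $\p\in\mathrm{Ass}\,E$ gives an embedding $R/\p\hookrightarrow E$, which extends, by essentiality of $R/\p$ in $E_R(R/\p)$ and injectivity of $E$, to a monomorphism $E_R(R/\p)\hookrightarrow E$ whose image is injective and hence a direct summand. Then, by Zorn's lemma, I would take a maximal independent family $(E_i)_{i\in I}$ of indecomposable injective submodules of $E$. By Bass--Papp the internal direct sum $\bigoplus_{i\in I}E_i$ is injective, so $E=\bigl(\bigoplus_i E_i\bigr)\oplus E'$; were $E'$ nonzero it would contain an indecomposable injective summand by the previous step, contradicting maximality, and therefore $E=\bigoplus_i E_i$.

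For part~(b), I would note that $E_R(R/\p)$ is indecomposable because the domain $R/\p$ is uniform and the injective hull of a uniform module is again uniform, hence indecomposable. Conversely, an indecomposable injective $E$ is uniform, so $E=E_R(N)$ for any nonzero $N\subseteq E$; taking $N=R/\p$ with $\p\in\mathrm{Ass}\,E$ gives $E\cong E_R(R/\p)$. The assignment $\p\mapsto E_R(R/\p)$ is injective because $\p$ is recoverable from the module: by essentiality one has $\mathrm{Ass}\,E_R(R/\p)=\mathrm{Ass}(R/\p)=\{\p\}$.

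Part~(c) is the crux, and the countable bound is the main obstacle. Writing $E=E_R(R/\p)$, each $s\in R\setminus\p$ acts injectively on $E$ (it does so on the essential submodule $R/\p$), whence $sE\cong E$ is injective and a direct summand of the indecomposable module $E$, forcing $sE=E$; so $E$ is an $R_\p$\+module, in fact injective over $R_\p$. Setting $(A,\mathfrak m,\kappa)=(R_\p,\p R_\p,R_\p/\p R_\p)$ and using $R/\p\subset\kappa\subset E$, the residue field $\kappa$ is essential in $E$ over $A$, so $E=E_A(\kappa)$ and $\mathrm{Ass}_A E=\{\mathfrak m\}$; hence every element of $E$ has $\mathfrak m$\+primary annihilator and is killed by a power of $\mathfrak m$, giving $E=\bigcup_{n\ge1}(0:_E\mathfrak m^n)$. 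The decisive point is that each layer $(0:_E\mathfrak m^n)$ is injective over the Artinian local ring $A/\mathfrak m^n$ with simple socle $\kappa$, so it equals $E_{A/\mathfrak m^n}(\kappa)$ and has finite length, hence is finitely generated over $A$. Thus $E$ is a countable union of finitely generated $A$\+modules and is at most countably generated over $R_\p$. The obstacle lies exactly here: the countable bound rests on the two inputs that every socle layer has finite length and that the ascending chain of layers is indexed by the natural numbers.
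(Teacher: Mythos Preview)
Your argument is correct and follows the classical route due to Matlis, which is exactly what the paper invokes: the paper's proof consists entirely of pointers to \cite[Theorem~2.5, Proposition~3.1, Theorems~3.6 and~3.11]{Mat}, and you have reproduced the substance of those results. In particular, your treatment of part~(c)---localizing to $(A,\mathfrak m)=(R_\p,\p R_\p)$, identifying $E$ with $E_A(\kappa)$, writing $E=\bigcup_n(0:_E\mathfrak m^n)$, and observing that each socle layer is the finite-length module $E_{A/\mathfrak m^n}(\kappa)$---is precisely Matlis's filtration argument underlying \cite[Theorems~3.6 and~3.11]{Mat}.
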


\begin{proof}
 All these results are due to Matlis~\cite{Mat}.
 Part~(a) is~\cite[Theorem~2.5]{Mat}.
 Part~(b) is~\cite[Proposition~3.1]{Mat}.
 Part~(c) is~\cite[Theorems~3.6 and~3.11]{Mat}.
\end{proof}

\begin{thm} \label{commutative-Noetherian-F-totally-acyclics-theorem}
 Let $\kappa$~be an uncountable regular cardinal and $R$ be
a commutative Noetherian ring with the cardinality of
the spectrum\/ $|\Spec R|<\kappa$.
 Then the category of F\+totally acyclic complexes of flat $R$\+modules
is $\kappa$\+accessible.
 The $\kappa$\+presentable objects of this category are precisely all
the F\+totally acyclic compexes of\/ $<\kappa$\+generated flat
$R$\+modules.
 Consequently, every F\+totally acyclic complex of flat $R$\+modules is
a $\kappa$\+directed colimit of F\+totally acyclic complexes of\/
$<\kappa$\+generated flat $R$\+modules.
\end{thm}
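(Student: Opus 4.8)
The plan is to follow the strategy of the proof of Theorem~\ref{dualizing-complex-F-totally-acyclics-theorem}, replacing the role of the dualizing complex by the family of indecomposable injectives furnished by Matlis theory. The first step is to reduce F\+total acyclicity to a tensoring condition indexed by the (few) primes of~$R$. By Proposition~\ref{matlis}(a,b), every injective $R$\+module is a direct sum of copies of the modules $E_R(R/\p)$, $\p\in\Spec R$, and each $E_R(R/\p)$ is itself injective. Since $\ot_R$ commutes with direct sums and homology commutes with direct sums of complexes, an acyclic complex of flat $R$\+modules $F^\bu$ is F\+totally acyclic if and only if the complex $F^\bu\ot_RE_R(R/\p)$ is acyclic for every $\p\in\Spec R$. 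This is the analogue, in the present setting, of Proposition~\ref{F-totally-acyclics-described-by-dualizing}.

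Next I would set up a pseudopullback as in Theorem~\ref{pseudopullback-theorem}, with the given~$\kappa$ and $\lambda=\aleph_0$. For $\sK_1$ I take the category of acyclic complexes of flat $R$\+modules; since $R$ is Noetherian, hence $<\kappa$\+coherent, Proposition~\ref{two-sided-by-accessible-classes-prop} (applied with $\sT_n$ the class of $<\kappa$\+generated flat $R$\+modules for all $n\in\boZ$, which is closed under direct summands and has $\varinjlim_{(\kappa)}\sT_n$ equal to the class of all flat modules) shows $\sK_1$ is $\kappa$\+accessible with $\kappa$\+presentable objects the acyclic complexes of $<\kappa$\+generated flat modules. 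Each $E_R(R/\p)$ is a module over the Noetherian ring $R_\p$, so $F^\bu\ot_RE_R(R/\p)$ is naturally a complex of $R_\p$\+modules. I therefore put $\sL=\prod_{\p\in\Spec R}\Com(\Modr R_\p)$ and let $\sK_2=\prod_{\p\in\Spec R}\sK_2^\p$, where $\sK_2^\p$ is the category of acyclic complexes of $R_\p$\+modules. Because $|\Spec R|<\kappa$, Propositions~\ref{product-proposition} and~\ref{acyclic-complexes-prop} together with Lemma~\ref{abelian-complexes-lemma} make $\sK_2$ and $\sL$ $\kappa$\+accessible (with $\sL$ locally $\kappa$\+presentable) and describe their $\kappa$\+presentable objects. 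The functor $\Phi_1\:\sK_1\rarrow\sL$ sends $F^\bu$ to $(F^\bu\ot_RE_R(R/\p))_{\p}$, and $\Phi_2\:\sK_2\rarrow\sL$ is the product of the inclusions of acyclic complexes into all complexes; by the first step the pseudopullback $\sC$ is exactly the category of F\+totally acyclic complexes of flat $R$\+modules.

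It remains to verify the hypotheses of Theorem~\ref{pseudopullback-theorem}. All three categories admit colimits of $\aleph_0$\+indexed chains (indeed of all directed diagrams, as flatness and acyclicity are preserved under filtered colimits), and both functors preserve $\kappa$\+directed colimits and countable chain colimits, since these are computed termwise and $\ot_RE_R(R/\p)$ preserves all colimits; the inclusion $\Phi_2$ evidently preserves $\kappa$\+presentable objects. The crux is that $\Phi_1$ takes $\kappa$\+presentable objects to $\kappa$\+presentable objects. Here I would use the flat base-change isomorphism $T\ot_RE_R(R/\p)\simeq(T\ot_RR_\p)\ot_{R_\p}E_R(R/\p)$ for a $<\kappa$\+generated flat $R$\+module~$T$: the module $T\ot_RR_\p$ is $<\kappa$\+generated over the Noetherian ring $R_\p$, hence $\kappa$\+presentable over $R_\p$, while $E_R(R/\p)$ is countably generated over $R_\p$ by Proposition~\ref{matlis}(c), hence $<\kappa$\+generated and $\kappa$\+presentable over $R_\p$ as well (as $\kappa$ is uncountable). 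Lemma~\ref{tensor-product-of-presentable}, applied over the ring $R_\p$, then shows $T\ot_RE_R(R/\p)$ is $\kappa$\+presentable as an $R_\p$\+module, so $\Phi_1$ carries an acyclic complex of $<\kappa$\+generated flats to a family of complexes of $\kappa$\+presentable $R_\p$\+modules.

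I expect this last point---preservation of $\kappa$\+presentability by $\Phi_1$---to be the main obstacle, as it is where the hypotheses on $R$ are used: Matlis's countable generation of $E_R(R/\p)$ over $R_\p$ is what keeps the tensor products small, and building $\sL$ out of $R_\p$\+modules rather than $R$\+modules is essential, since $E_R(R/\p)$ need not be $\kappa$\+presentable over $R$ itself. Granting this, Theorem~\ref{pseudopullback-theorem} gives that $\sC$ is $\kappa$\+accessible and that its $\kappa$\+presentable objects are the triples whose $\sK_1$\+component is $\kappa$\+presentable, i.e.\ the F\+totally acyclic complexes of $<\kappa$\+generated flat $R$\+modules; the concluding ``consequently'' is then immediate from $\kappa$\+accessibility.
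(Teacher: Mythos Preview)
Your proposal is correct and follows essentially the same route as the paper's own proof: the same pseudopullback with $\sK_1$ the acyclic complexes of flat $R$-modules, $\sK_2$ and $\sL$ the products over $\Spec R$ of acyclic (resp.\ all) complexes of $R_\p$-modules, and $\Phi_1$ given by $({-})\ot_R E_R(R/\p)$. The only cosmetic difference is that the paper applies Lemma~\ref{tensor-product-of-presentable} directly with $S=R_\p$ and $E=E_R(R/\p)$ viewed as an $R$\+$R_\p$\+bimodule, rather than first rewriting $T\ot_R E_R(R/\p)\simeq (T\ot_R R_\p)\ot_{R_\p}E_R(R/\p)$ as you do; both arguments yield the same conclusion.
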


\begin{proof}
 This is an application of Theorem~\ref{pseudopullback-theorem}
together with Proposition~\ref{product-proposition}
and Proposition~\ref{two-sided-by-accessible-classes-prop}.
 The argument is somewhat similar to the proof of
Theorem~\ref{dualizing-complex-F-totally-acyclics-theorem}.

 Denote by $\sK_1$ the category of acyclic complexes of flat
$R$\+modules.
 Applying Proposition~\ref{two-sided-by-accessible-classes-prop}
for $\sT_n$ being the class of all $<\kappa$\+generated flat
$R$\+modules for every $n\in\boZ$ (cf.~\cite[Proposition~10.2]{Pacc})
and $\lambda=\aleph_0$, we see that $\sK_1$ is a $\kappa$\+accessible
category and obtain a description of its $\kappa$\+presentable objects.

 Furthermore, denote by $\sK_2$ the Cartesian product of the categories
of acyclic complexes of $R_\p$\+modules, taken over all the prime
ideals $\p\in\Spec R$.
 Proposition~\ref{acyclic-complexes-prop} together with
Proposition~\ref{product-proposition} tell that $\sK_2$ is
a $\kappa$\+accessible category and provide a description of its
full subcategory of $\kappa$\+presentable objects.
 Finally, denote by $\sL$ the Cartesian product of the abelian
categories of arbitrary complexes of $R_\p$\+modules, taken over
all the spectrum points $\p\in\Spec R$.
 Lemma~\ref{abelian-complexes-lemma}(a,c) together with
Proposition~\ref{product-proposition} tell that $\sL$ is
a locally $\kappa$\+presentable category and describe its full
subcategory of $\kappa$\+presentable objects.

 Let the functor $\Phi_1\:\sK_1\rarrow\sL$ take any acyclic complex
of flat $R$\+modules $F^\bu$ to the collection of all complexes
of $R_\p$\+modules $F^\bu\ot_RE_R(R/\p)$.
 Let $\Phi_2\:\sK_2\rarrow\sL$ be the Cartesian product of
the identity inclusions of the categories of acyclic complexes of
$R_\p$\+modules into the respective categories of all complexes of
$R_\p$\+modules.
 Then the pseudopullback $\sC$ of the two functors $\Phi_1$ and $\Phi_2$
is the category of all F\+totally acyclic complexes of flat
$R$\+modules (in view of Proposition~\ref{matlis}(a\+-b)).

 Finally, it follows from Proposition~\ref{matlis}(c) and
Lemma~\ref{tensor-product-of-presentable} (for $S=R_\p$) that
the functor $\Phi_1$ takes $\kappa$\+presentable objects to
$\kappa$\+presentable objects.
 All the other assumptions of Theorem~\ref{pseudopullback-theorem}
(for the given cardinal~$\kappa$ and $\lambda=\aleph_0$) are clearly
satisfied.
 Theorem~\ref{pseudopullback-theorem} tells that $\sC$ is
a $\kappa$\+accessible category, and provides the desired description
of its full subcategory of $\kappa$\+presentable objects.
\end{proof}

\begin{cor} \label{commutative-Noetherian-Gorenstein-flats-cor}
 Let $\kappa$~be an uncountable regular cardinal and $R$ be
a commutative Noetherian ring with the cardinality of
the spectrum\/ $|\Spec R|<\kappa$.
 Then the category\/ $\GF$ of Gorenstein-flat $R$\+modules is
$\kappa$\+accessible.
 The $\kappa$\+presentable objects of\/ $\GF$ are precisely
all the\/ $<\kappa$\+generated Gorenstein-flat $R$\+modules.
 So every Gorenstein-flat $R$\+module is a $\kappa$\+directed
colimit of\/ $<\kappa$\+generated Gorenstein-flat $R$\+modules.
\end{cor}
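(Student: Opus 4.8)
The plan is to follow the pattern of the proof of Corollary~\ref{dualizing-complex-Gorenstein-flats-cor}, with Theorem~\ref{commutative-Noetherian-F-totally-acyclics-theorem} playing the role of Theorem~\ref{dualizing-complex-F-totally-acyclics-theorem} and a general uncountable regular cardinal~$\kappa$ replacing~$\aleph_1$. The two structural inputs are the closure of the class $\GF$ under directed colimits in $\Modr R$ (Proposition~\ref{saroch-stovicek-gorenstein-flat-basics}(a)) and a presentation $\GF=\varinjlim_{(\kappa)}\sT$ for a suitable set $\sT$ of $\kappa$\+presentable objects extracted from Theorem~\ref{commutative-Noetherian-F-totally-acyclics-theorem}; once both are in hand, Proposition~\ref{accessible-subcategory} delivers all three assertions simultaneously.

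First I would establish the colimit presentation. Any Gorenstein\+flat module has the form $M\simeq Z^0(F^\bu)$ for some F\+totally acyclic complex of flat $R$\+modules $F^\bu$. By Theorem~\ref{commutative-Noetherian-F-totally-acyclics-theorem}, the complex $F^\bu$ is a $\kappa$\+directed colimit of F\+totally acyclic complexes $T^\bu_\xi$ whose terms are $<\kappa$\+generated flat modules. Since directed colimits of modules are exact, the functor $Z^0(C^\bu)=\ker(C^0\to C^1)$ commutes with $\kappa$\+directed colimits, so $M\simeq\varinjlim_\xi Z^0(T^\bu_\xi)$ is a $\kappa$\+directed colimit of modules that are Gorenstein\+flat by definition.

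The point that needs checking is that each $Z^0(T^\bu_\xi)$ is $<\kappa$\+generated, so that these modules belong to a fixed set of $\kappa$\+presentable objects. This is where the Noetherian hypothesis enters: a commutative Noetherian ring is $<\kappa$\+Noetherian for every uncountable regular~$\kappa$, meaning that every submodule of a $<\kappa$\+generated module is again $<\kappa$\+generated. I would verify this by the standard finite\+support argument: a submodule $N$ of a module generated by $\lambda<\kappa$ elements lifts to a submodule $\widetilde N\subseteq R^{(\lambda)}$, which is the union of its intersections $\widetilde N\cap R^{(F)}$ over the finite subsets $F\subseteq\lambda$; each such intersection is finitely generated by the Noetherian property, so that $\widetilde N$, and hence $N$, is generated by at most $\lambda\cdot\aleph_0=\lambda<\kappa$ elements. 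As $Z^0(T^\bu_\xi)$ is a submodule of the $<\kappa$\+generated module $T^0_\xi$, it is $<\kappa$\+generated. The same property shows that over $R$ the $<\kappa$\+generated modules are exactly the $\kappa$\+presentable ones, so the $<\kappa$\+generated Gorenstein\+flat modules form, up to isomorphism, a set $\sT$ of $\kappa$\+presentable objects of $\Modr R$.

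Combining the first two steps gives $\GF\subseteq\varinjlim_{(\kappa)}\sT$, while Proposition~\ref{saroch-stovicek-gorenstein-flat-basics}(a) gives the reverse inclusion $\varinjlim_{(\kappa)}\sT\subseteq\GF$; hence $\GF=\varinjlim_{(\kappa)}\sT$. Proposition~\ref{accessible-subcategory} then tells us that $\GF$ is $\kappa$\+accessible and that its $\kappa$\+presentable objects are precisely the retracts of members of $\sT$. Any such retract is a direct summand of a $<\kappa$\+generated module, hence $<\kappa$\+generated, and it is an object of $\GF$, hence Gorenstein\+flat; conversely every member of $\sT$ is a retract of itself. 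This identifies the $\kappa$\+presentable objects of $\GF$ with the $<\kappa$\+generated Gorenstein\+flat modules and records that every Gorenstein\+flat $R$\+module is a $\kappa$\+directed colimit of such. I expect the only genuine obstacle to be the cardinality bookkeeping of the third paragraph---keeping the cocycle modules $<\kappa$\+generated by means of the $<\kappa$\+Noetherian property---while the rest is a formal application of the machinery already in place.
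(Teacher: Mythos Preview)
Your proof is correct and follows essentially the same approach as the paper, which simply says ``Similar to the proof of Corollary~\ref{dualizing-complex-Gorenstein-flats-cor} and based on Theorem~\ref{commutative-Noetherian-F-totally-acyclics-theorem} together with Proposition~\ref{saroch-stovicek-gorenstein-flat-basics}(a).'' Your added detail on why $Z^0(T^\bu_\xi)$ is $<\kappa$\+generated via the $<\kappa$\+Noetherian property is valid, though one can shortcut it by noting that for an acyclic complex $Z^0(T^\bu_\xi)=\coker(T^{-2}_\xi\to T^{-1}_\xi)$ is a quotient of the $<\kappa$\+generated module $T^{-1}_\xi$.
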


\begin{proof}
 Similar to the proof of
Corollary~\ref{dualizing-complex-Gorenstein-flats-cor} and based on
Theorem~\ref{commutative-Noetherian-F-totally-acyclics-theorem} together
with Proposition~\ref{saroch-stovicek-gorenstein-flat-basics}(a).
\end{proof}

 For a far-reaching generalization of
Corollary~\ref{commutative-Noetherian-Gorenstein-flats-cor}, see
Theorem~\ref{countably-coherent-Gorenstein-flats-theorem} below.

\begin{cor} \label{comm-Noetherian-F-totally-acyclics-direct-summands}
 Let $\kappa$~be an uncountable regular cardinal and $R$ be
a commutative Noetherian ring with the cardinality of
the spectrum\/ $|\Spec R|<\kappa$.
 Let $M$ be a\/ $<\nobreak\kappa$\+generated Gorenstein-flat $R$\+module.
 Then $M$ is a direct summand of an $R$\+module $N$ admitting
an F\+totally acyclic two-sided resolution $T^\bu$ with\/
$<\kappa$\+generated flat $R$\+modules $T^n$, \,$n\in\boZ$.
\end{cor}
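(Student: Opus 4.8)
The plan is to follow the pattern of the proofs of Corollaries~\ref{two-sided-by-lim-classes-direct-summand} and~\ref{dualizing-complex-F-totally-acyclics-direct-summands}, with Theorem~\ref{commutative-Noetherian-F-totally-acyclics-theorem} now playing the role that Theorem~\ref{dualizing-complex-F-totally-acyclics-theorem} played in the dualizing-complex setting.

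First I would use the definition of a Gorenstein-flat module to write $M\simeq Z^0(F^\bu)$ for some F\+totally acyclic complex of flat $R$\+modules $F^\bu$. By Theorem~\ref{commutative-Noetherian-F-totally-acyclics-theorem}, the complex $F^\bu$ is a $\kappa$\+directed colimit $F^\bu\simeq\varinjlim_{\xi\in\Xi}T_\xi^\bu$ of F\+totally acyclic complexes $T_\xi^\bu$ whose terms are $<\kappa$\+generated flat $R$\+modules. This colimit is computed termwise, because the class of F\+totally acyclic complexes is closed under directed colimits in the category of all complexes of $R$\+modules; and since directed colimits in $\Modr R$ are exact, the functor $Z^0$ (being a kernel) preserves them. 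Hence $M\simeq\varinjlim_{\xi\in\Xi}N_\xi$ is a $\kappa$\+directed colimit in $\Modr R$, where $N_\xi=Z^0(T_\xi^\bu)$; each $N_\xi$ by construction admits the F\+totally acyclic two-sided resolution $T_\xi^\bu$ by $<\kappa$\+generated flat $R$\+modules.

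The point that genuinely uses the hypotheses is that $M$ is a $\kappa$\+presentable object of $\Modr R$. Since $R$ is Noetherian, every ideal is finitely generated and hence $<\kappa$\+generated, so $R$ is $<\kappa$\+Noetherian for the uncountable cardinal~$\kappa$; therefore the kernel of any presentation $R^{(\alpha)}\rarrow M$ with $|\alpha|<\kappa$ is again $<\kappa$\+generated, exhibiting $M$ as the cokernel of a morphism of free modules with less than~$\kappa$ generators, i.e., as a $\kappa$\+presentable module.

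Finally, since $M$ is $\kappa$\+presentable and $M\simeq\varinjlim_{\xi\in\Xi}N_\xi$ is $\kappa$\+directed, the identity morphism of $M$ factorizes through one of the structural maps $N_\xi\rarrow M$, producing a composite $M\rarrow N_\xi\rarrow M$ equal to the identity and thereby realizing $M$ as a direct summand (retract) of $N=N_\xi=Z^0(T_\xi^\bu)$. I expect no real obstacle here: the construction is entirely parallel to the dualizing-complex case, and the only place where the assumptions genuinely enter is the passage from ``$<\kappa$\+generated'' to ``$\kappa$\+presentable'' via the Noetherian hypothesis.
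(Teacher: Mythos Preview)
Your proposal is correct and follows essentially the same approach as the paper, which simply says the argument is ``similar to the proof of Corollary~\ref{dualizing-complex-F-totally-acyclics-direct-summands} and based on Theorem~\ref{commutative-Noetherian-F-totally-acyclics-theorem}.'' You have merely spelled out the details the paper leaves implicit, including the passage from $<\kappa$\+generated to $\kappa$\+presentable via the Noetherian hypothesis.
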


\begin{proof}
 Similar to the proof of
Corollary~\ref{dualizing-complex-F-totally-acyclics-direct-summands}
and based on
Theorem~\ref{commutative-Noetherian-F-totally-acyclics-theorem}.
\end{proof}

 In particular, if the cardinality of $\Spec R$ is at most countable,
then Theorem~\ref{commutative-Noetherian-F-totally-acyclics-theorem}
and Corollaries~\ref{commutative-Noetherian-Gorenstein-flats-cor}\+-%
\ref{comm-Noetherian-F-totally-acyclics-direct-summands} are applicable
for $\kappa=\aleph_1$, providing a description of F\+totally acyclic
complexes of flat $R$\+modules as $\aleph_1$\+directed colimits of
F\+totally acyclic complexes of countably generated flat $R$\+modules.

\Section{Gorenstein-Flat Modules as Directed Colimits}
\label{gorenstein-flats-secn}

 In this section we use powerful and difficult results of
the paper~\cite{SarSt} in order to deduce a common generalization of
Corollaries~\ref{dualizing-complex-Gorenstein-flats-cor}
and~\ref{commutative-Noetherian-Gorenstein-flats-cor} applicable to
all right countably coherent rings~$R$.

\begin{lem} \label{epimorphism-from-directed-colimit-to-filtered}
 Let $R$ be an associative ring, $\kappa$~be a regular cardinal, and
$\lambda<\kappa$ be a smaller infinite cardinal.
 Let\/ $\sS$ and\/ $\sT$ be two classes of $\kappa$\+presentable
right $R$\+modules.
 Assume that the class of $R$\+modules\/ $\sT$ is closed under direct
summands, the class of $R$\+modules $\varinjlim_{(\kappa)}\sT$ is
closed under colimits of $\lambda$\+indexed chains in\/ $\Modr R$,
and the class\/ $\sS$ contains all the $\kappa$\+presentable
$R$\+modules belonging to\/ $\Fil(\sS)$.
 Let $F\in\varinjlim_{(\kappa)}\sT$ and $N\in\Fil(\sS)$ be two
$R$\+modules, and let $f\:F\rarrow N$ be a surjective $R$\+module
morphism.
 Then the morphism~$f$ is a $\kappa$\+directed colimit of surjective
$R$\+module morphisms $t\:T\rarrow S$ with $T\in\sT$ and $S\in\sS$.
\end{lem}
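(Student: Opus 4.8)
The plan is to realize $f$ as a $\kappa$-directed colimit inside the category $\sK$ of epimorphisms of right $R$-modules, which is $\kappa$-accessible by Lemma~\ref{epimorphism-lemma}. Let $\mathcal P\subset\sK$ be the class of surjective morphisms $t\:T\rarrow S$ with $T\in\sT$ and $S\in\sS$; by Lemma~\ref{epimorphism-lemma} each such $t$ is a $\kappa$-presentable object of $\sK$, and up to isomorphism they form a set. Since $f$ is an object of $\sK$, the assertion of the lemma is exactly that $f\in\varinjlim_{(\kappa)}\mathcal P$. By the factorization criterion in the last sentence of Proposition~\ref{accessible-subcategory}, applied in $\sK$, it is enough to prove that every morphism $(a,b)\:e\rarrow f$ in $\sK$ out of a $\kappa$-presentable epimorphism $e\:E'\rarrow E''$ factorizes through some object of $\mathcal P$.

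I would first restate the hypotheses in usable form. Because $\sT$ is closed under direct summands, Proposition~\ref{accessible-subcategory} shows that $\sT$ is precisely the class of $\kappa$-presentable objects of $\varinjlim_{(\kappa)}\sT$; thus any morphism from a $\kappa$-presentable module to $F$ factorizes through an object of $\sT$, and any $\kappa$-presentable module lying in $\varinjlim_{(\kappa)}\sT$ already belongs to $\sT$. On the codomain side, the module $N\in\Fil(\sS)$ comes, via Proposition~\ref{deconstructible-as-directed-colimit} (the Hill lemma), with a complete lattice $\mathcal L$ of submodules closed under unions, in which every subset of $N$ of cardinality $<\kappa$ lies in a $\kappa$-presentable member, and every $\kappa$-presentable member of $\mathcal L$ belongs to $\Fil(\sS)$ and hence, by the hypothesis on $\sS$, to $\sS$.

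Given $(a,b)\:e\rarrow f$, I would construct by recursion a $\lambda$-indexed chain $(T^{(i)})_{i<\lambda}$ in $\sT$ with maps $T^{(i)}\rarrow F$, and a $\lambda$-indexed chain $(S^{(i)})_{i<\lambda}$ of $\kappa$-presentable members of $\mathcal L$, maintaining $f(T^{(i)})\subseteq S^{(i)}$ and $S^{(i)}\subseteq f(T^{(i+1)})$. To begin, factor $a$ as $E'\rarrow T^{(0)}\rarrow F$ with $T^{(0)}\in\sT$, and pick $S^{(0)}\in\mathcal L$ containing the images of $b$ and of $T^{(0)}\rarrow F\rarrow N$. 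At a successor step I perform a \emph{forth} and a \emph{back} move: lifting the fewer than~$\kappa$ generators of $S^{(i)}$ along the surjection $f$ to $F$, and absorbing these lifts together with $T^{(i)}$ into a single larger stage $T^{(i+1)}\in\sT$ of the $\kappa$-directed presentation of $F$, yields $f(T^{(i+1)})\supseteq S^{(i)}$; then the $<\kappa$-generated image of $T^{(i+1)}\rarrow N$ is placed inside a $\kappa$-presentable $S^{(i+1)}\in\mathcal L$ with $S^{(i)}\subseteq S^{(i+1)}$. At limit stages I take colimits, noting that the partial colimit of $(T^{(i)})$ again lies in $\varinjlim_{(\kappa)}\sT$ by the assumed closure under $\lambda$-indexed chains and is $\kappa$-presentable, hence lands back in $\sT$ (re-factoring its map to $F$ through an actual stage as needed), so that the recursion continues.

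Passing to the colimits $T=\varinjlim_{i<\lambda}T^{(i)}$ and $S=\bigcup_{i<\lambda}S^{(i)}$, the two closure properties finish the argument. The module $T$ is a colimit of a $\lambda$-indexed chain in $\varinjlim_{(\kappa)}\sT$, hence lies in $\varinjlim_{(\kappa)}\sT$, and it is $\kappa$-presentable as a colimit of $\lambda<\kappa$ many $\kappa$-presentable modules, so $T\in\sT$; while $S$ is a union of members of $\mathcal L$, hence a $<\kappa$-generated member of $\mathcal L$, so $S\in\sS$. The surjection $f$ maps $T$ into $S$, defining $t\:T\rarrow S$, and the interleaving $f(T^{(i)})\subseteq S^{(i)}\subseteq f(T^{(i+1)})$ forces the image of $t$ to fill out all of $S$, so $t\in\mathcal P$. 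Finally $E'\rarrow T^{(0)}\rarrow T$ and $E''\rarrow S^{(0)}\hookrightarrow S$ assemble into morphisms $e\rarrow t\rarrow f$ composing to $(a,b)$, the two squares being checked using injectivity of $S\hookrightarrow N$. The one genuine difficulty is precisely the surjectivity of $t$: a single-step factorization delivers $T\in\sT$ and $S\in\sS$ but only an uncontrolled image, and it is exactly to make the image of $t$ equal to $S$ that the $\lambda$-indexed back-and-forth is run, drawing in an essential way on the closure of $\varinjlim_{(\kappa)}\sT$ under $\lambda$-indexed chains (to keep $T$ in $\sT$) and on the lattice structure of the Hill lemma (to keep $S$ in $\sS$).
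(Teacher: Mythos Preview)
Your argument is correct and takes a genuinely different route from the paper's.  The paper realizes the lemma as a direct instance of Theorem~\ref{pseudopullback-theorem}: it sets $\sK_1$ to be the category of epimorphisms (Lemma~\ref{epimorphism-lemma}), $\sL=\Modr R\times\Modr R$, and $\sK_2=\sF\times\sN$, where $\sF=\varinjlim_{(\kappa)}\sT$ and, crucially, $\sN$ is the Hill lattice of submodules of $N$ \emph{viewed as a poset category}.  The Hill lemma makes this non-additive category $\sN$ locally $\kappa$\+presentable, so the pseudopullback theorem applies and the conclusion drops out.  You instead invoke only the factorization criterion of Proposition~\ref{accessible-subcategory} inside the category of epimorphisms and run a $\lambda$\+indexed back-and-forth between stages of a $\kappa$\+directed presentation of $F$ and $\kappa$\+presentable members of the Hill lattice.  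In effect you are unwinding the proof of Theorem~\ref{pseudopullback-theorem} by hand in this particular situation; this is more elementary and makes the role of the $\lambda$\+chain hypothesis visible at exactly the point where surjectivity of $t$ is forced, whereas the paper's version is more modular and isolates the (perhaps unexpected) observation that the Hill lattice itself can be fed into the accessibility machinery as a category.

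One small imprecision: at a limit stage $j<\lambda$ you assert that the partial colimit $\varinjlim_{i<j}T^{(i)}$ lies in $\varinjlim_{(\kappa)}\sT$ ``by the assumed closure under $\lambda$\+indexed chains'', but that hypothesis concerns chains of length exactly~$\lambda$, not $j<\lambda$.  The clean fix, which your parenthetical ``re-factoring its map to $F$ through an actual stage'' already gestures at, is to work throughout with stages $T^{(i)}=T_{\xi_i}$ of a fixed $\kappa$\+directed presentation $F=\varinjlim_{\xi\in\Xi}T_\xi$: at a limit $j<\lambda$ simply choose $\xi_j\in\Xi$ above all $\xi_i$, $i<j$ (using $|j|<\lambda<\kappa$ and $\kappa$\+directedness of~$\Xi$), so that no closure property is needed at intermediate steps and the $\lambda$\+chain hypothesis is invoked only once, for the final colimit $T=\varinjlim_{i<\lambda}T_{\xi_i}$.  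For $\lambda=\aleph_0$ (the case used in Theorem~\ref{countably-coherent-Gorenstein-flats-theorem}) there are no intermediate limit stages and the issue does not arise.
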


\begin{proof}
 This is another application of Theorem~\ref{pseudopullback-theorem}
together with Lemma~\ref{epimorphism-lemma} and the Hill lemma.
 Let $\sK_1$ be the category of epimorphisms of right $R$\+modules.
 Lemma~\ref{epimorphism-lemma} tells that the category $\sK_1$ is
$\kappa$\+accessible and provides a description of its full subcategory
of $\kappa$\+presentable objects.
 Let $\sL=\Modr R\times\Modr R$ be the Cartesian square of the abelian
category of right $R$\+modules.
 The category $\sL$ is locally $\kappa$\+presentable, and
Proposition~\ref{product-proposition} provides a description of its
full subcategory of $\kappa$\+presentable objects.

 Finally, let $\sK_2=\sF\times\sN$ be the Cartesian product of
the following two categories.
 The category $\sF$ is the class of modules
$\sF=\varinjlim_{(\kappa)}\sT$, viewed as a full subcategory
in $\Modr R$.
 To construct the category $\sN$, we need to recall the Hill
lemma~\cite[Theorem~6]{StT}, \cite[Theorem~7.10]{GT},
\cite[Theorem~2.1]{Sto}.
 In application to the $R$\+module $N$ with its given filtration by
modules from $\sS$, the Hill lemma produces a certain complete lattice
of submodules of~$N$.
 The category $\sN$ is this complete lattice of submodules in $N$,
viewed as a poset, and this poset is viewed as a category.
 (Notice that the category $\sN$ is \emph{not} additive.)

 As any complete lattice interpreted as a category, the category $\sN$
has all limits and colimits.
 In particular, the colimits in $\sN$ are just the joins in the lattice.
 Furthermore, in the situation at hand it follows from Hill lemma
that $\sN$ is a locally $\kappa$\+presentable category, and
the $\kappa$\+presentable objects of $\sN$ are precisely all
the $\kappa$\+presentable submodules of $N$ belonging to the lattice.
 On the other hand, the category $\sF$ is $\kappa$\+accessible, and
$\sT\subset\sF$ is its full subcategory of $\kappa$\+presentable
objects, by Proposition~\ref{accessible-subcategory}.
 Once again, Proposition~\ref{product-proposition} tells that
the category $\sK_2=\sF\times\sN$ is $\kappa$\+accessible, and
describes its full subcategory of $\kappa$\+presentable objects.

 The functor $\Phi_1\:\sK_1\rarrow\sL$ assigns to an epimorphism
of $R$\+modules $A\rarrow B$ the pair of $R$\+modules $(A,B)\in\sL$.
 The functor $\Phi_2\:\sK_2\rarrow\sL$ assigns to a pair of objects
$(G,M)$, where $G\in\sF$ and $M\in\sN$, the pair of $R$\+modules
$(G,M)\in\Modr R\times\Modr R=\sL$.
 In other words, $\Phi_2$ is the Cartesian product of two functors
$\sF\rarrow\Modr R$ and $\sN\rarrow\Modr R$.
 The functor $\sF\rarrow\Modr R$ is the identity inclusion
$\sF=\varinjlim_{(\kappa)}\sT\rarrow\Modr R$.
 The functor $\sN\rarrow\Modr R$ assigns the $R$\+module $M$ to
an element $M\subset N$ of the Hill lattice of submodules in~$N$.

 The pseudopullback category $\sC$ of the pair of functors $\Phi_1$
and $\Phi_2$ is the category of $R$\+module epimorphisms
$G\rarrow M$, where $G\in\sF$ and $M\subset N$, \,$M\in\sN$.
 In particular, $f\:F\rarrow N$ is an object of~$\sC$.
 Theorem~\ref{pseudopullback-theorem} tells that the category $\sC$
is $\kappa$\+accessible, and provides a description of its full
subcategory of $\kappa$\+presentable objects.
 It follows that the object $f\in\sC$ is a $\kappa$\+directed colimit
of $\kappa$\+presentable objects of $\sC$, as desired.
\end{proof}

\begin{thm} \label{countably-coherent-Gorenstein-flats-theorem}
 Let $R$ be a right countably coherent ring.
 Then the category of Gorenstein-flat right $R$\+modules\/ $\GF$ is\/
$\aleph_1$\+accessible.
 The\/ $\aleph_1$\+presentable objects of\/ $\GF$ are precisely
all the Gorenstein-flat right $R$\+modules that are countably
presentable in $\Modr R$.
 So every Gorenstein-flat right $R$\+module is an\/ $\aleph_1$\+directed
colimit of countably presentable Gorenstein-flat right $R$\+modules.
\end{thm}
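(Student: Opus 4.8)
The plan is to reduce the theorem to its last assertion and then feed the surjection lemma just proved into the argument. By Proposition~\ref{saroch-stovicek-gorenstein-flat-basics}(a), the full subcategory $\GF\subset\Modr R$ is closed under directed colimits, so in view of Proposition~\ref{accessible-subcategory} it suffices to prove that every Gorenstein-flat right $R$\+module $M$ is an $\aleph_1$\+directed colimit of countably presentable Gorenstein-flat modules. Once this is done, Proposition~\ref{accessible-subcategory}, applied to a set of representatives of the countably presentable Gorenstein-flat modules, yields both the $\aleph_1$\+accessibility of $\GF$ and the identification of its $\aleph_1$\+presentable objects with the countably presentable objects of $\GF$.

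To realize $M$ as such a colimit, I would put $\kappa=\aleph_1$ and $\lambda=\aleph_0$ and apply Lemma~\ref{epimorphism-from-directed-colimit-to-filtered} with $\sT$ the class of all countably presentable flat right $R$\+modules and $\sS$ a set of representatives of all countably presentable Gorenstein-flat modules. Here $\sT$ is closed under direct summands and $\varinjlim_{(\aleph_1)}\sT$ is the class of all flat modules, which is closed under colimits of countable chains; and $\sS$ automatically contains every countably presentable module belonging to $\Fil(\sS)$, since $\Fil(\sS)\subset\GF$ by the Eklof lemma (the class $\GF$ being the left-hand class of a hereditary complete cotorsion pair, by the results of \v Saroch and \v St\!'ov\'\i\v cek). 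Since $M$ is Gorenstein-flat, it is the degree\+$0$ cocycle module $Z^0(F^\bu)$ of an F\+totally acyclic complex of flat modules; hence there is a short exact sequence $0\rarrow M\rarrow F\rarrow N\rarrow0$ in which $F=F^0$ is flat and $N$ is the next cocycle module of the same F\+totally acyclic complex, so $N$ is again Gorenstein-flat. I would feed the surjection $f\:F\rarrow N$ into Lemma~\ref{epimorphism-from-directed-colimit-to-filtered}.

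The lemma then presents $f$ as an $\aleph_1$\+directed colimit of surjections $t_i\:T_i\rarrow S_i$ with $T_i\in\sT$ countably presentable flat and $S_i\in\sS$ countably presentable Gorenstein-flat. As directed colimits are exact, passing to kernels gives $M=\ker f=\varinjlim_i\ker t_i$. Each $\ker t_i$ is countably presentable, being the kernel of a morphism between countably presentable modules over a right countably coherent ring. Moreover each $\ker t_i$ is Gorenstein-flat: from $0\rarrow\ker t_i\rarrow T_i\rarrow S_i\rarrow0$ with $T_i$ flat (hence Gorenstein-flat) and $S_i$ Gorenstein-flat, heredity of the cotorsion pair $(\GF,\GF^{\perp_1})$ shows that $\GF$ is closed under kernels of epimorphisms, so $\ker t_i\in\GF$. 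Thus $M$ is exhibited as an $\aleph_1$\+directed colimit of countably presentable Gorenstein-flat modules, which is exactly what is needed.

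The one hypothesis of Lemma~\ref{epimorphism-from-directed-colimit-to-filtered} that is not formal is that the target $N$ lie in $\Fil(\sS)$, that is, that the Gorenstein-flat module $N$ be \emph{filtered by countably presentable} Gorenstein-flat modules. This is the heart of the matter, and the point at which the difficult input from \cite[Theorems~4.9 and~4.11(4)]{SarSt} enters: one must improve the $\rho^+$\+deconstructibility recorded in Proposition~\ref{saroch-stovicek-gorenstein-flat-basics}(b) to a countable-type filtration valid over an arbitrary right countably coherent ring, with no dependence on~$|R|$. The structural reason the cardinal $\aleph_1$ becomes attainable, rather than a bound growing with $|R|$, is that flatness lets $F$ sit on the colimit side of the underlying pseudopullback: every flat module is an $\aleph_1$\+directed colimit of countably presentable flats irrespective of the size of $R$, while the Hill lattice supplies the countably presentable pieces of $N$ on the other side. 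I expect that securing this filtration property of $N$ from \cite{SarSt}, together with the heredity of the Gorenstein-flat cotorsion pair, will be the main obstacle; the remaining bookkeeping (countable presentability of the kernels, exactness of colimits) is routine.
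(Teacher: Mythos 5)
Your overall skeleton matches the paper's proof: the reduction via Proposition~\ref{saroch-stovicek-gorenstein-flat-basics}(a) and Proposition~\ref{accessible-subcategory}, the application of Lemma~\ref{epimorphism-from-directed-colimit-to-filtered} with $\kappa=\aleph_1$, \,$\lambda=\aleph_0$, and $\sT$ the countably presentable flat modules, and the final passage to kernels are all exactly as in the paper. But there is a genuine gap at the point you yourself flag as ``the heart of the matter'', and it cannot be closed in the way you hope. You take $\sS$ to be the countably presentable \emph{Gorenstein-flat} modules and feed in the surjection $F\rarrow N$ onto the next cocycle module of an F-totally acyclic complex; this forces you to verify $N\in\Fil(\sS)$, i.~e., that every Gorenstein-flat module is filtered by countably presentable Gorenstein-flat modules. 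That $\aleph_1$-deconstructibility of $\GF$ is not available: it is not what \cite[Theorem~4.9]{SarSt} asserts, and the paper's own remark following the theorem (contrasting it with Proposition~\ref{deconstructible-as-directed-colimit} combined with Proposition~\ref{saroch-stovicek-gorenstein-flat-basics}(b), which only gives $\rho^+$-deconstructibility, $\rho=|R|+\aleph_0$) indicates precisely that no such countable-type filtration of $\GF$ is known. So your argument, as written, rests on an unproven strengthening of the known deconstructibility results rather than on anything quotable from~\cite{SarSt}.

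The paper's proof routes around this by working with \emph{projectively coresolved} Gorenstein-flat modules. By \cite[Theorem~4.11(4)]{SarSt}, the Gorenstein-flat modules are exactly the kernels of surjections from flat modules onto projectively coresolved Gorenstein-flat modules; so one replaces your cocycle sequence $0\rarrow M\rarrow F\rarrow N\rarrow0$ (in which $N$ is merely Gorenstein-flat) by such a presentation, whose quotient is projectively coresolved Gorenstein-flat. Then \cite[Theorem~4.9]{SarSt} says that every projectively coresolved Gorenstein-flat module \emph{is} filtered by countably presentable projectively coresolved Gorenstein-flat modules. Taking $\sS$ to be the set of countably presentable projectively coresolved Gorenstein-flat modules, the hypothesis $N\in\Fil(\sS)$ of Lemma~\ref{epimorphism-from-directed-colimit-to-filtered} holds by direct quotation, not by a new deconstructibility argument; and the kernels of the approximating surjections are countably presentable (by countable coherence, as you note) and Gorenstein-flat (by \cite[Theorem~4.11(4)]{SarSt} again, or by the heredity argument you give). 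In short: both your choice of $\sS$ and your choice of short exact sequence must be changed from ``Gorenstein-flat'' to ``projectively coresolved Gorenstein-flat''; with that substitution, your argument becomes the paper's proof.
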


\begin{proof}
 The full subcategory $\GF\subset\Modr R$ is closed under
directed colimits in $\Modr R$ by
Proposition~\ref{saroch-stovicek-gorenstein-flat-basics}(a).
 In view of Proposition~\ref{accessible-subcategory}, it remains to show
that any Gorenstein-flat right $R$\+module is an $\aleph_1$\+directed
colimit of countably presentable Gorenstein-flat right $R$\+modules.

 We use the description of Gorenstein-flat modules provided
by~\cite[Theorem~4.11(4)]{SarSt}, which tells that the Gorenstein-flat
modules are precisely all the kernels of surjective morphisms from flat
modules onto \emph{projectively coresolved Gorenstein-flat modules}.
 Furthermore, by~\cite[Theorem~4.9]{SarSt}, every projectively coresolved
Gorenstein-flat right $R$\+module is filtered by countably presentable
projectively coresolved Gorenstein-flat $R$\+modules.

 We apply Lemma~\ref{epimorphism-from-directed-colimit-to-filtered}
for $\kappa=\aleph_1$ and $\lambda=\aleph_0$.
 Let $\sT$ be the set of all countably presentable flat right
$R$\+modules, and let $\sS$ be the set of all countably presentable
projectively coresolved Gorenstein-flat right $R$\+modules.
 Then $\varinjlim_{(\aleph_1)}\sT$ is the class of all flat right
$R$\+modules by~\cite[Proposition~10.2]{Pacc}, while $\Fil(\sS)$ is
the class of all projectively coresolved Gorenstein-flat right
$R$\+modules.

 So the lemma is applicable, and it tells that every surjective morphism
from a flat right $R$\+module onto a projectively coresolved
Gorenstein-flat right $R$\+module is an $\aleph_1$\+directed colimit of
surjective morphisms from countably presentable flat $R$\+modules onto
countably presentable projectively coresolved Gorenstein-flat
$R$\+modules.
 It remains to pass to the kernels.  \hbadness=1050
\end{proof}

 Notice that Theorem~\ref{countably-coherent-Gorenstein-flats-theorem}
gives a better cardinality estimate for the accessibility rank of
the category of Gorenstein-flat $R$\+modules than
Proposition~\ref{deconstructible-as-directed-colimit} combined with
Proposition~\ref{saroch-stovicek-gorenstein-flat-basics}(b).

\Section{Modules of Finite Injective Dimension}
\label{finite-injective-dimension-secn}

 In this section we discuss the accessibility rank of the category of
modules of injective dimension~$\le m$, where $m$~is a fixed integer.
 In this context, our category-theoretic approach based on
Theorems~\ref{pseudopullback-theorem} and~\ref{isomorpher-theorem}
via Proposition~\ref{two-sided-by-accessible-classes-prop} gives
the same result as the deconstructibility-based approach of
Proposition~\ref{right-Ext-1-orthogonal-accessible}.

\begin{thm}  \label{finite-injective-coresolutions-theorem}
 Let $\lambda$ be a regular cardinal and $R$ be a right\/
$<\lambda$\+Noetherian ring.
 Let $\nu$~be a cardinal such that $\rho=|R|+\aleph_0\le\nu$
and $\nu^{<\lambda}=\nu$.
 Let $m\ge0$ be an integer.
 Then the category of all exact sequences of injective right
$R$\+modules $J^0\rarrow J^1\rarrow\dotsb\rarrow J^m\rarrow0$ is
$\nu^+$\+accessible.
 The $\nu^+$\+presentable objects of this category are precisely all
the exact sequences $T^0\rarrow T^1\rarrow\dotsb\rarrow T^m\rarrow0$
with injective right $R$\+modules $T^n$, \,$0\le n\le m$,
of cardinality at most~$\nu$.
 Consequently, every finite injective coresolution
$J^0\rarrow J^1\rarrow\dotsb\rarrow J^m\rarrow0$ in\/ $\Modr R$
is a $\nu^+$\+directed colimit of finite injective coresolutions
$T^0\rarrow T^1\rarrow\dotsb\rarrow T^m\rarrow0$ of the same length~$m$
with injective $R$\+modules $T^n$, \,$0\le n\le m$, of cardinality
at most~$\nu$.
\end{thm}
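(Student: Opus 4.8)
The plan is to treat this as the injective analogue of Theorem~\ref{finite-flat-resolutions-theorem}, replacing the Govorov--Lazard description of flat modules by the accessibility of injectives furnished by Corollary~\ref{injectives-accessible}, and feeding everything into Proposition~\ref{two-sided-by-accessible-classes-prop} with $\kappa=\nu^+$ and the given~$\lambda$. First I would record two preliminaries. Since $\nu^{<\lambda}=\nu$ forces $\lambda\le\nu$ (were $\nu<\lambda$, the term $\nu^\nu=2^\nu>\nu$ would appear in the supremum $\nu^{<\lambda}$), we have $\lambda<\nu^+=\kappa$, as required. Moreover, $R$ is right\/ $<\nu^+$\+coherent: being right\/ $<\lambda$\+Noetherian with $\lambda\le\nu$ and $|R|\le\nu$, every finitely generated right ideal is\/ $<\lambda$\+generated, hence of cardinality at most~$\nu$, hence $\nu^+$\+presentable.

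Next I would set up the classes $(\sT_n)_{n\in\boZ}$ of $\nu^+$\+presentable right $R$\+modules. For $0\le n\le m$, let $\sT_n$ be the class of all injective right $R$\+modules of cardinality at most~$\nu$; let $\sT_{-1}$ be the class of all right $R$\+modules of cardinality at most~$\nu$; and set $\sT_n=\{0\}$ for all other~$n$. By Corollary~\ref{injectives-accessible}, for $0\le n\le m$ the class $\varinjlim_{(\nu^+)}\sT_n$ is precisely the class of all injective $R$\+modules, which is closed under $\lambda$\+directed colimits (in particular, under colimits of $\lambda$\+indexed chains) in $\Modr R$; and $\sT_n$ is closed under direct summands, a direct summand of an injective module of cardinality $\le\nu$ being again such. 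For $n=-1$ we have $\varinjlim_{(\nu^+)}\sT_{-1}=\Modr R$ (as $\Modr R$ is locally $\nu^+$\+presentable), closed under all colimits, and $\sT_{-1}$ is closed under direct summands. Thus all hypotheses of Proposition~\ref{two-sided-by-accessible-classes-prop} hold.

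Applying that proposition, the category $\sC$ of all acyclic complexes $C^\bu$ with $C^n\in\varinjlim_{(\nu^+)}\sT_n$ for every~$n$ is $\nu^+$\+accessible, and its $\nu^+$\+presentable objects are precisely the acyclic complexes $T^\bu$ with $T^n\in\sT_n$ for every~$n$. It then remains to identify $\sC$ with the category of finite injective coresolutions. An object of $\sC$ is an everywhere-exact complex $0\rarrow C^{-1}\rarrow C^0\rarrow\dotsb\rarrow C^m\rarrow0$ with $C^0,\dots,C^m$ injective and $C^{-1}$ arbitrary; discarding the term $C^{-1}=\ker(C^0\to C^1)$, which is functorially determined by the remaining data, yields exactly a finite injective coresolution $C^0\rarrow\dotsb\rarrow C^m\rarrow0$ (exact at $C^1,\dots,C^m$), and morphisms correspond bijectively. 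For the presentable-objects description, note that given injective $T^0,\dots,T^m$ of cardinality $\le\nu$, the kernel $T^{-1}=\ker(T^0\to T^1)$ is a submodule of $T^0$ and so automatically has cardinality $\le\nu$; hence the side condition $T^{-1}\in\sT_{-1}$ is vacuous, and the $\nu^+$\+presentable objects are exactly the finite injective coresolutions with injective terms of cardinality at most~$\nu$. The concluding statement about $\nu^+$\+directed colimits is immediate from $\nu^+$\+accessibility.

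The one genuinely substantive point — everything else being a routine transcription of the flat case — is the closure of the class of injective right $R$\+modules under colimits of $\lambda$\+indexed chains. This is exactly the content of Corollary~\ref{injectives-accessible}, and it depends essentially on the\/ $<\lambda$\+Noetherian hypothesis together with the arithmetic condition $\nu^{<\lambda}=\nu$; it is the injective counterpart of the classical fact that over a right Noetherian ring a direct limit of injective modules is injective. Without this closure property the functor $\Phi$ into which Proposition~\ref{two-sided-by-accessible-classes-prop} is organized would not preserve the relevant colimits, and the identification of $\sC$ with the coresolution category would break down, so I expect this to be where the real work (already done in Corollary~\ref{injectives-accessible}) is concentrated.
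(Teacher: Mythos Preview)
Your proposal is correct and follows essentially the same approach as the paper: both set $\kappa=\nu^+$, define the classes $\sT_n$ exactly as you do, invoke Corollary~\ref{injectives-accessible} for the accessibility and $\lambda$-chain closure of injectives, and apply Proposition~\ref{two-sided-by-accessible-classes-prop}. You are in fact slightly more explicit than the paper in verifying the side conditions (that $\lambda<\kappa$, that $R$ is right $<\nu^+$-coherent, and that the cardinality bound on $T^{-1}$ is automatic), which the paper leaves implicit.
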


\begin{proof}
 Recall first of all that the full subcategory of injective $R$\+modules
is closed under $\lambda$\+directed colimits in $\Modr R$ by
Corollary~\ref{injectives-accessible}.
 The same corollary tells that the category of injective right
$R$\+modules is $\nu^+$\+accessible, and describes its full subcategory
of $\nu^+$\+presentable objects as consisting precisely of all
the injective $R$\+modules of cardinality at most~$\nu$.

 Now we apply Proposition~\ref{two-sided-by-accessible-classes-prop}
for $\kappa=\nu^+$ (notice that $\lambda\le\nu<\kappa$).
 Take $\sT_n$ to be the class of all injective right $R$\+modules of
cardinality at most~$\nu$ for all $0\le n\le m$, \
$\sT_{-1}$ to be the class of all right $R$\+modules of cardinality
at most~$\nu$, and $\sT_n=\{0\}$ for all $n\ge m+1$ and $n\le-2$.
 Then $\varinjlim_{(\kappa)}\sT_n$ is the class of all injective
right $R$\+modules for all $0\le n\le m$, \ $\varinjlim_{(\kappa)}
\sT_{-1}=\Modr R$, and $\varinjlim_{(\kappa)}\sT_n=0$ for all
$n\ge m+1$ and $n\le-2$.
 So the category $\sC$ from
Proposition~\ref{two-sided-by-accessible-classes-prop} is equivalent
to the category of finite injective coresolutions we are interested in.
\end{proof}

\begin{cor} \label{finite-injective-dimension-cor}
 Let $\lambda$ be a regular cardinal and $R$ be a right\/
$<\lambda$\+Noetherian ring.
 Let $\nu$~be a cardinal such that $\rho=|R|+\aleph_0\le\nu$
and $\nu^{<\lambda}=\nu$.
 Let $m\ge0$ be an integer.
 Denote by\/ $\sI_m\subset\Modr R$ the full subcategory of all
right $R$\+modules of injective dimension at most~$m$.
 Then the category\/ $\sI_m$ is $\nu^+$\+accessible.
 The $\nu^+$\+presentable objects of\/ $\sI_m$ are precisely all
the right $R$\+modules of cardinality~$\le\nu$ and
of injective dimension\/~$\le m$.
 So every right $R$\+module of injective dimension~$m$ is
a $\nu^+$\+directed colimit of right $R$\+modules of cardinality
at most~$\nu$ and of injective dimension at most~$m$.
\end{cor}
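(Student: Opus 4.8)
The plan is to follow the proof of Corollary~\ref{finite-flat-dimension-cor} almost verbatim, identifying a module of injective dimension $\le m$ with the degree\+$0$ cocycle module $\ker(J^0\rarrow J^1)$ of a finite injective coresolution $J^0\rarrow J^1\rarrow\dotsb\rarrow J^m\rarrow0$, and invoking Theorem~\ref{finite-injective-coresolutions-theorem} in place of Theorem~\ref{finite-flat-resolutions-theorem}. The one genuine difference is that closure of $\sI_m$ under $\nu^+$\+directed colimits is no longer automatic: injective modules, unlike the flat modules appearing in the dual argument, are not closed under arbitrary directed colimits, so the Noetherian hypothesis on $R$ must be used here, exactly as it is used in Corollary~\ref{injectives-accessible}.

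First I would record that, over an arbitrary ring, a right $R$\+module $M$ satisfies $M\in\sI_m$ if and only if $\Ext^{m+1}_R(R/I,M)=0$ for every right ideal $I\subset R$; this follows from Baer's criterion by dimension shifting along a one\+step injective coresolution $0\rarrow M\rarrow E\rarrow M'\rarrow0$ (the connecting maps give $\Ext^{n}_R(R/I,M')\simeq\Ext^{n+1}_R(R/I,M)$ for $n\ge1$, reducing to the case $m=0$). Now since $R$ is right\/ $<\lambda$\+Noetherian, every such $R/I$ is $\lambda$\+presentable and admits a projective resolution by\/ $<\lambda$\+generated free modules, so each functor $\Ext^{m+1}_R(R/I,{-})$ preserves $\lambda$\+directed colimits. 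As $\lambda\le\nu<\nu^+$, it preserves $\nu^+$\+directed colimits in particular, and the vanishing criterion then shows that $\sI_m$ is closed under $\nu^+$\+directed colimits in $\Modr R$.

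In view of Proposition~\ref{accessible-subcategory} (applied with $\sK=\Modr R$, \,$\kappa=\nu^+$, and $\sT$ the set of all right $R$\+modules of cardinality $\le\nu$ and injective dimension $\le m$, a retract\+closed set of $\nu^+$\+presentable objects of $\Modr R$), it then remains to check that every $M\in\sI_m$ is a $\nu^+$\+directed colimit of modules from $\sT$. For this I would choose an injective coresolution $0\rarrow M\rarrow J^0\rarrow\dotsb\rarrow J^m\rarrow0$ and apply Theorem~\ref{finite-injective-coresolutions-theorem} to the finite injective coresolution $J^0\rarrow\dotsb\rarrow J^m\rarrow0$, writing it as a $\nu^+$\+directed colimit of coresolutions $T^0_\xi\rarrow\dotsb\rarrow T^m_\xi\rarrow0$ with injective modules $T^n_\xi$ of cardinality $\le\nu$. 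Passing to the kernels $M_\xi=\ker(T^0_\xi\rarrow T^1_\xi)$ yields modules lying in $\sT$: each $M_\xi$ is a submodule of $T^0_\xi$, hence of cardinality $\le\nu$, and sits in an exact sequence $0\rarrow M_\xi\rarrow T^0_\xi\rarrow\dotsb\rarrow T^m_\xi\rarrow0$, hence has injective dimension $\le m$. Since directed colimits are exact, $M=\varinjlim_\xi M_\xi$, as required, and all assertions of the corollary follow. The main obstacle is the closure step of the second paragraph, the only point where the formal accessibility machinery does not suffice and the hypothesis on $R$ is genuinely needed.
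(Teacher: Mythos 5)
Your proposal is correct and follows essentially the same route as the paper's proof: the Baer-type criterion $\Ext^{m+1}_R(R/I,{-})=0$ combined with the fact that $\Ext^*_R(R/I,{-})$ preserves $\lambda$\+directed colimits over a right $<\lambda$\+Noetherian ring (giving closure of $\sI_m$ under the relevant directed colimits), then Proposition~\ref{accessible-subcategory} plus Theorem~\ref{finite-injective-coresolutions-theorem} for the last assertion. The only difference is that you spell out the kernel-passing step and the retract-closedness of the generating set, which the paper leaves implicit, and the paper records closure under all $\lambda$\+directed colimits rather than just $\nu^+$\+directed ones; these are inessential variations.
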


\begin{proof}
 A right $R$\+module $K$ has injective dimension~$\le m$ if and only
if $\Ext^{m+1}_R(R/I,K)\allowbreak=0$ for all right ideals $I\subset R$.
 Over a right $<\lambda$\+Noetherian ring $R$, the functor
$\Ext^*_R(M,{-})$ preserves $\lambda$\+directed colimits for every
$<\lambda$\+generated right $R$\+module~$M$.
 Consequently, the full subcategory $\sI_m$ is closed under
$\lambda$\+directed colimits in $\Modr R$.
 In view of Proposition~\ref{accessible-subcategory}, it remains to
check the last assertion of the corollary, which follows immediately
from Theorem~\ref{finite-injective-coresolutions-theorem}.  \hfuzz=3pt
\end{proof}

\begin{rem}
 The full subcategory $\sI_m\subset\Modr R$ can be also described as
$\sI_m=\sS^{\perp_1}$, where $\sS$ denotes the set of all
($<\lambda$\+generated) $m$\+th syzygy modules of $<\lambda$\+generated
(or just cyclic) right $R$\+modules.
 This allows to obtain the assertion of
Corollary~\ref{finite-injective-dimension-cor} directly as a particular
case of Proposition~\ref{right-Ext-1-orthogonal-accessible}.
\end{rem}

\Section{Injectively Resolved Modules}
\label{injectively-resolved-secn}

 By an \emph{injective resolution} we mean an exact sequence
$\dotsb\rarrow J_2\rarrow J_1\rarrow J_0$, where $J_n$ are injective
modules for all $n\ge0$.
 An $R$\+module $M$ is said to be \emph{injectively resolved} if
there exists an exact sequence of $R$\+modules
$\dotsb\rarrow J_2\rarrow J_1\rarrow J_0\rarrow M\rarrow0$
with injective $R$\+modules~$J_n$.

 In this section, as in the previous one, our category-theoretic
approach based on
Theorems~\ref{pseudopullback-theorem} and~\ref{isomorpher-theorem}
via Proposition~\ref{two-sided-by-accessible-classes-prop} produces
results which can be also obtained with the deconstructibility-based
approach using a suitable version of
Proposition~\ref{right-Ext-1-orthogonal-accessible}.

\begin{thm}  \label{injective-resolutions-theorem}
 Let $\lambda$ be a regular cardinal and $R$ be a right\/
$<\lambda$\+Noetherian ring.
 Let $\nu$~be a cardinal such that $\rho=|R|+\aleph_0\le\nu$
and $\nu^{<\lambda}=\nu$.
 Then the category of injective resolutions $\dotsb\rarrow J_2\rarrow
J_1\rarrow J_0$ in\/ $\Modr R$ is $\nu^+$\+accessible.
 The $\nu^+$\+presentable objects of this category are precisely all
the injective resolutions $\dotsb\rarrow T_2\rarrow T_1\rarrow T_0$
with injective right $R$\+modules $T_n$, \,$n\ge0$,
of cardinality at most~$\nu$.
 Consequently, every injective resolution $\dotsb\rarrow J_2\rarrow
J_1\rarrow J_0$ in\/ $\Modr R$ is a $\nu^+$\+directed colimit of
injective resolutions $\dotsb\rarrow T_2\rarrow T_1\rarrow T_0$
with injective $R$\+modules $T_n$, \,$n\ge0$, of cardinality
at most~$\nu$.
\end{thm}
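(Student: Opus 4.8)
The plan is to derive this statement as one more instance of Proposition~\ref{two-sided-by-accessible-classes-prop}, in direct parallel with the proof of Theorem~\ref{finite-injective-coresolutions-theorem}; the only difference is that the resolution is now unbounded on the left, which causes no trouble, since Proposition~\ref{two-sided-by-accessible-classes-prop} is formulated for arbitrary sequences $(\sT_n)_{n\in\boZ}$ and rests on the unbounded acyclic-complexes construction of Proposition~\ref{acyclic-complexes-prop}. First I would record, exactly as in the finite case, that Corollary~\ref{injectives-accessible} makes the full subcategory of injective right $R$\+modules closed under $\lambda$\+directed colimits in $\Modr R$, that this subcategory is $\nu^+$\+accessible, and that its $\nu^+$\+presentable objects are precisely the injective modules of cardinality at most~$\nu$. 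I would also note that $R$ is right\/ $<\nu^+$\+coherent, because every finitely generated right ideal has cardinality at most $\rho\le\nu$ and is therefore $\nu^+$\+presentable.

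Next I would rewrite an injective resolution $\dotsb\rarrow J_2\rarrow J_1\rarrow J_0$ as an acyclic complex $C^\bu$ by placing the injective terms in the nonpositive cohomological degrees, $C^{-n}=J_n$ for $n\ge0$, and adjoining the single positive-degree term $C^1=\coker(J_1\rarrow J_0)$, with $C^n=0$ for $n\ge2$. Acyclicity of $C^\bu$ is then equivalent to exactness of the original sequence at every $J_n$ with $n\ge1$, so this passage sets up an equivalence between the category of injective resolutions and the category $\sC$ of Proposition~\ref{two-sided-by-accessible-classes-prop} for the following classes: $\sT_n$ is the class of injective right $R$\+modules of cardinality at most~$\nu$ for all $n\le0$, \ $\sT_1$ is the class of all right $R$\+modules of cardinality at most~$\nu$, and $\sT_n=\{0\}$ for all $n\ge2$. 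Here $\varinjlim_{(\nu^+)}\sT_n$ is the class of all injective modules for $n\le0$ (by Corollary~\ref{injectives-accessible}), equals $\Modr R$ for $n=1$, and equals $\{0\}$ for $n\ge2$.

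Then I would check the hypotheses of Proposition~\ref{two-sided-by-accessible-classes-prop} with $\kappa=\nu^+$ and the given~$\lambda$ (so $\lambda\le\nu<\kappa$): each $\sT_n$ is closed under direct summands, and each $\varinjlim_{(\nu^+)}\sT_n$ is closed under colimits of $\lambda$\+indexed chains in $\Modr R$ --- for $n\le0$ this is the injective-module closure from Corollary~\ref{injectives-accessible}, while for $n=1$ and $n\ge2$ it is trivial. The proposition then yields the $\nu^+$\+accessibility of $\sC$ and identifies its $\nu^+$\+presentable objects as the acyclic complexes $T^\bu$ with $T^n\in\sT_n$; translating back, these are exactly the injective resolutions by injective modules of cardinality at most~$\nu$ (the cokernel term $C^1$ being automatically of cardinality $\le\nu$ as a quotient of $T_0$), and the concluding $\nu^+$\+directed colimit statement is then immediate from accessibility.

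I do not expect a genuine obstacle: the argument is a mechanical adaptation of the finite-length case. The only points requiring care are the bookkeeping of the grading --- so that the cokernel module becomes the single nonzero positive-degree term $C^1$, playing the role that the kernel term $C^{-1}$ played in the flat-coresolution Theorem~\ref{flat-coresolutions-theorem} --- and the verification that $\varinjlim_{(\nu^+)}$ of the $\le\nu$\+sized injectives recovers all injective modules, which is supplied verbatim by Corollary~\ref{injectives-accessible}. Since Proposition~\ref{two-sided-by-accessible-classes-prop} already handles unbounded complexes, nothing new is needed to treat the infinite left tail of the resolution.
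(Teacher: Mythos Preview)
Your proposal is correct and follows essentially the same route as the paper's proof: both apply Proposition~\ref{two-sided-by-accessible-classes-prop} with $\kappa=\nu^+$, taking $\sT_n$ to be the small injectives in an infinite half-line of degrees, all small modules in the one adjacent degree (carrying the cokernel), and zero elsewhere, relying on Corollary~\ref{injectives-accessible} for the accessibility of the injective class. The only difference is a harmless shift of the grading by one (the paper places the injectives in degrees $\le -1$ and the cokernel in degree~$0$, whereas you place the injectives in degrees $\le 0$ and the cokernel in degree~$1$).
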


\begin{proof}
 Similarly to the proof of
Theorem~\ref{finite-injective-coresolutions-theorem}, the argument in
based on Corollary~\ref{injectives-accessible} and
Proposition~\ref{two-sided-by-accessible-classes-prop}
for $\kappa=\nu^+$.
 Take $\sT_n$ to be the class of all injective right $R$\+modules of
cardinality at most~$\nu$ for all $n\le-1$, \
$\sT_0$ to be the class of all right $R$\+modules of cardinality
at most~$\nu$, and $\sT_n=\{0\}$ for all $n\ge1$.
 Then $\varinjlim_{(\kappa)}\sT_n$ is the class of all injective
right $R$\+modules for all $n\le-1$, \ $\varinjlim_{(\kappa)}
\sT_0=\Modr R$, and $\varinjlim_{(\kappa)}\sT_n=0$ for all $n\ge1$.
 So the category $\sC$ from
Proposition~\ref{two-sided-by-accessible-classes-prop} is equivalent
to the category of injective resolutions we are interested in.
\end{proof}

\begin{cor} \label{injectively-resolved-modules-cor}
 Let $\lambda$ be a regular cardinal and $R$ be a right\/
$<\lambda$\+Noetherian ring.
 Let $\nu$~be a cardinal such that $\rho=|R|+\aleph_0\le\nu$
and $\nu^{<\lambda}=\nu$.
 Then any injectively resolved right $R$\+module is a $\nu^+$\+directed
colimit of injectively resolved right $R$\+modules of cardinality
at most~$\nu$.
\end{cor}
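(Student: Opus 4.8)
The plan is to deduce this as a consequence of Theorem~\ref{injective-resolutions-theorem}, following the same pattern by which Corollary~\ref{flatly-coresolved-modules-cor} was obtained from Theorem~\ref{flat-coresolutions-theorem} and Corollary~\ref{two-sided-by-lim-classes-cor} from Theorem~\ref{two-sided-by-lim-classes-theorem}. The bridge between the two statements is the cokernel functor, which identifies injectively resolved modules with the modules resolved by injective resolutions.

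First I would record the elementary correspondence: an injectively resolved module $M$ is exactly the cokernel $\coker(J_1\rarrow J_0)$ of an injective resolution $\dotsb\rarrow J_2\rarrow J_1\rarrow J_0$. Indeed, given an exact sequence $\dotsb\rarrow J_1\rarrow J_0\rarrow M\rarrow0$ with injective terms, the truncation $\dotsb\rarrow J_2\rarrow J_1\rarrow J_0$ is an injective resolution, and exactness at $J_0$ and at $M$ yields $M=\coker(J_1\rarrow J_0)$; conversely, every injective resolution produces such an~$M$. Next I would invoke the last assertion of Theorem~\ref{injective-resolutions-theorem}, which presents $\dotsb\rarrow J_2\rarrow J_1\rarrow J_0$ as a $\nu^+$\+directed colimit of injective resolutions $\dotsb\rarrow T_2^{(\xi)}\rarrow T_1^{(\xi)}\rarrow T_0^{(\xi)}$ whose terms are injective $R$\+modules of cardinality at most~$\nu$. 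Since these colimits are computed termwise, and the cokernel functor, being itself a colimit, commutes with directed colimits, I would obtain $M=\varinjlim_\xi\coker(T_1^{(\xi)}\rarrow T_0^{(\xi)})$ as a $\nu^+$\+directed colimit. Each module $M_\xi=\coker(T_1^{(\xi)}\rarrow T_0^{(\xi)})$ is injectively resolved, by the resolution $T_\bu^{(\xi)}$, which is the required form.

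It remains to bound the cardinality of each colimand $M_\xi$, and here the argument is even shorter than in Corollary~\ref{flatly-coresolved-modules-cor}: since $M_\xi$ is a quotient of $T_0^{(\xi)}$, which has cardinality at most~$\nu$, so does $M_\xi$, with no Noetherian hypothesis invoked at this step. As in the analogous corollaries, I would stress that \emph{no} assertion is made about the class of injectively resolved modules being closed under $\nu^+$\+directed colimits; the content is only that each such module is representable as a directed colimit of small ones. I do not expect a genuine obstacle; the single point demanding care is that $\nu^+$\+directed colimits in the category of injective resolutions agree termwise with colimits in the ambient category of complexes, which is built into the construction underlying Theorem~\ref{injective-resolutions-theorem} via Propositions~\ref{two-sided-by-accessible-classes-prop} and~\ref{acyclic-complexes-prop}.
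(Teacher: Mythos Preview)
Your proposal is correct and follows the same approach as the paper, which simply states that the corollary follows immediately from Theorem~\ref{injective-resolutions-theorem} (and, like you, emphasizes that no closure claim under directed colimits is being made). Your write-up fills in the natural details: pass to cokernels of the small injective resolutions supplied by the theorem, and bound the cardinality of each cokernel by that of~$T_0^{(\xi)}$.
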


\begin{proof}
 Let us emphasize that there is \emph{no} claim about the class of
injectively resolved modules being closed under $\mu$\+directed
colimits for any cardinal~$\mu$ in this corollary.
 The assertion of the corollary follows immediately from
Theorem~\ref{injective-resolutions-theorem}.
\end{proof}

\begin{cor} \label{injective-resolutions-direct-summands}
 Let $\lambda$ be a regular cardinal and $R$ be a right\/
$<\lambda$\+Noetherian ring.
 Let $\nu$~be a cardinal such that $\rho=|R|+\aleph_0\le\nu$
and $\nu^{<\lambda}=\nu$.
 Let $M$ be an injectively resolved right $R$\+module of
cardinality~$\le\nu$.
 Then $M$ is a direct summand of an $R$\+module $N$ admitting
an injective resolution $\dotsb\rarrow T_2\rarrow T_1\rarrow T_0$
with injective $R$\+modules $T_n$, \,$n\ge0$, of cardinality
at most~$\nu$.
\end{cor}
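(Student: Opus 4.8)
Corollary~\ref{injective-resolutions-direct-summands}: over a right $<\lambda$-Noetherian ring $R$, an injectively resolved module $M$ of cardinality $\le\nu$ is a direct summand of a module $N$ that admits an injective resolution by injective modules of cardinality $\le\nu$.

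Let me look at the pattern of proofs for the analogous direct-summand corollaries.

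The plan is to derive this directly from Theorem~\ref{injective-resolutions-theorem}, following the pattern of the proofs of Corollaries~\ref{two-sided-by-lim-classes-direct-summand} and~\ref{dualizing-complex-F-totally-acyclics-direct-summands}. First I would fix an injective resolution $\dotsb\rarrow J_2\rarrow J_1\rarrow J_0$ of the given module $M$, so that $M=\coker(J_1\rarrow J_0)$. By Theorem~\ref{injective-resolutions-theorem}, this resolution is the colimit of a $\nu^+$\+directed diagram, indexed by some poset~$\Xi$, of injective resolutions $\dotsb\rarrow T_{2,\xi}\rarrow T_{1,\xi}\rarrow T_{0,\xi}$ whose terms are injective $R$\+modules of cardinality at most~$\nu$. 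Since $\lambda\le\nu<\nu^+$, the full subcategory of injective modules is closed under $\nu^+$\+directed colimits in $\Modr R$ by Corollary~\ref{injectives-accessible}, so this colimit is computed termwise in the category of complexes.

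Next I would pass to the cokernels of the terminal morphisms. Setting $N_\xi=\coker(T_{1,\xi}\rarrow T_{0,\xi})$, each exact sequence $\dotsb\rarrow T_{1,\xi}\rarrow T_{0,\xi}\rarrow N_\xi\rarrow0$ exhibits $N_\xi$ as an injectively resolved $R$\+module whose injective resolution has terms of cardinality at most~$\nu$. Because the cokernel is a colimit and therefore commutes with the $\nu^+$\+directed colimit, one obtains $M=\varinjlim_{\xi\in\Xi}N_\xi$, a $\nu^+$\+directed colimit of injectively resolved modules of the required size.

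Finally, recalling that an $R$\+module of cardinality at most~$\nu$ is $\nu^+$\+presentable in $\Modr R$ (as $\nu\ge\rho$), I would observe that the identity morphism factors as $M\rarrow N_\xi\rarrow M$ through one of the terms $N_\xi$, which exhibits $M$ as a direct summand of $N=N_\xi$. The only point requiring a moment's care is the interchange of the cokernel functor with the directed colimit, together with the verification that the colimit of resolutions is formed termwise; both are routine consequences of Corollary~\ref{injectives-accessible}, and no genuine obstacle arises, the argument being a direct transcription of the earlier direct-summand corollaries.
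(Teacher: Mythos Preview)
Your argument is correct and follows exactly the same route as the paper's proof, which simply invokes Theorem~\ref{injective-resolutions-theorem} and refers back to the pattern of Corollary~\ref{two-sided-by-lim-classes-direct-summand}. You have merely spelled out the details the paper leaves implicit; the one small imprecision is that the commutation of cokernels with the $\nu^+$-directed colimit is a general fact about colimits commuting with colimits rather than a consequence of Corollary~\ref{injectives-accessible}, but this does not affect the validity of the argument.
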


\begin{proof}
 This is a corollary of Theorem~\ref{injective-resolutions-theorem}
provable similarly to
Corollary~\ref{two-sided-by-lim-classes-direct-summand}.
 It is clear from the theorem that every injectively resolved
right $R$\+module $M$ is the colimit of a $\nu^+$\+directed diagram
of right $R$\+modules $(N_\xi)_{\xi\in\Xi}$ such that $N_\xi$ admits
an injective resolution $\dotsb\rarrow T_{2,\xi}\rarrow T_{1,\xi}
\rarrow T_{0,\xi}\rarrow N_\xi\rarrow0$ with injective $R$\+modules
$T_{n,\xi}$, \,$n\ge0$, of cardinality at most~$\nu$ for
every $\xi\in\Xi$.
 Now if $M$ has cardinality at most~$\nu$, then $M$ is
$\nu^+$\+presentable, and it follows that there exists $\xi\in\Xi$
such that $M$ is a direct summand of~$N_\xi$.
\end{proof}

\begin{rem}
 The full subcategory of acyclic complexes of injective modules in
$\Com(\Modr R)$ can be also described as the right $\Ext^1$\+orthogonal
full subcategory $\sS^{\perp_1}\subset\Com(\Modr R)$, where $\sS\subset
\Com(\Modr R)$ denotes the set of all contractible two-term complexes
of cyclic $R$\+modules $\dotsb\rarrow 0 \rarrow R/I\overset=\rarrow R/I
\rarrow0\rarrow\dotsb$ (placed in various cohomological degrees) and
one-term complexes of free $R$\+modules with one generator
$\dotsb\rarrow 0\rarrow R\rarrow0\rarrow\dotsb$ (placed in various
cohomological degrees).
 Here $I$~ranges over all the right ideals of $R$, and the construction
of the right $\Ext^1$\+orthogonal class of objects $\sS^{\perp_1}$
is applied in the abelian category of complexes $\Com(\Modr R)$.
 This assertion is a special case of~\cite[Proposition~4.6]{Gil}.
 Therefore, Theorem~\ref{injective-resolutions-theorem} can be also
obtained as a particular case of a suitable version of
Proposition~\ref{right-Ext-1-orthogonal-accessible} for the category
of complexes $\Com(\Modr R)$.
\end{rem}

\Section{Totally Acyclic Complexes of Injectives over a Left
Noetherian Ring} \label{left-noetherian-secn}

 An acyclic complex of injective left $R$\+modules $I^\bu$ is said to be
\emph{totally acyclic}~\cite[Section~5.2]{IK} if the complex of abelian
groups $\Hom_R(J,I^\bu)$ is acyclic for every injective left
$R$\+module~$J$.
 A left $R$\+module $M$ is said to be \emph{Gorenstein-injective}
if there exists a totally acyclic complex of injective left $R$\+modules
$I^\bu$ such that $M\simeq Z^0(I^\bu)$ is its module of cocycles.
 The definitions for right modules are similar.

 In the following four sections,
Sections~\ref{left-noetherian-secn}\+-\ref{full-generality-secn},
we study accessibility properties of the categories of totally acyclic
complexes of injective modules and of Gorenstein-injective modules.
 In the present section, the assumptions are more restrictive than
in Section~\ref{full-generality-secn}, but we obtain a better
cardinality estimate.

 We start with a lemma and a corollary suggested to the author by
Jan \v St\!'ov\'\i\v cek.

\begin{lem} \label{functorial-totally-injective-lemma}
 For any left Noetherian ring $R$, there exists a functor assigning to
every Gorenstein-injective left $R$\+module $M$ one of its totally
acyclic two-sided injective resolutions, i.~e., a totally acyclic
complex of injective left $R$\+modules $I^\bu$ together with
an isomorphism $M\simeq Z^0(I^\bu)$.
\end{lem}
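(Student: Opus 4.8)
The claim is that over a left Noetherian ring $R$, the assignment $M \mapsto I^\bu$ sending a Gorenstein-injective module to a totally acyclic injective resolution can be made *functorial*. Let me think about what makes this nontrivial.

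A Gorenstein-injective module $M$ by definition admits *some* totally acyclic complex $I^\bu$ of injectives with $M \simeq Z^0(I^\bu)$. But a priori this is a non-canonical choice, and there's no reason two choices for $M$ and $M'$ are compatible under a morphism $M \to M'$. So the content is to produce these resolutions coherently, naturally in $M$.

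The standard way to get functorial resolutions is via some kind of adjunction or a functorial construction of injective envelopes / cosyzygies. Over a Noetherian ring, injective envelopes exist and... are they functorial? Injective *envelopes* are not functorial in general (the minimal injective resolution is not a functor). But arbitrary injective resolutions can be made functorial by the standard "cobar/cotriple" construction: there's a functorial embedding $M \hookrightarrow J(M)$ into an injective (the cofree module, or via the right adjoint to a forgetful functor), giving a functorial injective resolution in one direction.

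Let me think about the structure. A totally acyclic complex of injectives is two-sided (infinite in both directions). So I need:
1. A functorial injective *coresolution* $0 \to M \to J^0 \to J^1 \to \cdots$ (the "right half") — this is standard via a functorial injective embedding.
2. A functorial *left* resolution extending to $\cdots \to J^{-2} \to J^{-1} \to M \to 0$ where the cosyzygies are again Gorenstein-injective — this is the hard direction.

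**The key tool — cosyzygies of Gorenstein-injectives.** The subtle point is the left half of the complex. For a Gorenstein-injective module $M$, I need that $M$ is the cokernel of a map between injectives whose kernel is again Gorenstein-injective. Over a left Noetherian ring, the class of Gorenstein-injective modules is closed under the relevant operations (this is part of the Gorenstein homological apparatus for Noetherian rings — Gorenstein-injectives form the right half of a complete cotorsion pair, or one uses that the syzygies of totally acyclic complexes are again Gorenstein-injective by construction). The functoriality of the *whole two-sided resolution* is what must be arranged.

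So the plan would be: construct a functor $G$ on the category $\mathsf{GI}$ of Gorenstein-injective modules, where $G(M)$ is a short exact sequence $0 \to M' \to J \to M \to 0$ with $J$ injective and $M'$ Gorenstein-injective, functorially in $M$. Iterating $G$ produces the left half functorially; combining with the functorial injective coresolution on the right gives the two-sided complex. The functor $G$ would come from a *functorial special precover* by injectives in the cotorsion-pair sense, or from the explicit adjoint/cotriple machinery.

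**The main obstacle.** The hard part will be the functorial construction of the *left* resolution with Gorenstein-injective kernels — concretely, producing $G$ above as a genuine functor, not just a pointwise existence statement. The right-hand (injective coresolution) direction is routine via a functorial injective embedding. For the left direction, I would try to exhibit a *functorial special injective precover* $J \twoheadrightarrow M$ whose kernel is Gorenstein-injective; over a left Noetherian ring this should follow from the fact that $(\mathsf{GI}, \mathsf{GI}^{\perp})$ or the dual pair is a functorially complete cotorsion pair. The delicate verification is that total acyclicity is preserved: one must check that the complex assembled from the functorial injective coresolution of $M$ and the iterated functorial left resolution is genuinely totally acyclic, i.e.\ that $\Hom_R(J, I^\bu)$ is acyclic for every injective $J$. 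This reduces to checking the splicing is exact and that the cosyzygies remain Gorenstein-injective at every stage, which is where the left Noetherian hypothesis does its work (Noetherianity ensures injectivity is preserved under the relevant colimits/direct sums, so the assembled complex has injective terms and the total-acyclicity condition propagates through the two-sided splice).
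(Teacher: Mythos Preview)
Your overall architecture matches the paper's: split the two-sided resolution into a right half (functorial injective coresolution, routine) and a left half (the hard direction), then iterate a functorial ``injective cosyzygy'' construction on Gorenstein-injectives. Where you and the paper diverge is in how the left half is actually built. You gesture at functorial completeness of a cotorsion pair such as $({}^{\perp_1}\GI,\GI)$ to produce a functorial special precover; this is plausible but you do not pin down why the approximation is functorial, and the relevant cotorsion pair you name, $(\GI,\GI^{\perp_1})$, is not the one that would give injective precovers with Gorenstein-injective kernel. The paper instead gives a completely explicit construction: using left Noetherianity and Matlis' structure theory, there is a single injective $J$ such that every injective left $R$-module is a summand of a direct sum of copies of $J$; one then sets $I^{-1}(M)=J^{(\Hom_R(J,M))}$ with the evaluation map to $M$. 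This is manifestly functorial, is surjective because $M$ is Gorenstein-injective (hence a quotient of some injective, hence of $J^{(X)}$), and its kernel is again Gorenstein-injective by a cited lemma (essentially because the evaluation map is an $\mathrm{Add}(J)$-precover). Iterating yields the negative half. So your outline is sound, but the paper's route is more elementary and avoids the unproved functoriality of cotorsion-pair approximations; conversely, a cotorsion-pair argument, if made precise, would not need the Matlis decomposition and might generalize beyond the Noetherian case.
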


\begin{proof}
 First of all we observe that, for any Gorenstein-injective
$R$\+module $M$ and an arbitrary injective $R$\+module coresolution
$0\rarrow M\rarrow I^0\rarrow I^1\rarrow I^2\rarrow\dotsb$ of $M$,
the complex $0\rarrow M\rarrow I^0\rarrow I^1\rarrow I^2\rarrow\dotsb$
remains exact after applying the functor $\Hom_R(J,{-})$ for any
injective $R$\+module~$J$.
 Furthermore, it is well-known that there exists a functor $M\longmapsto
(M\hookrightarrow I(M))$ assigning to an $R$\+module $M$ its
embedding into an injective $R$\+module $I(M)$.
 One can use a functorial version of the small object argument as
in~\cite[Theorem~2]{ET}, or simply let $I(M)$ be the direct product
of copies of the $R$\+module $\Hom_\boZ(R,\boQ/\boZ)$ indexed over
all the abelian group maps $M\rarrow\boQ/\boZ$.
 These observations allow to construct the positive cohomological degree
part of the desired functorial two-sided resolution of~$M$.
 This part of the argument does not use the Noetherianity assumption
on~$R$.

 The construction of the negative cohomological degree part of
the functorial two-sided resolution is based on the results
of~\cite[Lemma~2.1 and Example~6.4]{PS2}.
 By~\cite[Theorem~2.5]{Mat}, every injective left $R$\+module is a direct
sum of indecomposable injective $R$\+modules.
 It is easy to see that there is only a set of isomorphism classes of
indecomposable injectives.
 Therefore, there exists an injective left $R$\+module $J$ such that all
the injective left $R$\+modules are direct summands of direct sums of
copies of~$J$.
 Starting with a Gorenstein-injective left $R$\+module $M$, we denote
by $I^{-1}(M)$ the direct sum of copies of $J$ indexed over all
the $R$\+module maps $J\rarrow M$, that is
$I^{-1}(M)=J^{(\Hom_R(J,M))}$.
 Clearly, the natural $R$\+module map $I^{-1}(M)\rarrow M$ is
surjective (since $M$ is Gorenstein-injective).
 By~\cite[Lemma~2.1(c)]{PS2} (for $\sA=R\Modl$ and $T=J$), the kernel of
this map is a Gorenstein-injective left $R$\+module again, and so
the inductive process can be continued indefinitely, producing
the desired negative part of a totally acyclic two-sided injective
resolution in a functorial way.
\end{proof}

\begin{cor} \label{Noeth-Gorenst-injectives-rho-dir-colim-closed}
 For any left Noetherian ring $R$, the class of all Gorenstein-injective
left $R$\+modules is closed under $\rho^+$\+directed colimits in
$R\Modl$, where $\rho=|R|+\aleph_0$.
 Furthermore, the class of all totally acyclic complexes of injective
left $R$\+modules is closed under $\rho^+$\+directed colimits in\/
$\Com(R\Modl)$ as well.
\end{cor}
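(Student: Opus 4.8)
The plan is to deduce the first assertion from the second, so I would prove the statement about totally acyclic complexes first and then feed it back. For the Gorenstein-injective assertion, given a $\rho^+$\+directed system $(M_\xi)_{\xi\in\Xi}$ of Gorenstein-injective left $R$\+modules, I would apply the functor of Lemma~\ref{functorial-totally-injective-lemma} to the entire diagram, obtaining a $\rho^+$\+directed system $(I_\xi^\bu)_{\xi\in\Xi}$ of totally acyclic complexes of injective left $R$\+modules together with natural isomorphisms $M_\xi\simeq Z^0(I_\xi^\bu)$. Since directed colimits in $R\Modl$ are exact and $Z^0$ is a kernel, one has $Z^0(\varinjlim_\xi I_\xi^\bu)\simeq\varinjlim_\xi Z^0(I_\xi^\bu)\simeq\varinjlim_\xi M_\xi$; granting that $\varinjlim_\xi I_\xi^\bu$ is again totally acyclic, this exhibits $\varinjlim_\xi M_\xi$ as the module of cocycles of a totally acyclic complex of injectives, hence Gorenstein-injective.

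Thus everything reduces to the second assertion. Let $(I_\xi^\bu)_{\xi\in\Xi}$ be a $\rho^+$\+directed system of totally acyclic complexes of injective left $R$\+modules, and set $I^\bu=\varinjlim_\xi I_\xi^\bu$, a termwise colimit in $\Com(R\Modl)$. Over a left Noetherian ring the class of injective modules is closed under directed colimits by the left-module version of Corollary~\ref{injectives-accessible} (with $\lambda=\aleph_0$), so each term $I^n$ is injective; and since directed colimits are exact, $I^\bu$ is an acyclic complex of injectives. It then remains to verify the total acyclicity condition, i.e.\ that $\Hom_R(J,I^\bu)$ is acyclic for every injective left $R$\+module $J$.

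Here I would reduce to indecomposable $J$. By \cite[Theorem~2.5]{Mat} every injective left $R$\+module decomposes as $J=\bigoplus_i E_i$ with $E_i$ indecomposable injective; since $\Hom_R(J,I^\bu)\simeq\prod_i\Hom_R(E_i,I^\bu)$ and a product of acyclic complexes of abelian groups is acyclic, it suffices to treat a single indecomposable injective $E$. The crucial point, which I expect to be the main obstacle, is that every indecomposable injective $E$ is $\rho^+$\+presentable, i.e.\ $|E|\le\rho$. I would obtain this from Corollary~\ref{injectives-accessible} as well: it presents $E$ as a $\rho^+$\+directed union of $\rho^+$\+presentable injective submodules, and each such submodule, being injective, is a direct summand of $E$; as $E$ is indecomposable, its only direct summands are $0$ and $E$, so one member of the union must equal $E$, whence $E$ is $\rho^+$\+presentable.

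With this in hand the argument closes. Since $E$ is $\rho^+$\+presentable, the functor $\Hom_R(E,{-})$ preserves $\rho^+$\+directed colimits, so $\Hom_R(E,I^\bu)\simeq\varinjlim_\xi\Hom_R(E,I_\xi^\bu)$; each $\Hom_R(E,I_\xi^\bu)$ is acyclic because $I_\xi^\bu$ is totally acyclic and $E$ is injective, and a directed colimit of acyclic complexes is acyclic. Hence $\Hom_R(E,I^\bu)$ is acyclic for every indecomposable injective $E$, so $\Hom_R(J,I^\bu)$ is acyclic for every injective $J$, and $I^\bu$ is totally acyclic, as required. I would emphasize that $\rho^+$\+directedness enters \emph{only} in this last step, where it makes $\Hom_R(E,{-})$ commute with the colimit; the closure of injectives under colimits and the exactness of colimits need nothing beyond ordinary directedness.
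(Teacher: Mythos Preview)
Your proposal is correct and follows essentially the same route as the paper's own proof: reduce the first assertion to the second via Lemma~\ref{functorial-totally-injective-lemma}, then check termwise injectivity and acyclicity of the colimit complex, reduce the total acyclicity condition to indecomposable injectives $E$, and use that each such $E$ has cardinality at most~$\rho$ (equivalently, is $\rho^+$\+presentable) so that $\Hom_R(E,{-})$ commutes with $\rho^+$\+directed colimits. Your argument deriving $|E|\le\rho$ from Corollary~\ref{injectives-accessible}---writing $E$ as a $\rho^+$\+directed union of $\rho^+$\+presentable injective submodules, each of which is a direct summand and hence, by indecomposability, either $0$ or $E$---is in fact more explicit than the paper's bare citation of the same corollary.
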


\begin{proof}
 Let $(M_\xi)_{\xi\in\Xi}$ be a diagram of Gorenstein-injective left
 $R$\+modules, indexed by some poset~$\Xi$.
 By Lemma~\ref{functorial-totally-injective-lemma}, one can construct
a $\Xi$\+indexed diagram of totally acyclic two-sided injective
resolutions $(I_\xi^\bu)_{\xi\in\Xi}$ for the given diagram of
$R$\+modules $(M_\xi)_{\xi\in\Xi}$.
 So the first assertion of the corollary follows from the second one.

 In order to prove the second assertion, notice first of all that
the class of injective left $R$\+modules is closed under directed
colimits in $R\Modl$ (e.~g., by the left-right opposite version of
Corollary~\ref{injectives-accessible} for $\lambda=\aleph_0$ and
$\nu=\rho$).
 Acyclicity of complexes of modules is obviously preserved by
directed colimits.

 To show that the class of totally acyclic complexes of injective left
$R$\+modules is closed under $\rho^+$\+directed colimits, notice that
in the definition of a totally acyclic complex of injectives it
suffices to check the preservation of acyclicity by the functors
$\Hom_R(J,{-})$, where $J$ ranges over \emph{indecomposable} injective
left $R$\+modules.
 Then it remains to observe that each indecomposable injective left
$R$\+module has cardinality at most~$\rho$ (again by
Corollary~\ref{injectives-accessible} for the same $\lambda$ and~$\nu$).
\end{proof}

 We refer to the recent preprint~\cite{Iac} for a general discussion of
directed colimit-closedness properties of Gorenstein-injective modules.
 For a version of
Corollary~\ref{Noeth-Gorenst-injectives-rho-dir-colim-closed} with more
restrictive assumptions and stronger conclusion, see
Corollary~\ref{dualizing-cmplx-Gorenst-injectives-dir-colim-closed}
below.

\begin{thm} \label{left-Noetherian-tot-acycl-of-inj-theorem}
 Let $R$ be a left Noetherian ring; put $\rho=|R|+\aleph_0$.
 Let $\nu$~be an infinite cardinal such that $\nu^\rho=\nu$.
 Then the category of totally acyclic complexes of injective left
$R$\+modules is $\nu^+$\+accessible.
 The $\nu^+$\+presentable objects of this category are precisely all
the totally acyclic complexes of injective left $R$\+modules of
cardinality at most~$\nu$.
 Consequently, every totally acyclic complex of injective left
$R$\+modules is a $\nu^+$\+directed colimit of totally acyclic complexes
of injective left $R$\+modules of cardinality at most~$\nu$.
\end{thm}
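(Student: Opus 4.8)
The plan is to realize the category of totally acyclic complexes of injective left $R$\+modules as a pseudopullback and to apply Theorem~\ref{pseudopullback-theorem} with $\kappa=\nu^+$ and $\lambda=\rho^+$. Note first that $\nu^\rho=\nu$ forces $\nu\ge 2^\rho$, so $\nu$ is uncountable, $\rho<\nu$, and $\rho^+\le\nu<\nu^+$; in particular $\lambda<\kappa$. I would begin by recording that every indecomposable injective left $R$\+module has cardinality at most~$\rho$: by (the left-module version of) Corollary~\ref{injectives-accessible}, applied with $\lambda=\aleph_0$ and the cardinal~$\rho$ in place of~$\nu$, such a module is a $\rho^+$\+directed union of injective submodules of cardinality $\le\rho$, each of which is a direct summand and hence, by indecomposability, equals the whole module. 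Consequently there is only a set of isomorphism classes of indecomposable injectives, of cardinality at most $2^\rho\le\nu$, so I may fix a representative set $(J_\xi)_{\xi\in\Xi}$ with $|\Xi|\le\nu$. As in the proof of Corollary~\ref{Noeth-Gorenst-injectives-rho-dir-colim-closed}, every injective is a direct sum of indecomposables and products of acyclic complexes of abelian groups are acyclic, so an acyclic complex of injectives $I^\bu$ is totally acyclic if and only if $\Hom_R(J_\xi,I^\bu)$ is acyclic for every $\xi\in\Xi$.

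Next I would assemble the three $\nu^+$\+accessible categories. Let $\sK_1$ be the category of acyclic complexes of injective left $R$\+modules. Applying (the left-module version of) Proposition~\ref{two-sided-by-accessible-classes-prop} with $\kappa=\nu^+$, $\lambda=\aleph_0$, and $\sT_n$ the class of all injective modules of cardinality $\le\nu$ for every $n\in\boZ$---here $R$ is left $<\nu^+$\+coherent because it is left Noetherian, the class $\sT_n$ is closed under direct summands, and $\varinjlim_{(\nu^+)}\sT_n$ is the class of all injectives (which is closed under directed colimits) by Corollary~\ref{injectives-accessible}---shows that $\sK_1$ is $\nu^+$\+accessible, with $\nu^+$\+presentable objects the acyclic complexes of injectives all of whose terms have cardinality $\le\nu$. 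Let $\sK_2$ be the product over $\Xi$ of copies of the category of acyclic complexes of abelian groups, which is $\nu^+$\+accessible by Proposition~\ref{acyclic-complexes-prop} (for the coherent ring~$\boZ$, with $\nu^+$ uncountable regular) and Proposition~\ref{product-proposition} (as $|\Xi|\le\nu<\nu^+$); and let $\sL=\prod_{\xi\in\Xi}\Com(\boZ\Modl)$, which is $\nu^+$\+accessible by Lemma~\ref{abelian-complexes-lemma} and Proposition~\ref{product-proposition}. Let $\Phi_2\:\sK_2\rarrow\sL$ be the product of the inclusion functors, and let $\Phi_1\:\sK_1\rarrow\sL$ send $I^\bu$ to the family $(\Hom_R(J_\xi,I^\bu))_{\xi\in\Xi}$. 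Then the pseudopullback $\sC$ of $\Phi_1$ and $\Phi_2$ is equivalent to the full subcategory of $\sK_1$ on those $I^\bu$ for which all the $\Hom_R(J_\xi,I^\bu)$ are acyclic, i.e.\ to the category of totally acyclic complexes of injectives.

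It then remains to check the hypotheses of Theorem~\ref{pseudopullback-theorem} for $\kappa=\nu^+$ and $\lambda=\rho^+$. Colimits of $\rho^+$\+indexed chains exist in each of the three categories, since directed colimits preserve acyclicity and, over the Noetherian ring $R$, injectives are closed under directed colimits (Corollary~\ref{injectives-accessible}); and $\Phi_2$ evidently preserves $\nu^+$\+directed colimits, colimits of $\rho^+$\+indexed chains, and $\nu^+$\+presentable objects. For $\Phi_1$, each $J_\xi$ has cardinality $\le\rho$ and is therefore $\rho^+$\+presentable, so $\Hom_R(J_\xi,{-})$ preserves $\rho^+$\+directed colimits---in particular $\nu^+$\+directed colimits, and colimits of $\rho^+$\+indexed chains, since such a chain, indexed by the regular cardinal $\rho^+$, is a $\rho^+$\+directed diagram. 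Finally, if $I^\bu$ is $\nu^+$\+presentable in $\sK_1$, then $|I^n|\le\nu$ for every~$n$, whence $|\Hom_R(J_\xi,I^n)|\le\nu^{|J_\xi|}\le\nu^\rho=\nu$, so $\Phi_1(I^\bu)$ is $\nu^+$\+presentable in~$\sL$. Theorem~\ref{pseudopullback-theorem} now gives that $\sC$ is $\nu^+$\+accessible and, because $\Phi_1$ preserves $\nu^+$\+presentability, that its $\nu^+$\+presentable objects are precisely the totally acyclic complexes of injectives of cardinality $\le\nu$; the concluding statement is then just the definition of $\nu^+$\+accessibility.

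The delicate point---and the reason for testing with the whole family $(\Hom_R(J_\xi,{-}))_\xi$ rather than with a single injective $J$ whose copies have all injectives as summands---is that such a $J$ would have cardinality as large as~$\nu$, which would simultaneously break the estimate $|\Hom_R(J,I^\bu)|\le\nu$ (as $\nu^\nu>\nu$) and defeat the $\rho^+$\+presentability needed to preserve $\rho^+$\+indexed chains. Splitting the test into the functors $\Hom_R(J_\xi,{-})$ with $|J_\xi|\le\rho$ is exactly what lets $\Phi_1$ both land in $\nu^+$\+presentable objects (via $\nu^\rho=\nu$) and preserve $\rho^+$\+indexed chains; establishing these two preservation properties of $\Phi_1$ is the crux of the argument.
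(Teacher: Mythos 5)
Your proposal is correct and follows essentially the same route as the paper's own proof: the same pseudopullback of the category of acyclic complexes of injectives (via Proposition~\ref{two-sided-by-accessible-classes-prop} with $\sT_n$ the injectives of cardinality $\le\nu$) against a product of categories of acyclic complexes of abelian groups indexed by the indecomposable injectives, with the same functors $\Hom_R(J_\xi,{-})$, the same choice $\kappa=\nu^+$, $\lambda=\rho^+$, and the same two key cardinality facts ($|J_\xi|\le\rho$ and $\nu^\rho=\nu$). The only cosmetic difference is that you bound the number of indecomposable injectives by $2^\rho\le\nu$, whereas the paper gets the sharper bound $\rho$ by noting each is the injective envelope of some $R/I$ with $I$ a (finitely generated) left ideal; either bound suffices for Proposition~\ref{product-proposition}.
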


\begin{proof}
 This is an application of Theorem~\ref{pseudopullback-theorem}
together with Proposition~\ref{two-sided-by-accessible-classes-prop}.
 For a comparable or somewhat similar argument, see the proof
of Theorem~\ref{dualizing-complex-tot-acycl-of-inj-theorem} below.

 Let $\sK_1$ denote the category of acyclic complexes of injective
left $R$\+modules.
 By the left-right opposite versions of
Corollary~\ref{injectives-accessible} and
Proposition~\ref{two-sided-by-accessible-classes-prop} for
$\lambda=\aleph_0$, \,$\kappa=\nu^+$, and $\sT_n$ being the set of all
injective $R$\+modules of cardinality at most~$\nu$, for every
$n\in\boZ$, we know that $\sK_1$ is a $\nu^+$\+accessible category,
and have a description of its $\nu^+$\+accessible objects (cf.\
Theorem~\ref{injective-resolutions-theorem}).

 Furthermore, let $\sK_2$ denote the Cartesian product of copies of
the category of acyclic complexes of abelian groups, taken over
a set of representatives of isomorphism classes of indecomposable
injective left $R$\+modules~$J$.
 Notice that every indecomposable injective left $R$\+module is
an injective envelope of a cyclic $R$\+module $R/I$, where $I$ is
a left ideal in~$R$; and there is at most~$\rho$ of these.
 Clearly, $\rho<\nu<\nu^+$.
 Proposition~\ref{acyclic-complexes-prop} for $\kappa=\nu^+$ and
the ring of integers $\boZ$ in the role of $R$, together with
Proposition~\ref{product-proposition}, tell that $\sK_2$ is
a $\nu^+$\+accessible category and provide a description of its full
subcategory of $\nu^+$\+presentable objects.

 Finally, let $\sL$ be the Cartesian product of copies of
the abelian category of arbitrary complexes of abelian groups,
taken over the same set of indecomposable injectives~$J$.
 Lemma~\ref{abelian-complexes-lemma}(a,c) together with
Proposition~\ref{product-proposition} tell that $\sL$ is
a locally $\nu^+$\+presentable category and describe its full
subcategory of $\nu^+$\+presentable objects.

 Let $\Phi_1\:\sK_1\rarrow\sL$ be the functor taking any acyclic
complex of injective left $R$\+modules $I^\bu$ to the collection
of complexes of abelian groups $\Hom_R(J,I^\bu)$, indexed over
the indecomposable injective left $R$\+modules~$J$.
 Let $\Phi_2\:\sK_2\rarrow\sL$ be the Cartesian product of
the identity inclusions of the category of acyclic complexes of
abelian groups into the category of all complexes of abelian groups.
 Then the pseudopullback $\sC$ of the two functors $\Phi_1$
and $\Phi_2$ is the desired category of totally acyclic complexes
of injective left $R$\+modules.

 Recall that any indecomposable injective left $R$\+module has
cardinality at most~$\rho$ (as explained in the proof of
Corollary~\ref{Noeth-Gorenst-injectives-rho-dir-colim-closed}).
 Therefore, the functor $\Phi_1$ takes $\nu^+$\+presentable
objects to $\nu^+$\+presentable objects (since $\nu^\rho=\nu$)
and preserves $\rho^+$\+directed colimits.
 Put $\lambda=\rho^+$ and $\kappa=\nu^+$; then $\lambda<\kappa$.

 All the other assumptions of Theorem~\ref{pseudopullback-theorem}
are clearly satisfied.
 Theorem~\ref{pseudopullback-theorem} tells that $\sC$ is
a $\nu^+$\+accessible category, and provides the desired description
of its full subcategory of $\nu^+$\+presentable objects.
\end{proof}

\begin{cor} \label{left-Noetherian-Gorenstein-injectives-accessible}
 Let $R$ be a left Noetherian ring; put $\rho=|R|+\aleph_0$.
 Let $\nu$~be an infinite cardinal such that $\nu^\rho=\nu$.
 Then the category of Gorenstein-injective left $R$\+modules\/ $\GI$ is
$\nu^+$\+accessible.
 The $\nu^+$\+presentable objects of\/ $\GI$ are precisely all
the Gorenstein-injective left $R$\+modules of cardinality at
most~$\nu$.
 So every Gorenstein-injective left $R$\+module is a $\nu^+$\+directed
colimit of Gorenstein-injective left $R$\+modules of cardinality
at most~$\nu$.
\end{cor}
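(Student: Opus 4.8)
The plan is to deduce this corollary from Theorem~\ref{left-Noetherian-tot-acycl-of-inj-theorem} in the same manner in which Corollary~\ref{dualizing-complex-Gorenstein-flats-cor} was deduced from Theorem~\ref{dualizing-complex-F-totally-acyclics-theorem}, namely via the degree-$0$ cocycle functor $Z^0$ and Proposition~\ref{accessible-subcategory}. First I would record the cardinal inequality that makes the two colimit ranks compatible: since $\nu^\rho=\nu$ and $\nu\ge2$, we have $\nu\ge2^\rho>\rho$, hence $\nu>\rho$ and $\nu^+>\rho^+$. Consequently a $\nu^+$\+directed poset is $\rho^+$\+directed, so every $\nu^+$\+directed colimit is in particular a $\rho^+$\+directed colimit, and a class closed under $\rho^+$\+directed colimits is automatically closed under $\nu^+$\+directed colimits. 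By Corollary~\ref{Noeth-Gorenst-injectives-rho-dir-colim-closed}, the full subcategory $\GI\subset R\Modl$ is closed under $\rho^+$\+directed colimits, hence under $\nu^+$\+directed colimits as well.

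Next I would use the functor $Z^0$ sending a totally acyclic complex of injective left $R$\+modules $I^\bu$ to its module of degree-$0$ cocycles; by the definition of Gorenstein-injectivity this functor is surjective onto the objects of $\GI$. Theorem~\ref{left-Noetherian-tot-acycl-of-inj-theorem} presents any such $I^\bu$ as a $\nu^+$\+directed colimit $I^\bu=\varinjlim_\xi T^\bu_\xi$ of totally acyclic complexes $T^\bu_\xi$ of injective left $R$\+modules of cardinality at most~$\nu$. Since directed colimits are exact in $R\Modl$, the kernel functor $Z^0$ preserves them, so $M=Z^0(I^\bu)=\varinjlim_\xi Z^0(T^\bu_\xi)$, where each $Z^0(T^\bu_\xi)$ is a Gorenstein-injective module of cardinality at most~$\nu$. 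This already yields the final assertion of the corollary: every Gorenstein-injective left $R$\+module is a $\nu^+$\+directed, hence $\rho^+$\+directed, colimit of Gorenstein-injective modules of cardinality at most~$\nu$.

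Finally I would package this via Proposition~\ref{accessible-subcategory}. Let $\sT$ be the set of isomorphism classes of Gorenstein-injective left $R$\+modules of cardinality at most~$\nu$; since $\nu\ge\rho$, these are exactly the $\nu^+$\+presentable objects of the locally $\nu^+$\+presentable category $R\Modl$ lying in $\GI$. The preceding paragraph gives $\GI\subseteq\varinjlim_{(\nu^+)}\sT$, while closure of $\GI$ under $\nu^+$\+directed colimits gives the reverse inclusion, so $\GI=\varinjlim_{(\nu^+)}\sT$. Proposition~\ref{accessible-subcategory} (applied with $\sK=R\Modl$ and $\kappa=\nu^+$) then shows that $\GI$ is $\nu^+$\+accessible and that its $\nu^+$\+presentable objects are precisely the retracts of objects of $\sT$; because the class of Gorenstein-injective modules is closed under direct summands and a summand cannot increase cardinality, these retracts are again Gorenstein-injective modules of cardinality at most~$\nu$.

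The argument is essentially formal once Theorem~\ref{left-Noetherian-tot-acycl-of-inj-theorem} is available, and the only points needing attention are the commutation of $Z^0$ with $\nu^+$\+directed colimits, which is immediate from exactness of directed colimits of modules, and the closure of $\GI$ under direct summands. The latter is the one genuinely module-theoretic ingredient and the place I would expect to have to be careful: I would either cite the standard closure of the Gorenstein-injective class under direct summands or verify it directly over the left Noetherian ring~$R$, and I would also double-check the cardinality bookkeeping $\nu^\rho=\nu\Rightarrow\nu>\rho$ that underlies the interplay between the ranks $\rho^+$ and $\nu^+$.
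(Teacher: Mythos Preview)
Your proposal is correct and follows essentially the same route as the paper: closure of $\GI$ under $\rho^+$\+directed colimits from Corollary~\ref{Noeth-Gorenst-injectives-rho-dir-colim-closed}, the decomposition via $Z^0$ applied to Theorem~\ref{left-Noetherian-tot-acycl-of-inj-theorem}, and the wrap-up through Proposition~\ref{accessible-subcategory}. One small simplification: your caution about closure of $\GI$ under direct summands is unnecessary here, since the retracts produced by Proposition~\ref{accessible-subcategory} are taken inside the category $\varinjlim_{(\nu^+)}\sT=\GI$ itself and are therefore automatically Gorenstein-injective; only the cardinality bound on a retract needs to be noted.
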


\begin{proof}
 The full subcategory $\GI\subset R\Modl$ is closed under
$\rho^+$\+directed colimits in $R\Modl$ by
Corollary~\ref{Noeth-Gorenst-injectives-rho-dir-colim-closed}.
 Since any $R$\+module of cardinality at most~$\nu$ is
$\nu^+$\+presentable in $R\Modl$, it follows that any
Gorenstein-injective left $R$\+module of cardinality at most~$\nu$
is $\nu^+$\+presentable in the category of Gorenstein-injective
left $R$\+modules.
 In view of Proposition~\ref{accessible-subcategory}, it remains to
check the last assertion of the corollary, which follows immediately
from Theorem~\ref{left-Noetherian-tot-acycl-of-inj-theorem}.
\end{proof}

 For a version of
Corollary~\ref{left-Noetherian-Gorenstein-injectives-accessible}
with more restrictive assumptions and a better cardinality estimate,
see Corollary~\ref{dualizing-complex-Gorenstein-injectives-accessible}
below.

\begin{cor} \label{left-Noetherian-tot-acycl-of-inj-direct-summand}
 Let $R$ be a left Noetherian ring; put $\rho=|R|+\aleph_0$.
 Let $\nu$~be an infinite cardinal such that $\nu^\rho=\nu$.
 Let $M$ be a Gorenstein-injective left $R$\+module of cardinality
at most~$\nu$.
 Then $M$ is a direct summand of an $R$\+module $N$ admitting
a two-sided totally acyclic injective resolution $T^\bu$ with injective
left $R$\+modules $T^n$, \,$n\in\boZ$, of cardinality at most~$\nu$.
\end{cor}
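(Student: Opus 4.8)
The plan is to follow the same template used in the proofs of Corollary~\ref{two-sided-by-lim-classes-direct-summand} and Corollary~\ref{dualizing-complex-F-totally-acyclics-direct-summands}, now feeding in Theorem~\ref{left-Noetherian-tot-acycl-of-inj-theorem} as the source of the colimit presentation. First I would start from the Gorenstein-injective module $M$: by definition there is a totally acyclic complex of injective left $R$\+modules $I^\bu$ together with an isomorphism $M\simeq Z^0(I^\bu)$. Theorem~\ref{left-Noetherian-tot-acycl-of-inj-theorem} then expresses $I^\bu$ as a $\nu^+$\+directed colimit $I^\bu\simeq\varinjlim_{\xi\in\Xi}T^\bu_\xi$ of totally acyclic complexes $T^\bu_\xi$ of injective left $R$\+modules, each of cardinality at most~$\nu$.

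The second step is to pass from the complexes to their degree\+$0$ cocycles. Since $\Xi$\+indexed directed colimits are exact in $R\Modl$, the functor $Z^0$ commutes with the directed colimit, so that $M\simeq Z^0(I^\bu)\simeq\varinjlim_{\xi\in\Xi}Z^0(T^\bu_\xi)$. Writing $N_\xi=Z^0(T^\bu_\xi)$, this exhibits $M$ as the colimit of the $\nu^+$\+directed diagram $(N_\xi)_{\xi\in\Xi}$, where each $N_\xi$ carries the two-sided totally acyclic injective resolution $T^\bu_\xi$ by injectives of cardinality $\le\nu$.

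The final step invokes presentability. As recalled in the paper (the paragraph preceding Proposition~\ref{enochs-slavik-trlifaj}), an $R$\+module of cardinality at most~$\nu$ is $\nu^+$\+presentable in $R\Modl$. Applying this to $M$, the identity morphism $M\rarrow M\simeq\varinjlim_{\xi}N_\xi$ factors through one of the structure morphisms, i.e.\ there is $\xi\in\Xi$ and maps $M\rarrow N_\xi\rarrow M$ whose composite is $\id_M$. Hence $M$ is a direct summand of $N=N_\xi$, and $T^\bu=T^\bu_\xi$ is the desired resolution.

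I do not expect a genuine obstacle here, as the argument is purely formal once Theorem~\ref{left-Noetherian-tot-acycl-of-inj-theorem} is in hand; the only points requiring (routine) care are that $Z^0$ preserves directed colimits — which is immediate from exactness of directed colimits of modules — and the translation ``cardinality $\le\nu$ $\Longrightarrow$ $\nu^+$\+presentable,'' which is standard. The substantive work has all been absorbed into the preceding theorem via the Pseudopullback Theorem~\ref{pseudopullback-theorem}.
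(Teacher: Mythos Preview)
Your proposal is correct and matches the paper's own approach: the paper simply says the argument is a corollary of Theorem~\ref{left-Noetherian-tot-acycl-of-inj-theorem} carried out as in Corollary~\ref{dualizing-complex-F-totally-acyclics-direct-summands}, which is precisely the template you execute. The only implicit point you might make explicit is that $\nu^\rho=\nu$ forces $\nu\ge\rho$, so the translation ``cardinality $\le\nu$ $\Rightarrow$ $\nu^+$-presentable'' is indeed available.
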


\begin{proof}
 This is a corollary of
Theorem~\ref{left-Noetherian-tot-acycl-of-inj-theorem}.
 The argument is similar to the proofs of
Corollaries~\ref{dualizing-complex-F-totally-acyclics-direct-summands}
and~\ref{injective-resolutions-direct-summands}.
\end{proof}

\Section{Dualizing Complexes and Totally Acyclic Complexes of
Injectives} \label{dualizing-and-totally-acyclic-of-injectives-secn}

 The present section prepares ground for the next one.
 As in Section~\ref{dualizing-and-F-totally-acyclic-secn}, we consider
two associative rings $R$ and $S$, and assume that the ring $S$ is
right coherent.
 We will say that a bounded complex of left $R$\+modules $K^\bu$ has
\emph{finite injective dimension} if it is isomorphic, as an object
of the derived category $\sD^\bb(R\Modl)$, to a bounded complex of
injective $R$\+modules.
 
 By a \emph{strong right dualizing complex of $R$\+$S$\+bimodules}
$D^\bu$ we mean a bounded complex of $R$\+$S$\+bimodules satisfying
conditions~(ii\+-iii) from
Section~\ref{dualizing-and-F-totally-acyclic-secn} together with
the following stronger version of condition~(i):
\begin{enumerate}
\renewcommand{\theenumi}{\roman{enumi}$'$}
\item the terms of the complex $D^\bu$ are fp\+injective as right
$S$\+modules, and the whole complex $D^\bu$ has finite injective
dimension as a complex of left $R$\+modules.
\end{enumerate}
 For a left Noetherian ring $R$, all fp\+injective left $R$\+modules
are injective, and there is no difference between right dualizing
complexes and strong right dualizing complexes.

 The following proposition is the main result of this section.
 It is dual-analogous to
Proposition~\ref{F-totally-acyclics-described-by-dualizing}.
 It is also another version of~\cite[Lemma~1.7]{Jor}.

\begin{prop} \label{tot-acycl-of-injectives-described-by-dualizing}
 Let $R$ be a ring, $S$ be a right coherent ring, and $D^\bu$ be
a strong right dualizing complex of $R$\+$S$\+bimodules.
 Then an acyclic complex of injective left $R$\+modules $I^\bu$ is
totally acyclic if and only if the complex of left $S$\+modules
$\Hom_R(D^\bu,I^\bu)$ is acyclic.
\end{prop}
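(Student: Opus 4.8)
The plan is to dualize the proof of Proposition~\ref{F-totally-acyclics-described-by-dualizing} line by line, systematically replacing the tensor product $F^\bu\ot_R({-})$ by the contravariant functor $\Hom_R({-},I^\bu)$ and interchanging the roles played by ``flat'' and ``injective/pure-injective''. First I would record the dual of Lemma~\ref{termwise-flat-tensored-with-bounded-acyclic}: for any \emph{bounded} acyclic complex of left $R$\+modules $L^\bu$ and any complex of injective left $R$\+modules $I^\bu$, the complex $\Hom_R(L^\bu,I^\bu)$ is acyclic. This holds because $\Hom_R({-},I^q)$ is exact for each injective term $I^q$, so each row $\Hom_R(L^\bu,I^q)$ is acyclic, and the totalization of a complex bounded in the $L^\bu$\+direction of acyclic complexes is acyclic. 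Combined with the very definition of total acyclicity, this yields the dual of Corollary~\ref{tensor-with-bounded-fp-injective-dimension}: if $I^\bu$ is totally acyclic and $K^\bu$ is a bounded complex of left $R$\+modules of finite injective dimension, then $\Hom_R(K^\bu,I^\bu)$ is acyclic (resolve $K^\bu$ by a bounded complex of injectives $E^\bu$; then $\Hom_R(E^\bu,I^\bu)$ is acyclic by total acyclicity together with boundedness, and the bounded acyclic cone of $K^\bu\rarrow E^\bu$ contributes an acyclic $\Hom$\+complex).

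The ``only if'' implication is then immediate: condition~(i$'$) says that $D^\bu$ is a bounded complex of finite injective dimension as a complex of left $R$\+modules, so the dual corollary applies with $K^\bu=D^\bu$ and shows that $\Hom_R(D^\bu,I^\bu)$ is acyclic whenever $I^\bu$ is totally acyclic. For the ``if'' implication, I assume $\Hom_R(D^\bu,I^\bu)$ acyclic and fix an injective left $R$\+module $J$, aiming to prove $\Hom_R(J,I^\bu)$ acyclic. As in the original proof I invoke Lemma~\ref{evaluation-quasi-isomorphism}: the evaluation map $D^\bu\ot_S\Hom_R(D^\bu,J)\rarrow J$ is a quasi-isomorphism of \emph{bounded} complexes, with bounded acyclic cone. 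Writing $G^\bu=\Hom_R(D^\bu,J)$, the dual of Lemma~\ref{termwise-flat-tensored-with-bounded-acyclic} lets me replace $\Hom_R(J,I^\bu)$ by $\Hom_R(D^\bu\ot_SG^\bu,\>I^\bu)$ up to an acyclic complex, and the tensor-Hom adjunction identifies this with $\Hom_S(G^\bu,\>\Hom_R(D^\bu,I^\bu))$. By Lemma~\ref{Hom-module-flat}, $G^\bu$ is a bounded complex of flat left $S$\+modules. So everything reduces to showing that applying $\Hom_S(G^\bu,{-})$, for a bounded complex of flats $G^\bu$, to the acyclic complex $P^\bu=\Hom_R(D^\bu,I^\bu)$ produces an acyclic complex.

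This last reduction is the main obstacle, and it is precisely where the duality breaks down. In the original proof the analogous final step was Lemma~\ref{acyclic-tensored-with-bounded-flat}, where tensoring an acyclic complex with a bounded complex of flats is exact; but $\Hom_S$ out of a flat module is \emph{not} exact, so there is no verbatim dual lemma. A d\'evissage on the length of the bounded complex $G^\bu$ reduces the claim to proving $\Hom_S(F,P^\bu)$ acyclic for a single flat left $S$\+module $F$, and a dimension-shift computation along $P^\bu$ gives $H^n\Hom_S(F,P^\bu)\simeq\Ext^1_S(F,Z^n(P^\bu))$; hence the entire ``if'' direction comes down to showing that the cocycle modules $Z^n(P^\bu)$ are \emph{cotorsion} left $S$\+modules. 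The plan to establish this rests on two facts about the terms $\Hom_R(D^p,I^q)$ of $P^\bu$: by Lemma~\ref{Hom-module-flat} they are flat over $S$, and since $I^q$ is injective (hence pure-injective) and $\Hom_R(D^p,{-})$ has the purity-preserving exact left adjoint $D^p\ot_S({-})$, they are also pure-injective, so they are flat-cotorsion. From this I expect to deduce that the cocycles are cotorsion by writing $F$ as a directed colimit of finitely generated free modules (Govorov--Lazard), using that the resulting bar resolution of $F$ is \emph{pure} exact, and exploiting pure-injectivity of the terms $P^q$ to annihilate the relevant higher $\varprojlim$ contributions. The delicate points I anticipate are the convergence of the attendant double-complex spectral sequence (since $I^\bu$, and hence $P^\bu$, is unbounded) and ensuring that pure-injectivity is used only on the terms, and not on the cocycles, of $\Hom_R(D^\bu,I^\bu)$.
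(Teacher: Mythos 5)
Your proof follows the paper's own argument step for step through all the reductions: the ``only if'' direction via the dual acyclicity lemma (the paper's Lemma~\ref{Hom-from-bounded-acyclic-to-termwise-injective} and Corollary~\ref{Hom-from-bounded-injective-dimension}), and the ``if'' direction via the evaluation quasi-isomorphism of Lemma~\ref{evaluation-quasi-isomorphism}, the tensor-Hom adjunction, and flatness over $S$ of the bounded complex $\Hom_R(D^\bu,J)$ (Lemma~\ref{Hom-module-flat}). You also correctly diagnose where naive dualization of Proposition~\ref{F-totally-acyclics-described-by-dualizing} breaks down, and correctly reduce everything to the claim that the cocycles of the acyclic complex $P^\bu=\Hom_R(D^\bu,I^\bu)$ are cotorsion $S$\+modules. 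It is only here that you and the paper part ways. The paper observes that the terms of $P^\bu$ are cotorsion (Lemma~\ref{Hom-module-cotorsion}, which needs no flatness, coherence, or pure-injectivity) and then quotes, inside Lemma~\ref{Hom-from-bounded-flat-to-acyclic-cotorsion}, the \emph{cotorsion periodicity theorem} of Bazzoni--Cort\'es-Izurdiaga--Estrada \cite[Theorem~5.1(2)]{BCE}: in any acyclic complex of cotorsion modules, the cocycles are cotorsion. You appear not to know this theorem and instead propose to prove the needed special case by hand, using the stronger observation (which is correct, by the adjunction argument you sketch) that the terms of $P^\bu$ are pure-injective.

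Your alternative final step can be made to work, but as written it has a gap exactly at the point you flag and leave unresolved. The double complex $\Hom_S(B_\bu,P^\bu)$ built from the bar resolution $B_\bu\rarrow F$ is unbounded in the $P^\bu$\+direction, so its spectral sequences have no a priori convergence, and no formal manipulation repairs this. What closes the argument is dimension shifting plus Mitchell's theorem. Writing $F=\varinjlim_{\xi\in\Xi}F_\xi$ with $F_\xi$ finitely generated free, pure-exactness of the bar resolution and pure-injectivity of each term $P^q$ give $\varprojlim\nolimits^p_{\xi}\Hom_S(F_\xi,P^q)=0$ for all $p\ge1$; combining this with the degenerate case of Lemma~\ref{varprojlim-Ext-spectral-sequence-lemma} and the long exact sequences of $\varprojlim^*$ applied to $0\rarrow\Hom_S(F_\xi,Z^{n-1})\rarrow\Hom_S(F_\xi,P^{n-1})\rarrow\Hom_S(F_\xi,Z^n)\rarrow0$ yields $\Ext^1_S(F,Z^n)\simeq\varprojlim\nolimits^1_\xi\Hom_S(F_\xi,Z^n)\simeq\varprojlim\nolimits^2_\xi\Hom_S(F_\xi,Z^{n-1})\simeq\dotsb\simeq\varprojlim\nolimits^{k+1}_\xi\Hom_S(F_\xi,Z^{n-k})$ for every $k\ge0$, the shifting being possible indefinitely precisely because $P^\bu$ is unbounded and acyclic. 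This chain terminates in zero only because the derived functors $\varprojlim^p_\Xi$ vanish for $p$ above a finite bound depending on the cardinality of the fixed poset $\Xi$ --- Mitchell's theorem \cite{Mit}, the very fact the paper uses in the proof of Proposition~\ref{aleph-m-presentable-directed-colimit-prop}. Without this (or an equivalent) input, there is no reason for the infinite descent to stop, and your ``delicate point'' is a genuine missing step, not a routine verification. So either cite \cite{BCE} as the paper does, or add Mitchell's theorem; with the latter, your argument becomes a self-contained alternative to the paper's, at the cost of using pure-injectivity rather than mere cotorsionness of the terms.
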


 The proof of
Proposition~\ref{tot-acycl-of-injectives-described-by-dualizing}
is based on a sequence of lemmas.

\begin{lem} \label{Hom-from-bounded-acyclic-to-termwise-injective}
 Let $L^\bu$ be a bounded acyclic complex of left $R$\+modules and
$I^\bu$ be a complex of injective left $R$\+modules.
 Then the complex of abelian groups $\Hom_R(L^\bu,I^\bu)$ is acyclic.
\end{lem}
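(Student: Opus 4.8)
The plan is to prove this as the exact dual of Lemma~\ref{termwise-flat-tensored-with-bounded-acyclic}, trading the exactness of the functors $F^n\ot_R{-}$ (flatness) for the exactness of the functors $\Hom_R({-},I^q)$ (injectivity). The object $\Hom_R(L^\bu,I^\bu)$ is the total complex of the double complex with terms $\Hom_R(L^a,I^b)$, $a,b\in\boZ$, and the goal is to show that this total complex is acyclic, using the boundedness of $L^\bu$ as the essential finiteness input.

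First I would record the pointwise fact: for every fixed $q\in\boZ$, the complex of abelian groups $\Hom_R(L^\bu,I^q)$ is acyclic. Indeed, $I^q$ is an injective left $R$-module, so $\Hom_R({-},I^q)$ is an exact (contravariant) functor and therefore carries the acyclic complex $L^\bu$ to an acyclic complex. The contravariance merely reverses the direction of the differential and causes no difficulty.

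Next I would assemble these. Viewing $\Hom_R(L^\bu,I^\bu)$ as the total complex of a complex of complexes of abelian groups indexed by the cohomological degree of $L^\bu$ --- with the term in degree $a$ being $\Hom_R(L^a,I^\bu)$ --- this complex of complexes is \emph{bounded}, precisely because $L^\bu$ is a bounded complex, so that only finitely many terms $\Hom_R(L^a,I^\bu)$ are nonzero. It is moreover \emph{acyclic} as a complex of complexes: its homology is computed in each internal degree $q$, where it is the homology of $\Hom_R(L^\bu,I^q)$, which vanishes by the first step. The conclusion then follows from the standard fact, already used in Lemma~\ref{termwise-flat-tensored-with-bounded-acyclic}, that the total complex of a bounded acyclic complex of complexes of abelian groups is acyclic.

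The one point that genuinely needs attention --- and the only obstacle in an otherwise routine argument --- is that $I^\bu$ is permitted to be unbounded, so none of the required finiteness can come from it. The boundedness must be taken in the $L^\bu$-direction, and the acyclicity that makes the assembly work must be acyclicity in that \emph{same} direction, namely the exactness of each $\Hom_R(L^\bu,I^q)$ supplied by injectivity. Concretely, the acyclic-assembly (staircase) argument proceeds by starting from the highest nonzero term $L^a$ and descending, and it terminates only because there are finitely many such terms; this termination is exactly where the hypothesis that $L^\bu$ is bounded is used, and no convergence condition on the $I^\bu$-direction is needed or available.
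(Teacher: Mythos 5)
Your proof is correct and is essentially the paper's own argument: the paper likewise observes that $\Hom_R(L^\bu,I^n)$ is acyclic for every $n\in\boZ$ (by exactness of $\Hom_R({-},I^n)$ for injective $I^n$) and then invokes the fact that the total complex of a bounded acyclic complex of complexes of abelian groups is acyclic, exactly as in Lemma~\ref{termwise-flat-tensored-with-bounded-acyclic}. Your explicit identification of the $L^\bu$-direction as the one carrying both the boundedness and the acyclicity is precisely the content the paper leaves implicit in calling the lemma ``dual-analogous.''
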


\begin{proof}
 This is dual-analogous to
Lemma~\ref{termwise-flat-tensored-with-bounded-acyclic}.
 The point is that the complex of abelian groups $\Hom_R(L^\bu,I^n)$
is acyclic for every $n\in\boZ$.
\end{proof}

\begin{cor} \label{Hom-from-bounded-injective-dimension}
 Let $I^\bu$ be a totally acyclic complex of injective left
$R$\+modules and $K^\bu$ be a bounded complex of left $R$\+modules.
 Assume that the complex $K^\bu$ has finite injective dimension.
 Then the complex of abelian groups $\Hom_R(K^\bu,I^\bu)$ is acyclic.
\end{cor}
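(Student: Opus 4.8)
The plan is to follow the dual-analogue of the proof of Corollary~\ref{tensor-with-bounded-fp-injective-dimension}, reducing the finite-injective-dimension hypothesis on $K^\bu$ to the case of a bounded complex of injective modules, where the definition of total acyclicity applies directly. The crucial simplification relative to the flat/fp\+injective setting is that no analogue of Lemma~\ref{tensor-with-fp-injective-module} will be needed: the definition of a totally acyclic complex of injectives already tests $\Hom_R(J,I^\bu)$ against \emph{all} injective modules $J$, and these are exactly the modules occurring as the terms of a bounded injective representative of $K^\bu$.

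First I would use the hypothesis that $K^\bu$ has finite injective dimension to fix a bounded complex of injective left $R$\+modules $G^\bu$ together with a quasi-isomorphism $K^\bu\rarrow G^\bu$. The isomorphism $K^\bu\simeq G^\bu$ in $\sD^\bb(R\Modl)$ can be realized by an honest chain map, since $G^\bu$ is a bounded (hence bounded below) complex of injectives, so morphisms in the derived category out of $K^\bu$ into $G^\bu$ coincide with homotopy classes of chain maps. Let $C^\bu$ denote the cone of this map. As $K^\bu$ and $G^\bu$ are bounded and the map is a quasi-isomorphism, $C^\bu$ is a bounded acyclic complex of left $R$\+modules, and there is a termwise split short exact sequence of complexes $0\rarrow G^\bu\rarrow C^\bu\rarrow K^\bu[1]\rarrow0$.

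The two key inputs are then as follows. First, the complex $\Hom_R(C^\bu,I^\bu)$ is acyclic by Lemma~\ref{Hom-from-bounded-acyclic-to-termwise-injective}, since $C^\bu$ is a bounded acyclic complex and $I^\bu$ is a complex of injectives. Second, the complex $\Hom_R(G^\bu,I^\bu)$ is acyclic: each term $G^j$ is an injective left $R$\+module, so $\Hom_R(G^j,I^\bu)$ is acyclic by the very definition of a totally acyclic complex of injectives; filtering the bounded complex $G^\bu$ by its brutal (stupid) truncations and using that acyclicity passes through each of the finitely many resulting extensions of complexes, one concludes that the total $\Hom$\+complex $\Hom_R(G^\bu,I^\bu)$ is acyclic.

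Finally, applying the contravariant functor $\Hom_R({-},I^\bu)$ to the termwise split short exact sequence above yields a short exact sequence of complexes of abelian groups $0\rarrow\Hom_R(K^\bu[1],I^\bu)\rarrow\Hom_R(C^\bu,I^\bu)\rarrow\Hom_R(G^\bu,I^\bu)\rarrow0$. The associated long exact cohomology sequence, together with the acyclicity of the two right-hand terms established above, forces $\Hom_R(K^\bu[1],I^\bu)$ to be acyclic, and hence $\Hom_R(K^\bu,I^\bu)$ is acyclic as well (a shift of a complex is acyclic if and only if the complex is). I do not expect a genuine obstacle in this argument; the only points requiring a little care are the passage from finite injective dimension to an honest bounded injective resolution realizing the quasi-isomorphism, and the bookkeeping of the finite filtration in the second key input.
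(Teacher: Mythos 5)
Your proof is correct and is essentially the paper's own argument spelled out in full: the paper's proof of this corollary is the one-liner ``Follows from Lemma~\ref{Hom-from-bounded-acyclic-to-termwise-injective}'', whose intended content is exactly your combination of that lemma (applied to the cone of a quasi-isomorphism onto a bounded complex of injectives) with the observation that total acyclicity of $I^\bu$ handles each injective term $G^j$ directly, so that no analogue of Lemma~\ref{tensor-with-fp-injective-module} is required. Your explicit realization of the derived-category isomorphism by an honest chain map into the bounded-below complex of injectives, and the two-out-of-three argument on the termwise split cone sequence, are just the routine details the paper leaves to the reader.
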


\begin{proof}
 Follows from
Lemma~\ref{Hom-from-bounded-acyclic-to-termwise-injective}.
\end{proof}

 A left $R$\+module $C$ is said to be (\emph{Enochs})
\emph{cotorsion}~\cite[Section~2]{En2} if $\Ext^1_R(F,C)=0$ for
all flat left $R$\+modules~$F$.
 One can easily see that $\Ext^n_R(F,C)=0$ for all flat
$R$\+modules $F$, all cotorsion $R$\+modules $C$, and all $n\ge1$.

\begin{lem} \label{Hom-module-cotorsion}
 Let $R$ and $S$ be two rings, $E$ be an $R$\+$S$\+bimodule, and
$I$ be an injective left $R$\+module.
 Then the left $S$\+module $\Hom_R(E,I)$ is cotorsion.
\end{lem}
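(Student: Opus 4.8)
The plan is to deduce the cotorsion property directly from the tensor--hom adjunction together with the injectivity of~$I$. First I would pin down the module structures: the right $S$\+module structure on $E$ makes $\Hom_R(E,I)$ into a left $S$\+module via $(s\cdot f)(e)=f(es)$, while for any left $S$\+module $M$ the functor $E\ot_S{-}$ sends $M$ to the left $R$\+module $E\ot_S M$. The essential tool is the adjunction isomorphism
$$
 \Hom_S(M,\Hom_R(E,I))\simeq\Hom_R(E\ot_SM,\>I),
$$
natural in~$M$, which expresses that $E\ot_S{-}$ is left adjoint to $\Hom_R(E,{-})$.

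Next, to verify that $C=\Hom_R(E,I)$ is cotorsion I would take an arbitrary flat left $S$\+module $F$ and show that $\Ext^1_S(F,C)=0$. Choose a short exact sequence $0\rarrow K\rarrow P\rarrow F\rarrow0$ with $P$ a projective left $S$\+module. Applying $\Hom_S({-},C)$ and using $\Ext^1_S(P,C)=0$, one obtains a natural identification $\Ext^1_S(F,C)\simeq\coker[\Hom_S(P,C)\rarrow\Hom_S(K,C)]$. By the naturality of the adjunction above, this cokernel is in turn identified with $\coker[\Hom_R(E\ot_SP,I)\rarrow\Hom_R(E\ot_SK,I)]$.

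It then remains to check that the latter map is surjective. Since $F$ is flat, $\Tor^S_1(E,F)=0$, so tensoring the short exact sequence with $E$ over $S$ yields an exact sequence $0\rarrow E\ot_SK\rarrow E\ot_SP\rarrow E\ot_SF\rarrow0$ of left $R$\+modules. Applying the exact contravariant functor $\Hom_R({-},I)$\,---\,exact precisely because $I$ is an injective $R$\+module\,---\,produces a surjection $\Hom_R(E\ot_SP,I)\rarrow\Hom_R(E\ot_SK,I)$. Hence the cokernel vanishes, and $\Ext^1_S(F,C)=0$, as desired.

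The argument is essentially formal, so there is no single serious obstacle; the point that requires care is the bookkeeping of the three module structures and the verification that the comparison map in the $\Ext$ computation really is induced by the adjunction isomorphism\,---\,that is, that the adjunction is natural enough to commute with the connecting maps in the long exact sequences. I would also remark that this lemma is dual-analogous in spirit to Lemma~\ref{Hom-module-flat} and is well known from the theory of cotorsion modules.
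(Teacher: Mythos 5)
Your proof is correct and is essentially the paper's own argument written out in full detail: the paper's one-line proof observes precisely that the functor $F\longmapsto\Hom_S(F,\Hom_R(E,I))\simeq\Hom_R(E\ot_SF,\>I)$ takes short exact sequences of flat left $S$\+modules to short exact sequences of abelian groups, which is your combination of the tensor--hom adjunction, $\Tor_1^S(E,F)=0$ for flat $F$, and exactness of $\Hom_R({-},I)$ for injective~$I$. The only cosmetic difference is that you invoke Tor-vanishing for the flat quotient $F$ alone, whereas the paper phrases the claim in terms of sequences of flat modules; both immediately yield $\Ext^1_S(F,\Hom_R(E,I))=0$.
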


\begin{proof}
 One can immediately see that the functor $F\longmapsto
\Hom_S(F,\Hom_R(E,I))\simeq\Hom_R(E\ot_SF,\>I)$ takes short exact
sequences of flat left $S$\+modules $F$ to short exact sequences of
abelian groups.
\end{proof}

 The following lemma is a (much more nontrivial) dual-analogous
version of Lemma~\ref{acyclic-tensored-with-bounded-flat}.

\begin{lem} \label{Hom-from-bounded-flat-to-acyclic-cotorsion}
 Let $S$ be a ring, $C^\bu$ be an acyclic complex of cotorsion left
$S$\+modules, and $G^\bu$ be a bounded complex of flat left
$S$\+modules.
 Then the complex of abelian groups $\Hom_R(G^\bu,C^\bu)$ is acyclic.
\end{lem}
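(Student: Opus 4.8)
The plan is to reduce the statement, by an induction on the length of $G^\bu$, to the case of a single flat module, and then to recognize that the only obstruction to acyclicity is the cotorsion property of the cocycle modules of $C^\bu$. The contrast with Lemma~\ref{acyclic-tensored-with-bounded-flat} is instructive: there the functor ${-}\ot_SG$ attached to a flat module is \emph{exact}, so acyclicity in the other variable is preserved for free, whereas here $\Hom_S(G,{-})$ is merely left exact, and this is exactly what forces the cotorsion hypothesis on $C^\bu$ to do the work.

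First I would set up the induction. Pick an integer $k$ inside the support of the bounded complex $G^\bu$ and form the brutal truncation short exact sequence $0\rarrow\sigma_{\ge k}G^\bu\rarrow G^\bu\rarrow\sigma_{<k}G^\bu\rarrow0$, where $\sigma_{\ge k}G^\bu$ is the subcomplex of terms in degrees $\ge k$ and $\sigma_{<k}G^\bu$ the quotient complex of terms in degrees $<k$. This sequence is split in each degree, so applying the total Hom complex $\Hom_S({-},C^\bu)$ produces a short exact sequence of complexes of abelian groups $0\rarrow\Hom_S(\sigma_{<k}G^\bu,C^\bu)\rarrow\Hom_S(G^\bu,C^\bu)\rarrow\Hom_S(\sigma_{\ge k}G^\bu,C^\bu)\rarrow0$, whose long exact cohomology sequence reduces acyclicity of the middle term to acyclicity of the two outer ones. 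Since the truncations are bounded complexes of flat $S$\+modules with strictly fewer nonzero terms, the induction reduces everything to the base case of a single flat $S$\+module $G$ placed in one degree, where $\Hom_S(G^\bu,C^\bu)$ is a shift of $\Hom_S(G,C^\bu)$.

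Next I would compute the cohomology of $\Hom_S(G,C^\bu)$. Writing $Z^n=Z^n(C^\bu)$ for the cocycles, acyclicity of $C^\bu$ yields short exact sequences $0\rarrow Z^n\rarrow C^n\rarrow Z^{n+1}\rarrow0$, and since $G$ is flat while each $C^n$ is cotorsion, the vanishing $\Ext^i_S(G,C^n)=0$ for all $i\ge1$ (recalled before Lemma~\ref{Hom-module-cotorsion}) feeds into the long exact $\Ext_S(G,{-})$\+sequences of these short exact sequences. A short diagram chase then identifies $H^n(\Hom_S(G,C^\bu))$ with $\Ext^1_S(G,Z^{n-1})$. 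Hence $\Hom_S(G,C^\bu)$ is acyclic for \emph{every} flat $G$ if and only if every cocycle module $Z^m$ is cotorsion, and the whole lemma is thereby equivalent to the assertion that the cocycles of an acyclic complex of cotorsion modules are again cotorsion.

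The hard part is precisely this cocycle assertion. Iterating the same dimension shift gives $\Ext^1_S(G,Z^n)\cong\Ext^{j+1}_S(G,Z^{n-j})$ for all $j\ge0$, so the groups vanish automatically once $G$ has finite projective dimension; this disposes of the easy situations (for instance whenever every flat $S$\+module has finite projective dimension). For a general flat $G$ of infinite projective dimension no such bound is available --- equivalently, applying the spectral sequence of Lemma~\ref{varprojlim-Ext-spectral-sequence-lemma} to a presentation $G=\varinjlim_iP_i$ as a directed colimit of finitely generated projectives, one gets $\Ext^1_S(G,Z^n)\cong\varprojlim^1_i\Hom_S(P_i,Z^n)$, and the vanishing of this $\varprojlim^1$ is a genuine periodicity phenomenon. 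I would close the gap by invoking the cotorsion periodicity theorem, to the effect that over any ring the cocycles of an acyclic complex of cotorsion modules are cotorsion; together with the reduction above this delivers the acyclicity of $\Hom_S(G^\bu,C^\bu)$. This periodicity input is the crux, and the only step that is not routine.
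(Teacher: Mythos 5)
Your proposal is correct and rests on exactly the same crux as the paper's proof: the cotorsion periodicity theorem of Bazzoni--Cort\'es-Izurdiaga--Estrada (cocycles of an acyclic complex of cotorsion modules are again cotorsion), combined with the routine $\Ext^1$-vanishing for flat against cotorsion and a bounded truncation/totalization argument. The only difference is organizational --- you reduce to a single flat module first and then invoke periodicity, while the paper invokes periodicity first and then observes that each column $\Hom_S(G^n,C^\bu)$ is acyclic and totalizes --- and this difference is immaterial.
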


\begin{proof}
 The \emph{cotorsion periodicity theorem}~\cite[Theorem~5.1(2)]{BCE}
(see also~\cite[Corollary~8.2]{PS6} or~\cite[Theorem~5.3]{Pflcc})
claims that, in any acyclic complex of cotorsion modules, the modules
of cocycles are also cotorsion.
 Now it is clear that, for every $n\in\boZ$, the complex
$\Hom_R(G^n,C^\bu)$ is acyclic.
\end{proof}

\begin{proof}[Proof of
Proposition~\ref{tot-acycl-of-injectives-described-by-dualizing}]
 The ``only if'' implication is provided by
Corollary~\ref{Hom-from-bounded-injective-dimension}.
 To prove the ``if'', let $J$ be an injective $R$\+module.
 By Lemma~\ref{evaluation-quasi-isomorphism}, the natural morphism of
bounded complexes of left $R$\+modules $D^\bu\ot_S\Hom_R(D^\bu,J)
\rarrow J$ is a quasi-isomorphism.
 In view of Lemma~\ref{Hom-from-bounded-acyclic-to-termwise-injective},
in order to show that the complex $\Hom_R(J,I^\bu)$ is acyclic, it
suffices to check that so is the complex
$\Hom_R(D^\bu\ot_S\Hom_R(D^\bu,J),\>I^\bu)\simeq
\Hom_S(\Hom_R(D^\bu,J),\Hom_R(D^\bu,I^\bu))$.
 Now, by assumption, the complex of left $S$\+modules
$\Hom_R(D^\bu,I^\bu)$ is acyclic.
 By Lemma~\ref{Hom-module-cotorsion}, \,$\Hom_R(D^\bu,I^\bu)$ is
a complex of cotorsion left $S$\+modules.
 By Lemma~\ref{Hom-module-flat}, \,$\Hom_R(D^\bu,J)$ is a bounded
complex of flat left $S$\+modules.
 It remains to refer to
Lemma~\ref{Hom-from-bounded-flat-to-acyclic-cotorsion}.
\end{proof}

\begin{rem}
 Once again, the theory simplifies for rings that are Gorenstein in
a suitable sense.
 Similarly to Remark~\ref{gorenstein-flats-in-gorenstein-case}, one
can notice that, over any ring $R$, for any acyclic complex of injective
left $R$\+modules $I^\bu$ and any left $R$\+module $G$ of finite flat
dimension, the complex of abelian groups $\Hom_R(G,I^\bu)$ is acyclic
by Lemmas~\ref{Hom-from-bounded-acyclic-to-termwise-injective}
and~\ref{Hom-from-bounded-flat-to-acyclic-cotorsion}.
 Now if every injective left $R$\+module has finite flat dimension,
then it follows that every acyclic complex of injective left
$R$\+modules is totally acyclic, and every injectively resolved
left $R$\+module is Gorenstein-injective.
 In this case, the results of
Proposition~\ref{two-sided-by-accessible-classes-prop},
Theorem~\ref{injective-resolutions-theorem},
Corollary~\ref{injectively-resolved-modules-cor},
and Corollary~\ref{injective-resolutions-direct-summands}
become directly applicable (essentially) as properties of
totally acyclic complexes of injective modules and
Gorenstein-injective modules.
\end{rem}

\Section{Totally Acyclic Complexes of Injectives as Directed Colimits}
\label{totally-acyclics-of-injectives-as-dir-colims-secn}

 In the present section, the assumptions are somewhat restrictive, as
they include existence of a (strong) right dualizing complex; but
the cardinality estimates are even better than in
Section~\ref{left-noetherian-secn}.

 In addition to the conditions~(i$'$) and~(ii\+-iii) in the definition
of a strong right dualizing complex in
Sections~\ref{dualizing-and-F-totally-acyclic-secn}
and~\ref{dualizing-and-totally-acyclic-of-injectives-secn}, we will need
to assume another condition (which should be compared to condition~(iv)
from Section~\ref{F-totally-acyclics-as-dir-colims-secn}).
 We will say that a strong right dualizing complex of
$R$\+$S$\+bimodules $D^\bu$ has \emph{left type\/~$<\lambda$} if
\begin{enumerate}
\renewcommand{\theenumi}{\roman{enumi}$'$}
\setcounter{enumi}{3}
\item $D^\bu$ is isomorphic, as an object of the bounded derived
category of left $R$\+modules $\sD^\bb(R\Modl)$, to a bounded complex
of $<\lambda$\+generated left $R$\+mod\-ules~$\widetilde D^\bu$.
\end{enumerate}
 Condition~(iv$'$) will be used for left $<\lambda$\+Noetherian
rings $R$, so the classes of $<\nobreak\lambda$\+generated and
$\lambda$\+presentable left $R$\+modules coincide.
 In particular, in the case of the cardinal $\lambda=\aleph_0$, we will
speak about dualizing complexes of \emph{left finite type}.

\begin{thm} \label{dualizing-complex-tot-acycl-of-inj-theorem}
 Let $\lambda$ be a regular cardinal and $R$ be a left\/
$<\lambda$\+Noetherian ring.
 Let $\nu$~be a cardinal such that $\rho=|R|+\aleph_0\le\nu$
and $\nu^{<\lambda}=\nu$.
 Let $S$ be a right coherent ring and $D^\bu$ be a strong right
dualizing complex of $R$\+$S$\+bimodules.
 Assume that $D^\bu$ is a strong right dualizing complex of left
type\/~$<\lambda$.
 Then the category of totally acyclic complexes of injective left
$R$\+modules is $\nu^+$\+accessible.
 The $\nu^+$\+presentable objects of this category are precisely all
the totally acyclic complexes of injective left $R$\+modules of
cardinality at most~$\nu$.
 Consequently, every totally acyclic complex of injective left
$R$\+modules is a $\nu^+$\+directed colimit of totally acyclic complexes
of injective left $R$\+modules of cardinality at most~$\nu$.
\end{thm}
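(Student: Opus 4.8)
The plan is to mirror the proof of Theorem~\ref{dualizing-complex-F-totally-acyclics-theorem}, replacing its tensor criterion for F\+totally acyclicity by the $\Hom$ criterion for total acyclicity of complexes of injectives supplied by Proposition~\ref{tot-acycl-of-injectives-described-by-dualizing}, and then applying the Pseudopullback Theorem~\ref{pseudopullback-theorem} with $\kappa=\nu^+$ and the given regular cardinal~$\lambda$ (note $\lambda\le\nu<\nu^+$). First I would invoke condition~(iv$'$) to fix a bounded complex of\/ $<\lambda$\+generated left $R$\+modules $\widetilde D^\bu$ that is isomorphic to $D^\bu$ in $\sD^\bb(R\Modl)$. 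Since that isomorphism is realized by a zigzag of quasi-isomorphisms of bounded complexes and the terms of $I^\bu$ are injective, the cone of each comparison map is bounded acyclic, so Lemma~\ref{Hom-from-bounded-acyclic-to-termwise-injective} shows that for every complex of injective left $R$\+modules $I^\bu$ the complex $\Hom_R(D^\bu,I^\bu)$ is acyclic if and only if the complex of abelian groups $\Hom_R(\widetilde D^\bu,I^\bu)$ is acyclic. Combined with Proposition~\ref{tot-acycl-of-injectives-described-by-dualizing}, this detects total acyclicity of an acyclic complex of injectives $I^\bu$ by acyclicity of $\Hom_R(\widetilde D^\bu,I^\bu)$, now landing in complexes of abelian groups rather than of $S$\+modules.

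Next I would set up the pseudopullback data. Let $\sK_1$ be the category of acyclic complexes of injective left $R$\+modules; by the left-right opposite versions of Corollary~\ref{injectives-accessible} and Proposition~\ref{two-sided-by-accessible-classes-prop} (taking $\kappa=\nu^+$, the given~$\lambda$, and $\sT_n$ the class of injective left $R$\+modules of cardinality~$\le\nu$ for every $n\in\boZ$), the category $\sK_1$ is $\nu^+$\+accessible, its $\nu^+$\+presentable objects being the acyclic complexes of injective modules of cardinality~$\le\nu$, exactly as in the proof of Theorem~\ref{left-Noetherian-tot-acycl-of-inj-theorem}. Let $\sK_2$ be the category of acyclic complexes of abelian groups, which is $\nu^+$\+accessible by Proposition~\ref{acyclic-complexes-prop} applied to the ring $\boZ$, and let $\sL$ be the category of all complexes of abelian groups, locally $\nu^+$\+presentable by Lemma~\ref{abelian-complexes-lemma}. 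Define $\Phi_1\:\sK_1\rarrow\sL$ by $I^\bu\longmapsto\Hom_R(\widetilde D^\bu,I^\bu)$, and let $\Phi_2\:\sK_2\rarrow\sL$ be the identity inclusion. By the previous paragraph the pseudopullback $\sC$ of $\Phi_1$ and $\Phi_2$ is precisely the category of totally acyclic complexes of injective left $R$\+modules.

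The key step, and the main obstacle, is verifying that $\Phi_1$ satisfies the hypotheses of Theorem~\ref{pseudopullback-theorem}. Because $R$ is left\/ $<\lambda$\+Noetherian, each\/ $<\lambda$\+generated term $\widetilde D^p$ is $\lambda$\+presentable, so $\Hom_R(\widetilde D^p,{-})$ preserves $\lambda$\+directed colimits; as $\widetilde D^\bu$ is bounded, the total $\Hom$ complex is in each degree a finite product of such terms, whence $\Phi_1$ preserves $\nu^+$\+directed colimits and colimits of $\lambda$\+indexed chains (every $\lambda$\+indexed chain being $\lambda$\+directed for regular~$\lambda$). For the presentability clause, suppose $I^\bu$ is an acyclic complex of injectives of cardinality~$\le\nu$. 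Each $\widetilde D^p$ admits a generating set of fewer than $\lambda$ elements, so a homomorphism $\widetilde D^p\rarrow I^q$ is determined by its values on that set, giving $|\Hom_R(\widetilde D^p,I^q)|\le\nu^{<\lambda}=\nu$; hence $\Phi_1(I^\bu)$ is a complex of abelian groups of cardinality~$\le\nu$, i.e.\ a $\nu^+$\+presentable object of $\sL$ by Lemma~\ref{abelian-complexes-lemma}(c). This is exactly the point where the hypothesis $\nu^{<\lambda}=\nu$ enters, and it is the analogue of the cardinality computation behind Lemma~\ref{tensor-product-of-presentable} in the tensor setting.

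Finally, Theorem~\ref{pseudopullback-theorem} yields that $\sC$ is $\nu^+$\+accessible and that its $\nu^+$\+presentable objects are the triples whose $\sK_1$\+component is $\nu^+$\+presentable and whose $\sK_2$\+component is $\nu^+$\+presentable. Unwinding this, an object $I^\bu\in\sC$ is $\nu^+$\+presentable exactly when $I^\bu$ has cardinality~$\le\nu$; the accompanying $\sK_2$\+component $\Hom_R(\widetilde D^\bu,I^\bu)$ then automatically has cardinality~$\le\nu$ by the estimate just made, so no extra constraint arises. This gives the stated description of the $\nu^+$\+presentable objects as the totally acyclic complexes of injective left $R$\+modules of cardinality~$\le\nu$. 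The concluding assertion, that every totally acyclic complex of injectives is a $\nu^+$\+directed colimit of such complexes of cardinality~$\le\nu$, is then immediate from the $\nu^+$\+accessibility of~$\sC$.
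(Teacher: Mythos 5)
Your proposal is correct and follows essentially the same route as the paper's own proof: the same pseudopullback setup with $\sK_1$ the acyclic complexes of injectives, $\sK_2$ the acyclic complexes of abelian groups, $\sL$ all complexes of abelian groups, $\Phi_1=\Hom_R(\widetilde D^\bu,{-})$, and $\Phi_2$ the inclusion, with the identification of the pseudopullback via condition~(iv$'$), Lemma~\ref{Hom-from-bounded-acyclic-to-termwise-injective}, and Proposition~\ref{tot-acycl-of-injectives-described-by-dualizing}, and the same use of $\nu^{<\lambda}=\nu$ for the presentability of $\Phi_1$'s values. Your explicit verifications (the zigzag argument, the $\lambda$\+presentability of the terms $\widetilde D^p$ over a left $<\lambda$\+Noetherian ring, and the cardinality estimate on $\Hom_R(\widetilde D^p,I^q)$) simply spell out steps the paper treats as clear.
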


\begin{proof}
 This is an application of Theorem~\ref{pseudopullback-theorem}
together with Proposition~\ref{two-sided-by-accessible-classes-prop}
and Proposition~\ref{tot-acycl-of-injectives-described-by-dualizing}.
 The argument bears some similarity to the proof of
Theorem~\ref{left-Noetherian-tot-acycl-of-inj-theorem}.
 First of all, let us show that the full subcategory of totally acyclic
complexes of injective left $R$\+modules is closed under
$\lambda$\+directed colimits in the ambient abelian category
$\Com(R\Modl)$.

 By Corollary~\ref{injectives-accessible}, the full subcategory of
injective left $R$\+modules is closed under $\lambda$\+directed colimits
in $R\Modl$.
 It is also clear that acyclicity of complexes is preserved by all
directed colimits.
 Now let $I^\bu$ be an acyclic complex of injective left $R$\+modules,
and let $\widetilde D^\bu$ be a bounded complex of $<\lambda$\+generated
left $R$\+modules provided by the definition of a strong right dualizing 
complex of left type~$<\lambda$ (item~(iv$'$)) above.
 It is clear from
Lemma~\ref{Hom-from-bounded-acyclic-to-termwise-injective} that
the complex of left $S$\+modules $\Hom_R(D^\bu,I^\bu)$ is acyclic
if and only if the complex of abelian groups
$\Hom_R(\widetilde D^\bu,I^\bu)$ is acyclic.
 By Proposition~\ref{tot-acycl-of-injectives-described-by-dualizing},
we can conclude that the acyclic complex of injective $R$\+modules
$I^\bu$ is totally acyclic if and only if the complex of abelian groups
$\Hom_R(\widetilde D^\bu,I^\bu)$ is acyclic.
 The latter condition is obviously preserved by $\lambda$\+directed
colimits of the complexes~$I^\bu$.

 Now denote by $\sK_1$ the category of acyclic complexes of injective
left $R$\+modules.
 Applying the left-right opposite version of
Proposition~\ref{two-sided-by-accessible-classes-prop} for
$\kappa=\nu^+$ and $\sT_n$ being the set of all injective $R$\+modules
of cardinality at most~$\nu$, for every $n\in\boZ$, and using
Corollary~\ref{injectives-accessible} again, we see that $\sK_1$ is
a $\nu^+$\+accessible category, and obtain a description of its
$\nu^+$\+accessible objects (cf.\
Theorem~\ref{injective-resolutions-theorem}).
 Notice that $\lambda\le\nu$, so the ring $R$ is left
$<\nu^+$\+coherent.

 Furthermore, let $\sK_2$ denote the category of acyclic complexes of
abelian groups.
 Proposition~\ref{acyclic-complexes-prop} for $\kappa=\nu^+$ and
the ring of integers $\boZ$ in the role of $R$ tells that $\sK_2$ is
a $\nu^+$\+accessible category and provides a description of its full
subcategory of $\nu^+$\+presentable objects.
 Finally, let $\sL$ be the abelian category of arbitrary complexes of
abelian groups.
 Lemma~\ref{abelian-complexes-lemma}(a,c) tells that $\sL$ is
a locally $\nu^+$\+presentable category and describes its full
subcategory of $\nu^+$\+presentable objects as the category of
complexes of abelian groups of cardinality at most~$\nu$.

 Let the functor $\Phi_1\:\sK_1\rarrow\sL$ take any acyclic complex of
injective left $R$\+modules $I^\bu$ to the complex of abelian groups
$\Hom_R(\widetilde D^\bu,I^\bu)$.
 Let $\Phi_2\:\sK_2\rarrow\sL$ be the identity inclusion of
the category of acyclic complexes of abelian groups into the category
of all complexes of abelian groups.
 Then the pseudopullback $\sC$ of the two functors $\Phi_1$ and $\Phi_2$
is the category of all acyclic complexes of injective left $R$\+modules
$I^\bu$ for which the complex of abelian groups
$\Hom_R(\widetilde D^\bu,I^\bu)$ is acyclic.
 As we have seen in the second paragraph of this proof, the category
$\sC$ is the desired category of totally acyclic complexes of injective
left $R$\+modules.

 Clearly, the functor $\Phi_1$ takes $\nu^+$\+presentable objects to
$\nu^+$\+presentable objects (since $\nu^{<\lambda}=\nu$), and
preserves $\lambda$\+directed colimits.
 All the other assumptions of Theorem~\ref{pseudopullback-theorem}
(for $\kappa=\nu^+$) are also satisfied.
 Theorem~\ref{pseudopullback-theorem} tells that $\sC$ is
a $\nu^+$\+accessible category, and provides the desired description of
its full subcategory of $\nu^+$\+presentable objects.
\end{proof}

\begin{cor} \label{dualizing-cmplx-Gorenst-injectives-dir-colim-closed}
 Let $R$ be a left Noetherian ring, $S$ be a right coherent ring, and
$D^\bu$ be a right dualizing complex of $R$\+$S$\+bimodules.
 Assume that $D^\bu$ is a (strong) right dualizing complex of left
finite type.
 Then the class of all Gorenstein-injective left $R$\+modules is
closed under directed colimits in $R\Modl$.
 Furthermore, the class of all totally acyclic complexes of injective
left $R$\+modules is closed under directed colimits in\/
$\Com(R\Modl)$ as well.
\end{cor}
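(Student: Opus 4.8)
The plan is to establish the second assertion first and then derive the first one exactly as the first assertion was derived from the second in the proof of Corollary~\ref{Noeth-Gorenst-injectives-rho-dir-colim-closed}. What distinguishes this corollary from Corollary~\ref{Noeth-Gorenst-injectives-rho-dir-colim-closed}, where only $\rho^+$\+directed colimits were controlled, is that the left finite type hypothesis lets us test total acyclicity against a \emph{bounded} complex of finitely generated left $R$\+modules; and finitely generated modules over the Noetherian ring $R$ are finitely presentable, so that testing functor will commute with arbitrary directed colimits.

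For the second assertion I would take $\lambda=\aleph_0$, so that $R$ is left $<\lambda$\+Noetherian and condition~(iv$'$) supplies a bounded complex of finitely generated left $R$\+modules $\widetilde D^\bu$ with $D^\bu\simeq\widetilde D^\bu$ in $\sD^\bb(R\Modl)$. As recorded in the proof of Theorem~\ref{dualizing-complex-tot-acycl-of-inj-theorem} (via Proposition~\ref{tot-acycl-of-injectives-described-by-dualizing} and Lemma~\ref{Hom-from-bounded-acyclic-to-termwise-injective}), an acyclic complex of injective left $R$\+modules $I^\bu$ is totally acyclic if and only if the complex of abelian groups $\Hom_R(\widetilde D^\bu,I^\bu)$ is acyclic; this is the characterization I would use. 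Now let $I^\bu=\varinjlim_{\xi\in\Xi}I^\bu_\xi$ be a directed colimit of totally acyclic complexes of injective left $R$\+modules. By the left-right opposite version of Corollary~\ref{injectives-accessible} (for $\lambda=\aleph_0$ and $\nu=\rho$), the class of injective left $R$\+modules is closed under directed colimits in $R\Modl$, so each term $I^n=\varinjlim_\xi I^n_\xi$ is injective, and the acyclicity of $I^\bu$ is automatic because directed colimits are exact.

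It then remains to verify total acyclicity of $I^\bu$, and this is where the finite type hypothesis is decisive. Since $R$ is left Noetherian, each finitely generated term $\widetilde D^p$ is finitely presentable, so $\Hom_R(\widetilde D^p,{-})$ preserves directed colimits; and as $\widetilde D^\bu$ is bounded, the total $\Hom$\+complex is a finite direct sum in each degree, whence $\Hom_R(\widetilde D^\bu,I^\bu)\simeq\varinjlim_\xi\Hom_R(\widetilde D^\bu,I^\bu_\xi)$. Each complex $\Hom_R(\widetilde D^\bu,I^\bu_\xi)$ is acyclic because $I^\bu_\xi$ is totally acyclic, and acyclicity passes to directed colimits, so $\Hom_R(\widetilde D^\bu,I^\bu)$ is acyclic and $I^\bu$ is totally acyclic by the quoted characterization.

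Finally, for the first assertion I would argue as in Corollary~\ref{Noeth-Gorenst-injectives-rho-dir-colim-closed}: given a directed diagram $(M_\xi)_{\xi\in\Xi}$ of Gorenstein-injective left $R$\+modules, Lemma~\ref{functorial-totally-injective-lemma} yields a $\Xi$\+indexed diagram of totally acyclic two-sided injective resolutions $(I^\bu_\xi)_{\xi\in\Xi}$ with $M_\xi\simeq Z^0(I^\bu_\xi)$; the colimit $\varinjlim_\xi I^\bu_\xi$ is totally acyclic by the second assertion, and since directed colimits are exact, they commute with the formation of cocycles, so $\varinjlim_\xi M_\xi\simeq Z^0(\varinjlim_\xi I^\bu_\xi)$ is Gorenstein-injective. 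The main obstacle is precisely the total-acyclicity step of the second assertion: one must see that the finite type hypothesis is exactly what upgrades the commutation of $\Hom_R(\widetilde D^\bu,{-})$ with colimits from the $\rho^+$\+directed case to the case of arbitrary directed colimits, and that nothing weaker would suffice.
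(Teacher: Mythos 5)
Your proposal is correct and follows essentially the same route as the paper's own proof: the paper establishes the second assertion exactly as in the first two paragraphs of the proof of Theorem~\ref{dualizing-complex-tot-acycl-of-inj-theorem} specialized to $\lambda=\aleph_0$ (the characterization of total acyclicity via acyclicity of $\Hom_R(\widetilde D^\bu,{-})$ for a bounded complex $\widetilde D^\bu$ of finitely generated, hence finitely presentable, modules, which commutes with all directed colimits), and then deduces the first assertion from the second via Lemma~\ref{functorial-totally-injective-lemma}, exactly as in the first paragraph of the proof of Corollary~\ref{Noeth-Gorenst-injectives-rho-dir-colim-closed}. You have merely written out in full the details that the paper compresses into cross-references.
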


\begin{proof}
 In view of the first paragraph of the proof of
Corollary~\ref{Noeth-Gorenst-injectives-rho-dir-colim-closed}, it
suffices to prove the second assertion, which is a particular case
of the first two paragraphs of the proof of
Theorem~\ref{dualizing-complex-tot-acycl-of-inj-theorem}
(for $\lambda=\aleph_0$).
\end{proof}

 In the case of a commutative Noetherian ring $R$ with a dualizing
complex, the first assertion of
Corollary~\ref{dualizing-cmplx-Gorenst-injectives-dir-colim-closed}
can be obtained by combining~\cite[Theorem~2]{Iac}
with~\cite[Lemma~2.5(b)]{HJ}.

\begin{cor} \label{dualizing-complex-Gorenstein-inj-dir-colims-cor}
 Let $\lambda$ be a regular cardinal and $R$ be a left\/
$<\lambda$\+Noetherian ring.
 Let $\nu$~be a cardinal such that $\rho=|R|+\aleph_0\le\nu$
and $\nu^{<\lambda}=\nu$.
 Let $S$ be a right coherent ring and $D^\bu$ be a strong right
dualizing complex of $R$\+$S$\+bimodules.
 Assume that $D^\bu$ is a strong right dualizing complex of left
type\/~$<\lambda$.
 Then every Gorenstein-injective left $R$\+module is
a $\nu^+$\+directed colimit of Gorenstein-injective left $R$\+modules
of cardinality at most~$\nu$.
\end{cor}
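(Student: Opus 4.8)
The plan is to deduce this corollary directly from Theorem~\ref{dualizing-complex-tot-acycl-of-inj-theorem} by applying the cocycle functor $Z^0$, in complete analogy with the way Corollary~\ref{dualizing-complex-Gorenstein-flats-cor} was obtained from Theorem~\ref{dualizing-complex-F-totally-acyclics-theorem}. Let $M$ be a Gorenstein-injective left $R$\+module. By definition, there exists a totally acyclic complex of injective left $R$\+modules $I^\bu$ with $M\simeq Z^0(I^\bu)$. The hypotheses of the present corollary are exactly those of Theorem~\ref{dualizing-complex-tot-acycl-of-inj-theorem}, so that theorem applies and presents $I^\bu$ as a $\nu^+$\+directed colimit $I^\bu\simeq\varinjlim_{\xi\in\Xi}I_\xi^\bu$ of totally acyclic complexes of injective left $R$\+modules $I_\xi^\bu$ of the cardinality at most~$\nu$.

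The key step is the observation that the functor $Z^0$ commutes with $\nu^+$\+directed (indeed, with all directed) colimits: since directed colimits in $R\Modl$ are exact and are computed termwise on complexes, they commute with the formation of the kernel $Z^0(I^\bu)=\ker(d^0\:I^0\to I^1)$. Consequently $M\simeq Z^0(\varinjlim_\xi I_\xi^\bu)\simeq\varinjlim_\xi Z^0(I_\xi^\bu)$, which exhibits $M$ as a $\nu^+$\+directed colimit of the modules $Z^0(I_\xi^\bu)$.

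It then remains to verify that each $Z^0(I_\xi^\bu)$ has the required properties. First, $Z^0(I_\xi^\bu)$ is Gorenstein-injective by the very definition, being the module of cocycles of the totally acyclic complex of injectives $I_\xi^\bu$. Second, $Z^0(I_\xi^\bu)$ is a submodule of $I_\xi^0$, so its cardinality does not exceed that of the complex $I_\xi^\bu$, which is at most~$\nu$ (here we use that $\nu$ is infinite). This completes the argument.

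I do not expect any genuine obstacle here; the statement is a routine consequence of the preceding theorem, and the only points to check are the exactness of directed colimits (which yields the commutation with $Z^0$) and the elementary cardinality bound on cocycles. Let me only remark that, unlike Corollaries~\ref{dualizing-complex-Gorenstein-flats-cor} and~\ref{left-Noetherian-Gorenstein-injectives-accessible}, the present corollary deliberately does \emph{not} assert $\nu^+$\+accessibility of the category of Gorenstein-injective left $R$\+modules. The reason is that closure of this class under directed colimits is guaranteed only under the left finite type hypothesis $\lambda=\aleph_0$ (Corollary~\ref{dualizing-cmplx-Gorenst-injectives-dir-colim-closed}); in the general setting of a regular cardinal~$\lambda$, Proposition~\ref{accessible-subcategory} is therefore not available to upgrade this colimit presentation into a full accessibility statement.
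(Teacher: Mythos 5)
Your proposal is correct and takes essentially the same approach as the paper, whose entire proof is the single sentence that the corollary ``follows immediately from the last assertion of Theorem~\ref{dualizing-complex-tot-acycl-of-inj-theorem}''; your write-up merely makes explicit the routine steps left implicit there (applying $Z^0$, its commutation with directed colimits via exactness, and the cardinality bound on cocycles as submodules of $I^0_\xi$). Your closing remark about why no $\nu^+$\+accessibility claim is made here is also consistent with the paper, which only upgrades to accessibility in Corollary~\ref{dualizing-complex-Gorenstein-injectives-accessible} under the left finite type hypothesis.
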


\begin{proof}
 Follows immediately from the last assertion of
Theorem~\ref{dualizing-complex-tot-acycl-of-inj-theorem}.
\end{proof}

\begin{cor} \label{dualizing-complex-Gorenstein-injectives-accessible}
 Let $R$ be a left Noetherian ring; put $\rho=|R|+\aleph_0$.
 Let $S$ be a right coherent ring and $D^\bu$ be a right dualizing
complex of $R$\+$S$\+bimodules.
 Assume that $D^\bu$ is a (strong) right dualizing complex of left
finite type.
 Then the category of Gorenstein-injective left $R$\+modules\/ $\GI$ is
$\rho^+$\+accessible.
 The $\rho^+$\+presentable objects of\/ $\GI$ are precisely all
the Gorenstein-injective left $R$\+modules of cardinality at
most~$\rho$.
 So every Gorenstein-injective left $R$\+module is a $\rho^+$\+directed
colimit of Gorenstein-injective left $R$\+modules of cardinality
at most~$\rho$.
\end{cor}

\begin{proof}
 The full subcategory $\GI\subset R\Modl$ is closed under directed
colimits in $R\Modl$ by
Corollary~\ref{dualizing-cmplx-Gorenst-injectives-dir-colim-closed}.
 Since any $R$\+module of cardinality at most~$\rho$ is
$\rho^+$\+presentable in $R\Modl$, it follows that any
Gorenstein-injective left $R$\+module of cardinality at most~$\rho$
is $\rho^+$\+presentable in the category of Gorenstein-injective
left $R$\+modules.
 In view of Proposition~\ref{accessible-subcategory}, it remains to
check the last assertion of the corollary, which is provided by
Corollary~\ref{dualizing-complex-Gorenstein-inj-dir-colims-cor}
(for $\lambda=\aleph_0$ and $\nu=\rho$).
\end{proof}

\begin{cor} \label{dualizing-complex-tot-acycl-of-inj-direct-summand}
 Let $\lambda$ be a regular cardinal and $R$ be a left\/
$<\lambda$\+Noetherian ring.
 Let $\nu$~be a cardinal such that $\rho=|R|+\aleph_0\le\nu$
and $\nu^{<\lambda}=\nu$.
 Let $S$ be a right coherent ring and $D^\bu$ be a strong right
dualizing complex of $R$\+$S$\+bimodules.
 Assume that $D^\bu$ is a strong right dualizing complex of left
type\/~$<\lambda$.
 Let $M$ be a Gorenstein-injective left $R$\+module of cardinality
at most~$\nu$.
 Then $M$ is a direct summand of an $R$\+module $N$ admitting
a two-sided totally acyclic injective resolution $T^\bu$ with
injective left $R$\+modules $T^n$, \,$n\in\boZ$, of cardinality
at most~$\nu$.
\end{cor}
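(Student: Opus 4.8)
The plan is to mirror the proof of Corollary~\ref{dualizing-complex-F-totally-acyclics-direct-summands}, with Theorem~\ref{dualizing-complex-tot-acycl-of-inj-theorem} playing the role there held by Theorem~\ref{dualizing-complex-F-totally-acyclics-theorem}. First I would invoke Theorem~\ref{dualizing-complex-tot-acycl-of-inj-theorem} to realize the given module as a directed colimit. By definition of Gorenstein-injectivity, $M\simeq Z^0(I^\bu)$ for some totally acyclic complex of injective left $R$\+modules $I^\bu$. The theorem then tells us that $I^\bu$ is a $\nu^+$\+directed colimit $I^\bu\simeq\varinjlim_{\xi\in\Xi}T_\xi^\bu$ of totally acyclic complexes of injective left $R$\+modules $T_\xi^\bu$ of the cardinality at most~$\nu$, indexed by some $\nu^+$\+directed poset~$\Xi$.

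Next I would pass from the complexes to their degree-zero cocycle modules. Since the category of totally acyclic complexes of injective left $R$\+modules is closed under $\lambda$\+directed colimits in\/ $\Com(R\Modl)$ (as established in the first two paragraphs of the proof of Theorem~\ref{dualizing-complex-tot-acycl-of-inj-theorem}), and a $\nu^+$\+directed colimit is in particular $\lambda$\+directed, the colimit $\varinjlim_{\xi\in\Xi}T_\xi^\bu$ may be computed termwise in\/ $\Com(R\Modl)$. Because directed colimits are exact in the category of $R$\+modules, the functor $Z^0$ commutes with this colimit, so that $M\simeq\varinjlim_{\xi\in\Xi}N_\xi$, where $N_\xi=Z^0(T_\xi^\bu)$. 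Each $N_\xi$ is thus a Gorenstein-injective left $R$\+module admitting a two-sided totally acyclic injective resolution, namely $T_\xi^\bu$, with injective terms of the cardinality at most~$\nu$.

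Finally I would use the presentability of $M$ to split off one of the $N_\xi$. Since $M$ has cardinality at most~$\nu\ge\rho$, it is $\nu^+$\+presentable in $R\Modl$, so the functor\/ $\Hom_R(M,{-})$ preserves the $\nu^+$\+directed colimit $M\simeq\varinjlim_{\xi}N_\xi$. Consequently the identity morphism\/ $\id_M$ factors through one of the colimit structure maps $N_\xi\rarrow M$; that is, there is an index $\xi\in\Xi$ and a morphism $M\rarrow N_\xi$ whose composite with $N_\xi\rarrow M$ equals\/ $\id_M$. This exhibits $M$ as a direct summand (retract) of $N=N_\xi$, which carries the desired two-sided totally acyclic injective resolution $T^\bu=T_\xi^\bu$ with injective terms of the cardinality at most~$\nu$.

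I do not anticipate a genuine obstacle here, as the argument is formally identical to the earlier direct-summand corollaries (Corollaries~\ref{two-sided-by-lim-classes-direct-summand}, \ref{dualizing-complex-F-totally-acyclics-direct-summands}, and~\ref{left-Noetherian-tot-acycl-of-inj-direct-summand}). The only point requiring a moment's care is the interchange of $Z^0$ with the directed colimit, which rests on the closedness of the subcategory of totally acyclic complexes under directed colimits together with the exactness of directed colimits in $R\Modl$; both facts are already available from Theorem~\ref{dualizing-complex-tot-acycl-of-inj-theorem} and its proof.
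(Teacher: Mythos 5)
Your proof is correct and is essentially the paper's own argument: the paper also deduces the corollary from Theorem~\ref{dualizing-complex-tot-acycl-of-inj-theorem} by the same route used for Corollary~\ref{dualizing-complex-F-totally-acyclics-direct-summands}, namely writing $M\simeq Z^0(I^\bu)$, expressing $I^\bu$ as a $\nu^+$\+directed colimit of totally acyclic complexes of injective modules of cardinality at most~$\nu$, passing to degree-zero cocycles, and using the $\nu^+$\+presentability of $M$ to split it off as a direct summand of some $N_\xi=Z^0(T_\xi^\bu)$. Your explicit justification of the interchange of $Z^0$ with the colimit (via closedness of totally acyclic complexes under $\lambda$\+directed colimits and exactness of directed colimits) merely fills in details the paper leaves implicit.
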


\begin{proof}
 This is a corollary of
Theorem~\ref{dualizing-complex-tot-acycl-of-inj-theorem}.
 The argument is similar to the proofs of
Corollaries~\ref{dualizing-complex-F-totally-acyclics-direct-summands}
and~\ref{injective-resolutions-direct-summands}.
\end{proof}

 In particular, if the ring $R$ in
Corollary~\ref{dualizing-complex-Gorenstein-injectives-accessible}
is (at most) countable, then $\rho^+=\aleph_1$; so every
Gorenstein-injective left $R$\+module is an $\aleph_1$\+directed colimit
of at most countable Gorenstein-injective left $R$\+modules.
 Furthermore, every Gorenstein-injective left $R$\+module of
cardinality~$\nu$ is a direct summand of an $R$\+module admitting
a two-sided totally acyclic injective resolution by injective
left $R$\+modules of cardinality at most~$\nu$.
 This holds for any infinite cardinal~$\nu$.

\Section{Totally Acyclic Complexes of Injectives in Full Generality}
\label{full-generality-secn}

 The results of this section, based on~\cite[Section~5]{SarSt}, are
an example of de\-con\-structibil\-ity-based approach to accessibility.
 They are applicable in much greater generality than the results of
Sections~\ref{left-noetherian-secn}
and~\ref{totally-acyclics-of-injectives-as-dir-colims-secn},
but the cardinality estimate is not as good.

\begin{prop} \label{totally-acyclic-injectives-accessible}
 Let $R$ be an associative ring; put $\rho=|R|+\aleph_0$.
 Let $\mu$~be an infinite cardinal such that $\mu^\rho=\mu$, and let
$\nu$~be an infinite cardinal such that $\nu^\mu=\nu$.
 Then the category of totally acyclic complexes of injective right
$R$\+modules is $\nu^+$\+accessible.
 The $\nu^+$\+presentable objects of this category are precisely all
the totally acyclic complexes of injective $R$\+modules of
cardinality at most~$\nu$.
 So every totally acyclic complex of injective $R$\+modules is
a $\nu^+$\+directed colimit (in fact, a $\nu^+$\+directed union)
of totally acyclic complexes of injective $R$\+modules of
cardinality at most~$\nu$.
\end{prop}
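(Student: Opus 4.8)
The plan is to exhibit the category of totally acyclic complexes of injective right $R$\+modules as a right $\Ext^1$\+orthogonal class $\sS^{\perp_1}$ inside the abelian category of complexes $\Com(\Modr R)$, for a suitable \emph{set} of test complexes $\sS$ each of which is $\mu^+$\+P$_2$ in $\Com(\Modr R)$, and then to feed this into the version of Proposition~\ref{right-Ext-1-orthogonal-accessible} for $\Com(\Modr R)$ (the one alluded to in the last remark of Section~\ref{injectively-resolved-secn}), taken with the parameter $\lambda=\mu^+$. The two cardinal hypotheses play separate roles matched to the two stages: $\mu^\rho=\mu$ keeps the test complexes of cardinality at most~$\mu$ at the characterization stage, while $\nu^\mu=\nu$ is exactly the condition $\nu^{<\mu^+}=\nu$ required by the orthogonality\+accessibility result at the second stage.

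First I would assemble $\sS$ in two parts. That an acyclic complex $I^\bu$ be a complex of injective modules is already captured by orthogonality against the explicit set $\sS_0\subset\Com(\Modr R)$ of contractible two\+term complexes of cyclic modules $R/I$ and one\+term complexes of free modules on one generator, recorded (as a case of~\cite[Proposition~4.6]{Gil}) in the remark following Corollary~\ref{injective-resolutions-direct-summands}. It remains to encode the total\+acyclicity requirement that $\Hom_R(J,I^\bu)$ be acyclic for every injective~$J$; here I would appeal to~\cite[Section~5]{SarSt}, whose cotorsion\+theoretic analysis supplies a set $\sS_1$ of complexes cogenerating the cotorsion pair whose right\+hand class meets the acyclic complexes of injectives precisely in the totally acyclic ones. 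The decisive quantitative point is that total acyclicity need only be tested against injective envelopes of modules of cardinality at most~$\rho$; since $\mu^\rho=\mu$ forces $\mu\ge 2^\rho$, each such envelope, and hence each resulting test complex, has cardinality at most~$\mu$. Putting $\sS=\sS_0\cup\sS_1$, every member of $\sS$ is bounded with terms of cardinality at most~$\mu$, so it admits a presentation by complexes of free modules on fewer than~$\mu^+$ generators (the sizes staying~$\le\mu$ throughout, so no coherence of $R$ is needed); that is, $\sS$ consists of $\mu^+$\+P$_2$ objects and $\sS^{\perp_1}$ is the desired class.

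With the characterization secured, I would invoke the version of Proposition~\ref{right-Ext-1-orthogonal-accessible} for $\Com(\Modr R)$ with $\lambda=\mu^+$. Since $\rho\le\mu<\nu$ and $\nu^{<\mu^+}=\nu^\mu=\nu$, its hypotheses hold: interpreting complexes as modules over a ring with many objects (cf.\ Lemma~\ref{abelian-complexes-lemma}), the underlying system\+of\+equations argument of Lemma~\ref{systems-of-equations} shows that, for each $S\in\sS$, membership in $\{S\}^{\perp_1}$ is the solvability of a system of fewer than~$\mu^+$ equations in fewer than~$\mu^+$ variables, whence every complex in $\sS^{\perp_1}$ is the $\nu^+$\+directed union of those of its subcomplexes that are $\nu^+$\+presentable and lie in $\sS^{\perp_1}$. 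By Lemma~\ref{abelian-complexes-lemma}(c) the $\nu^+$\+presentable objects of $\Com(\Modr R)$ are exactly the complexes of cardinality at most~$\nu$; this identifies the $\nu^+$\+presentable objects of $\sS^{\perp_1}$ with the totally acyclic complexes of injectives of cardinality at most~$\nu$, and yields both the $\nu^+$\+accessibility and the asserted $\nu^+$\+directed union description.

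The principal difficulty lies in the first stage: extracting from~\cite[Section~5]{SarSt} a set of test complexes that genuinely cuts out \emph{total} acyclicity as a single $\Ext^1$\+orthogonality condition in $\Com(\Modr R)$, and doing so with members of cardinality at most~$\mu$. The size control is the delicate ingredient, because without any Noetherian assumption an arbitrary injective module is not a directed union of small injectives, so the reduction to test data of cardinality at most~$\rho$ (whence at most~$\mu$) must be supplied by the machinery of~\cite{SarSt} rather than by a naive envelope argument; it is precisely this reduction that necessitates the hypothesis $\mu^\rho=\mu$. Once the test set is in hand, the remaining steps are a routine application of the already\+established orthogonality\+accessibility result, entirely parallel to the deductions carried out from Proposition~\ref{right-Ext-1-orthogonal-accessible} elsewhere in the paper.
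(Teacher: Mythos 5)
Your proposal is correct and follows essentially the same route as the paper: the paper's own proof consists of citing \cite[Proposition~5.5]{SarSt} for a set $\sS$ of $\mu^+$\+presentable objects of $\Com(\Modr R)$ whose right $\Ext^1$\+orthogonal class $\sS^{\perp_1}$ is exactly the class of totally acyclic complexes of injectives, followed by an application of the version of Proposition~\ref{right-Ext-1-orthogonal-accessible} for the category $\Com(\Modr R)$ with $\lambda=\mu^+$ --- precisely your two stages, with the same matching of $\mu^\rho=\mu$ to the \cite{SarSt} input and of $\nu^{<\mu^+}=\nu^\mu=\nu$ to the orthogonality\+accessibility step. The only deviations are cosmetic: the paper needs no separate Gillespie set $\sS_0$ (the \cite{SarSt} set already cuts out the whole class at once), and your paragraph\+two heuristic of testing total acyclicity against injective envelopes of modules of cardinality at most~$\rho$ is indeed not justified over a non-Noetherian ring --- but, as you note yourself, that size reduction is exactly part of what the citation of \cite[Section~5]{SarSt} is relied upon to supply.
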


\begin{proof}
 By~\cite[Proposition~5.5]{SarSt}, there is a set $\sS$ of
$\mu^+$\+presentable objects in the abelian category of complexes
$\Com(\Modr R)$ such that the class of all totally acyclic complexes
of injective right $R$\+modules is the right $\Ext^1$\+orthogononal
class $\sS^{\perp_1}$ to $\sS$ in $\Com(\Modr R)$.
 The assertion of the proposition is provable by applying a suitable
version of Proposition~\ref{right-Ext-1-orthogonal-accessible}
for the category of complexes $\Com(\Modr R)$.
\end{proof}

\begin{prop} \label{Gorenstein-injectives-accessible}
 Let $R$ be an associative ring; put $\rho=|R|+\aleph_0$.
 Let $\mu$~be an infinite cardinal such that $\mu^\rho=\mu$, and let
$\nu$~be an infinite cardinal such that $\nu^\mu=\nu$.
 Then the category of Gorenstein-injective right $R$\+modules\/ $\GI$ is
$\nu^+$\+accessible.
 The $\nu^+$\+presentable objects of\/ $\GI$ are precisely all
the Gorenstein-injective $R$\+modules of cardinality at most~$\nu$.
 So every Gorenstein-injective $R$\+module is a $\nu^+$\+directed
colimit (in fact, a $\nu^+$\+directed union) of Gorenstein-injective
$R$\+modules of cardinality at most~$\nu$.
\end{prop}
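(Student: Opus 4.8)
The plan is to follow, almost verbatim, the proof of the immediately preceding Proposition~\ref{totally-acyclic-injectives-accessible}, transporting the argument from the category of complexes $\Com(\Modr R)$ to the module category $\Modr R$ itself. The essential input is a cotorsion-theoretic description of the class $\GI$: by the results of \cite[Section~5]{SarSt}, the class of all Gorenstein-injective right $R$-modules should be expressible as a right $\Ext^1$-orthogonal class $\GI=\sS^{\perp_1}$ in $\Modr R$, where $\sS$ is a \emph{set} of $\mu^+$-presentable right $R$-modules. The role of the hypothesis $\mu^\rho=\mu$ is exactly to guarantee that the generating set $\sS$ can be chosen among modules of cardinality at most~$\mu$, that is, among $\mu^+$-presentable modules, just as in the complex-level statement invoked for Proposition~\ref{totally-acyclic-injectives-accessible}.

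Granting this description, I would then apply Proposition~\ref{right-Ext-1-orthogonal-accessible} with $\lambda=\mu^+$ and the given cardinal~$\nu$. The cardinal arithmetic lines up precisely: one has $\nu^{<\mu^+}=\nu^\mu=\nu$ by hypothesis, and $\rho\le\mu\le\nu$, so all assumptions of that proposition are satisfied once $\sS$ is known to consist of $\mu^+$-P$_2$ modules. Proposition~\ref{right-Ext-1-orthogonal-accessible} then yields in one stroke that $\sS^{\perp_1}=\GI$ is closed under $\mu^+$-directed colimits, that it is $\nu^+$-accessible with directed colimits of $\mu^+$-indexed chains, that its $\nu^+$-presentable objects are precisely the $\nu^+$-presentable modules lying in $\GI$ (i.e.\ the Gorenstein-injective modules of cardinality at most~$\nu$), and that every module in $\GI$ is a $\nu^+$-directed union of such submodules. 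This gives all three assertions of the proposition at once.

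A more self-contained alternative route would bypass the module-level orthogonality statement and instead deduce the decomposition directly from Proposition~\ref{totally-acyclic-injectives-accessible} via the cocycle functor $Z^0$. Given a Gorenstein-injective module $M=Z^0(I^\bu)$ with $I^\bu$ a totally acyclic complex of injectives, one writes $I^\bu$ as the $\nu^+$-directed union of totally acyclic complexes of injectives $I_\xi^\bu$ of cardinality $\le\nu$ furnished by that proposition, and observes that $Z^0$ preserves directed colimits (directed colimits being exact), so that $M=\varinjlim_\xi Z^0(I_\xi^\bu)$ is a $\nu^+$-directed union of Gorenstein-injective modules of cardinality $\le\nu$. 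Combined with closure of $\GI$ under $\nu^+$-directed colimits and Proposition~\ref{accessible-subcategory} (applied with $\sK=\Modr R$, $\kappa=\nu^+$, and $\sT$ the set of Gorenstein-injective modules of cardinality $\le\nu$), this again delivers the accessibility statement; note, however, that this route still requires the directed-colimit closure as a separate input.

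The main obstacle is securing the deep inputs from \cite{SarSt}: the $\sS^{\perp_1}$-description of $\GI$ by a set of $\mu^+$-presentable modules, and (for the second route) the closure of $\GI$ under directed colimits, which over a general ring is genuinely delicate for Gorenstein-injectives, unlike the Gorenstein-flat case of Proposition~\ref{saroch-stovicek-gorenstein-flat-basics}(a) and the Noetherian case of Corollary~\ref{Noeth-Gorenst-injectives-rho-dir-colim-closed}. A minor technical point to check is that the generating set $\sS$ may be taken to consist of $\mu^+$-P$_2$ modules (by enlarging it with syzygies), so that Proposition~\ref{right-Ext-1-orthogonal-accessible} applies literally rather than merely in spirit; this is exactly the same gloss already made in the proof of Proposition~\ref{totally-acyclic-injectives-accessible}.
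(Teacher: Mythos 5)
Your main route is exactly the paper's proof: the paper cites \cite[Theorem~5.6]{SarSt} for the existence of a set $\sS$ of $\mu^+$\+presentable $R$\+modules with $\GI=\sS^{\perp_1}\subset\Modr R$, and then applies Proposition~\ref{right-Ext-1-orthogonal-accessible} (with $\lambda=\mu^+$, using $\nu^{<\mu^+}=\nu^\mu=\nu$ and $\rho\le\nu$), which is precisely your argument. The cardinal-arithmetic verification and the gloss that $\sS$ may be taken to consist of $\mu^+$\+P$_2$ modules (automatic here, since over a ring of cardinality~$\le\rho\le\mu$ every $\mu^+$\+presentable module is $\mu^+$\+P$_2$ by taking a syzygy) are correct and are left implicit in the paper's two-line proof.
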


\begin{proof}
 Once again, by~\cite[Theorem~5.6]{SarSt}, there is a set $\sS$ of
$\mu^+$\+presentable $R$\+modules such that $\GI=\sS^{\perp_1}
\subset\Modr R$ is the right $\Ext^1$\+orthogonal class to $\sS$ in
$\Modr R$.
 It remains to apply
Proposition~\ref{right-Ext-1-orthogonal-accessible}.
\end{proof}

\begin{cor} \label{totally-acyclic-injectives-direct-summand}
 Let $R$ be an associative ring; put $\rho=|R|+\aleph_0$.
 Let $\mu$~be an infinite cardinal such that $\mu^\rho=\mu$, and let
$\nu$~be an infinite cardinal such that $\nu^\mu=\nu$.
 Let $M$ be a Gorenstein-injective $R$\+module of cardinality
at most~$\nu$.
 Then $M$ is a direct summand of an $R$\+module $N$ admitting
a two-sided totally acyclic injective resolution $T^\bu$ with
injective $R$\+modules $T^n$, \,$n\in\boZ$, of cardinality
at most~$\nu$.
\end{cor}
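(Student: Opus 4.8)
The plan is to follow the pattern of the proof of Corollary~\ref{dualizing-complex-F-totally-acyclics-direct-summands}, using the left-right opposite version of Proposition~\ref{totally-acyclic-injectives-accessible} as the main input. Three facts will be combined: the $\nu^+$-accessibility of the category of totally acyclic complexes of injective left $R$-modules, whose $\nu^+$-presentable objects are exactly those of cardinality at most~$\nu$; the exactness of directed colimits in $R\Modl$, which makes the cocycle functor $Z^0$ commute with directed colimits; and the fact that every left $R$-module of cardinality at most~$\nu$ is $\nu^+$-presentable in $R\Modl$.

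First I would use that $M$ is Gorenstein-injective to fix a totally acyclic complex of injective left $R$-modules $I^\bu$ together with an isomorphism $M\simeq Z^0(I^\bu)$. By the left version of Proposition~\ref{totally-acyclic-injectives-accessible}, the complex $I^\bu$ is a $\nu^+$-directed colimit $I^\bu\simeq\varinjlim_{\xi\in\Xi}T_\xi^\bu$ of totally acyclic complexes $T_\xi^\bu$ of injective left $R$-modules of cardinality at most~$\nu$, over a $\nu^+$-directed poset~$\Xi$. Since directed colimits are exact in $R\Modl$ and $Z^0$ is the kernel of a differential, the functor $Z^0$ preserves this colimit, so $M\simeq Z^0(I^\bu)\simeq\varinjlim_{\xi\in\Xi}Z^0(T_\xi^\bu)$ as a $\nu^+$-directed colimit in $R\Modl$. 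Setting $N_\xi=Z^0(T_\xi^\bu)$, each $N_\xi$ is a submodule of $T_\xi^0$ and hence has cardinality at most~$\nu$.

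It then remains to run the retraction argument. Since $M$ has cardinality at most~$\nu$, it is $\nu^+$-presentable in $R\Modl$, so $\Hom_R(M,{-})$ preserves the $\nu^+$-directed colimit $M\simeq\varinjlim_\xi N_\xi$; hence the identity morphism $\id_M$ factors as $M\to N_\xi\to M$ through one of the canonical maps $N_\xi\to M$. This exhibits $M$ as a direct summand of $N=N_\xi=Z^0(T_\xi^\bu)$, with $T_\xi^\bu$ a totally acyclic complex of injective left $R$-modules of cardinality at most~$\nu$, which is the desired conclusion. I expect no genuine obstacle at this level: the real work is already contained in Proposition~\ref{totally-acyclic-injectives-accessible}, and the only points requiring attention are that the index poset be $\nu^+$-directed and that $M$ be $\nu^+$-presentable, which together force the factorization. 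The cardinal arithmetic hypotheses $\mu^\rho=\mu$ and $\nu^\mu=\nu$ enter precisely through the invocation of that proposition.
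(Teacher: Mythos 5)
Your proposal is correct and follows essentially the same route as the paper: the paper's proof is a one-line reference to Proposition~\ref{totally-acyclic-injectives-accessible} plus the retraction pattern of Corollaries~\ref{dualizing-complex-F-totally-acyclics-direct-summands}, \ref{left-Noetherian-tot-acycl-of-inj-direct-summand}, and~\ref{dualizing-complex-tot-acycl-of-inj-direct-summand}, which is exactly what you spell out (write $I^\bu$ with $M\simeq Z^0(I^\bu)$ as a $\nu^+$\+directed colimit of totally acyclic complexes of injectives of cardinality~$\le\nu$, pass to $Z^0$ using exactness of directed colimits, and factor $\id_M$ through some $N_\xi$ via $\nu^+$\+presentability of~$M$). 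Your handling of the left/right side-switch in invoking the proposition is also appropriate, since the paper itself states the proposition for right modules but applies it to left modules.
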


\begin{proof}
 This is a corollary of
Proposition~\ref{totally-acyclic-injectives-accessible}.
 The argument is similar to the proofs of
Corollaries~\ref{dualizing-complex-F-totally-acyclics-direct-summands},
\ref{left-Noetherian-tot-acycl-of-inj-direct-summand},
and~\ref{dualizing-complex-tot-acycl-of-inj-direct-summand}.
\end{proof}

\bigskip

\end{document}